\renewcommand{\epsilon}{\varepsilon}
\newcommand{\newsection}[1] 
{\subsection{#1}\setcounter{theorem}{0} \setcounter{equation}{0} 
\par\noindent}
\newtheorem{theorem}{Theorem}
\newtheorem{lemma}[theorem]{Lemma}
\newtheorem{corr}[theorem]{Corollary}
\newtheorem{proposition}[theorem]{Proposition}
\newtheorem{deff}[theorem]{Definition}
\newtheorem{remark}[theorem]{Remark}
\newcommand{\bth}{\begin{theorem}}
\newcommand{\ble}{\begin{lemma}}
\newcommand{\bcor}{\begin{corr}}
\newcommand{\bdeff}{\begin{deff}}
\newcommand{\bprop}{\begin{proposition}}
\newcommand{\ele}{\end{lemma}}
\newcommand{\ecor}{\end{corr}}
\newcommand{\edeff}{\end{deff}}
\newcommand{\eprop}{\end{proposition}}
\newcommand{\la}{\lambda}
\newcommand{\e}{\varepsilon}
\renewcommand{\Pi}{\varPi}
\renewcommand{\epsilon}{\varepsilon}
\newcommand{\R}{{\mathbb R}}
\newcommand{\1}{\mathbb{1}}
\newcommand{\tis}{\tilde s}
\newcommand{\tit}{\tilde t}
\newcommand{\tix}{\tilde x}
\newcommand{\tiy}{\tilde y}
\newcommand{\diag}{\Upsilon^{\text{diag}}}
\newcommand{\tdiag}{\tilde\Upsilon^{\text{diag}}}
\newcommand{\far}{\Upsilon^{\text{far}}}
\newcommand{\kt}{L^2_tL^{q_e}_x}
\newcommand{\qe}{q_e}
\newcommand{\qc}{q_c}
\newcommand{\Atn}{A^{\theta_0}_\nu}
\newcommand{\tAtn}{\tilde A^{\theta_0}_{\nu}}
\newcommand{\Atnt}{A^{\theta_0}_{\tilde\nu}}
\newcommand{\tAtnt}{\tilde A^{\theta_0}_{\tilde \nu}}
\newcommand{\xid}{\Xi_{\theta_0}}
\newcommand{\speed}{\kappa_\ell^{c_0\theta}}
\begin{document}

\title[Strichartz estimates  on negatively curved manifolds]
{Strichartz estimates for the Schr\"odinger equation on negatively curved compact manifolds}

\thanks{The second author was supported in part by an AMS-Simons travel grant. The third author was supported in part by the NSF (DMS-1665373). }

\keywords{Schr\"odinger's equation, curvature, Schr\"odinger curves}
\subjclass[2010]{58J50, 35P15}

\author{Matthew D. Blair}
\address[M.D.B.]{Department of Mathematics, 
University of New Mexico, Albuquerque, NM 87131}
\email{blair@math.unm.edu} 
\author{Xiaoqi Huang}
\address[X.H.]{Department of Mathematics, University of Maryland, College Park. MD 20742}
\email{xhuang49@umd.edu}
\author{Christopher D. Sogge}
\address[C.D.S.]{Department of Mathematics,  Johns Hopkins University,
Baltimore, MD 21218}
\email{sogge@jhu.edu}

\begin{abstract}
We obtain improved Strichartz estimates for solutions of the Schr\"odinger equation on negatively curved compact manifolds which improve the
classical universal results results of Burq, G\'erard and Tzvetkov~\cite{bgtmanifold} in this geometry.  In the case where the spatial manifold is a 
hyperbolic surface we are able to obtain no-loss $L^{q_c}_{t,x}$-estimates on intervals of length $\log \la\cdot \la^{-1} $ for initial data whose frequencies
are comparable to $\la$, which, given the role of the Ehrenfest time, is the natural analog of the universal results
in \cite{bgtmanifold}.     We are also obtain improved endpoint Strichartz estimates for manifolds of nonpositive
curvature, which cannot
hold for spheres.
\end{abstract}

\maketitle

\newsection{Introduction}

It has been almost two decades since Burq, G\'erard
and Tzvetkov~\cite{bgtmanifold} obtained their now
classical
universal Strichartz estimates for the 
Schr\"odinger equation on compact manifolds.
Besides the notable exception of near lossless
estimates on general tori by 
Bourgain and Demeter~\cite{BoDe}, 
and more recent related work in this setting by Deng, Germain and Guth~\cite{DGG} and
Deng, Germain, Guth and Meyerson~\cite{DGGM},
to the best of
our knowledge, there have not been significant 
improvements of the results in \cite{bgtmanifold}, in other geometries.

The purpose of this paper is to obtain improvement
of the universal bounds in \cite{bgtmanifold} under
the assumption of negative curvature, as well as, more generally,
nonpositive curvature.

Let us now recall the universal estimates
of Burq, G\'erard and Tzvetkov~\cite{bgtmanifold}.
If $(M^d,g)$ is a compact Riemannian manifold of
dimension $d\ge2$, then the main estimate
in \cite{bgtmanifold} is that if $\Delta_g$ is
the associated Laplace-Beltrami operator and
\begin{equation}\label{00.1}
u(x,t)=\bigl(e^{-it\Delta_g}f\bigr)(x)
\end{equation}
is the solution of the Schr\"odinger equation
on $M^d\times \R$,
\begin{equation}\label{00.2}
i \partial_tu(x,t)=\Delta_gu(x,t), \quad u(x,0)=f(x),
\end{equation}
then one has the mixed-norm Strichartz estimates
\begin{equation}\label{00.3}
\|u\|_{L^p_tL^q_x(M^d\times [0,1])}
\lesssim \|f\|_{H^{1/p}(M^d)}
\end{equation}
for all {\em admissible} pairs $(p,q)$.  By the latter
we mean, as in Keel and Tao~\cite{KT},
\begin{equation}\label{00.4}
d(\tfrac12-\tfrac1q)=\tfrac2p\, \, \,
\text{and } \, \, 2< q\le \tfrac{2d}{d-2} \, \,
\text{if } \, d\ge 3, \, \, \, 
\text{or } \, 2< q<\infty \, \, 
\text{if } \, \, d=2.
\end{equation}
Also, in \eqref{00.3} $H^\mu$ denotes the
standard Sobolev space
\begin{equation}\label{00.5}
\|f\|_{H^\mu(M^d)}=
\bigl\| \, (I+P)^\mu f\, \bigr\|_{L^2(M^d)}, 
\quad \text{with } \, \, P=\sqrt{-\Delta_g},
\end{equation}
and ``$\lesssim$'' in \eqref{00.3} and, in what
follows, denotes an inequality with an implicit,
but unstated, constant $C$ which can change at each occurrence.

Note that if $e_\la$ is an eigenfunction of
$P$ with eigenvalue $\la$, i.e.,
\begin{equation}\label{00.6}
-\Delta_g e_\la = \la^2 e_\la,
\end{equation}
then
\begin{equation}\label{00.7}
u(x,t)=e^{it\la^2}e_\la(x)
\end{equation}
solves \eqref{00.2} with initial data $f=e_\la$.
From this one immediately sees that, unlike for
the Euclidean case originally treated by
Strichartz~\cite{Strichartz77}, one can never
obtain any sort of global analog of
\eqref{00.3} where $[0,1]$ is replaced by
$\R$.  On the other hand, the proof of \eqref{00.3}
in \cite{bgtmanifold} shows that one can replace
$[0,1]$ by a larger interval $I$ at the expense
of an additional factor $|I|^{1/p}$ in the 
implicit constant in the right side of \eqref{00.3}.  Also, in some cases,
the special solutions \eqref{00.7} involving
eigenfunctions saturate \eqref{00.3}.  Specifically,
for the endpoint Strichartz estimates where
$p=2$ and $q=\tfrac{2d}{d-2}$ with $d\ge3$ the
solutions where $e_\la=Z_\la$ are zonal eigenfunctions
on $S^d$ with eigenvalue $\la=({k(k+\tfrac{n-1}2)})^{1/2}$, $k=1,2,\dots$, which saturate \eqref{00.3} since
\eqref{00.3} as
\begin{equation}\label{00.8}
\|Z_\la\|_{L^{\frac{2d}{d-2}}(S^d)}/\|Z_\la\|_{L^2(S^d)}
\approx \la^{1/2}
\end{equation}
(see, e.g., \cite{sogge86}).  We shall have more to
say about solutions arising from eigenfunction  in what follows.

To align with the numerology in related earlier results
involving eigenfunction and spectral projection estimates, as well as
parabolic Fourier restriction problems, in what follows,
we shall always take $d=n-1$.  Thus, we are interested
in estimates of solutions of Schr\"odinger's equation
\eqref{00.2} on the $n$-dimensional space
$M^{n-1}\times [0,1]$.  As we mentioned before,
we are focusing here on improvements of the 
universal bounds \eqref{00.3} of 
\cite{bgtmanifold} when $M^{n-1}$ has nonpositive
curvature.  We shall take $d=n-1\ge2$, since
the case where $d=1$ boils down to
the spatial manifold being the circle, $S^1$, and optimal
results in this case were obtained by
Bourgain~\cite{BourgainStrichartz1}.  In what follows
(just as in \cite{BourgainStrichartz1} and \cite{BoDe}) we shall mainly focus
on the unique admissible pair $(p,q)$ in \eqref{00.4}
where $p=q$, i.e.,
\begin{equation}\label{00.9}
q=q_c=\tfrac{2(n+1)}{n-1}.
\end{equation}

One of our main results is that in this case
we have logarithmic improvements of the
universal bounds in \cite{bgtmanifold} under our curvature assumptions.

\begin{theorem}\label{nonposthm}  Let $M^{n-1}$ be a
$d=n-1\ge 2$ dimensional compact manifold all of whose
sectional curvatures are nonpositive.  Then
\begin{equation}\label{00.10}
\|u\|_{L^{q_c}(M^{n-1}\times [0,1] )}\lesssim \bigl\|(I+P)^{1/q_c} \, (\log(2I+P))^{-\frac{n-1}{(n+1)^2}}f \bigr\|_{L^2(M^{n-1})}.
\end{equation}
\end{theorem}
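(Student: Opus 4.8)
The plan is to reduce matters to a spectral cluster estimate with a logarithmic gain, in the spirit of the log-improved eigenfunction and spectral projection bounds on nonpositively curved manifolds. First I would use a Littlewood--Paley decomposition in the $P$ variable to reduce \eqref{00.10} to a single dyadic frequency block: it suffices to show that if $f$ has frequencies comparable to $\lambda$ (i.e. $f=\chi(P/\lambda)f$ for a bump $\chi$), then
\begin{equation*}
\|e^{-it\Delta_g}f\|_{L^{q_c}(M^{n-1}\times[0,1])}\lesssim \lambda^{1/q_c}(\log\lambda)^{-\frac{n-1}{(n+1)^2}}\|f\|_{L^2(M^{n-1})},
\end{equation*}
since summing the dyadic pieces in $\ell^2$ (Minkowski, using $q_c\ge 2$) recovers the Sobolev norm on the right with the stated logarithmic weight. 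The square-function/orthogonality step here is standard, but one must be a little careful that the implicit constants do not reabsorb the logarithmic gain; this is handled as in the Bourgain--Demeter and Burq--G\'erard--Tzvetkov arguments by keeping the frequency localization sharp.

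Second, following the strategy for proving $L^{q_c}$ Strichartz bounds of this kind, I would pass from the time interval $[0,1]$ to a union of $\sim \lambda$ short intervals of length $\sim \lambda^{-1}$, and on each short interval compare $e^{-it\Delta_g}$ to a half-wave operator. Concretely, writing the Schr\"odinger propagator via its spectral decomposition and using a partition of unity in time adapted to scale $\lambda^{-1}$, on each such interval the phase $t\lambda^2$ linearizes so that $e^{-it\Delta_g}f$ behaves (microlocally, after conjugation) like a sum of frequency-$\lambda$ localized pieces of $e^{is\sqrt{-\Delta_g}}$ evaluated at the rescaled time. The key point is that the relevant exponent $q_c=\frac{2(n+1)}{n-1}$ is precisely the exponent governing the discrete restriction / Fourier integral estimate that in the negatively curved setting admits a $(\log\lambda)^{-\sigma}$ improvement over the universal bound, by the results on $L^p$ bounds for spectral projectors and their Schr\"odinger analogues on manifolds without conjugate points and with nonpositive curvature.

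Third, and this is where the negative (nonpositive) curvature enters essentially, I would invoke the improved $L^{q_c}$ spectral cluster / Strichartz estimate on nonpositively curved manifolds: on intervals of length a small multiple of the Ehrenfest time $c\log\lambda$, the wave flow on $M^{n-1}$ spreads out because of the exponential divergence of geodesics, which upgrades the trivial stationary-phase bounds by a power of $\log\lambda$. Summing these $O(\log\lambda / 1)$ Ehrenfest-length blocks and optimizing (each block contributes the universal $\lambda^{1/q_c}$ bound but with a gain, and there are $\sim(\log\lambda)$ of them so one loses a factor $(\log\lambda)^{1/q_c}$ from the number of blocks) yields the net exponent $-\frac{n-1}{(n+1)^2}$ after the bookkeeping $\frac{1}{q_c}=\frac{n-1}{2(n+1)}$ versus the per-block gain. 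I expect the main obstacle to be exactly this last optimization: tracking how the per-block log-gain from dispersion on the Ehrenfest time scale trades off against the $(\log\lambda)^{1/q_c}$ loss from the number of blocks and the $\ell^2$ reassembly, so as to land on the precise exponent $\frac{n-1}{(n+1)^2}$ rather than something weaker. A secondary difficulty is controlling the error terms generated when comparing the Schr\"odinger propagator to the half-wave propagator and the parametrix construction on the long (logarithmic) time intervals, where one must ensure the remainders are genuinely lower order and do not eat the gain.
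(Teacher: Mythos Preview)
Your overall architecture is right: Littlewood--Paley to reduce to a dyadic block, work on time intervals of length $\approx (\log\lambda)\cdot\lambda^{-1}$ (after rescaling, $T=c_0\log\lambda$), prove a per-block estimate with a log-power gain, and then pay a $(\log\lambda)^{1/q_c}$ loss from the $\approx \lambda/\log\lambda$ blocks covering $[0,1]$. The numerology $\tfrac{n-1}{(n+1)^2}=\tfrac1{q_c}(1-\tfrac2{q_c})$ is exactly the bookkeeping the paper uses. So far so good.

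The gap is in your third step. You write that you would ``invoke the improved $L^{q_c}$ spectral cluster / Strichartz estimate on nonpositively curved manifolds'' on Ehrenfest-length intervals. There is no such black box available for the Schr\"odinger propagator; establishing precisely this per-block estimate is the entire content of the paper. The existing log-improved spectral projection bounds (Blair--Sogge et al.) do not transfer automatically to Schr\"odinger Strichartz, because the relevant dispersion is along space-time ``Schr\"odinger curves'' $s\mapsto(x(\kappa s),-(s-s_0))$ with varying speeds $\kappa$, not along geodesics in $M^{n-1}$ alone, and the bilinear machinery has to be rebuilt accordingly. Your step 2 (``compare to a half-wave operator'') is too coarse for this: the half-wave reduction gives the universal BGT bound on $\lambda^{-1}$-intervals but does not by itself produce any log-gain on longer intervals.

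What the paper actually does for the per-block estimate is a Bourgain-type height splitting of $\{|\tilde S_\lambda f|\gtrless \lambda^{\frac{n-1}4+\delta}\}$. On the large-value set $A_+$ one runs a $TT^*$ argument and needs pointwise kernel bounds for $e^{-i(t-s)\lambda^{-1}\Delta_g}\beta^2(P/\lambda)$ for $|t-s|\lesssim\log\lambda$; these come from lifting to the universal cover and the Hadamard parametrix (B\'erard), yielding $|K|\lesssim \lambda^{(n-1)/2}|t-s|^{-(n-1)/2}e^{C_M|t-s|}$, which is where nonpositive curvature enters and why $T=c_0\log\lambda$ with $c_0$ small. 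On the small-value set $A_-$ one needs local auxiliary ``quasimode'' operators $\sigma_\lambda=\sigma(\lambda^{1/2}|D_t|^{1/2}-P)\tilde\beta(D_t/\lambda)$, a microlocal Whitney decomposition, and a genuinely new application of Lee's bilinear oscillatory integral estimates carried out in adapted ``Schr\"odinger coordinates'' (a Fermi-type frame along the Schr\"odinger curves). None of this is captured by citing spectral-cluster improvements, and your proposal gives no indication of how you would produce the per-block gain without these ingredients.
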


To prove this estimate we shall employ a similar strategy
to the one used in \cite{bgtmanifold}, which we now
recall.  We first note that, by Littlewood-Paley theory,
we may reduce matters to proving certain dyadic estimates.

To this end, fix a Littlewood-Paley bump function
$\beta$ satisfying
\begin{equation}\label{00.11}
\beta \in C^\infty_0((1/2,2)) \quad
\text{and } \, \, 1=\sum_{k=-\infty}^\infty 
\beta(2^{-k}s), \, \, s>0.
\end{equation}
Then, if we set $\beta_0(s)=1-\sum_{k=1}^\infty
\beta(2^{-k}s)\in C^\infty_0(\R_+)$ and
$\beta_k(s)=\beta(2^{-k}s)$, $k=1,2,\dots$, we
have (see e.g., \cite{SFIO2})
\begin{equation}\label{00.12}
\|h\|_{L^q(M^{n-1})}\approx
\bigl\| \, (\, \sum_{k=0}^\infty |\beta_k(P)h|^2\, 
)^{1/2} \, \bigr\|_{L^q(M^{n-1})}, \, \, \,
1<q<\infty.
\end{equation} 
Trivially, $\|\beta_0(P)e^{-it\Delta_g}\|_{L^2(M^{n-1})
\to L^q(M^{n-1}\times [0,1])}=O(1)$, and, similarly such results
where $k=0$ is replaced by a small fixed $k\in {\mathbb N}$ are
also standard.
So, as noted
in \cite{bgtmanifold}, one can use \eqref{00.12} and
Minkowski's inequality to see that the special case
of 
\eqref{00.3} where $p=q=q_c$ follows
from the uniform bounds
\begin{equation}\label{00.3'}\tag{1.3$'$}
\| e^{-it\Delta_g}\beta(P/\la)f\|_{L^{q_c}(M^{n-1}
\times [0,1])}\le C\la^{\frac1{q_c}}\, \|f\|_{L^2(M^{n-1})}, \quad \la \gg 1.
\end{equation}
Burq, G\'erard and Tzvetkov proved this estimate in
\cite{bgtmanifold} by showing that one always has the 
following uniform dyadic estimates over very small intervals:
\begin{equation}\label{00.3''}\tag{1.3$''$}
\|e^{-it\Delta_g} \beta(P/\la)f\|_{L^{q_c}(M^{n-1}
\times [0, \, \la^{-1}])}\le C\, \|f\|_{L^2(M^{n-1})}, \quad \la \gg 1.
\end{equation}
Indeed, \eqref{00.3''} immediately yields 
\eqref{00.3'}, since one can write $[0,1]$ as the
union of $\approx \la$ intervals of length $\la^{-1}$
and thus obtain \eqref{00.3'} by adding up the
uniform estimates on each of these subintervals
that \eqref{00.3''} affords.  As was noted
 in \cite{bgtmanifold}, one can also obtain
the universal Strichartz estimates of
Burq, G\'erard and Tzvetkov using 
local smoothing estimates of Staffilani and
Tataru~\cite{StTa}; however, it seems difficult
to obtain improvements like the ones 
in Theorem~\ref{nonposthm} using this approach.

The time scale here of $|t|\le \la^{-1}$ is natural
since the dyadic operators in \eqref{00.3''} behave somewhat 
like standard half-wave operators $e^{itcP}$ of speed $c=\la$,
although this is a somewhat cartoonish reduction.  
Being more specific, it is possible to construct parametrices
for the dyadic operators in such small time scales
that allow one to use the Keel-Tao \cite{KT} theorem
to deduce \eqref{00.3''}.  Similar arguments show that
the other cases in \eqref{00.3} also follow from
uniform dyadic estimates for this time scale.

It is a simple matter to see that on {\em any} manifold
the bounds in \eqref{00.3''} cannot be improved even
though the time intervals are very small.  For instance,
if $\beta(P/\la)(x,y)$ is the kernel of the 
Littlewood-Paley operators $\beta(P/\la)$ and
$f(x)=f_\la(x)=\beta(P/\la)(x,x_0)$ for any fixed
$x_0\in M^{n-1}$, then the ratio of the norms
in \eqref{00.3''} is comparable to one for
$\la \gg 1$.  As a result, in order to obtain
improvements such as those in \eqref{00.10},
one must use larger time intervals.  

Since we are working on manifolds of nonpositive
curvature, due to the expected role of the 
Ehrenfest time in the analysis, it is natural to
consider time intervals of length
$\approx \log\la \cdot \la^{-1}$.  This is what
we shall do.  Specifically, we shall show that if
$M^{n-1}$ is as in Theorem~\ref{nonposthm} then
we have the uniform bounds
\begin{equation}\label{00.10'}\tag{1.10$'$}
\| e^{-it\Delta_g} \beta(P/\la) f\|_{L^{q_c}(M^{n-1}
\times [0, \, \log\la\cdot \la^{-1}])}\le C\, (\log\la)^{\frac2{(q_c)^2}} \, \|f\|_{L^2(M^{n-1})}, \quad \la\gg 1.
\end{equation}
Since the logarithmic gain of $\tfrac{n-1}{(n+1)^2}$ in \eqref{00.10} versus \eqref{00.3} is just
$\tfrac1{q_c}(1-\tfrac2{q_c})$, by the above counting
arguments, one obtains \eqref{00.10} from 
\eqref{00.10'} since $[0,1]$ can be covered
by $\approx \la/\log\la$ intervals of length
$\log\la\cdot \la^{-1}$.  Also,   the universal
bounds \eqref{00.3''}  imply the analog of 
this inequality with $2/(q_c)^2$ replaced by the larger
exponent $1/q_c$ (since $q_c>2$), which is another way of recognizing the improvement
of \eqref{00.10'} versus \eqref{00.3''}.

We shall also show that if one strengthens the hypothesis
in the above theorem by assuming that the manifolds
are of negative curvature than we can obtain
stronger results, including a natural analog
of the estimates \eqref{00.3''} for hyperbolic
surfaces:

\begin{theorem}\label{negthm}  Assume that
$d=n-1\ge 2$ and that
all of the sectional curvatures of $M^{n-1}$
are negative.  Then if $d=n-1\ge3$
\begin{equation}\label{00.13}
\|u\|_{L^{q_c}( M^{n-1}\times [0,1])}\lesssim \bigl\|(I+P)^{1/q_c} \, (\log(2I+P))^{-\frac1{(n+1)}}f \bigr\|_{L^2(M^{n-1})}.
\end{equation}
Moreover, if $d=n-1=2$, in which case $q_c=4$, we have
\begin{equation}\label{00.14}
\|  e^{-it\Delta_g} \beta(P/\la) f\|_{L^{4}(M^{2}
\times [0,\, \log\la\cdot \la^{-1}])}\le C\, \, \|f\|_{L^2(M^{2})}, \quad \la\gg 1,
\end{equation}
and
\begin{equation}\label{00.14'}\tag{1.14$'$} 
\|u\|_{L^{4}( M^{2}\times [0,1])}\lesssim \bigl\|(I+P)^{1/4} \, (\log(2I+P))^{-1/4}f \bigr\|_{L^2(M^{2})}.
\end{equation}
\end{theorem}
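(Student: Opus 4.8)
The plan is to follow the same Littlewood--Paley reduction used for Theorem~\ref{nonposthm}, so that \eqref{00.13} follows from a uniform dyadic bound over intervals of length $\log\la\cdot\la^{-1}$, namely
\begin{equation}\label{plan.1}
\| e^{-it\Delta_g}\beta(P/\la)f\|_{L^{q_c}(M^{n-1}\times[0,\,\log\la\cdot\la^{-1}])}\le C\,(\log\la)^{\sigma_n}\,\|f\|_{L^2(M^{n-1})},\qquad \la\gg1,
\end{equation}
where $\sigma_n$ is the appropriate exponent: comparing the logarithmic gain $\tfrac1{n+1}$ in \eqref{00.13} with the loss $\tfrac1{q_c}$ coming from summing $\approx \la/\log\la$ subintervals, one needs $\sigma_n=\tfrac1{q_c}-\tfrac1{n+1}$, and when $n-1=2$ this is exactly $0$, which is precisely \eqref{00.14}. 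Then \eqref{00.14'} is just the $d=2$ specialization of \eqref{00.13} after the summation argument, so the whole theorem reduces to \eqref{plan.1}. The improvement over Theorem~\ref{nonposthm} must therefore come entirely from replacing the power of $\log\la$ in \eqref{00.10'} by a smaller one under the strictly negative curvature hypothesis.

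The mechanism for the gain is the Ehrenfest-time parametrix for $e^{-it\Delta_g}\beta(P/\la)$ lifted to the universal cover. On a manifold of negative curvature the geodesic flow has a uniform exponential rate of mixing, and the number of geodesics of length $\le T=\log\la\cdot\la^{-1}\cdot\la = \log\la$ (in the rescaled time variable where the dyadic operator behaves like a half-wave operator of speed $\la$) joining two points in a fundamental domain grows like $e^{cT}$, i.e., polynomially in $\la$, but the pieces of the kernel coming from long geodesics have rapidly decaying amplitude. Concretely, I would decompose the half-wave--type parametrix dyadically in the travel-time variable, $e^{-it\Delta_g}\beta(P/\la)=\sum_{2^j\le \log\la}K_j$, estimate the local ($2^j\approx1$) piece by the standard Keel--Tao argument (which gives the loss-free $L^{q_c}$ bound on unit time intervals), and for the remaining pieces use the Hadamard parametrix on the cover together with the negative-curvature dispersive estimate: the kernel of $K_j$ on the cover enjoys a $t^{-(n-1)/2}$-type decay with an exponential-in-$2^j$ gain in the volume/Jacobian factor, so that $\|K_j\|_{L^1\to L^\infty}$ is better than the flat bound by a factor $e^{-c2^j}$. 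Summing the resulting Strichartz contributions over $j$ with $2^j\lesssim\log\la$ then produces only a bounded loss when $n-1=2$ and the stated smaller power of $\log\la$ in higher dimensions. An equivalent and perhaps cleaner route is to deduce \eqref{plan.1} from the known negative-curvature improvements of the $L^{q_c}$ spectral projection (Sogge--Zelditch type) and $L^{q_c}_{t,x}$ local-smoothing bounds over $\log\la$ time scales, invoking the Keel--Tao machinery abstractly with the sharpened dispersive input.

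The main obstacle is controlling the accumulation of long geodesic loops, i.e., making quantitative the heuristic ``Ehrenfest time'' cutoff: one must show that summing the contributions of the $K_j$ with $2^j$ up to $c\log\la$ does not reintroduce a power loss worse than $(\log\la)^{1/q_c-1/(n+1)}$. This requires the strictly negative curvature hypothesis in two places — to get the uniform exponential mixing rate that makes the per-scale gain $e^{-c2^j}$ genuinely exponential (merely nonpositive curvature gives no such rate, only polynomial volume growth, which is why Theorem~\ref{nonposthm} has the weaker exponent) and to guarantee that the Jacobian of the exponential map on the cover stays bounded below so the Hadamard parametrix is valid up to time $\log\la$. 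In the borderline case $d=2$ one has to be careful that the $j$-summation is exactly neutral; this is where the specific value $q_c=4$ and the exact power $(\log\la)^{2/(q_c)^2}=(\log\la)^{1/8}$ from Theorem~\ref{nonposthm} getting improved to $(\log\la)^0$ must be checked by tracking constants, rather than just orders of magnitude, through the Keel--Tao interpolation.
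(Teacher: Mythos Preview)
Your reduction to the dyadic bound \eqref{plan.1} with $\sigma_n=\tfrac1{q_c}-\tfrac1{n+1}=\tfrac{4-q_c}{2q_c}$ is correct and matches the paper exactly (this is the inequality~(2.4$'$) there, which in turn follows from Proposition~\ref{mainprop}).  The gap is in your proposed mechanism for the improvement.

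You claim that on the universal cover the pieces $K_j$ of the kernel enjoy an $L^1\to L^\infty$ bound that is better than the flat one by a factor $e^{-c2^j}$, coming from the decay of the volume/Jacobian factor $w_0$ in the Hadamard parametrix.  It is true that $w_0(\tilde x,\tilde y)\lesssim e^{-c\,d_{\tilde g}(\tilde x,\tilde y)}$ in strictly negative curvature (G\"unther comparison), but this gain is exactly cancelled by the number of deck transformations $\alpha$ with $d_{\tilde g}(\tilde x,\alpha(\tilde y))\lesssim 2^j$, which grows like $e^{C2^j}$.  Indeed the paper proves (Proposition~\ref{kerprop}) that the \emph{unlocalized} kernel $S_\la S_\la^*$ satisfies only $|S_\la S_\la^*(x,t;y,s)|\le C\la^{(n-1)/2}|t-s|^{-(n-1)/2}\exp(C_M|t-s|)$, with an exponential \emph{loss}, not gain.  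So a direct dispersive/Keel--Tao argument cannot give anything better than Theorem~\ref{nonposthm}.  Your alternative route via spectral-projection bounds has the same defect: those bounds do not feed into a dispersive estimate strong enough to run Keel--Tao on intervals of length $\log\la$.

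What the paper actually does is substantially more involved.  After a height splitting $A_\pm$ at level $\la^{(n-1)/4+1/8}$, the large-value piece $A_+$ is handled by a $TT^*$ argument that only needs the crude kernel bound above.  The small-value piece $A_-$ is treated by bilinear oscillatory-integral estimates of Lee, applied after a microlocal decomposition $A^{\theta_0}_\nu$ at scale $\theta_0=\la^{-1/8}$ in \emph{both} direction and ``height'' $\kappa$ (i.e.\ speed along Schr\"odinger curves).  The strictly-negative-curvature improvement enters only through the microlocalized kernel (Proposition~\ref{mick}): the directional cutoff restricts the relevant deck transformations to a tube about a single lifted geodesic, and the height cutoff then forces $d_{\tilde g}(\tilde x,\alpha(\tilde y))$ to lie in an interval of length $O(|t-s|\la^{-1/8})\ll 1$, so only $O(1)$ terms survive.  \emph{Then} the $w_0$ decay gives $|K|\lesssim \la^{(n-1)/2}|t-s|^{-N}$ for $|t-s|\ge 1$, which upgrades an $\ell^{q_c}_m$ summation to $\ell^2_m$ and produces the exponent $\tfrac{4-q_c}{2q_c}$ in place of $\tfrac{2}{q_c^2}$.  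The microlocalization is not a technical convenience but the essential step that makes the $w_0$ decay usable; without it your summation over $j$ would diverge rather than be ``exactly neutral.''
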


By the above discussion of course \eqref{00.14} yields
\eqref{00.14'}.  Moreover, we point out that 
\eqref{00.14} is the natural extension of the uniform
small-time scale estimates \eqref{00.3''} of 
Burq, G\'erard and Tzvetkov to time intervals
which are perhaps the largest one can hope to obtain
such estimates in the geometry we are focusing on  using available techniques, due to
the role of the Ehrenfest time.

As we shall see, the improvement in Theorem~\ref{negthm} compared to those in
Theorem~\ref{nonposthm} are due to the much stronger dispersive properties of the
kernel for the solution operators for the wave equation.  On the other hand, in proving 
Theorem~\ref{negthm}, we have to balance this with the exponential volume growth
 of 
manifolds of strictly negative curvature as we have in some earlier works.  We 
accomplish 
this using arguments involving microlocal pseudo-differential cutoffs.

By interpolating with the endpoint Strichartz estimates
of Burq, G\'erard and Tzvetkov \cite{bgtmanifold}, one can also
obtain logarithmic--power improvements for all of the
other pairs of exponents $(p,q)$ in \eqref{00.4}
besides the endpoint case where $p=2$ and $d=n-1\ge3$.
Although these techniques break down for the important
endpoint case, we are able to adapt arguments from
one of us \cite{sogge2015improved} to get the following
more modest improvements for this case.

\begin{theorem}\label{endthm}  Let $M^d$ be a
$d\ge 3$ dimensional compact manifold all of whose
sectional curvatures are nonpositive.  Then
\begin{equation}\label{00.15}
\|u\|_{L^2_tL^{\frac{2d}{d-2}}_x(  M^{d}\times [0,1])}\lesssim \bigl\|(I+P)^{1/2} \, (\log(\log(2I+P)))^{-1/2}f \bigr\|_{L^2(M^{d})}.
\end{equation}
\end{theorem}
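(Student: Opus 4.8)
The plan is to combine the approach of Burq--G\'erard--Tzvetkov with the microlocal and Kakeya-type arguments from \cite{sogge2015improved}, which give logarithmic improvements for the endpoint spectral projection estimate on manifolds of nonpositive curvature. As before, by Littlewood--Paley theory it suffices to establish the dyadic bound
\begin{equation}\label{eplan1}
\| e^{-it\Delta_g}\beta(P/\la)f\|_{L^2_tL^{\frac{2d}{d-2}}_x(M^d\times[0,T_\la])}\le C(\log\log\la)^{1/2}\,\|f\|_{L^2(M^d)}
\end{equation}
over time intervals of length $T_\la\approx (\log\log\la)\cdot\la^{-1}$; summing over the $\approx\la/\log\log\la$ such subintervals covering $[0,1]$ and using Minkowski's inequality then yields \eqref{00.15}. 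For the endpoint exponent one cannot interpolate with a lossless estimate as in Theorems~\ref{nonposthm} and \ref{negthm}, so the $TT^*$ argument must be run directly.

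The key step is to use the Keel--Tao machinery \cite{KT} together with a parametrix for $e^{-it\Delta_g}\beta(P/\la)$ valid on the time scale $|t|\lesssim T_\la$. Writing $T_\la=e^{c_0\log\log\la}\cdot\la^{-1}=(\log\la)^{c_0}\cdot\la^{-1}$ up to a harmless change of constant, one lifts to the universal cover and uses the Hadamard parametrix for the half-wave propagator, exploiting nonpositive curvature so that the finitely many relevant covering-group translates (there are $O((\log\la)^{C})$ of them contributing on this time scale) satisfy the dispersive off-diagonal decay needed for the Keel--Tao bilinear estimate. The dispersive kernel bound in the curved setting degrades by a factor that is polynomial in $\log\la$, but the endpoint Strichartz inequality only sees the square root of the number of bad scales, and the refined stationary-phase and Kakeya-type estimates of \cite{sogge2015improved} replace the naive $(\log\la)^{1/q_c}$-type loss by one that is of size $(\log\log\la)^{1/2}$ once one optimizes the splitting into a main term handled by the universal Keel--Tao estimate and an error controlled microlocally.

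I would organize the argument as follows. First, reduce \eqref{eplan1} to an $L^2\to L^2$ bound for the relevant $TT^*$ operator after composing with the dual Strichartz exponent, exactly as in \cite{KT}. Second, decompose the kernel of this operator according to the distance traveled by the underlying geodesic flow, separating a ``local'' piece where $\dist$ on $M^d$ is bounded (handled by the universal bounds \eqref{00.3''}, applied over $O(\log\log\la)$ unit-time blocks) from the genuinely non-local piece. Third, on the non-local piece invoke the microlocal pseudo-differential cutoffs and the dispersive estimates for the wave kernel on the universal cover that \cite{sogge2015improved} provides, using the Chebyshev/square-function trick that converts the $\ell^1$ sum over scales into an $\ell^2$ sum, which is the source of the square-root and ultimately of the $(\log\log\la)^{-1/2}$ gain. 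The main obstacle will be in this third step: balancing the exponential volume growth does not occur here (the curvature is only nonpositive, not negative), but one must still control the summation over the group translates in the endpoint space, where the Keel--Tao bilinear interpolation is delicate and one does not have room to absorb even a single power of $\log\la$; getting the constant in \eqref{eplan1} down to $(\log\log\la)^{1/2}$ rather than $(\log\la)^{1/2}$ requires carefully tracking how the improved Kakeya/stationary-phase input of \cite{sogge2015improved} interacts with the Keel--Tao exponents at $p=2$.
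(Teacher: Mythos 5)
Your opening reduction already fails: the bound \eqref{eplan1} you propose,
\begin{equation*}
\|e^{-it\Delta_g}\beta(P/\la)f\|_{L^2_tL^{q_e}_x(M^d\times[0,T_\la])}\le C(\log\log\la)^{1/2}\|f\|_2,\quad T_\la\approx(\log\log\la)\cdot\la^{-1},
\end{equation*}
is exactly what one gets \emph{for free} by cutting $[0,T_\la]$ into $\approx\log\log\la$ intervals of length $\la^{-1}$ and applying the universal Burq--G\'erard--Tzvetkov bound \eqref{00.3''} on each (since $L^2_t$ norms add orthogonally, this gives a factor $(\log\log\la)^{1/2}$). When you then sum the squared $L^2_t$ norms over the $\approx\la/\log\log\la$ subintervals covering $[0,1]$, the loss factor cancels the count and you recover only $\lesssim\la^{1/2}\|f\|_2$ — the universal bound, with no gain at all. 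To extract a $(\log\log\la)^{-1/2}$ improvement by this covering argument you would need a \emph{lossless} estimate $\le C\|f\|_2$ on intervals of length $(\log\log\la)\cdot\la^{-1}$, which is not what you state and is not established by your subsequent sketch.

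Beyond the arithmetic, the machinery you invoke is not what carries the argument for the endpoint. The actual proof does not establish an improved or lossless dyadic estimate on a short time interval by Keel--Tao plus a covering-group parametrix. Instead it uses a height decomposition: after rescaling time, one splits $M^{n-1}\times[0,\log\la]$ into the sets $A_\pm$ according to whether $|U_\la f|$ exceeds $\la^{(n-1)/4}(\log\la)^{\e_0}$. On $A_+$ the paper runs a Bourgain-style $TT^*$ argument (Proposition~\ref{endprop}) where the off-diagonal kernel piece is controlled through the Hadamard-parametrix bound \eqref{k} together with the \emph{ceiling}, giving a lossless bound $\lesssim\la^{1/2}$ on each interval of (rescaled) length $c_0\log\log\la$; summing $O(\log\la/\log\log\la)$ such intervals produces $\la^{1/2}(\log\la/\log\log\la)^{1/2}$. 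On $A_-$ the mechanism is completely different: it has nothing to do with a parametrix for $e^{-it\Delta_g}$ on a small interval, but is an interpolation/H\"older argument \eqref{e.16}--\eqref{e.17} that leverages the previously established $L^{q_c}$ log-power gain of Theorem~\ref{nonposthm} together with the ceiling, and the crucial numerical identity \eqref{22.95}. Your proposal contains neither the height splitting, nor the use of the already-proven $L^{q_c}$ improvement, nor the ceiling-based absorption step; the references to ``Kakeya-type estimates'' and ``square-function tricks'' from \cite{sogge2015improved} do not correspond to anything in the actual endpoint argument. These are not cosmetic omissions: without them you cannot see where the logarithmic gain comes from, and as noted above the naive Keel--Tao covering gives zero gain at $p=2$.

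Finally, a technical obstruction you may be underestimating: at $p=2$ one cannot interpolate the endpoint with a lossless estimate the way Theorems~\ref{nonposthm}--\ref{negthm} exploit the strict inequality $p>2$, and the bilinear machinery used for those theorems degenerates at the endpoint. This is precisely why the paper settles for a $(\log\log)^{-1/2}$ gain rather than a $(\log)^{-1/2}$ gain, and is the motivation for the two-region approach rather than a direct short-time dispersive argument.
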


Our mixed-norm notation differs a bit from some other works when we define
$$\|u\|_{L^p_tL^q_x(M^{d}\times [0,1])}=\bigr(\, \int_0^1 \, \|u(\, \cdot\, , t)\|_{L^q_x(M^{d})}^p \, dt\,\bigr)^{1/p}.$$
We choose to write $M^{d}\times [0,1]$ instead of $[0,1]\times M^{d}$ 
inside the norm in \eqref{00.15}, and ones to follow, since most of the crucial local analysis, as well
as the pseudodifferential cutoffs employed, involve the spatial variables.  We hope that our choice of notation does
not confuse the reader.

A very interesting, but perhaps difficult problem,
would be to show that, like in \eqref{00.14'}, one
could replace the $(\log(\log (2I+P)))^{1/p}$ gain
in \eqref{00.15} with a $(\log (2I+P))^{1/p}$ gain, with $p$ in \eqref{00.15}
being $2$ as opposed to $4$ in \eqref{00.14}.
This would provide a  potentially difficult generalization
of  an important special case of the $(\log\la)^{-1/2}$ eigenfunction gains
$$\|e_\la\|_{L^{\frac{2d}{d-2}}(M^d)}
\lesssim \la^{1/2}\, (\log\la)^{-1/2}\|e_\la\|_{L^2
(M^d)}$$
of Hassell and Tacy \cite{HassellTacy} for
manifolds of nonpositive curvature versus the
universal eigenfunction estimates of 
one of us \cite{sogge88} for $q>\tfrac{2(d+1)}{d-1}$.

As we shall show, for $d\ge3$ dimensional tori, we can
strengthen our endpoint estimates in \eqref{00.15}
by replacing, in this case, $(\log(\log(2+P)))^{-1/2}$
with $P^{\frac2{d+2}-\frac12+\e}$, $\forall \, \e>0$.  
This follows directly from using the $L^{q_c}_{t,x}$ toral estimates
of Bourgain and Demeter~\cite{BoDe} along with Sobolev estimates.
We have no doubt that
stronger estimates should hold; however,
we are not aware of any.  This seems worth
of further investigation.   The decoupling methods
of Bourgain and Demeter~\cite{BoDe} that work so
well for the case $p=q=q_c$ might not apply as well 
for the endpoint case $(p,q)=(2,\tfrac{2d}{d-2})$.
We have to prove our bounds
\eqref{00.15}
for general manifolds of nonpositive curvature
 in a somewhat 
circuitous way (leading to only log-log  power gains)
due to the fact that the related bilinear techniques that we utilize
 break down for this endpoint case.

The estimates in Theorems \ref{nonposthm}
and \ref{negthm} of course improve the universal
estimates of Burq, G\'erard and Tzvetkov~\cite{bgtmanifold} in  the geometry
that we are focusing on here, manifolds of nonpositive
curvature.  On the other hand, they are weaker
than the (near) optimal toral results of
Bourgain and Demeter~\cite{BoDe}, as well
as the non-endpoint Strichartz estimates for the sphere of 
Burq, G\'erard and Tzvetkov~\cite{bgtmanifold}.  The estimates in
\cite{BoDe} were obtained via decoupling using,
in part, that the types of microlocal cutoffs that we
shall employ commute well with  Schr\"odinger
propagators on tori, and, moreover, lend themselves there to 
analysis on  much larger time scales than
we are able to handle on general manifolds of
nonpositive curvature.  The improved estimates for spheres simply follow from specific arithmetic properties of the distinct eigenvalues of the Laplacian on $S^d$.

Even though we cannot obtain estimates that are
as strong as those for the sphere for the 
non-endpoint exponents in \eqref{00.4}, our
endpoint Strichartz estimates in Theorem~\ref{endthm}
are improvements of the ones for the sphere,
where, by \eqref{00.8}, there can be no improvement
of the $H^{1/2}(S^d)$ endpoint estimates of
Burq, G\'erard and Tzvetkov~\cite{bgtmanifold}.

This paper is organized as follows.  In the next section
we present the main arguments that allow us to prove
the above theorems.  The proofs require local
bilinear arguments from harmonic analysis and a
detailed analysis of the kernels that arise in both
the ``local'' and ``global'' arguments.  We carry out these in Sections 3 and 4, respectively.

The local harmonic analysis arguments that we use rely
on bilinear oscillatory integral estimates 
of Lee~\cite{LeeBilinear} and are variable coefficient
analogs of the arguments of Tao, Vargas and Vega~\cite{TaoVargasVega} that were used to study
parabolic restriction problems for the Fourier 
transform, which, of course is 
related to Strichartz estimates for 
Schr\"odinger's equation.  As we shall see, the kernels of the 
local operators oscillate most rapidly 
along curves of the form $s\to (x(\kappa s),
-(s-s_0))\in M^{n-1}\times \R$, where $x(s)\in M^{n-1}$
is a unit-speed geodesic.  We call such space-time
curves ``Schr\"odinger curves" of
varying speeds $\kappa$, which we shall be able to take 
to be comparable to one.
They are integral curves of the Hamilton vector
field $H_P$ associated with the Schr\"odinger operator
$P=D_t+\Delta_g$.  Such curves naturally arise in our analysis,
as well as in related past work (cf. \cite{AnanNon}, 
\cite{bgtmanifold} and \cite{GGH}).  
Perhaps a 
novelty here, though, is that, in order to 
apply the bilinear oscillatory integral estimates
of Lee~\cite{LeeBilinear}, it is very convenient
to work in what we call ``Schr\"odinger coordinates''
about one of these curves.  

These coordinates are
the analog of Fermi normal coordinates that naturally
arise in relativity theory and Riemannian
geometry (see, e.g., \cite{Tubes}, 
\cite{Fermi1} and \cite{Fermi2}).   In relativity theory, Fermi normal coordinates
are chosen so that, for an observer in a free fall (geodesic) path in an arbitrary spacetime,
the geometry will appear to be ``flat" up to higher order terms.
The Schr\"odinger coordinates that we shall employ have a similar property for quantum
``observers'' traveling along what we call Schr\"odinger curves.  The use of these ``Schr\"odinger coordinates" is key 
to be able to adapt the Euclidean harmonic analysis techniques of \cite{LeeBilinear} and \cite{TaoVargasVega}
to our variable coefficient setting.

In order to apply Lee's results we also need detailed
estimates for the kernels of the local operators
that arise.  Motivated by the earlier local quasimode
analysis of the last two authors \cite{HSSchro}, we
are able to construct local operators whose kernels
can essentially be calculated using  techniques
from the first and last authors \cite{SBLog}, while, at the same time, be of use
for studying the ``global operators'' that necessarily
arise in the proofs of the above theorems.  We need to 
compose the ``global'' operators with ``local'' ones
to apply the bilinear harmonic analysis techniques,
and, motivated by the earlier work in by the last two authors in \cite{HSSchro},
they can be constructed so that the difference between
the original global operators and the ones composed
with the local ones has small norm.  Besides the
relatively intricate application of harmonic analysis
techniques that we require, we also need to show, that when we
microlocalize the solution operators for 
Schr\"odinger's equation \eqref{00.2}, our geometric
assumptions imply that there are favorable bounds for
the resulting kernels.  Using the Fourier transform, this amounts to a classical
argument involving the Hadamard parametrix going back
to B\'erard~\cite{Berard}, with microlocal variants
in a more recent work of the first and third authors \cite{BSTop}, as well
as in that \cite{BHSsp} of all 
three of the authors.

The authors are  grateful to W. Minicozzi for
patiently answering numerous questions about
Fermi normal coordinates, as well as  for referring us
to the classical reference Manasse and
Misner~\cite{Fermi2}.

\newsection{Main arguments}

Let us start by proving Theorems \ref{nonposthm} and \ref{negthm} which concern the non-endpoint
Strichartz estimates.    Then at the end of this section we shall give the modifications needed to prove
the endpoint estimates in Theorem~\ref{endthm}.  For the proofs we shall require certain bilinear
estimates and pointwise estimates for kernels that arise in the arguments, which will be addressed
in the next two sections.

To start, let $\beta$ be the Littlewood-Paley bump function in \eqref{00.11}, and also fix
\begin{equation}\label{22.1}
\eta\in C^\infty_0((-1,1)) \quad
\text{with } \, \, \eta(t)=1, \, \, \, |t| \le 1/2.
\end{equation}
We then shall consider the dyadic time-localized dilated Schr\"odinger operators
\begin{equation}\label{22.2} 
S_\la =\eta(t/T) e^{-it\la^{-1}\Delta_g} \beta(P/\la),
\end{equation}
and claim that the estimates in Theorems \ref{nonposthm} and \ref{negthm}
are a consequence of the following.

\begin{proposition}\label{mainprop}  Let $M^d$, $d=n-1\ge2$ be a fixed compact manifold all of whose
sectional curvatures are nonpositive.  Then we can fix $c_0>0$ so that for large $\la \gg 1$ we have the
uniform bounds
\begin{equation}\label{22.3}
\| S_\la f\|_{L^{q_c}(M^{n-1}\times \R)} \le C
\la^{\frac1{q_c}} T^{\frac1{q_c}\cdot \frac2{q_c}} \, \|f\|_{L^2(M^{n-1})}, \quad
\text{if } \, \, T=c_0\log\la.
\end{equation}
Moreover, if all of the sectional curvatures of $M^{n-1}$ are negative
$c_0>0$ can be fixed so that for all $\la \gg 1$ we have
\begin{equation}\label{22.4}
\|S_\la f\|_{L^{q_c}(M^{n-1}\times \R)}\le C\la^{\frac1{q_c}}
T^{\frac{4-q_c}{2q_c}} \, \|f\|_{L^2(M^{n-1})}, \quad
\text{if } \, \, T=c_0\log\la.
\end{equation}
\end{proposition}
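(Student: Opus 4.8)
The plan is to dyadically decompose in time: since we only need the bound on the interval $[0,T]$ with $T=c_0\log\la$, and $\eta(t/T)$ is supported in $|t|<T$, I would write $\eta(t/T)=\sum_{0\le j\lesssim \log T}\eta_j(t)$ where each $\eta_j$ is a smooth bump adapted to a dyadic scale $|t|\approx 2^j$ (with the $j=0$ piece covering $|t|\lesssim 1$). The $j=0$ piece is handled by the Keel--Tao machinery exactly as in \cite{bgtmanifold}: the rescaled operator $e^{-it\la^{-1}\Delta_g}\beta(P/\la)$ on time scale $|t|\lesssim 1$ behaves like a unit-speed half-wave operator and satisfies the fixed-time dispersive estimate, hence $\|\eta_0(t)e^{-it\la^{-1}\Delta_g}\beta(P/\la)f\|_{L^{q_c}(M^{n-1}\times\R)}\lesssim \la^{1/q_c}\|f\|_2$. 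So it remains to bound, uniformly in $1\le j\lesssim\log T$, the dyadic-time pieces $S_\la^{(j)}=\eta_j(t)e^{-it\la^{-1}\Delta_g}\beta(P/\la)$, and then sum; since there are $O(\log\log\la)$ values of $j$, any bound of the form $\|S_\la^{(j)}f\|_{q_c}\lesssim \la^{1/q_c}(2^j)^{\alpha}\|f\|_2$ will, after summing the geometric-type series in $j$, yield the claimed powers of $T=c_0\log\la$ — namely $T^{\frac{1}{q_c}\cdot\frac{2}{q_c}}$ in the nonpositively curved case and $T^{\frac{4-q_c}{2q_c}}$ in the negatively curved case — provided the per-scale exponent $\alpha$ matches $\frac{2}{q_c^2}$, resp.\ $\frac{4-q_c}{2q_c}$, which in the surface case $q_c=4$ is $0$ (so \eqref{00.14} with no loss).

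For each fixed dyadic time scale $2^j=:\tau\le T$, I would use the bilinear reduction of Tao--Vargas--Vega \cite{TaoVargasVega} adapted to variable coefficients: decompose $\beta(P/\la)=\sum_\nu Q_\nu$ into angular pieces of aperture $\sim \tau^{-1/2}$ (the natural scale on which the Schr\"odinger flow over time $\tau$ separates directions), reduce the $L^{q_c}$ estimate for $S_\la^{(j)}$ to bilinear $L^{q_c/2}$ estimates for pairs $S_\la^{(j)}Q_\nu\cdot S_\la^{(j)}Q_{\tilde\nu}$ with $\nu,\tilde\nu$ separated, and invoke Lee's bilinear oscillatory integral theorem \cite{LeeBilinear}. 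To put ourselves in the hypotheses of \cite{LeeBilinear} one works in the ``Schr\"odinger coordinates'' described in the introduction, built around a Schr\"odinger curve $s\mapsto(x(\kappa s),-(s-s_0))$, in which the phase of the local operator takes the required Carleson--Sj\"olin/cinematic-curvature normal form up to acceptable errors. This is where the kernel estimates from Sections 3--4 enter: one must (i) show the ``global'' operator $S_\la^{(j)}$ can be replaced, modulo an operator of acceptably small $L^2\to L^{q_c}$ norm, by a sum of ``local'' operators whose kernels are given by an explicit oscillatory integral amenable to \cite{LeeBilinear} (following \cite{HSSchro}, \cite{SBLog}), and (ii) control the overlap/almost-orthogonality in the reassembly of the $\nu$-pieces. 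In the negatively curved case the stronger dispersive decay of the wave kernel (B\'erard--Hadamard parametrix, cf.\ \cite{Berard}, \cite{BSTop}, \cite{BHSsp}) on scales up to the Ehrenfest time $c_0\log\la$ gives the improved per-scale exponent, at the cost of having to absorb the exponential volume growth via the microlocal pseudodifferential cutoffs alluded to in the introduction.

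The main obstacle, I expect, is step (i) together with the exponential-volume-growth bookkeeping in the negatively curved case: one must construct the local operators so that their kernels are simultaneously (a) explicit enough to feed into Lee's bilinear estimate after passing to Schr\"odinger coordinates, and (b) close enough to the true propagator $\eta_j(t)e^{-it\la^{-1}\Delta_g}\beta(P/\la)$ that the error is negligible in $L^2\to L^{q_c}$, \emph{uniformly} out to time $c_0\log\la$. Controlling that error requires the Hadamard/B\'erard parametrix on the universal cover together with a count of the number of geodesic ``returns'' (of order $e^{C\tau}$), and it is precisely the interplay between this exponential factor and the $\tau^{-(n-1)/2}$-type dispersive gain that forces the Ehrenfest-time threshold and fixes the admissible $c_0$. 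Once the local model is in place, the bilinear estimate and its summation are comparatively routine adaptations of \cite{TaoVargasVega} and \cite{LeeBilinear}; I would defer all of the kernel analysis to the subsequent sections and, in this section, only state the two ingredients (the local-operator approximation lemma and the bilinear estimate) and show how Keel--Tao plus the dyadic-in-time sum deduces \eqref{22.3} and \eqref{22.4}, and hence Theorems \ref{nonposthm} and \ref{negthm}, from them.
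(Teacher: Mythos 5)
Your dyadic-in-time decomposition does not actually reduce the problem. If you sum $\sum_{j:\,2^j\le T}(2^j)^\alpha \approx T^\alpha$, the sum is dominated by the single largest piece $2^j\approx T=c_0\log\la$, and for that piece the ``per-scale'' estimate you posit is \emph{exactly} the inequality \eqref{22.3} (resp. \eqref{22.4}) you set out to prove. So the step ``it remains to bound uniformly in $j$ the dyadic pieces and then sum'' is circular; you have relabeled the target, not reduced it. Relatedly, the angular aperture $\tau^{-1/2}$ adapted to each time scale is not what enters here: the paper works with a fixed aperture $\theta_0=\la^{-1/8}$ for the microlocal cutoffs $A^{\theta_0}_\nu$, together with a Whitney decomposition into scales $\theta=2^k\theta_0$ inside the bilinear argument, and this choice is tied to the constant height $\la^{(n-1)/4+\delta}$ in a level-set (Bourgain-type) decomposition of $M^{n-1}\times[0,T]$ that your proposal does not mention.

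The paper's actual structure, which you will need, is this. One first replaces $S_\la$ by $\tilde S_\la=\tilde\sigma_\la S_\la$ modulo an $O(T^{1/q_c-1/2}\la^{1/q_c})$ error (Lemma~\ref{lemmadiff}), then splits $M^{n-1}\times[0,T]$ into a high region $A_+$ (where $|\tilde S_\la f|\ge\la^{(n-1)/4+\delta}$) and a low region $A_-$. On $A_+$ a $TT^*$ argument with the global kernel bound \eqref{k} already gives the sharp $\la^{1/q_c}$ (the exponential growth factor $\exp(C_M|t-s|)$ is controlled by the small constant $c_0$ and then absorbed using the ceiling defining $A_+$). On $A_-$, the region is chopped into unit-length time intervals $\alpha_m(t)$ (not dyadic), and the crucial point is that after the bilinear step one reaches, uniformly in $m$, the bound $\sum_\nu\|\alpha_m(t)\tilde\sigma_\la A^{\theta_0}_\nu S_\la f\|^2_{L^{q_c}}\lesssim\la^{2/q_c}$ (equation \eqref{22.55}), by almost orthogonality in $\nu$. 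The two powers of $T$ in \eqref{22.3} versus \eqref{22.4} then come purely from how one sums the $O(T)$ unit pieces in $m$: via H\"older into $\ell^{q_c}_m$ (costing $T^{2/q_c}$) in the nonpositive case, or into $\ell^2_m$ (costing $T^{(4-q_c)/2}$) in the negative case, the latter being available because the microlocalized kernel \eqref{22.65neg} decays like $|m-m'|^{-N}$ across different unit intervals. Your proposal does capture correctly that the bilinear/Lee machinery in Schr\"odinger coordinates and the Hadamard--B\'erard kernel analysis on the universal cover are the two technical inputs, and that the Ehrenfest threshold and the choice of $c_0$ come from balancing the $e^{C\tau}$ return count against the $\tau^{-(n-1)/2}$ dispersion; but the height decomposition $A_\pm$, the unit-time decomposition with $\ell^p_m$ bookkeeping, and the $\nu$-orthogonality step are the mechanism that actually produces the stated powers of $T$, and none of these appear in your outline.
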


We claim that \eqref{22.3} and \eqref{22.4} imply Theorems \ref{nonposthm} and
\ref{negthm}, respectively.  For the former, we note that
just by changing scales \eqref{22.1} and \eqref{22.3} imply that for large
enough $\la$ we have the analog of \eqref{00.10'} where the interval
$[0,\log\la\cdot \la^{-1}]$ in the left is replaced by
$[0, \tfrac12c_0\log\la\cdot \la^{-1}]$, and this of course implies \eqref{00.10'}
at the expense of including an additional factor of $(c_0/2)^{-1/q_c}$ in the constant in 
the right if $c_0<2$.  As we indicated before, the estimate \eqref{00.10'} for
large $\la$ and Littlewood-Paley theory yield Theorem~\ref{nonposthm}, which verifies
our claim regarding \eqref{22.3}.  Repeating this argument, we see that \eqref{22.4}
implies that, for large enough $\la$, we have
\begin{equation}\label{22.4'}\tag{2.4$'$}
\| e^{-it\Delta_g}  \beta(P/\la)f\|_{L^{q_c}(M^{n-1}\times [0,\log\la \cdot \la^{-1}])}
\le C \, (\log\la)^{\frac{4-q_c}{2q_c}} \, \|f\|_{L^2(M^{n-1})},
\end{equation}
which yields the first estimate in Theorem~\ref{negthm} as
$$\tfrac1{n+1}=\tfrac1{q_c}-\tfrac{4-q_c}{2q_c}, \quad \text{if } \, \, d=n-1\ge3,$$
as well as \eqref{00.14} and hence \eqref{00.14'} since $q_c=4$ when
$d=n-1=2$.

In order to prove Proposition~\ref{mainprop}, as in earlier works, we shall use bilinear
techniques requiring us to compose the ``global operators'' $S_\la$ with related
local ones.  Motivated  by the recent work of the last two authors \cite{HSSchro},
our ``local'' auxiliary operators will be  the following ``quasimode'' operators adapted to the
scaled Schr\"odinger operators $\la D_t+\Delta_g$, 
\begin{equation}\label{22.5}
\sigma_\la = \sigma\bigl(\la^{1/2}|D_t|^{1/2}-P\bigr) \, \tilde \beta(D_t/\la),
\end{equation}
where
\begin{equation}\label{22.6}
\sigma\in {\mathcal S}(\R) \, \, \text{satisfies } \, \,
\sigma(0)=1 \, \, \text{and } \, \, \text{supp }\Hat \sigma
\subset \delta\cdot [1-\delta_0,1+\delta_0]=[\delta-\delta_0\delta, \delta+\delta_0\delta],
\end{equation}
with $0<\delta,\delta_0<1/8$ to be specified later, and, also here
\begin{equation}\label{22.7}
\tilde \beta\in C^\infty_0((1/8,8)) \quad \text{satisfies } \, \,
\tilde \beta=1 \, \, 
\text{on } \, \, [1/6,6].
\end{equation}
  We shall want $\delta$ in \eqref{22.6}
to be smaller than the injectivity radius of $(M^{n-1},g)$ and $\delta_0$ to be small enough
so that we can verify the hypotheses of the bilinear oscillatory integral estimates that we shall
use in the next section.

To handle the bilinear arguments it will be convenient
to introduce an initial microlocalization.  So,
let us write
\begin{equation}\label{22.8}
I=\sum_{j=1}^N B_j(x,D),
\end{equation}
where each $B_j\in S^0_{1,0}(M^{n-1})$ is a standard
pseudo-differential operator with symbol supported in
a small conic neighborhood of some $(x_j,\xi_j)\in
S^*M$.  The size of the support will be described
later; however, these operators will not depend
on our parameter $\la\gg 1$.  Next, if 
$\tilde \beta$ is as in \eqref{22.7} then the
dyadic operators
\begin{equation}\label{22.9}
B=B_{j,\la} = B_j\circ \tilde \beta(P/\la)
\end{equation}
are uniformly bounded on $L^p$, i.e.,
\begin{equation}\label{22.10}
\|B\|_{L^p(M^{n-1})\to L^p(M^{n-1})}
=O(1) \quad \text{for } \, \, 1\le p\le \infty.
\end{equation}
Also, note that since $\sigma\in {\mathcal S}(\R)$
a simple calculation shows that if $\la_k$ is an
eigenvalue of $P$
$$(1-\tilde \beta(\la_k/\la))
\, \sigma(\la^{1/2}|\tau|^{1/2}-P) \, 
\tilde \beta(\tau/\la)
=O(\la^{-N}(1+\la_k+|\tau|)^{-N}) \, \, \forall \, N.
$$
Consequently,
$$\|\sigma_\la -\tilde \beta(P/\la)\circ \sigma_\la
\|_{L^2(M^{n-1}\times [0,T])\to L^{q_c}(M^{n-1}\times [0,T])}
=O(\la^{-N}) \quad \forall \, N.$$

Thus, if $B_j$ is as in \eqref{22.8} and 
$B_{j,\la}$ is the corresponding dyadic operator
in \eqref{22.9}
\begin{equation}\label{22.11}
\|B_j\sigma_\la -B_{j,\la}\sigma_\la\|_{L^2(M^{n-1}\times [0,T])\to L^{q_c}(M^{n-1}\times [0,T])}
=O(\la^{-N}) \quad \forall \, N,
\end{equation}
since operators in $S^0_{1,0}(M^{n-1})$ are bounded on $L^p$ for $1<p<\infty$.

We need one more result for now about these local
operators:

\begin{lemma}\label{lemmadiff}
If $S_\la$ as in \eqref{22.2} and $\sigma_\la$ is as in \eqref{22.5} then
\begin{equation}\label{22.12}
\|(I-\sigma_\la)\circ S_\la f\|_{L^{q_c}(M^{n-1}\times [0,T])}\le CT^{\frac1{q_c}-\frac12}\la^{\frac1{q_c}}\|f\|_2.
\end{equation}
\end{lemma}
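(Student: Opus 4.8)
The plan is to reduce the estimate for $(I-\sigma_\la)\circ S_\la$ to a genuinely short-time estimate on intervals of length $\approx\la^{-1}$ and then sum over $\approx T\la$ such intervals. The point of subtracting $\sigma_\la$ is that $I-\sigma_\la$ kills the frequencies where the scaled Schr\"odinger symbol $\la^{1/2}|\tau|^{1/2}-P$ is comparable to zero, i.e. the frequencies carried by $S_\la f$ on the relevant dyadic block, up to an acceptable error. More precisely, first I would observe that $S_\la$ in \eqref{22.2} localizes the spatial frequency $P$ to the block $|P|\approx\la$ via $\beta(P/\la)$, and it localizes the time frequency $D_t$: on the solution $e^{-it\la^{-1}\Delta_g}\beta(P/\la)f$ one has $D_t\approx \la^{-1}P^2\approx\la$, so $\tilde\beta(D_t/\la)$ acts as the identity on $\eta(t/T)e^{-it\la^{-1}\Delta_g}\beta(P/\la)f$ up to an $O(\la^{-N})$ error (the cutoff $\eta(t/T)$ only smears $D_t$ by $O(1/T)$, which is harmless since $\tilde\beta=1$ on a neighborhood of the support). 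Hence, modulo negligible errors, $\sigma_\la S_\la$ only sees the region where $|\la^{1/2}|\tau|^{1/2}-P|\lesssim 1$, and $(I-\sigma(\la^{1/2}|D_t|^{1/2}-P))$ restricted to that region is, by the Schwartz decay of $\sigma$ together with $\sigma(0)=1$, essentially a multiplier supported where $|\la^{1/2}|\tau|^{1/2}-P|\gtrsim 1$ (one uses that $\Hat\sigma$ is compactly supported away from $0$, as in the computation already displayed before \eqref{22.11}).

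The main step is then the short-time estimate: I claim
\[
\|(I-\sigma_\la)\circ e^{-it\la^{-1}\Delta_g}\beta(P/\la)f\|_{L^{q_c}(M^{n-1}\times J)}\le C\,\la^{\frac1{q_c}}\,\|f\|_2
\]
for any interval $J$ of length $\la^{-1}$. This is exactly the type of dyadic bound \eqref{00.3''} of Burq--G\'erard--Tzvetkov rescaled (recall $e^{-it\la^{-1}\Delta_g}$ on $[0,T]$ corresponds to $e^{-it\Delta_g}$ on $[0,T\la^{-1}]$, so an interval of length $\la^{-1}$ in the scaled time corresponds to length $\la^{-2}$ in the original time, well inside the BGT window), and the extra factor $(I-\sigma_\la)$ is an $L^{q_c}\to L^{q_c}$ bounded Fourier multiplier uniformly in $\la$ — it is a pseudodifferential-type operator of order $0$ built from $\sigma\in\mathcal S(\R)$ and $\tilde\beta(D_t/\la)$, so it does not disturb the estimate. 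To be safe I would instead run the standard parametrix/Keel--Tao argument for \eqref{00.3''} directly with the $(I-\sigma_\la)$ cutoff inserted; since that cutoff is benign this changes nothing.

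Given the short-time bound, summing over the $\lesssim T\la$ disjoint intervals $J$ covering $[0,T]$ and using the $\ell^{q_c}$ triangle inequality in the time variable gives
\[
\|(I-\sigma_\la)\circ S_\la f\|_{L^{q_c}(M^{n-1}\times[0,T])}^{q_c}\le \sum_{J}\|(I-\sigma_\la)\circ S_\la f\|_{L^{q_c}(M^{n-1}\times J)}^{q_c}\lesssim (T\la)\cdot\la^{\frac{q_c}{q_c}}\cdot\|f\|_2^{q_c},
\]
so $\|(I-\sigma_\la)\circ S_\la f\|_{L^{q_c}}\lesssim (T\la)^{1/q_c}\la^{1/q_c}\|f\|_2 = T^{1/q_c}\la^{2/q_c}\|f\|_2$. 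This is the wrong power of $\la$ and $T$, which signals that summing naively is lossy and that the role of $(I-\sigma_\la)$ must actually be exploited, not just tolerated.

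The hard part, and the step I expect to be the real obstacle, is gaining the extra factor $T^{-1/2}\la^{1/q_c-2/q_c}=T^{-1/2}\la^{-1/q_c}$ over this trivial sum. The mechanism should be an almost-orthogonality (square-function) gain: because $(I-\sigma_\la)$ removes the "on-shell" frequencies, the pieces $(I-\sigma_\la)S_\la f$ restricted to different time intervals $J$ are no longer just bounded in $L^{q_c}$ but are controlled in $L^2_{t,x}$ with a genuine decay, so one should first prove an $L^2_{t,x}$ bound $\|(I-\sigma_\la)S_\la f\|_{L^2(M^{n-1}\times[0,T])}\lesssim \la^{-1/2}\|f\|_2$ exploiting that on the support of $1-\sigma$ the phase $t(\la^{-1}\tau - \la^{-1}|P_k|^2)$ — equivalently $t\,\la^{-1}(\la^{1/2}|\tau|^{1/2}-P)(\la^{1/2}|\tau|^{1/2}+P)\approx t(\la^{1/2}|\tau|^{1/2}-P)$ — is non-stationary, giving an extra $\la^{-1/2}$ from a $TT^*$ / van der Corput argument in $t$, and then interpolate this $L^2$ gain against the trivial $L^\infty$-type or the summed $L^{q_c}$ bound to land exactly on the exponent $T^{\frac1{q_c}-\frac12}\la^{\frac1{q_c}}$ claimed in \eqref{22.12}. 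Making the spectral cutoffs genuinely commute (handling the $\eta(t/T)$ smearing, the fact that $|D_t|^{1/2}$ is nonlocal, and the $O(\la^{-N})$ remainders uniformly over $T=c_0\log\la$) is the bookkeeping that has to be done carefully, but the conceptual content is the $L^2_{t,x}$ off-shell decay followed by interpolation.
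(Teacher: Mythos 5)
Your opening reduction (noting that $\tilde\beta(D_t/\la)$ is essentially the identity on $S_\la f$, and that $\sigma(0)=1$ is the source of the gain) is correct and matches the paper's first step. But the core mechanism you propose has a genuine gap, and your $L^2$ bound is also off.

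First, the $L^2_{t,x}$ gain. You claim $\|(I-\sigma_\la)S_\la f\|_{L^2_{t,x}}\lesssim\la^{-1/2}\|f\|_2$ from a van der Corput argument in $t$, but the actual gain is $T^{-1/2}\|f\|_2$, and its source is not oscillation but concentration. By Plancherel in $t$, the $k$-th eigenmode contributes $T\Hat\eta(T(\tau-\la^{-1}\la_k^2))$, concentrated in a window of width $\approx T^{-1}$ about the paraboloid; since $\la^{1/2}\tau^{1/2}-\la_k\approx\tfrac12(\tau-\la^{-1}\la_k^2)$ on the support of $\beta(\la_k/\la)\tilde\beta(\tau/\la)$, and $|1-\sigma(s)|\lesssim|s|$ (this is where $\sigma(0)=1$ is used), one computes $\int|1-\sigma|^2|T\Hat\eta(T\cdot)|^2\,d\tau\approx T^{-1}$, hence the $T^{-1/2}$ gain. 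Getting $T$ (and not $\la$) in the exponent is essential, since the target \eqref{22.12} is $T^{1/q_c-1/2}\la^{1/q_c}$ with $T\approx\log\la$, a log-scale factor.

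Second, and more seriously, the interpolation step cannot close the argument. Interpolating your $L^2_{t,x}$ bound against the trivial $L^\infty_{t,x}$ bound $\lesssim\la^{(n-1)/2}$ gives, since $q_c=\tfrac{2(n+1)}{n-1}$ so that $\tfrac{n-1}{2}(1-\tfrac2{q_c})=\tfrac2{q_c}$, the estimate $\lesssim T^{-1/q_c}\la^{2/q_c}$ in $L^{q_c}$, which is worse than the target $T^{1/q_c-1/2}\la^{1/q_c}$ by a full power $\la^{1/q_c}$. This loss is structural: interpolation throws away the fact that $(I-\sigma_\la)S_\la f$ is an approximate solution of the scaled Schr\"odinger equation and replaces the sharp $L^2\to L^{q_c}$ Strichartz mapping (gain $\la^{1/q_c}$ on unit time intervals) by the crude Bernstein-type $L^2\to L^\infty$ bound (gain $\la^{(n-1)/2}$). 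Interpolating against "the summed $L^{q_c}$ bound" is not a valid operation at all, since both estimates live in the same target space.

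The mechanism the paper actually uses, and which your intuition gestures at but does not reach, is a Duhamel argument. Set $w=\alpha_m(t)\bigl(I-\sigma(\la^{1/2}|D_t|^{1/2}-P)\tilde\beta(D_t/\la)\bigr)S_\la f$ and $F=(i\la\partial_t-\Delta_g)w$; since $w$ and $F$ are supported in $t\in[m-1,m+1]$ and $F$ is dyadically localized in $P$ by \eqref{F'}, Duhamel's formula for the scaled Schr\"odinger operator plus Minkowski plus the BGT estimate \eqref{00.3''} give $\|w\|_{L^{q_c}_{t,x}}\lesssim\la^{1/q_c-1}\|F\|_{L^2_{t,x}}$. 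One then proves $\|F\|_{L^2_{t,x}}\lesssim\la\,T^{-1/2}$ by splitting $F$ into the $\alpha_m'$-commutator term and the $(i\la\partial_t+P^2)$-term and running the same Plancherel/$\sigma(0)=1$/$T^{-1}$-scale computation as above. Each $m$ contributes $T^{-1/2}\la^{1/q_c}$, and summing the $O(T)$ nonzero $m$'s in $\ell^{q_c}_m$ gives $T^{1/q_c}\cdot T^{-1/2}\la^{1/q_c}$, which is \eqref{22.12}. The moral is that converting an $L^2$ gain on the forcing term into an $L^{q_c}$ gain on the solution requires passing through the Strichartz estimate for the free propagator via Duhamel; no $L^2$--$L^\infty$ interpolation can do this without losing a polynomial power of $\la$.
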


For a given $B=B_{j,\la}$ as in \eqref{22.9} let
us define the microlocalized variant of 
$\sigma_\la$ as follows
\begin{equation}\label{22.13}
\tilde \sigma_\la = B\circ \sigma_\la, 
\quad B=B_{j,\la},
\end{equation}
and the associated ``semi-global'' operators
\begin{equation}\label{22.14}
\tilde S_\la =\tilde \sigma_\la \circ S_\la.
\end{equation}

By \eqref{22.8}, \eqref{22.11} and \eqref{22.12},
in order to prove Proposition~\ref{mainprop}, it suffices
to show that if $T=c_0\log\la$ with 
$c_0>0$ sufficiently small (depending on $M^{n-1}$), then, if all the sectional curvatures of
$M^{n-1}$ are nonpositive,
\begin{equation}\label{22.3'}\tag{2.3$'$}
\|\tilde S_\la f\|_{L^{q_c}(M^{n-1}\times \R)} \le C
\la^{\frac1{q_c}} T^{\frac1{q_c}\cdot \frac2{q_c}} \, \|f\|_{L^2(M^{n-1})},
\end{equation}
and, if all of the sectional curvatures of $M^{n-1}$ are negative,
\begin{equation}\label{22.4''}\tag{2.4$''$}
\|\tilde S_\la f\|_{L^{q_c}(M^{n-1}\times \R)}\le C\la^{\frac1{q_c}}
T^{\frac{4-q_c}{2q_c}} \, \|f\|_{L^2(M^{n-1})}.
\end{equation}

As we shall see, in order to prove \eqref{22.3'} and \eqref{22.4''} we shall need to take $\delta$ and $\delta_0$ in
\eqref{22.6} and \eqref{22.7} to be sufficiently small for each $j$; however, since, by the compactness of $M^{n-1}$
and the arguments to follow, the sum in \eqref{22.8} can be taken to be finite,
 we can take these two parameters to be the minimum over what is needed for $j=1,\dots,N$.

\begin{proof}[Proof of Lemma~\ref{lemmadiff}]

We shall follow the strategy in \cite{HSSchro}.  In proving \eqref{22.12} we may assume,
as we shall throughout, that
\begin{equation}\label{normalize}\|f\|_2=1.
\end{equation}
Also, we  notice that, if $E_kf$ denotes the projection of $f$ onto the eigenspace
of $P=\sqrt{-\Delta_g}$ with eigenvalue $\la_k$, we have
\begin{align*}
S_\la f(x,t)&=\sum_k  \eta(t/T) e^{-it\la^{-1}\la_k^2} \beta(\la_k/\la)E_kf(x)
\\
&=(2\pi)^{-1}\sum_k \int_{-\infty}^\infty e^{it\tau} T\Hat \eta(T(\tau-\la^{-1}\la_k^2)) \, \beta(\la_k/\la) E_kf(x)\, d\tau.
\end{align*}
Since, by \eqref{00.11}, $\beta(s)=0$ if $s\notin [1/2,2]$, $\Hat \eta\in {\mathcal S}(\R)$ and
$\tilde \beta(s)=1$ for $s\in [1/6,6]$, it is not difficult to check that
$$(1-\tilde \beta(\tau/\la)) \, T\Hat \eta(T(\tau-\la^{-1}\la_k^2)) \, \beta(\la_k/\la)
=O(\la^{-N}(1+|\tau|)^{-N}) \, \, \forall \, N,
$$
and so trivially
$$\|(I-\tilde \beta(D_t/\la))S_\la f\|_{L^{q_c}(M^{n-1}\times [0,T])}=O(\la^{-N}) \, \, \forall N.$$
Consequently, in order to prove \eqref{22.12}, it suffices to show that
\begin{equation}\label{diff'}
\bigl\| \, (I-\sigma(\la^{1/2}|D_t|^{1/2}-P))\circ \tilde \beta(D_t/\la) S_\la f \, \bigr\|_{L^{q_c}(M^{n-1}\times [0,T])}
\le CT^{\frac1{q_c}-\frac12}\la^{\frac1{q_c}}.
\end{equation}

To prove this  let 
\begin{equation}\label{22.16}
\alpha\in C^\infty_0((-1,1)) \quad \text{satisfy } \, \, 1\equiv \sum_{m=-\infty}^\infty \alpha_m(t),
\end{equation}
if
\begin{equation}\label{22.17}
\alpha_m(t)=\alpha(t-m).
\end{equation}
Then, in order to prove \eqref{diff'},
it suffices to see that
\begin{multline}\label{suffice}
\bigl\|\alpha_m(t)\bigl(I-\sigma(\la^{1/2}|D_t|^{1/2}-P)) \,  \tilde \beta(D_t/\la) \bigr) \circ
\bigl( \eta(t/T)\bigr)e^{-it\la^{-1}\Delta_g} \beta(P/\la)f\bigr) \bigr\|_{L^{q_c}_{t,x}}
\\
\lesssim T^{-1/2} \la^{1/q_c}.
\end{multline}
Call $w$ the function in the norm in the left, i.e.,
\begin{equation}\label{wfor}
w= \alpha_m(t)\bigl(I-\sigma(\la^{1/2}|D_t|^{1/2}-P)) \tilde \beta(D_t/\la) \bigr) \circ
\bigl( \eta(t/T)e^{-it\la^{-1}\Delta_g} \beta(P/\la)f\bigr).
\end{equation}
It is supported in $[m-1,m+1]$, and so is 
\begin{equation}\label{F}
F
=(i\la\partial_t-\Delta_g)w.
\end{equation}

For later use, note that, by \eqref{00.11} and  \eqref{22.7} 
\begin{equation}\label{F'}
(I-\tilde \beta(P/\la))F=0.
\end{equation}
Also, by the Duhamel formula  for the scaled Schr\"odinger equation and the above support properties
$$w(x,t)= (i\la)^{-1}\int_{m-1}^t\bigl(e^{-i\la^{-1}(t-s)\Delta_g}F(s,\, \cdot\, )\bigr)(x) \, ds.$$
So, by Minkowski's inequality, for each fixed $t$,
\begin{align*}
\|w(\, \cdot \, ,t)\|_{L^{q_c}_x(M^{n-1})}&\le \la^{-1}\int_{m-1}^t \bigl\| e^{-i\la^{-1}t\Delta_g}\bigl(e^{i\la^{-1}s\Delta_g}F(s, \, \cdot \, )\bigl) \, \big\|_{L^{q_c}(M^{n-1})}\, ds
\\
&\le \la^{-1}\int_{-1}^{1} \bigl\| e^{-i\la^{-1}t\Delta_g}\bigl(e^{i\la^{-1}(s+m)\Delta_g}F(s+m, \, \cdot \, )\bigl) \, \big\|_{L^{q_c}(M^{n-1})}\, ds,
\end{align*}
since $F(s,\cdot)=0$ if $s\notin [m-1,m+1]$.
Thus, by Minkowski's inequality, we have
$$\|w\|_{L^{q_c}_{x,t}}\le  \la^{-1}\int_{-1}^{1} \bigl\| e^{-i\la^{-1}t\Delta_g}\bigl(e^{i\la^{-1}(s+m)\Delta_g}F(s+m, \, \cdot \, )\bigl) \, \big\|_{L^{q_c}_{t,x}} \, ds.$$

Furthermore, by \eqref{F'} we can use the local dyadic estimates \eqref{00.3''} of Burq, G\'erard and Tzvetkov along with Schwarz's inequality to obtain
\begin{align*}
\|w\|_{L^{q_c}_{t,x}}&\lesssim \la^{\frac1{q_c}-1}  \int _{-1}^1 \bigl\| e^{i\la^{-1}(s+m)\Delta_g }F(s+m, \, \cdot \, )\bigr\|_{L^2_x} \, ds
\\
& \le \la^{\frac1{q_c}-1} \int_{-1}^1\|F(s+m,\, \cdot \, )\|_{L^2_x} \, ds\notag
\\
&\lesssim \la^{\frac1{q_c}-1}\|F\|_{L^2_{t,x}}.  \notag
\end{align*}
If we put
$$I=\la \, \bigl\|
\alpha'_m(t) \bigl(I-\sigma(\la^{1/2}|D_t|^{1/2}-P)) \tilde \beta(D_t/\la) \circ (\eta(t/T) e^{-it\la^{-1}\Delta_g}\beta(P/\la)f) \, \bigr\|_{L^2_{t,x}},
$$
and
$$II=
\|\alpha_m(t) (i\la\partial_t+P^2)\bigl(I-\sigma(\la^{1/2}|D_t|^{1/2}-P)) \tilde \beta(D_t/\la) \circ (\eta(t/T) e^{-it\la^{-1}\Delta_g}\beta(P/\la)f) \, \bigr\|_{L^2_{t,x}},$$
we conclude that from \eqref{F} and \eqref{F'} that
\begin{equation}\label{999}
\|w\|_{L^{q_c}_{t,x}}\lesssim \la^{\frac1{q_c}-1} \bigl(I + II\bigr).
\end{equation}

To handle $I$ we note that the function in the norm can be written as
$$(2\pi)^{-1}\alpha'_m(t) 
\sum_k \int e^{it\tau}  (1-\sigma(\la^{1/2}\tau^{1/2}-\la_k)) \, \tilde \beta(\tau/\la)  \, 
 T\Hat \eta(T(\tau-\la^{-1}\la_k^2)) \beta(\la_k/\la) E_kf\, d\tau,
$$
Thus, by orthogonality and the support properties of $\tilde\beta$ in \eqref{22.7}, since we are assuming that $\|f\|_2=1$, we have
$$I\le \la T \sup_{\la_k\approx \la} \Bigl( \int_{\la/8}^{8\la}
|1-\sigma(\la^{1/2}\tau^{1/2}-\la_k))|^2 \, 
|\Hat \eta(T \, (\tau-\la^{-1}\la_k^2)|^2 \, d\tau \Bigr)^{1/2},$$
If we change variables $s=\la^{1/2}\tau^{1/2}$ then $ds\approx d\tau$ in the support of the integrand, and so  by the above
\begin{multline*}
I\lesssim \la T \sup_{\la_k\approx \la}
\Bigl(\int_0^\infty |1-\sigma(s-\la_k)|^2 \,  |\Hat \eta(T\la^{-1}(s+\la_k)(s-\la_k))|^2\, ds\Bigr)^{1/2}
\\
=
 \la T \sup_{\la_k\approx \la} \Bigl(\int_0^\infty |1-\sigma(s)|^2\,  |\Hat \eta(T\la^{-1}(s+2\la_k)\cdot s)|^2\, ds\Bigr)^{1/2}
\\
\lesssim\la  T\Bigl(\int_0^\infty s^2(1+|Ts|)^{-N} \, ds \Bigr)^{1/2} \approx \la T^{-1/2},
\end{multline*}
using, in the last step, our assumption in \eqref{22.6} that $\sigma(0)=0$.

If we repeat the arguments we find that 
\begin{multline*}
II \lesssim T \sup_{\la_k\approx \la}\Bigl(\int_{\la/8}^{8\la} 
 |(1-\sigma(\la^{1/2}\tau^{1/2}-\la_k))|^2 \, |-\la\tau +\la^2_k|^2\, |\Hat \eta(T(\tau-\la^{-1}\la_k^2))|^2 \, d\tau)^{1/2}
\\
\le \la \sup_{\la_k\approx \la} \Bigl(\int_0^\infty  
  \bigl| T(\tau -\la^{-1}\la^2_k) \cdot \Hat \eta(T(\tau-\la^{-1}\la_k^2))
\bigr|^2 \, d\tau\Bigr)^{1/2}
\\
\lesssim \la \Bigl(\int_{-\infty}^\infty (1+T|\tau |)^{-N}\, d\tau\Bigr)^{1/2} =O(\la T^{-1/2}).\end{multline*}

If we combine these two estimates and use \eqref{999} we conclude that
$$\|w\|_{L^{q_c}_{t,x}}\lesssim  T^{-1/2}\la^{\frac1{q_c}},
$$
as posited in \eqref{suffice}, which finishes the proof.
\end{proof}

For later use, let us also see that this argument yields the following  result, which we shall need when
we use local variable coefficient bilinear harmonic analysis techniques.

\begin{lemma}\label{qlemma}  If $\alpha_m$
is as in \eqref{22.17} then for
$m\in {\mathbb Z}$ we have
\begin{equation}\label{qu1}
\bigl\| \alpha_m(t) \sigma_\la H\bigr\|_{L^{q_c}_{t,x}}
\le C\la^{\frac1{q_c}}\|H\|_{L^2(M^{n-1}\times
[m-10,m+10])} +C_N \la^{-N}\|H\|_{L^2(M^{n-1}\times 
\R)},
\end{equation}
for every $N=1,2,\dots$.
\end{lemma}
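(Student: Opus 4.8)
The plan is to follow the same Duhamel-based scheme used to prove Lemma~\ref{lemmadiff}, but now starting from an arbitrary $H\in L^2(M^{n-1}\times\R)$ rather than from a specific Schr\"odinger solution. First I would decompose $\sigma_\la H$ using the $\alpha_m$ partition of unity in \eqref{22.16}--\eqref{22.17} in the time variable: writing $H=\sum_{m'}\alpha_{m'}(t)H$, the key point is that the kernel of $\sigma_\la$ in the time variable is (essentially) supported where $|t-s|\lesssim \delta<1$, because $\operatorname{supp}\Hat\sigma\subset[\delta-\delta_0\delta,\delta+\delta_0\delta]$ and $\tilde\beta(D_t/\la)$ contributes only a rapidly decaying tail at distances $\gg \la^{-1}$. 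Hence $\alpha_m(t)\sigma_\la(\alpha_{m'}(t')H)$ is negligible (i.e.\ $O(\la^{-N})$ in the relevant operator norm) unless $|m-m'|\le$ a fixed constant, say $10$; this is where the localization of $H$ to $M^{n-1}\times[m-10,m+10]$ on the right-hand side comes from, and the far-away contributions are collected into the $C_N\la^{-N}\|H\|_{L^2(M^{n-1}\times\R)}$ error term. So it suffices to bound $\|\alpha_m(t)\sigma_\la \tilde H\|_{L^{q_c}_{t,x}}$ by $C\la^{1/q_c}\|\tilde H\|_{L^2}$ when $\tilde H$ is supported in $M^{n-1}\times[m-10,m+10]$, up to $O(\la^{-N})$ errors.

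For that reduced estimate I would argue as in the proof of Lemma~\ref{lemmadiff}. After inserting $\tilde\beta(D_t/\la)$ (which we may do freely modulo $O(\la^{-N})$ since $\sigma_\la$ already carries $\tilde\beta(D_t/\la)$, and the input frequencies $\tau$ outside $[\la/8,8\la]$ only produce rapidly decaying contributions), I would set $w=\alpha_m(t)\sigma_\la\tilde H$ and compute $F=(i\la\partial_t-\Delta_g)w$. Both $w$ and $F$ are supported in $[m-11,m+11]$ in $t$, and, crucially, $(I-\tilde\beta(P/\la))F=0$ because $\sigma_\la$ contains the spatial frequency localization $\tilde\beta$ (via \eqref{22.5} and the computation preceding \eqref{22.11}). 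Duhamel's formula for the scaled Schr\"odinger equation then gives $w$ as a time-integral of $e^{-i\la^{-1}(t-s)\Delta_g}F(s,\cdot)$ over an interval of length $O(1)$, so by Minkowski's inequality and the \emph{universal} dyadic Strichartz bound \eqref{00.3''} of Burq--G\'erard--Tzvetkov (applicable because $F$ has frequencies $\approx\la$), together with Schwarz's inequality in $s$ over this bounded interval, $\|w\|_{L^{q_c}_{t,x}}\lesssim \la^{1/q_c-1}\|F\|_{L^2_{t,x}}$. It then remains to see $\|F\|_{L^2_{t,x}}\lesssim \la\|\tilde H\|_{L^2_{t,x}}$: expanding $F$ via the spectral decomposition exactly as in the $I$ and $II$ estimates of Lemma~\ref{lemmadiff}, one picks up the commutator of $\alpha_m(t)$ with $\sigma_\la$ (giving an $\alpha_m'(t)$ term, harmless since $\sigma_\la$ is $L^2$-bounded) plus the contribution of $(i\la\partial_t-\Delta_g)$ hitting $\sigma_\la \tilde H$ directly; on the frequency side the latter is the multiplier $(-\la\tau+\la_k^2)\,\sigma(\la^{1/2}\tau^{1/2}-\la_k)\,\tilde\beta(\tau/\la)$, which is $O(\la)$ uniformly because $\sigma\in\mathcal S(\R)$ forces $\la^{1/2}\tau^{1/2}-\la_k=O(1)$ on its effective support, hence $|-\la\tau+\la_k^2|=|(\la^{1/2}\tau^{1/2}-\la_k)(\la^{1/2}\tau^{1/2}+\la_k)|=O(\la)$ there. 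Orthogonality in $k$ and Plancherel in $\tau$ then yield $\|F\|_{L^2_{t,x}}\lesssim\la\|\tilde H\|_{L^2_{t,x}}$, and combining gives $\|w\|_{L^{q_c}_{t,x}}\lesssim\la^{1/q_c}\|\tilde H\|_{L^2_{t,x}}$, as desired.

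I expect the main obstacle to be the bookkeeping for the localization step: one must verify quantitatively that $\alpha_m(t)\sigma_\la$ applied to data time-supported far from $[m-10,m+10]$ is $O(\la^{-N})$ as an operator into $L^{q_c}_{t,x}$, which requires combining the compact Fourier support of $\Hat\sigma$ with a non-stationary-phase / integration-by-parts argument in $t$, while also controlling the $\tilde\beta(D_t/\la)$ factor's own (rapidly decaying but not compactly supported in time) kernel. This is routine but needs care, since both factors must be handled simultaneously and the $L^{q_c}$ output norm over all of $\R$ must be reinserted. Everything else parallels the already-proved Lemma~\ref{lemmadiff} line by line, with $\eta(t/T)e^{-it\la^{-1}\Delta_g}\beta(P/\la)f$ simply replaced by a general $L^2$ function $\tilde H$ time-supported in a bounded interval.
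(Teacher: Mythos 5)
Your proposal is correct and follows essentially the same route as the paper: reduce to $H$ time-supported near $[m-10,m+10]$ by exploiting the rapid decay of the $\sigma_\la$ kernel in $|t-s|$, set $w=\alpha_m(t)\sigma_\la H$ and $F=(i\la\partial_t-\Delta_g)w$, invoke Duhamel together with the universal local bound \eqref{00.3''} to get $\|w\|_{L^{q_c}_{t,x}}\lesssim\la^{1/q_c-1}\|F\|_{L^2_{t,x}}$, and then bound $\|F\|_{L^2_{t,x}}\lesssim\la\|H\|_{L^2_{t,x}}$ via the uniform multiplier estimates $\sigma(\la^{1/2}\tau^{1/2}-\mu)\tilde\beta(\tau/\la)=O(1)$ and $-(\la\tau-\mu^2)\sigma(\la^{1/2}\tau^{1/2}-\mu)\tilde\beta(\tau/\la)=O(\la)$. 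One small precision worth noting: the $\tilde\beta$ appearing in \eqref{22.5} is a \emph{time}-frequency cutoff $\tilde\beta(D_t/\la)$, so $(I-\tilde\beta(P/\la))F=0$ does not hold exactly but only up to $O(\la^{-N})$ errors (via the computation preceding \eqref{22.11}), which is precisely what the paper relies on as well.
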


\begin{proof}
If $\{e_k\}$ is an orthonormal basis of eigenfunctions
of $P$ on $M^{n-1}$ with eigenvalues $\{\la_k\}$ then
the kernel $\sigma_\la(x,t;y,s)$ of $\sigma_\la$ is
\begin{multline*}
(2\pi)^{-1}\sum_k \int e^{i(t-s)\tau} \sigma(\la^{1/2}\tau^{1/2}-\la_k)\, \tilde \beta(\tau/\la)\,  e_k(x)\overline{e_k(y)} \, d\tau
\\
=(2\pi)^{-2} \iint
e^{i(t-s)\tau}e^{i\la^{1/2}\tau^{1/2}r} \,
\tilde \beta(\tau/\la) \, \Hat \sigma(r)
\sum_k e^{-ir\la_k}e_k(x)\overline{e_k(y)} \, dr d\tau.
\end{multline*}
Recall that, by \eqref{22.6}, $\Hat \sigma(r)=0$
if $r\notin [0,1]$.  Therefore, by \eqref{22.7} and a
simple integration by parts argument we have that
\begin{multline*}\iint  
e^{i(t-s)\tau}e^{i\la^{1/2}\tau^{1/2}r} \,
\tilde \beta(\tau/\la) \, \Hat \sigma(r)
 e^{-ir\la_k} \, dr d\tau
=O\bigl((\la+\la_k+|t-s|)^{-N}\bigr), 
\\
\text{if } \, |t-s|\ge 5
.
\end{multline*}
If $\la_k\le 100\la$ one obtains these bounds just
by integrating by parts in $\tau$, while if $\la_k>100\la$ one integrates by parts in both
variables $r$ and $\tau$ to obtain this bound.
Since, by the pointwise Weyl formula (see e.g.
\cite{SFIO2}),
$$
\sum_k (1+\la_k)^{-n}|e_k(x)e_k(y)|=O(1),$$
we conclude that
$$\sigma_\la(x,t;y,s)=O((|t-s|+\la)^{-N}) \, \forall \, N,
\quad \text{if } \, \, |t-s|\ge5.$$

Thus, if $H(t,x)=0$ for $t\in [m-10,m+10]$, then the
left side of \eqref{qu1} is dominated by the second
term in the right.  Consequently, to prove 
\eqref{qu1} we may assume that
$$H(t,x)=0 \quad \text{if } \, \, t\notin
[m-10,m+10].$$
If we then let
$$w(x,t)=\alpha_m(t) \bigl(\sigma_\la H\bigr)(x,t), 
\quad \text{and } \, F=(i\la\partial_t-\Delta_g)w$$
and argue as in the proof of Lemma~\ref{lemmadiff},
it suffices to show that
$$\|F\|_{L^2_{t,x}}\lesssim \la \|H\|_{L^2_{t,x}},$$
which would follow from
\begin{equation}\label{qu2} 
\| \, \sigma(\la^{1/2}|D_t|^{1/2}-P) \, 
\tilde \beta(D_t/\la) \, H\, \|_{L^2_{t,x}}
\lesssim \|H\|_{L^2_{t,x}},
\end{equation}
and
\begin{equation}\label{qu3}
\| \,  (i\la\partial_t -\Delta_g) \,[\sigma(\la^{1/2}|D_t|^{1/2}-P) \, 
\tilde \beta(D_t/\la) \, H]\, \|_{L^2_{t,x}}
\lesssim \la \|H\|_{L^2_{t,x}}.
\end{equation}

By orthogonality and the arguments in the proof of Lemma~\ref{lemmadiff}, \eqref{qu2} just follows from the
fact that
$$\sigma(\la^{1/2}\tau^{1/2}-\mu) \, \tilde \beta(\tau/\la)
=O(1),$$
and, \eqref{qu3} is a consequence of the bound
\begin{multline*}
-(\la\tau-\mu^2) \, \sigma(\la^{1/2}\tau^{1/2}-\mu)
\, \tilde \beta(\tau/\la)
\\
=
-(\la^{1/2}\tau^{1/2}+\mu)  \, \tilde \beta(\tau/\la)\, 
\cdot \bigl[
\, (\la^{1/2}\tau^{1/2}-\mu) \, \sigma(\la^{1/2}\tau^{1/2}-\mu) \, \bigr]
=O(\la),
\end{multline*}
which follows from \eqref{22.7} and the fact
that $\sigma\in {\mathcal S}(\R)$.
\end{proof}

\noindent{\bf 2.1. Height Decomposition.}

Next we set up a variation of an argument of
Bourgain~\cite{BourgainBesicovitch} originally used
to study Fourier transform restriction problems, and, more
recently, to study eigenfunction problems in
\cite{BHSsp}, \cite{SBLog} and \cite{sogge2015improved}.
This involves splitting the estimates in 
Proposition~\ref{mainprop} into two heights
involving relatively large and small values of
$|\tilde S_\la f(x,t)|$.  

To describe this, here, and in what follows we shall
assume, as we just did, that $f$ is $L^2$-normalized
as in \eqref{normalize}.  Then, we shall prove
the estimates in Proposition~\ref{mainprop}, using
very different techniques by estimating $L^{q_c}$ bounds
over the two regions
\begin{multline}\label{22.24}A_+=\{(x,t)\in  M^{n-1}\times [0,T]: \, |\tilde S_\la f(t,x)|\ge \la^{\frac{n-1}4+\delta} \},
 \\
  \text{and } \, \, A_-=\{(x,t)\in M^{n-1} \times  [0,T] : \, |\tilde S_\la f(x,t)|< \la^{\frac{n-1}4+\delta} \}.
  \end{multline}
Due to the numerology of the powers of $\la$ arising,
the splitting occurs at height $\la^{\frac{n-1}4+\delta}$, $\delta=1/8$; however, we could have replaced this 
specific value of $\delta$ by any sufficiently
small positive $\delta$.  The transition occurring at, basically, $\la^{\frac{n-1}4}$ is natural and arises
due to Knapp-type phenomena, both in Euclidean problems,
as well as geometric ones that we are considering here.  We choose this specific value of $\delta=1/8$ to 
simplify some of the calculations to follow.

We next notice that Proposition~\ref{mainprop}
(and hence Theorems \ref{nonposthm} and \ref{negthm})
are a consequence of the following two propositions
corresponding to the two regions in \eqref{22.24}.

\begin{proposition}\label{largeprop}  Let $(M^{n-1},g)$,
$n\ge3$ have nonpositive curvature.  We then can
choose $c_0>0$ so that for $\la \gg1$ and
$T=c_0\log\la$ we have the uniform bounds
\begin{equation}\label{high}
\|\tilde S_\la f\|_{L^{q_c}(A_+)}
\le C\la^{\frac1{q_c}},
\end{equation} 
assuming that $f$ is $L^2$-normalized as in 
\eqref{normalize}.
\end{proposition}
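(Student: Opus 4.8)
\emph{Proof proposal.} The plan is to prove \eqref{high} by the ``height decomposition'' / bootstrap argument of Bourgain \cite{BourgainBesicovitch}, as adapted to eigenfunction and spectral projection problems in \cite{BHSsp}, \cite{SBLog} and \cite{sogge2015improved}. Write $\Lambda=\|\tilde S_\la f\|_{L^{q_c}(A_+)}$ and $\mu_0=\la^{\frac{n-1}4+\delta}$. Since $f$ is $L^2$-normalized, the crude bound $\|\tilde S_\la f\|_{L^2(M^{n-1}\times[0,T])}\lesssim T^{1/2}$ together with Tchebyshev's inequality shows $|A_+|\lesssim T\mu_0^{-2}$; on the other hand, since $|\tilde S_\la f|>\mu_0$ on $A_+$, one also has $|A_+|\le\mu_0^{-q_c}\Lambda^{q_c}$. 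We plan to establish an inequality of the form $\Lambda^2\lesssim\la^{a}\,|A_+|^{c}$ for appropriate exponents $a,c$ with $q_c\,c<2$, and then feed in $|A_+|\le\mu_0^{-q_c}\Lambda^{q_c}$ to deduce $\Lambda^{2-q_c c}\lesssim\la^{a}\mu_0^{-q_c c}$, i.e. $\Lambda\lesssim(\la^{a}\mu_0^{-q_c c})^{1/(2-q_c c)}$. The splitting height in \eqref{22.24} is chosen just above the Knapp height $\approx\la^{(n-1)/4}$ (which saturates the lossless one-interval bound \eqref{00.3''}) precisely so that the factor $\mu_0^{-q_c c}$, carrying the extra power $\la^{-q_c c\delta}$, leaves enough room to conclude $\Lambda\lesssim\la^{1/q_c}$.

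To obtain $\Lambda^2\lesssim\la^{a}|A_+|^{c}$ we argue by duality and a $TT^*$ reduction. Choosing $h$ supported in $A_+$ with $\|h\|_{L^{q_c'}(M^{n-1}\times\R)}\le1$ and $\Lambda=\langle\tilde S_\la f,h\rangle=\langle f,\tilde S_\la^{\,*}h\rangle$, Cauchy--Schwarz and $\|f\|_2=1$ give
$$\Lambda^{2}\le\|\tilde S_\la^{\,*}h\|_{L^2(M^{n-1})}^{2}=\langle\tilde S_\la\tilde S_\la^{\,*}h,\,h\rangle.$$
By Lemmas~\ref{lemmadiff} and \ref{qlemma} (together with \eqref{22.11}), modulo $O(\la^{-N})$ errors $\tilde S_\la\tilde S_\la^{\,*}$ is a microlocally cut-off, time-truncated Schr\"odinger propagator, and we split its Schwartz kernel according to whether $|t-s|$ is bounded or lies between a fixed constant and $T=c_0\log\la$. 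The near-diagonal part is treated losslessly using the short-time Strichartz bound \eqref{00.3''} of Burq--G\'erard--Tzvetkov (equivalently, the Keel--Tao theorem \cite{KT} applied to the standard parametrix), its contribution to $\langle\tilde S_\la\tilde S_\la^{\,*}h,h\rangle$ being at most $\la^{2/q_c}$ times a power of $|A_+|$ coming from the support of $h$. For the far part we invoke the pointwise kernel estimates of Section~4: here the nonpositive curvature hypothesis is used, through a lift to the universal cover and the Hadamard parametrix (following B\'erard~\cite{Berard}, with the microlocal refinements of \cite{BSTop} and \cite{BHSsp}), to produce the dispersive decay $\approx\la^{(n-1)/2}|t-s|^{-(n-1)/2}$; the microlocal factors $B$ in $\tilde\sigma_\la$ are precisely what keep the exponential volume growth of the cover under control, provided $c_0$ is chosen small enough (depending on $M^{n-1}$) that the accompanying loss stays within a negligible power of $\la$. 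Combining the two pieces --- together with the structure of the kernel of $\tilde\sigma_\la$ along the Schr\"odinger curves of Section~3 --- yields $\Lambda^2\lesssim\la^{a}|A_+|^{c}$; the careful extraction of admissible exponents is the technical heart of the argument and parallels the analogous step in \cite{SBLog} and \cite{sogge2015improved}.

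The main obstacle is the long-time dispersive kernel estimate used for the far part. One has to run the Hadamard parametrix out to the Ehrenfest time $\approx\log\la$ on the universal cover and show that the sum over the deck-transformation group --- whose number of elements within distance $\lesssim T$ is $\lesssim e^{CT}=\la^{Cc_0}$ (with $C$ depending on $M^{n-1}$) because of the exponential volume growth of a nonpositively curved cover --- does not destroy the $|t-s|^{-(n-1)/2}$ dispersion once the microlocal cutoffs $B$ have restricted the relevant geodesics to a fixed narrow cone of directions. This is exactly the analysis of Section~4, and it is where the curvature hypothesis and the smallness of $c_0$ are needed; the remaining bookkeeping --- reducing $\tilde S_\la$ to its kernel via Lemmas~\ref{lemmadiff}--\ref{qlemma}, disposing of the $O(\la^{-N})$ tails, and the algebra of the bootstrap --- is then routine.
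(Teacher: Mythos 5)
Your overall strategy — Bourgain-style bootstrap via duality and $TT^*$, splitting the propagator kernel into a near-diagonal part treated by short-time Strichartz and a far part treated by the Hadamard parametrix on the universal cover — matches the paper's, and you are right that the smallness of $c_0$ and the choice $\delta=1/8$ in the threshold $\mu_0 = \la^{\frac{n-1}{4}+\delta}$ are what close the argument. However, you misidentify the mechanism controlling the exponential volume growth of the cover. For this proposition the kernel bound used is \eqref{k}, i.e.\ Proposition~\ref{kerprop}, which is stated and proved for $S_\la S_\la^*$ with \emph{no} microlocal cutoff; the proof simply counts deck transformations, and the exponential factor $\exp(C_M|t-s|)$ \emph{remains} in \eqref{k}. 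The pseudodifferential factor $B$ appears in the argument only through the trivial bound \eqref{22.10} (uniform $L^p\to L^p$ boundedness); it does not restrict geodesic directions in a way that removes the exponential. What actually absorbs the exponential is arithmetic: one chooses $c_0$ so that $\exp(2C_MT)\le\la^{1/8}$, and then the height restriction $|\tilde S_\la f|\ge\la^{\frac{n-1}4+\frac18}$ on $A_+$ gives $\|\1_{A_+}\|^2_{L^{q_c}}\le\la^{-\frac{n-1}2-\frac14}\|\tilde S_\la f\|^2_{L^{q_c}(A_+)}$, so the far-part contribution is $\lesssim\la^{-1/8}\|\tilde S_\la f\|^2_{L^{q_c}(A_+)}$ and can be absorbed. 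Microlocal cutoffs do suppress the growth of the deck-group sum, but that is the content of Proposition~\ref{mick} (estimates \eqref{22.65np}--\eqref{22.65neg}), which feeds into the $A_-$ estimate, not into Proposition~\ref{largeprop}.

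Two smaller points. First, your single-inequality framework $\Lambda^2\lesssim\la^a|A_+|^c$ with $q_c c<2$ does not reflect the actual structure: the near-diagonal piece $L_\la$ produces the \emph{absolute} bound $\la^{2/q_c}$ (using only $\|g\|_{L^{q_c'}(A_+)}=1$, no power of $|A_+|$), while the far piece $G_\la$ produces the self-improving term $\la^{-1/8}\Lambda^2$. In fact the exponent falls exactly on the border $q_c c=2$, so your proposed bootstrap via $\Lambda^{2-q_c c}\lesssim\la^a\mu_0^{-q_c c}$ would degenerate; the argument only closes because the near-diagonal piece supplies the genuine bound and the far piece is merely absorbed. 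Second, the paper first invokes Lemma~\ref{lemmadiff} to replace $\tilde S_\la=B\sigma_\la S_\la$ by $B S_\la$ up to an acceptable error, so the $TT^*$ operator is the clean $BS_\la S_\la^* B^*$; working directly with $\tilde S_\la\tilde S_\la^*$ as you propose would require controlling $\sigma_\la S_\la S_\la^*\sigma_\la^*$, which is more awkward because $\sigma_\la$ acts in the time variable. These are fixable, but as written the proposal would not carry the far-part estimate through.
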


\begin{proposition}\label{smallprop}  Let $(M^{n-1},g)$,
$n\ge3$, have nonpositive curvature.  We then can
take $T=c_0\log\la$ as above so that, if $f$ is
$L^2$-normalized,
\begin{equation}\label{22.26}
\|\tilde S_\la f\|_{L^{q_c}(A_-)}
\le C\la^{\frac1{q_c}}T^{\frac1{q_c}\cdot \frac2{q_c}}.
\end{equation}
Furthermore, if all the sectional curvatures of
$(M^{n-1},g)$ are negative,
\begin{equation}\label{22.27}
\|\tilde S_\la f\|_{L^{q_c}(A_-)}
\le C\la^{\frac1{q_c}}T^{\frac{4-q_c}{2q_c}}.
\end{equation}
\end{proposition}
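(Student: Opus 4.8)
With $\tilde S_\la=B_{j,\la}\circ\sigma_\la\circ S_\la$ the microlocalized operator from \eqref{22.13}--\eqref{22.14} and $A_-$ as in \eqref{22.24}, the region $A_-$ is where the bilinear harmonic analysis carries the argument, and the plan is to run a variable--coefficient version of the Tao--Vargas--Vega scheme \cite{TaoVargasVega} built on the bilinear oscillatory integral estimates of Lee~\cite{LeeBilinear}. For a fixed $j$ I would pass to the ``Schr\"odinger coordinates'' about the Schr\"odinger curve through $(x_j,\xi_j)$ discussed in the introduction; in these coordinates, and after discarding the negligible errors supplied by Lemma~\ref{lemmadiff} (and the kernel estimates of Section~4), $\tilde S_\la$ becomes a variable--coefficient oscillatory integral operator whose phase satisfies the curvature and transversality hypotheses of Lee's theorem, with the Schr\"odinger curves playing the role that null bicharacteristics play in the flat model. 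Since $q_c=\tfrac{2(n+1)}{n-1}$, so that $\tfrac{q_c}{2}=\tfrac{n+1}{n-1}$ lies strictly above the bilinear threshold $\tfrac{n+2}{n}$ in the space--time dimension $n$, the passage from bilinear to linear estimates will close without an $\e$--loss, just as in \cite{TaoVargasVega}.

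Concretely, one writes $\|\tilde S_\la f\|_{L^{q_c}(A_-)}^2=\|(\tilde S_\la f)^2\|_{L^{q_c/2}(A_-)}$ and makes a Whitney decomposition of the frequency support of $B_{j,\la}$ into pairs of $\theta$--separated angular caps $(\nu,\nu')$, with $\theta$ dyadic between the Knapp scale $\la^{-1/2}$ and the fixed small aperture $\theta_0$. Writing $\tilde S_\la f=\sum_\nu u_\nu$, this splits $(\tilde S_\la f)^2$ into a sum over separated Whitney pairs at each scale $\theta$ plus a diagonal piece from pairs that are $\lesssim\la^{-1/2}$--close. For the separated pairs one invokes the bilinear estimate of Section~3 --- the variable--coefficient analogue of \cite{LeeBilinear} --- applied on the \emph{full} Ehrenfest interval $[0,T]$ with $T=c_0\log\la$, not merely on unit subintervals; this is exactly the step in which the dispersive kernel bounds of Section~4 enter, and it is the difference between their strength under nonpositive curvature (polynomial decay of the lifted wave kernel) and under negative curvature (exponential decay, to be balanced against the exponential volume growth of $M^{n-1}$, as in \cite{HSSchro}) that produces the two different powers of $T$ --- $T^{\frac1{q_c}\cdot\frac2{q_c}}$ in \eqref{22.26} versus $T^{\frac{4-q_c}{2q_c}}$ in \eqref{22.27}. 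Summing the bilinear bounds over the Whitney pairs by almost orthogonality and then over the dyadic scales $\theta$ --- a sum that converges precisely because $q_c/2$ exceeds the bilinear exponent --- bounds the separated contribution by $\la^{2/q_c}$ times the appropriate power of $T$.

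For the diagonal piece I would iterate the Whitney decomposition down to the Knapp scale $\la^{-1/2}$ in the manner of \cite{TaoVargasVega}, the geometric series of losses from successive refinements being summable, until $(\tilde S_\la f)^2$ is reduced to a sum of $|u_\nu|^2$ over $\approx\la^{(n-2)/2}$ single Knapp tubes. Here the definition of $A_-$ enters: on $A_-$ one has $|\tilde S_\la f|<\la^{\frac{n-1}{4}+\delta}$, hence $\int_{A_-}|u_\nu|^{q_c}\le\la^{(\frac{n-1}{4}+\delta)(q_c-2)}\int|u_\nu|^2$, and since $\tfrac{n-1}{4}(q_c-2)=1$ and the $u_\nu(\cdot,t)$ are almost orthogonal with $\sum_\nu\|u_\nu(\cdot,t)\|_2^2\lesssim\|f\|_2^2=1$ by \eqref{normalize}, summing over $\nu$ and over $t\in[0,T]$ recovers $\la^{1/q_c}$ up to the same power of $T$; the harmless factor $\la^{\delta(1-2/q_c)}$ is absorbed by interpolating, within a single fixed--aperture cap, against the improved fixed--time spectral projection bound in place of the crude $L^2$ bound. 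Combining the separated and diagonal contributions gives \eqref{22.26}, and the stronger kernel bounds give \eqref{22.27}. As in \cite{HSSchro}, Lemma~\ref{qlemma} is what permits replacing, in the local bilinear step, the semi--global operator $\tilde S_\la$ by genuinely local pieces at negligible cost.

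The main obstacle, as always in the variable--coefficient setting, will be to verify the hypotheses of Lee's bilinear theorem --- the curvature (non--degeneracy of the relevant mixed Hessian) and the transversality of the two angular pieces --- \emph{uniformly} over the interval $[0,T]$ with $T\approx\log\la$; the role of the Schr\"odinger coordinates is precisely to flatten the geometry enough that these conditions are visible and remain stable over this time scale. Closely tied to this is the bookkeeping in the negatively curved case, where one must check that the extra exponential dispersion of the wave kernel genuinely survives the exponential volume growth of $M^{n-1}$, so that the sharp exponent $\tfrac{4-q_c}{2q_c}$ --- and, in particular, the no--loss statement \eqref{00.14} when $d=2$ --- actually comes out rather than a lossier power of $\log\la$.
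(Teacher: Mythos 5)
Your overall philosophy --- Whitney decomposition into separated and near-diagonal pairs, bilinear estimates \`a la Lee~\cite{LeeBilinear} for the separated pairs, and exploiting the ceiling from $A_-$ --- is the right one, but the proposed mechanism for producing the two different powers of $T$ is not the one the paper uses and, as described, has a genuine gap. You assert that the bilinear estimate is applied ``on the \emph{full} Ehrenfest interval $[0,T]$'' and that the distinction between the nonpositive and negative curvature powers of $T$ is generated at that bilinear step by the differing dispersive strength of the lifted wave kernel. In the paper the bilinear machinery (Proposition~\ref{locprop} and Lemmas~\ref{blemma}, \ref{leelemma}) is applied only on \emph{unit} time intervals, via the $\alpha_m(t)$ cutoffs, and it is identical in the two curvature cases. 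The actual source of the $T$-dependence lies downstream: after Proposition~\ref{locprop} one is faced with $\sum_m\sum_\nu\|\alpha_m\tilde\sigma_\la A^{\theta_0}_\nu S_\la f\|^{q_c}$; for each $m$ one bounds $\sum_\nu a_\nu^{q_c}\le(\sum_\nu a_\nu^2)(\max_\nu a_\nu)^{q_c-2}\lesssim\la^{2/q_c}(\max_\nu a_\nu)^{q_c-2}$ by almost orthogonality (estimate \eqref{22.55}, which is where Lemma~\ref{qlemma} enters). Picking the maximizing $\nu(m)$ for each $m$ and applying H\"older over the $O(T)$ nontrivial values of $m$ produces $T^{2/q_c}\|\,\cdot\,\|_{\ell^{q_c}_m}^{q_c-2}$ in the nonpositive case and, since $q_c\le 4$, $T^{(4-q_c)/2}\|\,\cdot\,\|_{\ell^2_m}^{q_c-2}$ in the negative case. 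The remaining $\ell^{q_c}_m L^{q_c}$ (resp.\ $\ell^2_m L^{q_c}$) bound is then a $WW^*$ argument \eqref{22.62'}--\eqref{22.63'}, and it is \emph{there} that the curvature distinction enters: the microlocalized kernel estimates of Proposition~\ref{mick} give $|t-s|^{-\frac{n-1}{2}}$ decay in general, but $|t-s|^{-N}$ decay under strictly negative curvature, and the extra off-diagonal decay in $|m-m'|$ is precisely what makes the $\ell^2_m$ Schur-type summation go through. Your account omits this entire chain --- the per-$m$ orthogonality step, the single-$\nu$-per-$m$ selection, the H\"older in $m$, and the $WW^*$/Schur step with the microlocalized kernel --- which is where the proposition is actually proved.

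Two subsidiary errors are worth flagging. First, for the near-diagonal piece you write $\int_{A_-}|u_\nu|^{q_c}\le\la^{(\frac{n-1}{4}+\delta)(q_c-2)}\int|u_\nu|^2$, which presupposes a pointwise bound $|u_\nu|\lesssim|\tilde S_\la f|$ on $A_-$; no such bound is available for an individual microlocalized piece. The paper applies the $A_-$ ceiling only to $\tilde\sigma_\la H=\tilde S_\la f$ itself, in the step bounding the term $II$ after Lemma~\ref{leelemma}, to downgrade the bilinear exponent $q=\tfrac{2(n+2)}{n}$ (which is strictly less than $q_c$, not greater, so the passage from bilinear to linear does \emph{not} close trivially without the ceiling) up to $q_c$. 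Second, the paper does not iterate the Whitney decomposition down to the Knapp scale $\la^{-1/2}$; the finest scale is $\theta_0=\la^{-1/8}$, a single Whitney decomposition is performed, and the residual diagonal piece is handled by the almost-orthogonality argument in Lemma~\ref{blemma} rather than by iteration.
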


We shall present the proofs of these Propositions in
the next two subsections.

\medskip

 \noindent{\bf 2.2. Estimates for relatively large values: Proof of Proposition~\ref{largeprop}.}

We first note that, by Lemma~\ref{lemmadiff} 
and \eqref{22.10}
we have
$$ \|\tilde S_\la f\|_{L^{q_c}(A_+)} \le \| B S_\la f\|_{L^{q_c}(A_+)} + CT^{\frac1{q_c}-\frac12}\la^{\frac1{q_c}},
$$
and, since $q_c>2$,  \eqref{high} would follow from
\begin{equation}\label{high'}
\| BS_\la f\|_{L^{q_c}(A_+)}\le C\la^{\frac1{q_c}}+\tfrac12 \|\tilde S_\la f\|_{L^{q_c}(A_+)}.
\end{equation}

To prove this we shall adapt an argument of
Bourgain~\cite{BourgainBesicovitch} and more
recent variants in
\cite{BHSsp} and  \cite{sogge2015improved} .
Specifically, choose $g(x,t)$ such that
$$\|g\|_{L^{q_c'}(A_+)}=1
\quad \text{and } \, \,
\|BS_\la f\|_{L^{q_c}(A_+)}
=\iint BS_\la f\cdot
\overline{\bigl(\1_{A_+}\cdot g\bigr)} \, dx dt.
$$
Then, since we are assuming that $\|f\|_2=1$, by
the Schwarz inequality
\begin{align}\label{22.29}
\|BS_\la f\|^2_{L^{q_c}(A_+)}
&= \Bigl( \, \int f(x) \, \cdot \,  
\overline{\bigl(S^*B^*\bigr)(\1_{A_+}
\cdot g\bigr)(x)} \, dx \, \Bigr)^2
\\
&\le \int |S^*_\la B^*(\1_{A_+}\cdot g)(x)|^2 \, dx
\notag
\\
&=\iint \bigl(BS_\la S^*_\la B^*\bigr)(\1_{A_+}\cdot
g)(x,t) \, \overline{(\1_{A_+}\cdot
g)(x,t)} \, dx dt
\notag
\\
&=\iint \bigl(B\circ L_\la\circ B^*\bigr)(\1_{A_+}\cdot
g)(x,t) \, \overline{(\1_{A_+}\cdot
g)(x,t)} \, dx dt \notag
\\
&\qquad+\iint \bigl(B\circ G_\la\circ B^*\bigr)(\1_{A_+}\cdot
g)(x,t) \, \overline{(\1_{A_+}\cdot
g)(x,t)} \, dx dt \notag
\\
&=I + II, \notag
\end{align}
where $L_\la$ is the integral operator with kernel equaling that of $S_\la S^*_\la$ if $|t-s|\le 1$ and 
$0$ otherwise, i.e,
\begin{multline}\label{ok}
L_\la(x,t;y,s)=
\\
\begin{cases}
\bigl(S_\la S_\la^*\bigr)(x,t;y,s)=
\eta(t/T)\eta(s/T) \bigl(\, 
\beta^2(P/\la)e^{-i(t-s)\la^{-1}\Delta_g}\bigr)(x,y), \, \,
\text{if } \, \, |t-s|\le 1,
\\
0 \, \, \, \text{otherwise}.
\end{cases}
\end{multline}

In the final section (see Proposition~\ref{kerprop})
we shall show that for $T$ as above we have
\begin{equation}\label{k}
|(S_\la S_\la^*)(x,t;y,s)|\le C
\la^{\frac{n-1}2} \, |t-s|^{-\frac{n-1}2}
\exp(C_M|t-s|), \quad \text{if } \, \, |t-s|\le 2T.
\end{equation}
Consequently, if we let $L_{\la,t,s}$ be the ``frozen''
operators
$$\bigl(L_{\la,t,s}f\bigr)(x)
=\int L_\la(x,t;y,s) \, f(y) \, dy,
$$
we have that
$$\|L_{\la,t,s}f\|_{L^\infty(M^{n-1})}
\le C\la^{\frac{n-1}2} \, |t-s|^{-\frac{n-1}2}
\, \|f\|_{L^1(M^{n-1})},
$$
and, since $e^{-i(t-s)\la^{-1}\Delta_g}$ is unitary,  we of course have
$$\|L_{\la,t,s}f\|_{L^2(M^{n-1})}
\le C
 \|f\|_{L^2(M^{n-1})}.
$$
Therefore, by interpolation
$$\|L_{\la,t,s}f\|_{L^{q_c}(M^{n-1})}
\le C \la^{\frac2{q_c}} \, |t-s|^{-\frac2{q_c}}
 \|f\|_{L^{q_c'}(M^{n-1})}.
$$
Therefore, by Strichartz's \cite{Strichartz77}
original argument (or, e.g., Theorem 0.3.6 in \cite{SFIO2}),
we can use the classical Hardy-Littlewood fractional integral
estimates to conclude that
$$\|L_\la\|_{L^{q_c'}(M^{n-1}\times \R)
\to L^{q_c}(M^{n-1}\times \R)}=O(\la^{\frac2{q_c}}).$$
If we use this, along with H\"older's inequality and
\eqref{22.10}, we obtain for the term $I$ in \eqref{22.29}
\begin{align}\label{k301}
|I|&\le \|BL_\la B^*(\1_{A_+}\cdot g)\|_{L^{q_c}(M^{n-1}\times \R)}
\cdot \| \1_{A_+}\cdot g \|_{L^{q_c'}(M^{n-1}\times \R)}
\\
&\lesssim \|L_\la B^*(\1_{A_+}\cdot g)\|_{L^{q_c}(M^{n-1}\times \R)} 
\cdot
\|\1_{A_+}\cdot g \|_{L^{q_c'}(M^{n-1}\times \R)}
\notag
\\
&\lesssim \la^{\frac2{q_c}} \|B^*(\1_{A_+}\cdot g) \|_{L^{q_c'}(M^{n-1}\times \R)}
\cdot
\|\1_{A_+}\cdot g \|_{L^{q_c'}(M^{n-1}\times \R)}
\notag
\\
&\lesssim \la^{\frac2{q_c}}\|g\|^2_{L^{q_c'}(A_+)}
=\la^{\frac2{q_c}}. \notag
\end{align}

To estimate  the other term in \eqref{22.29}, $II$, we choose $c_0$ small enough
so that if $C_M$ is the constant
in \eqref{k}
$$\exp(2C_MT)\le \la^{1/8}, \quad \text{if  }
\,
T=c_0\log\la \, \, \text{and } \, \, \la \gg 1.
$$
Then, since $\eta(t)=0$ for $|t|\ge1$, it follows
from \eqref{ok} and \eqref{k} that
$$\|G_\la \|_{L^1(M^{n-1}\times \R) 
\to L^\infty (M^{n-1}\times \R)}\le C
\la^{\frac{n-1}2+\frac18}.
$$
As a result, since, by \eqref{22.10}, the dyadic operators $B$ are bounded
on $L^1$ and $L^\infty$, we can repeat the arguments
to estimate $I$ and use
H\"older's inequality to see that
$$|II|\le C\la^{\frac{n-1}2}\la^{\frac18}
\| \1_{A_+}\cdot g\|^2_1
\le C \la^{\frac{n-1}2}\la^{\frac18}
\|g\|^2_{L^{q_c'}(A_+)} \cdot
\|\1_{A_+}\|_{L^{q_c}}^2
=C \la^{\frac{n-1}2}\la^{\frac18}
\|\1_{A_+}\|_{L^{q_c}}^2.
$$
If we recall the definition of $A_+$ in
\eqref{22.24}, we can estimate the last
factor:
$$\|\1_{A_+}\|_{L^{q_c}}^2
\le \bigl(\la^{\frac{n-1}4+\frac18}\bigr)^{-2}
\|\tilde S_\la f\|^2_{L^{q_c}(A_+)}.$$
Therefore, 
$$|II|\lesssim \la^{-\frac18}
\|\tilde S_\la f\|^2_{L^{q_c}(A_+)}
\le \bigl(\tfrac12 \|\tilde S_\la f\|_{L^{q_c}(A_+)}
\bigr)^2,
$$
assuming, as we may, that $\la$ is large enough.

If we combine this bound with the earlier one,
\eqref{k301} for $I$, we conclude that
\eqref{high'} is valid, which completes the 
proof of Proposition~\ref{largeprop}. \qed

\medskip

\noindent{\bf 2.3.   Estimates for relatively small values: Proof of Proposition \ref{largeprop}.}

We now turn to the proving the $L^{q_c}(A_-)$ estimates  in Proposition~\ref{largeprop}.  To do this we
need to borrow and adapt results from the bilinear harmonic analysis in \cite{LeeBilinear} and
\cite{TaoVargasVega}.

We shall utilize a microlocal decomposition which we shall now describe.
We first recall that the symbol $B(x,\xi)$ of $B$ in \eqref{22.9} is supported in a small
conic neighborhood of some $(x_0,\xi_0)\in S^*M^{n-1}$.  We may assume that its symbol has
small enough support so that we may work in a coordinate chart $\Omega$ and that
$x_0=0$, $\xi_0=(0,\dots,0,1)$ and $g_{jk}(0)=\delta^j_k$ in the local coordinates.
So, we shall assume that $B(x,\xi)=0$ when $x$ is outside a small relatively compact neighborhood
of the origin or $\xi$ is outside of a small conic neighborhood of $(0,\dots,0,1)$.  These reductions
and those that follow will contribute to the number of terms in \eqref{22.8}; however, it will be clear
that the $N$ there will be independent of $\la\gg 1$.  Similarly, the positive numbers $\delta$ and
$\delta_0$ in \eqref{22.7} may depend on $N$, but, at the end we can just take each to be the minimum
of what is required for each $j=1,\dots,N$.

Next, let us define the microlocal cutoffs that we shall use.   We fix a function
$a\in C^\infty_0({\mathbb R}^{2(n-2)})$ supported in $\{z: \, |z_j|\le 1, \, \, 1\le j\le 2(n-2)\}$
 which satisfies
\begin{equation}\label{m1}
\sum_{j\in {\mathbb Z}^{2(n-2)}}a(z-j)\equiv 1
.
\end{equation}
We shall use this function to build our microlocal cutoffs.
By the above, we shall focus on defining them 
 for $(y,\eta)\in S^*\Omega$ with    $y$ near the origin
 and  $\eta$ in a small conic neighborhood of $(0,\dots,0,1)$. 
We shall let
$$\Pi=\{y: \, y_{n-1}=0\}$$
be the points in $\Omega$ whose last coordinate vanishes.   Let $y'=(y_1,\dots, y_{n-2})$ and
$\eta=(\eta_1,\dots,\eta_{n-2})$ denote the first $n-2$ coordinates of $y$ and $\eta$, respectively.
 For $y\in \Pi$ near $0$ and $\eta$ near $(0,\dots,0,1)$ we can
just use the functions $a(\theta^{-1}(y',\eta')-j)$, $j\in {\mathbb Z}^{2(n-2)}$ to obtain cutoffs of scale $\theta$.  We will always have
$\theta\in [\la^{-\delta},1]$ with $\delta=1/8$.

We can then extend the definition to a neighborhood of $(0,(0,\dots,0,1))$ by setting for $(x,\xi)\in S^*\Omega$ in this neighborhood
\begin{equation}\label{m2}
a^\theta_j(x,\xi)=a(\theta^{-1}(y',\eta')-j) \quad
\text{if } \, \, \chi_s(x,\xi)=(y',0,\eta',\eta_{n-1}) \, \, \, \text{with } \, \, \, s=d_g(x,\Pi).
\end{equation}
Here $\chi_s$ denotes geodesic flow in $S^*\Omega$.  Thus, $a^\theta_j(x,\xi)$ is constant on all geodesics
$(x(s),\xi(s))\in S^*\Omega$ with $x(0)\in \Pi$ near $0$ and $\xi(0)$ near $(0,\dots,0,1)$.   As a result,
\begin{equation}\label{m3}
a^\theta_j(\chi_s(x,\xi))=a^\theta_j(x,\xi)
\end{equation}
for $s$ near $0$ and $(x,\xi)\in S^*\Omega$ near $(0,(0,\dots,0,1))$.

We then extend the definition of the cutoffs to a conic neighborhood of $(0,(0,\dots,0,1))$  in $T^*\Omega \, \backslash \, 0$ by setting
\begin{equation}\label{m4}
a^\theta_j(x,\xi)=a^\theta_j(x,\xi/p(x,\xi)).
\end{equation}

Notice that if $(y'_j,\eta'_j)=\theta j$ and $\gamma_j$ is the geodesic in $S^*\Omega$ passing through $(y'_j,0,\eta_j)\in S^*\Omega$
with $\eta_j\in S^*_{(y'_j,0)}\Omega$ having $\eta'_j$ as its first $(n-2)$ coordinates then
\begin{equation}\label{m5}
a^\theta_j(x,\xi)=0 \quad \text{if } \, \, \,
\text{dist }\bigl((x,\xi), \gamma_j\bigr)\ge C_0\theta,
\end{equation}
for some fixed constant $C_0>0$.  Also,  $a^\theta_j$ satisfies the estimates
\begin{equation}\label{m6}
\bigl|\partial_x^\sigma \partial_\xi^\gamma a^\theta_j(x,\xi)\bigr| \lesssim \theta^{-|\alpha|-|\gamma|}, \, \, \,
(x,\xi)\in S^*\Omega
\end{equation}
related to this support property.

The $a^\theta_j$ provide ``directional'' microlocalization.  We also need a ``height'' localization since the characteristics of
the symbols of our scaled Schr\"odinger operators lie on paraboloids.  The variable coefficient operators that we shall use of course are 
adapted to our operators and are analogs of ones that are used in the study of Fourier restriction problems involving paraboloids.

To construct these, choose $b\in C^\infty_0(\R)$ supported in $|s|\le1$  satisfying $\sum_{-\infty}^\infty b(s-\ell)\equiv 1$.
 We then simply define the ``height operator'' as follows
\begin{equation}\label{m7}
A_\ell^\theta(P)=
b(\theta^{-1}\la^{-1}(P -\la\kappa^\theta_\ell)) \, 
\Upsilon(P/\la),
\quad \kappa^\theta_\ell = 1+\theta\ell, \quad
|\ell|\lesssim \theta^{-1},
\end{equation}
where if $\tilde \beta$ is as in \eqref{22.7}
\begin{equation}\label{ups}
\Upsilon\in C^\infty_0((1/10,10)) \, \, \text{satisfies } \, \, \,
\Upsilon(r)=1 \, \, \, \text{in a neighborhood of} \, \, \text{supp } \, \tilde \beta.
\end{equation}
Thus, these operators microlocalize $P$ to 
intervals of size $\approx \theta\la$ about ``heights''
$\la\kappa^\theta_\ell\approx \la$.  As we shall see below,
different ``heights'' will give rise to different
``Schr\"odinger tubes" about which the kernels of 
our microlocalization of the $\tilde \sigma_\la$
operators are highly concentrated.
Also, standard arguments as in \cite{SFIO2} show that if $A^\theta_\ell(x,y)$ is the kernel of this operator then
\begin{equation}\label{m8}
A^\theta_\ell(x,y)=O(\la^{-N}) \, \forall \, N, \quad
\text{if } \, d_g(x,y)\ge C_0\theta,
\end{equation}
for a fixed constant if $\theta\in [\la^{-\delta_0},1]$ with, as we are assuming $\delta_0<1/2$.

If $\psi(x)\in C^\infty_0(\Omega)$ equals $1$ in a neighborhood of the $x$-support of the $B(x,\xi)$ and
$A^\theta_j(x,D_x)$ is the operator with symbol
\begin{equation}\label{m9}
A^\theta_j(x,\xi)=\psi(x) a^\theta_j(x,\xi),
\end{equation}
then for $\nu=(\theta j, \theta \ell)\in \theta{\mathbb Z}^{2(n-2)+1}$ we can finally define the cutoffs that we shall use:
\begin{equation}\label{m10}
A^\theta_\nu=A^\theta_j(x,D_x)\circ A^\theta_\ell(P).
\end{equation}
For later use, we note that if 
$A^\theta_\nu(x,\xi)$ and $A^\theta_{\tilde \nu}(x,\xi)$ are 
the symbols of $A^\theta_\nu$ and $A^\theta_{\tilde \nu}$, respectively, then
\begin{equation}\label{sep0}
A^\theta_\nu(x,\xi)A^\theta_{\tilde \nu}(x,\xi)\equiv 0, \quad \text{if } \, \, \, |\nu-\tilde \nu|\ge C_0 \theta,
\end{equation}
for some uniform constant $C_0$.  Also, since $p(x,\xi)$ is invariant under the geodesic flow,
by by \eqref{m3} we have that the principal symbol $a_\nu^\theta(x,\xi)$ of $A^\theta_\nu$ satisfies
\begin{equation}\label{invariant}
a^\theta_\nu(\chi_r(x,\xi))=a^\theta_\nu(x,\xi), \, \, \text{on supp } B(x,\xi) \, \,
\text{if } \, \, |r|\le 2\delta,
\end{equation}
assuming that $\delta>0$ is small, and, as we may assume, the symbol $B(x,\xi)$ is supported in a small
conic neighborhood of $(0, \, (0,\dots,0,1))$.

Note also that, if $\theta\in [\la^{-\delta_0},1]$, then the $A^\theta_\nu$ belong to a bounded subset of
$S^0_{1-\delta_0,\delta_0}(M)$ (pseudo-differential operators of order zero and
type $(1-\delta_0,\delta_0)$).

Also, as operators between any  $L^p\to L^q$, $1\le p,q\le \infty$, spaces we have
\begin{equation}\label{m11}
\tilde \sigma_\la =
\sum_\nu \tilde \sigma_\la A^\theta_\nu  +O(\la^{-N}) \, \, \, \forall N,
\end{equation}
and the $A^\theta_\nu$ are almost orthogonal in the sense that we have 
\begin{equation}\label{m12}
\sum_\nu \|A^\theta_\nu G\|_{L^2_{t,x}}^2\lesssim \|G\|_{L^2_{t,x}}^2,
\end{equation}
with constants independent of $\theta\in [\la^{-\delta_0},1]$, with $\delta_0<1/2$ as above.
The second estimate \eqref{m12}, is standard since the $A_\nu^\theta$ are in $S^{0}_{1-\delta_0,\delta_0}$ 
and \eqref{sep0} is valid.  The other estimate \eqref{m12} follows from the fact, that by \eqref{m1} and \eqref{ups},
$Q(x,D)=I-\sum_\nu A_\nu^\theta\in S^0_{1-\delta_0,\delta_0}$ has symbol supported outside of a neighborhood
of $\text{supp }B(x,\xi)$, if, as we may, we assume that the latter is small, and this leads to \eqref{m11} by
the proof of Lemma~\ref{comprop} below if $\delta$ in \eqref{22.6} is small enough.
Also, for each $x$ the symbols vanish outside of cubes of sidelength $\theta \la$ and 
$| \partial^\gamma_\xi A^\theta_\nu(x,\xi)|=O((\la\theta)^{-|\gamma|})$, we also have that their kernels
are $O((\theta\la)^{n-1}(1+\theta\la d_g(x,y))^{-N})$ for all $N$ and so
\begin{equation}\label{mp}
\|A_\nu^\theta\|_{L^p(M)\to L^p(M)}=O(1) \quad \forall \, 1\le p\le \infty.
\end{equation}

In view of \eqref{m11} we have for $\theta_0=\la^{-1/8}$
\begin{equation}\label{m13}
\bigl( \alpha_m(t)\tilde\sigma_\la H\bigr)^2
=\sum_{\nu,\tilde \nu}
\bigl( \alpha_m(t)\tilde \sigma_\la A^{\theta_0}_\nu H
\bigr)\cdot \bigl( \alpha_m(t)\tilde \sigma_\la A^{\theta_0}_{\tilde\nu} H\bigr)
+O(\la^{-N}\|H\|_2^2),
\end{equation}
for $\alpha_m$ as in \eqref{22.17}.
Recall that in $A_\nu^{\theta_0}$, $\nu \in \theta{\mathbb Z}^{2(n-2)+1}$ indexes a $\la^{-1/8}$-separated set in
${\mathbb R}^{2n-3}$.

We need to organize the pairs of indices $\nu,\tilde \nu$ in \eqref{m13} as in many earlier works (see \cite{LeeBilinear} and 
\cite{TaoVargasVega}).
To this end, consider dyadic cubes,
$\tau^\theta_\mu$ in ${\mathbb R}^{2n-3}$ of sidelength $\theta=2^k\theta_0=2^k\la^{-1/8}$, with $\tau^\theta_\mu$ denoting translations of the cube
$[0,\theta)^{2n-3}$ by $\mu\in \theta{\mathbb Z}^{2n-3}$.
Two such dyadic cubes of sidelength $\theta$ are said
to be {\em close}  if they are not adjacent but have
adjacent parents of length $2\theta$, and, in this case,
we write $\tau^\theta_\mu \sim \tau^\theta_{\tilde \mu}$.
We note that close cubes satisfy $\text{dist}(\tau^\theta_\mu, \tau^\theta_{\tilde \mu})\approx \theta$, and so each fixed cube has $O(1)$ cubes which are ``close'' to it.  Moreover, as noted in 
\cite[p. 971]{TaoVargasVega}, any distinct points $\nu,
\tilde \nu\in {\mathbb R}^{2n-3}$ must like in
a unique pair of close cubes in this Whitney decomposition.  So, there must be a unique triple
$(\theta=\theta_02^k, \mu, \tilde \mu)$ such that
$(\nu,\tilde \nu)\in \tau^\theta_\mu\times \tau^\theta_{\tilde \mu}$ and $\tau^\theta_\mu\sim
\tau^\theta_{\tilde \mu}$.  We remark that by choosing
$B$ to have small support we need only consider 
$\theta=2^k\theta_0\ll 1$.

Taking these observations into account implies
that the bilinear sum \eqref{m13} can be 
organized as follows:
\begin{multline}\label{m14}
\sum_{\{k\in {\mathbb N}: \, k\ge 10 \, \, \text{and } \, 
\theta=2^k\theta_0\ll 1\}}
\sum_{\{(\mu,\tilde \mu): \, \tau^\theta_\mu
\sim \tau^\theta_{\tilde \mu}\}}
\sum_{\{(\nu,\tilde \nu)\in
\tau^\theta_\mu\times \tau^\theta_{\tilde \mu}\}}
\bigl(\alpha_m(t)\tilde \sigma_\la
A^{\theta_0}_\nu H\bigr) 
\cdot \bigl(\alpha_m(t)\tilde \sigma_\la
A^{\theta_0}_{\tilde \nu} H\bigr)
\\
+\sum_{(\tau,\tilde \tau)\in \Xi_{\theta_0}} 
\bigl(\alpha_m(t) \tilde \sigma_\la A^{\theta_0}_\nu H\bigr) 
\cdot \bigl(\alpha_m(t)\tilde \sigma_\la
A^{\theta_0}_{\tilde \nu} 
H\bigr)
,
\end{multline}
where $\Xi_{\theta_0}$ indexes the remaining pairs such
that $|\nu-\tilde \nu|\lesssim \theta_0=\la^{-1/8}$,
including the diagonal ones where $\nu=\tilde \nu$.

The key  estimate that we require, which follows from bilinear harmonic analysis arguments, then is the following.

\begin{proposition}\label{locprop}
If $H=S_\la f$ is as in \eqref{22.2}
then for $m\in {\mathbb Z}$ we have the uniform bounds
\begin{multline}\label{b1}
\|\alpha_m(t)\tilde \sigma_\la H\|_{L^{q_c}(A_-)}
\\
\lesssim \Bigl(\, \sum_\nu \bigl\|
\alpha_m(t) \tilde \sigma_\la A^{\theta_0}_\nu H
\bigr\|_{L^{q_c}_{t,x}(M^{n-1}\times \R)}^{q_c}\, \Bigr)^{1/q_c}
+\la^{\frac1{q_c}-}\|H\|_{L^2_{t,x}(M^{n-1}\times \R)}.
\end{multline}
\end{proposition}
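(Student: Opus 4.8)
The plan is to prove \eqref{b1} by combining the bilinear reorganization \eqref{m14} with the variable-coefficient bilinear oscillatory integral estimates of Lee~\cite{LeeBilinear}, following the scheme of Tao--Vargas--Vega~\cite{TaoVargasVega}. Since $q_c = \tfrac{2(n+1)}{n-1}$ is exactly the bilinear exponent — one has $q_c/2 = \tfrac{n+1}{n-1}$, which is the critical exponent for the bilinear estimate for paraboloids in $\R^{n-1}$ with the extra time variable — the strategy is to estimate $\|\alpha_m(t)\tilde\sigma_\la H\|_{L^{q_c}(A_-)}^2 = \|(\alpha_m(t)\tilde\sigma_\la H)^2\|_{L^{q_c/2}(A_-)}$ by passing the square inside and using \eqref{m14}. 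First I would dispose of the second sum in \eqref{m14}, indexed by $\Xi_{\theta_0}$ (the near-diagonal pairs with $|\nu-\tilde\nu|\lesssim\theta_0$): for these there are only $O(1)$ partners $\tilde\nu$ for each $\nu$, so Cauchy--Schwarz in the pair and the triangle inequality in $L^{q_c/2}$ reduce this piece to $\bigl(\sum_\nu\|\alpha_m(t)\tilde\sigma_\la A^{\theta_0}_\nu H\|_{L^{q_c}_{t,x}}^{q_c}\bigr)^{1/q_c}$ — this is exactly the first (``diagonal'') term on the right side of \eqref{b1}, modulo the routine step of replacing the $L^{q_c/2}$ norm of a sum of $O(1)$ terms by the sum of the norms. (Squaring, this matches, since $(\sum a_\nu^{q_c/2})^{2/q_c}\ge$ the norm we want by the embedding $\ell^{q_c/2}\hookrightarrow\ell^{q_c}$... one has to be a little careful and instead bound this diagonal contribution using $|\nu-\tilde\nu|\lesssim\theta_0$ and almost-orthogonality, but in all cases it folds into the first term of \eqref{b1}.)

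The heart of the matter is the main sum over Whitney pairs $\tau^\theta_\mu\sim\tau^\theta_{\tilde\mu}$ with $\theta=2^k\theta_0$, $k\ge 10$. For each such pair, the product $\bigl(\alpha_m(t)\tilde\sigma_\la \sum_{\nu\in\tau^\theta_\mu}A^{\theta_0}_\nu H\bigr)\cdot\bigl(\alpha_m(t)\tilde\sigma_\la \sum_{\tilde\nu\in\tau^\theta_{\tilde\mu}}A^{\theta_0}_{\tilde\nu} H\bigr)$ is a product of two pieces whose frequency/direction supports are $\theta$-separated. Here is where the kernel analysis promised in Section~3, together with ``Schr\"odinger coordinates'' about a Schr\"odinger curve, enters: one writes $\alpha_m(t)\tilde\sigma_\la A^{\theta}_\mu$ (where $A^\theta_\mu=\sum_{\nu\in\tau^\theta_\mu}A^{\theta_0}_\nu$, itself a cutoff of the same type at scale $\theta$) as a variable-coefficient oscillatory integral operator whose canonical relation and symbol put it, after change of variables, into the form to which Lee's bilinear theorem \cite{LeeBilinear} applies, with the transversality/curvature hypotheses verified precisely because the two cubes are ``close'' so the relevant directions are $\approx\theta$-separated but contained in an $O(\theta)$-ball (the Whitney condition). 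Lee's estimate then gives
\begin{equation*}
\bigl\|(\alpha_m(t)\tilde\sigma_\la A^\theta_\mu H)\cdot(\alpha_m(t)\tilde\sigma_\la A^\theta_{\tilde\mu}H)\bigr\|_{L^{q_c/2}_{t,x}}\lesssim \theta^{-\epsilon}\la^{\frac{2}{q_c}}\cdot\text{(}L^2\text{-masses of }A^\theta_\mu H,\,A^\theta_{\tilde\mu}H\text{)},
\end{equation*}
with an $\epsilon$-loss in $\theta$ that is then summable in $k$. Summing over the Whitney pairs $(\mu,\tilde\mu)$ at a fixed scale $\theta$ using Cauchy--Schwarz (each $\mu$ has $O(1)$ close partners) and almost-orthogonality \eqref{m12} applied at scale $\theta$ yields, for each $k$, a bound $\lesssim 2^{-ck}\la^{\frac2{q_c}}\|H\|_{L^2_{t,x}}^2$ after crucially using the restriction to $A_-$ — the height hypothesis $|\tilde S_\la f|<\la^{\frac{n-1}4+\delta}$ on $A_-$ is what converts what would otherwise be an $L^\infty$-type loss into the gain that makes the $k$-sum converge and produces the final $\la^{\frac1{q_c}-}\|H\|_{L^2_{t,x}}$ error term. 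Summing the geometric series in $k$ and taking square roots gives \eqref{b1}.

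\textbf{The main obstacle} I anticipate is the reduction of $\alpha_m(t)\tilde\sigma_\la A^\theta_\mu$ to an operator in the exact normal form required by \cite{LeeBilinear}: one must produce, via the Section~3 kernel estimates and the Schr\"odinger-coordinate construction, a representation as an oscillatory integral with a phase function satisfying Lee's curvature and transversality conditions \emph{uniformly} over the range $\theta\in[\theta_0,c]$ and over $m$ (i.e., over the $\approx\log\la$ time steps), while keeping the symbol estimates compatible with the $S^0_{1-\delta_0,\delta_0}$-type calculus and with the almost-orthogonality. The restriction to the region $A_-$ and the precise power $\la^{\frac1{q_c}-}$ in the error also require care: one has to track how the measure of $A_-\cap\{\text{fixed height}\}$ enters through the Knapp-type examples, and check that the $\theta^{-\epsilon}$ losses from Lee's theorem — summed over the $O(\log\la)$ dyadic scales $k$ — produce at most a $\la^{0+}$ loss rather than a genuine power, which is why the statement is phrased with $\la^{\frac1{q_c}-}$. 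Everything else (the $\Xi_{\theta_0}$ piece, the passage between $\ell^{q_c}$ and $\ell^{q_c/2}$ norms of $\nu$-sums, and the use of \eqref{m12}) is routine once the bilinear input is in place.
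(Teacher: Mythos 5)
Your decomposition into near-diagonal ($\xid$) and Whitney pairs matches the paper, and Lee's bilinear oscillatory integral estimates are indeed the engine. But there are two genuine errors. The first concerns the bilinear exponent: the critical exponent for the bilinear estimate for the $(n-1)$-dimensional paraboloid in $n$-dimensional spacetime is $q/2$ with $q=\tfrac{2(n+2)}{n}$, which is strictly less than $q_c/2=\tfrac{n+1}{n-1}$ for every $n\ge3$ --- this gap is precisely why bilinear methods improve on linear Strichartz, and your identification of $q_c/2$ as the bilinear exponent is a dimension error. Lee's theorem therefore yields an $L^{q/2}$ bound (not $L^{q_c/2}$) for each Whitney pair, and that bound carries a factor $(\la^{7/8}2^k)^{\frac{n-1}{2}(q-q_c)}$ which already decays geometrically in $k$ because $q<q_c$; the Whitney sum converges with no help from $A_-$. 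The height ceiling on $A_-$ is used afterwards and for a different purpose: one writes $|\alpha_m\tilde\sigma_\la H|^{q_c}\lesssim|\alpha_m\tilde\sigma_\la H|^{q_c-q}\bigl(|\diag(H)|^{q/2}+|\far(H)|^{q/2}\bigr)$, uses $\|\tilde S_\la f\|_{L^\infty(A_-)}^{q_c-q}\le\la^{(\frac{n-1}{4}+\delta)(q_c-q)}$, and thereby lifts the $L^{q/2}$ estimate on $\far(H)$ to an $L^{q_c}(A_-)$ estimate with the promised gain $\la^{\frac1{q_c}-}$. Your version, running Lee at $q_c/2$ with $\theta^{-\e}$ losses, would at best yield an error of size $\la^{\frac1{q_c}+}$, and so cannot produce \eqref{b1}.

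Second, the $\xid$ piece is not disposed of by Cauchy--Schwarz and the triangle inequality. Since $q_c/2\ge1$, Minkowski applied to $\sum_\nu|\alpha_m\tilde\sigma_\la\Atn H|^2$ produces only the $\ell^1$ (hence $\ell^2$) bound $\sum_\nu\|\alpha_m\tilde\sigma_\la\Atn H\|_{L^{q_c}_{t,x}}^2$, which is strictly larger than the $\ell^{q_c}$ expression in \eqref{b1} since $q_c>2$. The $\ell^{q_c}$ structure is what makes the downstream $m$-summation work, and obtaining it requires the Tao--Vargas--Vega duality argument: pair against $G\in L^{(q_c/2)'}$, introduce auxiliary almost-orthogonal pseudodifferential cutoffs $b_{\nu,\tilde\nu}(x,D)$ adapted to the essentially disjoint frequency supports of the products $\Atn h\cdot\Atnt h$, and exploit their $\ell^2$ and $\ell^\infty$ almost-orthogonality, together with the commutator bound that lets one move $\Atn$ past $\tilde\sigma_\la$. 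This is the paper's diagonal lemma, a genuine step rather than bookkeeping, and it is the source of the first term in \eqref{b1}.
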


The $\la^{\frac1{q_c}-}$  notation that we are using for the last term in \eqref{b1} denotes $\la^{\frac1{q_c}-\e_0}$ for some
unspecified $\e_0>0$.  Note that since $\|H\|_{L^2_{t,x}}\approx T^{1/2}$ for $H=S_\la f$ and $T\approx \log\la$ the log-loss afforded by having the last term involve this norm is more than overset by the power gain $1/q_c-$ of $\lambda$.  Similarly, when we sum over $m$ and use this estimate, the additional log--loss will be more than compensated by this gain.

We shall postpone the proof of Proposition~\ref{locprop} until the next section.  Let us now
see how we can use it to prove Proposition~\ref{smallprop}.

We first note that if $\alpha_m(t)=\alpha (t-m)$ is as in \eqref{22.17} with $\alpha$ as in \eqref{22.16}, we of course have
$$\| \tilde S_\la f\|^{q_c}_{L^{q_c}(A_-)}\lesssim \sum _m \|\alpha_m(t) \tilde S_\la f\|^{q_c}_{L^{q_c}(A_-)}.$$
Recall that $\tilde S_\la  = \tilde \sigma_\la S_\la$.  Therefore, by \eqref{b1} and \eqref{normalize} we have with
$\theta_0=\la^{-1/8}$
$$\|\tilde S_\la f\|^{q_c}_{L^{q_c}(A_-)} \lesssim  \sum_m\sum_\nu \|\alpha_m(t)\tilde \sigma_\la A^{\theta_0}_\nu
S_\la f\|^{q_c}_{L^{q_c}(A_-)} +\la^{1-}\|S_\la f\|^{q_c}_{L^{q_c}(M^{n-1}\times \R)}.$$
Since the last term is $O(\la^{1-}\log\la)$, in order to prove Proposition~\ref{smallprop}, it suffices to show that when
$M^{n-1}$ has nonpositive curvature
\begin{equation}\label{22.53}
\sum_m \sum_\nu \|\alpha_m(t)\tilde \sigma_\la A^{\theta_0}_\nu
S_\la f\|^{q_c}_{L^{q_c}(M^{n-1}\times [0,T])} \le C \la T^{\frac2{q_c}},
\end{equation}
with, as in the Proposition, $T=c_0\log\la$ for $c_0>0$ sufficiently small, and we obtain the other estimate, \eqref{22.27}, from
\begin{equation}\label{22.54}
\sum_m\sum_\nu 
\|\alpha_m(t)
\tilde \sigma_\la A^{\theta_0}_\nu
S_\la f\|^{q_c}_{L^{q_c}(M^{n-1}\times [0,T])} \le C \la T^{\frac{4-q_c}{2}}.
\end{equation}

If we use Lemma~\ref{qlemma} along with \eqref{22.10} and \eqref{m12} we obtain the following uniform bounds for each fixed $m$
\begin{align}\label{22.55}
\sum_\nu \|\alpha_m(t)
\tilde \sigma_\la A^{\theta_0}_\nu
S_\la &f\|^2_{L^{q_c}(M^{n-1}\times [0,T])}
\\
&\lesssim \sum_\nu
\|\alpha_m(t)
 \sigma_\la A^{\theta_0}_\nu
S_\la f\|^2_{L^{q_c}(M^{n-1}\times [0,T])} \notag
\\
&\lesssim \la^{\frac2{q_c}}
\sum_\nu \|A_\nu^{\theta_0} S_\la f\|^2_{L^2(M^{n-1}\times [m-10,m+10])} +O(\la^{-N}) \notag
\\
&\lesssim  \la^{\frac2{q_c}} \|S_\la f\|_{L^2(M^{n-1}\times [m-10,m+10])}  +O(\la^{-N}) \notag
\\
&\lesssim \la^{\frac2{q_c}}.  \notag
\end{align}
Here, we again used the trivial bound $\|S_\la f\|_{L^2(M^{n-1}\times I)} \lesssim  |I|^{1/2}$ if $I\subset \R$ is an interval.

To use this, for each $m$ choose $\nu(m)$ such that
\begin{equation}\label{22.56}
\max_\nu \|\alpha_m(t)
\tilde \sigma_\la A^{\theta_0}_\nu
S_\la f\|_{L^{q_c}(M^{n-1}\times [0,T])}=
\|\alpha_m(t)
\tilde \sigma_\la A^{\theta_0}_{\nu(m)}
S_\la f\|_{L^{q_c}(M^{n-1}\times [0,T])}.
\end{equation}
Then, by \eqref{22.55} we have
\begin{align} \label{22.57}
&\sum_m 
\sum_\nu \|\alpha_m(t)
\tilde \sigma_\la A^{\theta_0}_\nu
S_\la f\|^{q_c}_{L^{q_c}(M^{n-1}\times \R)}
\\
&\le \sum_m
\bigl(\, \sum_\nu \|\alpha_m(t)
\tilde \sigma_\la A^{\theta_0}_\nu
S_\la f\|^2_{L^{q_c}(M^{n-1}\times [0,T])} 
\bigr)
\cdot
\|\alpha_m(t)
\tilde \sigma_\la A^{\theta_0}_{\nu(m)}
S_\la f\|^{q_c-2}_{L^{q_c}(M^{n-1}\times [0,T])} \notag
\\
& \lesssim  \la^{\frac2{q_c}}
\sum_m
\|\alpha_m(t)
\tilde \sigma_\la A^{\theta_0}_{\nu(m)}
S_\la f\|^{q_c-2}_{L^{q_c}(M^{n-1}\times [0,T])}.
\notag
\end{align}

Since there are $O(T)$ nonzero terms in the last sum, by H\"older's inequality we have
$$\sum_m
\|\alpha_m(t)
\tilde \sigma_\la A^{\theta_0}_{\nu(m)}
S_\la f\|^{q_c-2}_{L^{q_c}(M^{n-1}\times [0,T])} \lesssim T^{\frac2{q_c}}
\|\alpha_m(t)
\tilde \sigma_\la A^{\theta_0}_{\nu(m)}
S_\la f\|^{q_c-2}_{\ell_m^{q_c} L^{q_c}(M^{n-1}\times [0,T])},
$$
and, as $q_c\le 4$,
$$\sum_m
\|\alpha_m(t)
\tilde \sigma_\la A^{\theta_0}_{\nu(m)}
S_\la f\|^{q_c-2}_{L^{q_c}(M^{n-1}\times [0,T])} \lesssim T^{\frac{4-q_c}2}
\|\alpha_m(t)
\tilde \sigma_\la A^{\theta_0}_{\nu(m)}
S_\la f\|^{q_c-2}_{\ell_m^2 L^{q_c}(M^{n-1}\times [0,T])}.
$$
Therefore, by \eqref{22.55}, we would have \eqref{22.53} if we could show that when all the sectional curvatures
of $M^{n-1}$ are nonpositive then for $T=c_0 \log\la$ with $c_0>0$ small enough
\begin{equation}\label{22.58}
\| \alpha_m(t)B\sigma_\la A^{\theta_0}_{\nu(m)} S_\la f\|_{\ell^{q_c}_mL^{q_c}_{t,x}(M^{n-1}\times \R)} \lesssim \la^{\frac1{q_c}},
\end{equation}
and we would have \eqref{22.54} if we could show that when all of the sectional curvatures are negative and $T$ is as above
\begin{equation}\label{22.59}
\| \alpha_m(t)B\sigma_\la A^{\theta_0}_{\nu(m)} S_\la f\|_{\ell^{2}_mL^{q_c}_{t,x}(M^{n-1}\times \R)} \lesssim \la^{\frac1{q_c}},
\end{equation}
since $\tilde \sigma_\la =B\sigma_\la$.

To prove these inequalities we  shall make use of the following simple lemma whose proof we postpone until the 
end of this subsection.

\begin{lemma}\label{comprop}  
If $\delta>0$ in \eqref{22.6} is small enough
and
$\theta_0 =\la^{-1/8}$ we have
for 
$B$ as in \eqref{22.9}
\begin{equation}\label{cc3}
\bigl\| \, 
B \sigma_\la A^{\theta_0}_\nu -B A^{\theta_0}_\nu
\sigma_\la \, \bigr\|_{L^2_{t,x}\to L^{q_c}_{t,x}}
=O(\la^{\frac1{q_c}-\frac14}).
\end{equation}
\end{lemma}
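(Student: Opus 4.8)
The plan is to prove the commutator bound \eqref{cc3} by writing $B\sigma_\la A^{\theta_0}_\nu - BA^{\theta_0}_\nu \sigma_\la = B(\sigma_\la A^{\theta_0}_\nu - A^{\theta_0}_\nu \sigma_\la)$ and analyzing the commutator $[\sigma_\la, A^{\theta_0}_\nu]$ as an operator on $M^{n-1}\times \R$. Recall that $\sigma_\la = \sigma(\la^{1/2}|D_t|^{1/2}-P)\tilde\beta(D_t/\la)$ acts in the $t$ and spatial variables jointly, while $A^{\theta_0}_\nu = A^{\theta_0}_j(x,D_x)\circ A^{\theta_0}_\ell(P)$ acts only in the spatial variables. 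Since $A^{\theta_0}_\ell(P)$ is a function of $P$, it commutes with $\sigma_\la$ exactly, so the only nontrivial piece is the commutator of $\sigma_\la$ with the genuine pseudodifferential factor $A^{\theta_0}_j(x,D_x)$. The key point will be that $\sigma_\la$ has a kernel in the spatial variables that, after the $\Hat\sigma$-localization to $r\in \delta[1-\delta_0,1+\delta_0]$ from \eqref{22.6}, is essentially the half-wave kernel transported a geodesic distance $\approx \delta$; composing against a symbol of type $(1-\delta_0,\delta_0)$ frozen along the geodesic flow (by \eqref{invariant}, $a^{\theta_0}_\nu$ is constant on geodesics of length $\le 2\delta$ over $\supp B$) produces a commutator whose symbol gains a factor of $\theta_0 = \la^{-1/8}$ together with a lower-order factor from the symbol derivative estimates \eqref{m6}.

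The steps I would carry out: first, use the Hadamard/half-wave parametrix for $e^{-ir P}\tilde\beta(P/\la)$ on the support $r\in [\delta-\delta_0\delta, \delta+\delta_0\delta]$ (which is below the injectivity radius since $\delta$ is small) to write the spatial kernel of $\sigma_\la$ as an oscillatory integral $\int e^{i\psi(x,y)\la^{1/2}\tau^{1/2}} a(x,y,\tau/\la)\,d\tau$ with $a$ a symbol in $\tau$ of order $(n-1)$, supported for $d_g(x,y)\approx \delta$. Second, conjugate $A^{\theta_0}_j(x,D_x)$ by this kernel; because $A^{\theta_0}_j$ has symbol constant along geodesic flow for time $\le 2\delta$ (from \eqref{m3}, \eqref{invariant}), the principal symbols of $\sigma_\la A^{\theta_0}_j$ and $A^{\theta_0}_j\sigma_\la$ agree, and the commutator is lower order: each symbol derivative of $A^{\theta_0}_j$ costs $\theta_0^{-1}=\la^{1/8}$ (by \eqref{m6}) but is paired against a derivative of the phase/amplitude of $\sigma_\la$ which carries a gain — at frequency $\approx\la$ a $\xi$-derivative of the $\sigma_\la$ amplitude gains $\la^{-1/2}$ from the $\tau^{1/2}$ homogeneity. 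This yields a net operator bound $O(\la^{1/8}\cdot\la^{-1/2}\cdot\la^{(n-1)/2})$ worth of $L^2\to L^\infty$ improvement over $\sigma_\la$ itself; combined with $TT^*$/interpolation against the $L^2\to L^2$ bound (which is $O(\la^{-3/8})$ by the same symbolic calculus, since the commutator is one order lower and type $(1-\delta_0,\delta_0)$) one interpolates to the $L^2_{t,x}\to L^{q_c}_{t,x}$ bound $O(\la^{1/q_c-1/4})$. One also needs to note, as in Lemma~\ref{lemmadiff}, that the time-variable is harmless: the $t$-localization by $\tilde\beta(D_t/\la)$ forces $|t-s|\lesssim$ finite and all error kernels are $O(\la^{-N})$ for $|t-s|$ large, exactly as in the kernel estimate from the proof of Lemma~\ref{qlemma}.

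The main obstacle I anticipate is bookkeeping the symbol calculus for the \emph{variable-coefficient} composition $\sigma_\la \circ A^{\theta_0}_j(x,D_x)$ carefully enough to extract the precise power $\la^{1/q_c-1/4}$ rather than a weaker gain: one must track that $\sigma_\la$ behaves like a half-wave operator transported distance $\approx\delta$ (so that the geodesic-invariance \eqref{invariant} of $a^{\theta_0}_\nu$ over time $2\delta$ really does kill the principal symbol of the commutator), and simultaneously that the $(1-\delta_0,\delta_0)$-type symbol loses of $A^{\theta_0}_\nu$ are dominated by the frequency gains of $\sigma_\la$ — this is exactly where the hypothesis "$\delta$ small enough" in the statement is used, to guarantee the parametrix is valid and the geodesics do not leave the coordinate chart. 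Once the commutator is identified as an operator of type $(1-\delta_0,\delta_0)$, order $-1$ relative to $\sigma_\la$, the desired estimate follows from the fixed-time dispersive bound for $e^{-it\la^{-1}\Delta_g}$ composed with such cutoffs and the Keel--Tao/Strichartz machinery already invoked in Section~2.2, so the remaining arguments are routine.
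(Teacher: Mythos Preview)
Your core idea---reduce to the commutator $[\sigma_\la, A^{\theta_0}_\nu]$, write $\sigma_\la$ via the half-wave propagator $e^{-irP}$ over $r\in\text{supp}\,\Hat\sigma$, and exploit the geodesic-flow invariance \eqref{invariant} to kill the principal symbol of the commutator---is exactly what the paper does. But your execution diverges from the paper's in two ways, and one of them contains a real error.

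First, the paper's finishing step is much simpler than your interpolation scheme: rather than estimate $L^2\to L^\infty$ and interpolate, the paper applies Sobolev embedding on the $n$-dimensional space $M^{n-1}\times\R$ to reduce $L^2_{t,x}\to L^{q_c}_{t,x}$ to an $L^2_{t,x}\to L^2_{t,x}$ estimate for $(\sqrt{I+P^2+D_t^2})^{\,n(\frac12-\frac1{q_c})}$ composed with the commutator. Since $e^{ir\la^{1/2}|D_t|^{1/2}}$ and $e^{-irP}$ are $L^2$-isometries, this immediately reduces to bounding $B_\la[(e^{-irP}A^{\theta_0}_\nu e^{irP}) - A^{\theta_0}_\nu]$ in $L^2$, which is pure pseudodifferential calculus via Egorov's theorem. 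No Keel--Tao or dispersive input is needed at all.

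Second, and more seriously, your accounting of the gain is wrong. You attribute a $\la^{-1/2}$ gain to ``$\tau^{1/2}$ homogeneity'' of the $\sigma_\la$ amplitude; there is no such gain. The actual mechanism is that $A^{\theta_0}_\nu\in S^0_{7/8,1/8}$ (since $\theta_0=\la^{-1/8}$), and once Egorov plus \eqref{invariant} forces the principal symbols of $e^{-irP}A^{\theta_0}_\nu e^{irP}$ and $A^{\theta_0}_\nu$ to agree on $\text{supp}\,B_\la$, the difference lies in $S^{-(\rho-\delta)}_{\rho,\delta}=S^{-3/4}_{7/8,1/8}$ with symbol vanishing for $|\xi|\gtrsim\la$. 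This gives an $L^2\to L^2$ bound of $O(\la^{-3/4})$, not your $O(\la^{-3/8})$. Combined with the Sobolev weight $\la^{n(\frac12-\frac1{q_c})}=\la^{\frac1{q_c}+\frac12}$, one gets exactly $\la^{\frac1{q_c}+\frac12-\frac34}=\la^{\frac1{q_c}-\frac14}$. Your $\la^{1/8}\cdot\la^{-1/2}$ bookkeeping would not produce this exponent.
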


If we use \eqref{cc3} followed by the use of \eqref{22.10} and \eqref{mp}, we see that for
each $m$ we  have
\begin{align}\label{22.60}
\|\alpha_m(t) &B \sigma_\la   A_{\nu(m)}^{\theta_0} S_\la f\|_{L^{q_c}_{t,x}}
\\
&\lesssim \| \alpha_m(t) B A_{\nu(m)}^{\theta_0} \sigma_\la  S_\la f\|_{L^{q_c}_{t,x}}  +\la^{\frac1{q_c}-\frac14} \|S_\la f \|_{L^2_{t,x}} \notag
\\
&\lesssim 
\|\alpha_m(t) BA_{\nu(m)}^{\theta_0} S_\la f\|_{L^{q_c}_{t,x}}
+ \| \alpha_m(t) B A_{\nu(m)}^{\theta_0} (I-\sigma_\la)S_\la f\|_{L^{q_c}_{t,x}} +\la^{\frac1{q_c}-\frac14} (\log\la )^{1/2} \notag
\\
&\lesssim 
\|\alpha_m(t) A_{\nu(m)}^{\theta_0} S_\la f\|_{L^{q_c}_{t,x}}
+\| \alpha_m(t) ( I-\sigma_\la) S_\la f\|_{L^{q_c}_{t,x}} +\la^{\frac1{q_c}-\frac14} (\log\la)^{1/2}. \notag
\end{align}

By \eqref{22.16}--\eqref{22.17} and Lemma~\ref{lemmadiff} we have
\begin{equation}\label{22.61}
\| \alpha_m(t) (I-\sigma_\la)S_\la f\|_{\ell^{q_c}_m L^{q_c}_{t,x}}\le \la^{\frac1{q_c}} \, T^{\frac1{q_c}-\frac12},
\end{equation}
and so, by \eqref{22.60} we would have \eqref{22.58} if
\begin{equation}\label{22.62}
\|\alpha_m(t) A_{\nu(m)}^{\theta_0}S_\la f\|_{\ell_m^{q_c}L^{q_c}_{t,x}(M^{n-1}\times \R)}\lesssim \la^{\frac1{q_c}}.
\end{equation}
Also, by H\'older's inequality in $m$ and \eqref{22.61} we have
$$
\|\alpha_m(t)(I-\sigma_\la)S_\la f\|_{\ell^2_m L^{q_c}_{t,x}(M^{n-1}\times \R)}
\lesssim T^{\frac{q_c-2}{2q_c}} \|\alpha_m(t)(I-\sigma_\la)S_\la f\|_{\ell^{q_c}_m L^{q_c}_{t,x}(M^{n-1}\times \R)}
\lesssim \la^{\frac1{q_c}},$$
which, by \eqref{22.60} means that we would also have \eqref{22.59} if when all the sectional curvatures
of $M^{n-1}$ are negative
\begin{equation}\label{22.63}
\|\alpha_m(t) A_{\nu(m)}^{\theta_0}S_\la f\|_{\ell_m^{2}L^{q_c}_{t,x}(M^{n-1}\times \R)}\lesssim \la^{\frac1{q_c}}.
\end{equation}

In both \eqref{22.62} and \eqref{22.63} we are considering the map
$$f\to \bigl(Wf \bigr)(x,t,m)=\eta(t/T) \alpha_m(t) \bigl(A_{\nu(m)}^{\theta_0}\circ e^{-it\la^{-1}\Delta_g}f\bigr)(x).
$$
By repeating the standard $TT^*$ argument that was used in the proof of Proposition~\ref{largeprop}, we 
would have \eqref{22.62} if
\begin{equation}\label{22.62'}\tag{2.65$'$}
\| WW^*G\|_{\ell_m^{q_c}L^{q_c}_{t,x}(M^{n-1}\times \R)}
\le C\la^{\frac2{q_c}} \|G\|_{\ell^{q_c'}_{m'} L^{q_c'}_{t,x}(M^{n-1}\times \R)},
\end{equation}
and \eqref{22.63} if
\begin{equation}\label{22.63'}\tag{2.66$'$}
\| WW^*G\|_{\ell_m^2 L^{q_c}_{t,x}(M^{n-1}\times \R)}
\le C\la^{\frac2{q_c}} \|G\|_{\ell^{2}_{m'} L^{q_c'}_{t,x}(M^{n-1}\times \R)},
\end{equation}
with
\begin{align}\label{22.64}
&WW^*G(x,t,m)=
\\
&=\alpha_m(t) \eta(t/T) \sum_{m'} \int_{-\infty}^\infty
\alpha_{m'}(s) \eta(s/T) \Bigl[ \bigr( A_{\nu(m)}^{\theta_0} e^{-i(t-s)\la^{-1}\Delta_g} (A^{\theta_0}_{\nu(m')})^* \bigr)G(\, \cdot \, , s,m')\Bigr] (x) \, ds \notag
\\
&=\sum_{m'} \iint K(x,t,m;y,s,m') \, G(y,s,m') \, dyds, \notag
\end{align}
with
\begin{equation}\label{22.65}
K(x,t,m; y,s,m') = \alpha_m(t) \eta(t/T) \bigr( A_{\nu(m)}^{\theta_0} e^{-i(t-s)\la^{-1}\Delta_g} (A^{\theta_0}_{\nu(m')})^* \bigr)(x,y) \, \alpha_{m'}(s)\eta(s/T).
\end{equation}

In \S 4 we shall show (see Proposition \ref{mick}) that for $T=c_0\log \la$
small enough we have for $M^{n-1}$ of nonpositive curvature
\begin{equation}\label{22.65np}
|K(x,t,m;y,s,m')|\le C\la^{\frac{n-1}2} \, |t-s|^{-\frac{n-1}2},
\end{equation}
and, moreover, if all of the sectional curvatures of $M^{n-1}$ are negative
\begin{equation}\label{22.65neg}
|K(x,t,m;y,s,m')|\le C\la^{\frac{n-1}2} \,|t-s|^{-N} \quad \text{if } \, \, |t-s|\ge1.
\end{equation}
As we shall see, it is for these two estimates that we need to assume that $c_0$ is small enough depending
on $(M^{n-1},g)$.
Also, by the support properties of $\alpha$ in \eqref{22.16} we also have
\begin{equation}\label{22.67}
K(x,t,m; y, s, m')=0 \quad \text{if } \, |t-m| \ge 3 \, \, \text{or } \, \, |s-m'| \ge 3.
\end{equation}
Thus, if we define the frozen operators
$$\bigl(W_{t,m;s,m'}h\bigr)(x) =\int_{M^{n-1}} K(x,t,m; y, s, m') \, h(y) \, dy,$$
we have
\begin{equation}\label{22.68}
W_{t,m;s,m'}\equiv 0 \quad \text{if } \, |t-m|\ge 3 \, \, \text{or } \, \, |s-m'|\ge 3,
\end{equation}
and, if $M^{n-1}$ has nonpositve curvature, by \eqref{22.65np},
\begin{equation}\label{22.69}
\|W_{t,m;s,m'}h\|_{L^\infty_x(M^{n-1})}\le C\la^{\frac{n-1}2} \, |t-s|^{-\frac{n-1}2} \|h\|_{L^1(M^{n-1})},
\end{equation}
and, moreover, if the sectional curvatures of $M^{n-1}$ are negative, by \eqref{22.65neg} and \eqref{22.67},
\begin{equation}\label{22.70}
\|W_{t,m;s,m'} h\|_{L^\infty_x(M^{n-1})}\lesssim
\begin{cases}
\la^{\frac{n-1}2} |t-s|^{-\frac{n-1}2} \|h\|_{L^1(M^{n-1})} \, \, \text{if } \, |m-m'|\le 10
\\
\la^{\frac{n-1}2} |m-m'|^{-N} \|h\|_{L^1(M^{n-1})} \, \, \forall \, N \, \, \text{if } \, |m-m'| > 10.
\end{cases}
\end{equation}
Also, by \eqref{mp} and the fact that $e^{-it\la^{-1}\Delta_g}$ is unitary, we of course always have
\begin{equation}\label{22.71}
\| W_{t,m;s,m'} \|_{L^2(M^{n-1})\to L^2(M^{n-1})}=O(1).
\end{equation}

By interpolation \eqref{22.69}, \eqref{22.71} along with \eqref{22.68} yield that if $M^{n-1}$ has nonpositive curvature
\begin{equation}\label{22.72}
\|W_{t,m;s,m'} h\|_{L^{q_c}_x(M^{n-1})}
=
\begin{cases}
O(\la^{\frac2{q_c}}|t-s|^{-\frac2{q_c}}\|h\|_{L^{q_c'}(M^{n-1})}) \, \,
\text{if } \, |t-m| \le 3 \, \, \text{and } \,|s-m'|\le 3
\\
0 \quad \text{if } \, \, |t-m| >3 \, \, \text{or } \, \,|s-m'| >3.
\end{cases}
\end{equation}
while if we also use \eqref{22.70} then this argument implies that if the sectional
curvatures of $M^{n-1}$ are all negative
\begin{multline}\label{22.73}
\|W_{t,m;s,m'} h\|_{L^{q_c}_x(M^{n-1})}
=
\\
\begin{cases}
O(\la^{\frac2{q_c}}|t-s|^{-\frac2{q_c}}\|h\|_{L^{q_c'}(M^{n-1})}) \, \,
\text{if }|m-m'|\le 10, \, |t-m|\le 3 \, \, \text{and }|s-m'| \le 3
\\
O(\la^{\frac2{q_c}}|m-m'|^{-2}  \|h\|_{L^{q_c'}(M^{n-1})}) \, \, 
\text{if } \,|m-m'|>10, \, |t-m|\le 3 \, \text{and } \,|s-m'|\le 3
\\
0 \quad \text{if } \, \, |t-m|>3 \, \, \text{or } \, \,|s-m'| >3.
\end{cases}
\end{multline}

Note that for fixed $t,m$ we have by Minkowski's inequality and \eqref{22.64}
\begin{align}\label{22.74}
\| WW^*G(\, \cdot \, ,t,m)\|_{L^{q_c}_x} &\le \sum_{m'} \int \, \Bigl\| \, \int K(x,t,m;y,s,m') \, G(y,s,m') \, dy \, \Bigr\|_{L^{q_c}_x} \, ds
\\
&=\sum_{m'} \int \, \bigl\| (W_{t,m;s,m'}G(\, \cdot \, ,s,m))(x) \bigr\|_{L^{q_c}_x} \, ds.
\notag
\end{align}

Set
\begin{equation}\label{22.75}
H(t,m;s,m') =
\begin{cases}
\la^{\frac2{q_c}} |t-s|^{-\frac2{q_c}}, \, \, \, \text{if } \, |t-m|\le 3 \, \text{and } \, |s-m'|\le 3
\\
0 \quad \text{if } \, \, |t-m|>3 \, \, \text{or } \, \, |s-m'|>3.
\end{cases}
\end{equation}
Then, by \eqref{22.72} and \eqref{22.74} we have
\begin{align*}
\|WW^*G\|_{\ell_m^{q_c}L^{q_c}_{t,x}} &\lesssim
\Bigl( \sum_m \int \bigl| \, \sum_{m'} \int H(t,m;s,m') \, \|G(\, \cdot \, ,s,m') \|_{L^{q_c'}_x} \, ds \, \bigr|^{q_c} \, dt \, \Bigr)^{1/q_c}
\\
&\lesssim \la^{\frac2{q_c}} \Bigl( \sum_{m'} \int \|G(\, \cdot \, ,s,m')\|^{q_c'}_{L^{q_c'}_x} \, ds \Bigr)^{1/q_c'}
\\
&=\la^{\frac2{q_c}} \|G\|_{\ell^{q_c'}_{m'} L^{q_c'}_{t,x}(M^{n-1}\times \R)},
\end{align*}
since  if
\begin{equation}\label{22.76}
Uf(t,m) =\sum_{m'} \int H(t,m;s,m') \, f(s,m')\, ds,
\end{equation}
we have
$$\|U\|_{\ell^{q_c'}_{m'}L^{q_c'}_s \to \ell^{q_c}_mL^{q_c}_t} =O(\la^{\frac2{q_c}})$$
by a simple variant of Theorem 0.3.6 in \cite{SFIO2}.  Thus, we have obtained
\eqref{22.62'}.

If all the sectional curvatures of $M^{n-1}$ are negative and we set
$$
H(t,m;s,m')=
\begin{cases}
\la^{\frac2{q_c}} |t-s|^{-\frac2{q_c}} \, \, 
\text{if } \,|m-m'|\le 10, \, |t-m|\le 3 \, \, \text{and } \,|s-m'|\le 3
\\
\la^{\frac2{q_c}}|m-m'|^{-2} \, \, 
\text{if } \,|m-m'| > 10, \, |t-m|\le 3 \, \, \text{and } \,|s-m'|\le 3
\\
0 \quad \text{if } \, \, |t-m|>3 \, \, \text{or } \, \,|s-m'|>3,
\end{cases}
$$
and, if $U$ is as in \eqref{22.76}, then the proof of Theorem 0.3.6 in \cite{SFIO2} yields
$$\|U\|_{\ell^2_{m'}L^{q_c'}_s\to \ell^2_mL^{q_c}_t}=O(\la^{\frac2{q_c}}),$$
which yields \eqref{22.63'} by the above argument. \qed

This completes the proof of Proposition~\ref{smallprop} and hence Theorems \ref{nonposthm} and \ref{negthm} up to proving
the crucial local estimates in Proposition~\ref{locprop}, as well as the global kernel estimates \eqref{k},  \eqref{22.65np} and \eqref{22.65neg}
and  that we have used.  We shall prove the former using bilinear harmonic analysis techniques in the next section and the kernel estimates
in the final section.

The other task remaining to complete the proofs Theorems \ref{nonposthm} and \ref{negthm}  is to 
prove the  commutator estimate that we employed:

\begin{proof}[Proof of Lemma~\ref{comprop}]
Recall that 
by \eqref{m7} and \eqref{ups} the symbol $B(x,\xi)=B_\la(x,\xi)
\in S^{0}_{1,0}$ vanishes when $|\xi|$ is not
comparable to $\la$.  In particular, it vanishes if $|\xi|$ is larger than a fixed
multiple of $\la$, and it belongs to a bounded subset of $S^0_{1,0}$.
Furthermore, if $a^{\theta_0}_\nu(x,\xi)$ is the principal
symbol of our zero-order dyadic microlocal operators, we recall  that by \eqref{invariant}
we have that for $\delta>0$ small enough
\begin{equation}\label{cc2}
a^{\theta_0}_\nu(x,\xi)=a^{\theta_0}_\nu(\chi_r(x,\xi))
\quad \text{on supp } \, B_\la \, \, \, 
\text{if } \, \, |r|\le 2\delta,
\end{equation}
where $\chi_r: \, T^*M^{n-1}\, \backslash 0 \to T^*M^{n-1} \, \backslash 0$ denotes
geodesic flow in the cotangent bundle.

By Sobolev estimates for $M^{n-1}\times \R$, in order
to prove \eqref{cc3}, it suffices to show that
\begin{equation}\label{cc4}
\Bigl\|\,
\Bigl(\sqrt{I+P^2+D_t^2} \, \, \Bigr)^{n(\frac12-\frac1{q_c})}
\,
\bigl[ B_\la \sigma_\la A^{\theta_0}_\nu
-B_\la A^{\theta_0}_\nu \sigma_\la \bigr]
\, 
\Bigr\|_{L^2_{t,x}\to L^2_{t,x}}
=
O(\la^{\frac1{q_c}-\frac14}).
\end{equation}

To prove this we recall that
$$\sigma_\la=(2\pi)^{-1}\tilde \beta(D_t/\la)
\int \Hat \sigma(r)
e^{ir\la^{1/2}|D_t|^{1/2}} \, e^{-irP} \, dr,
$$
and, therefore, since $e^{ir\la^{1/2}|D_t|^{1/2}}$
has $L^2\to L^2$ norm one and commutes with $B_\la$,
$A^{\theta_0}_\nu$ and $(\sqrt{I+P^2+D_t^2})^{n(\frac12-\frac1{q_c})}$, and since
$\Hat \sigma(r)=0$, $|r|\ge 2\delta$, by
Minkowski's integral inequality,
we would have
\eqref{cc4} if 
\begin{multline}\label{cc5}
\sup_{|r|\le 2\delta}\, 
\Bigl\|\,
\Bigl(\sqrt{I+P^2+D_t^2}  \, \, \Bigr)^{n(\frac12-\frac1{q_c})}
\, \tilde \beta(D_t/\la) \, 
\bigl[ B_\la e^{-irP} A^{\theta_0}_\nu
-B_\la A^{\theta_0}_\nu e^{-irP} \bigr]
\, 
\Bigr\|_{L^2_{t,x}\to L^2_{t,x}}
\\
=
O(\la^{\frac1{q_c}-\frac14}).
\end{multline}

Next, to be able to use Egorov's theorem, we write
$$\bigl[ B_\la e^{-irP} A^{\theta_0}_\nu
-B_\la A^{\theta_0}_\nu e^{-irP} \bigr]
=B_\la
\, \bigl[
(e^{-irP}A^{\theta_0}_\nu e^{irP}) - B_\la A^{\theta_0}_\nu]
\circ e^{-irP}.
$$
Since $e^{-irP}$ also has $L^2$-operator norm one, we
would obtain \eqref{cc5} from
\begin{multline}\label{cc6}
\Bigl\|\,
\Bigl(\sqrt{I+P^2+D_t^2} \, \, \Bigr)^{n(\frac12-\frac1{q_c})}
\, \tilde \beta(D_t/\la) \, 
B_\la
\, \bigl[
(e^{-irP}A^{\theta_0}_\nu e^{irP}) - A^{\theta_0}_\nu\bigr]
\, 
\Bigr\|_{L^2_{t,x}\to L^2_{t,x}} 
\\
=
O(\la^{\frac1{q_c}-\frac14}).
\end{multline}

By Egorov's theorem (see e.g. Taylor~\cite[\S VIII.1]{TaylorPDO})
$$A^{\theta_0}_{\nu,r}(x,D)= e^{-irP}A^{\theta_0}_\nu e^{irP}$$
is a one-parameter family of 
zero-order  pseudo-differential operators, depending on the parameter $r$, whose principal symbol is 
$a^{\theta_0}_\nu(\chi_{-r}(x,\xi))$.  By \eqref{cc2}
 and the composition calculus
of pseudo-differential operators the principal
symbol of $B_\la A_{\nu,r}^{\theta_0}$ and $B_\la A^{\theta_0}_\nu$ both
equal $B_\la(x,\xi)a^{\theta_0}_\nu(x,\xi)$ if $|r|\le 2\delta$.  If $\theta=1$ then $A^\theta_\nu
\in S^0_{1,0}$, and, so, in this  case
we would have that $B_\la (e^{-irP}A^{\theta}_\nu e^{irP})
-B_\la A^\theta_\nu$ would be a pseudo-differential
operator of order $-1$ with symbol vanishing
for $|\xi|$ larger than a fixed multiple of $\la$ (see e.g., \cite[Theorem 4.3.6]{SoggeHangzhou}).
Since we are assuming that $\theta_0=\la^{-1/8}$, by the way they were constructed, the symbols $A^{\theta_0}_\nu$
belong to a bounded subset of $S^0_{7/8,1/8}$.  So, by \cite[p. 147]{TaylorPDO}, for $|r|\le 2\delta$,
$B_\la (e^{-irP}A^{\theta_0}_\nu e^{irP})
-B_\la A^{\theta_0}_\nu$ belong to a bounded subset of
$S^{-3/4}_{7/8,1/8}$ with symbols vanishing
for $|\xi|$ larger than a fixed multiple of
$\la$ due to the fact that the symbol  $B_\la(x,\xi)$ has this property
(see e.g., \cite[p. 46]{TaylorPDO}).

We also need to take into account the other operators
inside the norm in \eqref{cc6}.  Since
$\tilde \beta(D_t/\la)$ is a zero-order dyadic operator,
by the above, the operators in the left of \eqref{cc6} belong to 
a bounded subset of 
$S^{n(\frac12-\frac1{q_c})-\frac34}_{7/8,1/8}
(M^{n-1}\times \R)$ with symbols vanishing
for $|(\xi,\tau)|$ larger than a fixed multiple of
$\la$.  Consequently, the left side of
\eqref{cc6} is $O(\la^{n(\frac12-\frac1{q_c})-\frac34})
=O(\la^{\frac1{q_c}-\frac14})$.  For,
$q_c=\tfrac{2(n+1)}{n-1}$ and so
$\tfrac1{q_c}=n(\tfrac12-\tfrac1{q_c})-\tfrac12$.
\end{proof}

\noindent{\bf 2.3.  Endpoint Strichartz estimates: Proof of Theorem~\ref{endthm}.}

We now prove our final theorem saying that if all the 
sectional curvatures of $M^{n-1}$ are nonpositive
and, as is necessary, $d=n-1\ge3$ we have the endpoint
Strichartz estimates \eqref{00.15}.  As we pointed
out before, such improvements cannot hold on spheres
$S^d$  since the estimates are
saturated just by taking the initial data in
\eqref{00.1} to be zonal eigenfunctions.

To prove our improvements under our geometric assumptions
we shall use the universal local estimates of
Burq, G\'erard and Tzvetkov~\cite{bgtmanifold} along
with our improvements in Theorem \ref{nonposthm} for
non-endpoint exponents, some of the kernel estimates
we have used and an argument of one of us
\cite{sogge2015improved} that is a variation of an
earlier one of Bourgain~\cite{BourgainBesicovitch}.

To this end, we recall the universal endpoint Strichartz
estimates of Burq, G\'erard and Tzvetkov, which say
that for $\la\gg 1$ one has the uniform 
dyadic small interval bounds
\begin{equation*}
\|e^{-it\Delta_g}\beta(P/\la)f\|_{L^2_tL^{q_e}_x(
M^{n-1}\times [0,\la^{-1}])} \le C\|f\|_2,
\, \,  
\text{if } \, \, q_e=\tfrac{2d}{d-2}=\tfrac{2(n-1)}{n-3},
\, \, \, d=n-1\ge 3.
\end{equation*}
This is of course equivalent to the following estimates
for the scaled Schr\"odinger operators
\begin{equation}\label{2.85}
\|e^{-it\la^{-1}\Delta_g}
\beta(P/\la)f\|_{L^2_tL^{q_e}_x(M^{n-1}\times [0,1])}
\le C\la^{\frac12} \|f\|_2, 
\, \, q_e=\tfrac{2(n-1)}{n-3}, \, \, n\ge4.
\end{equation}

We also point out that by using the Littlewood-Paley arguments  described in the introduction we would 
obtain the bound \eqref{00.15} in Theorem~\ref{endthm}
by showing that whenever all the sectional 
curvatures of $M^{n-1}$ are nonpositive we have
for $q_e$ and $n$ as above
\begin{equation}\label{endest}
\bigl\|
e^{-it\la^{-1}\Delta_g}\beta(P/\la)f
\bigr\|_{L^2_tL^{q_e}_x(M^{n-1}\times 
[0,\log\la])}
\le C \la^{\frac12}
\, (\log\la)^{\frac12}\,
(\log(\log\la))^{-\frac12} \|f\|_2.
\end{equation}

In order to use  our earlier arguments, it turns
out that we need to modify the height splitting
\eqref{22.24} as follows
\begin{multline}\label{22.86}
A_+=\{(x,t)\in M^{n-1}\times [0,\log\la]:
\, |U_\la f(x,t)|\ge \la^{\frac{n-1}4}(\log\la)^{\e_0}
\},
\\
\text{and } \,
A_-=\{(x,t)\in M^{n-1}\times [0,\log\la]:
\, |U_\la f(x,t)|< \la^{\frac{n-1}4}(\log\la)^{\e_0}
\},
\end{multline}
assuming, as we are that $\|f\|_2=1$,
for $\e_0>0$ to be specified in just a moment and
$$U_\la f=e^{-it\la^{-1}\Delta_g}\beta(P/\la)f.$$

Let us now see how we can adapt the proof of
Proposition~\ref{largeprop} to obtain the following.

\begin{proposition}\label{endprop}  Suppose that
all the curvatures of $M^{n-1}$ are nonpositive and
let $\e_0>0$ be fixed and $A_+$ be as in
\eqref{22.86}.  Then, if, as before 
$\|f\|_2=1$ and $\la\gg 1$ we have the following uniform
bounds
\begin{equation}\label{ubound}
\|U_\la f\|_{L^2_tL^{q_e}_x(A_+\cap (M^{n-1}\times I_T))}
\le C\la^{\frac12},
\end{equation}
if $I_T\subset [0,\log\la]$ is an interval of length
$|I_T|\le T$ where
$$T=c_0\log(\log\la),$$
with $c_0>0$ sufficiently small (depending on
$\e_0>0$ and $M^{n-1}$).
\end{proposition}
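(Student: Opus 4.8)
The plan is to adapt the $TT^*$/Bourgain argument from the proof of Proposition~\ref{largeprop}, now tracking the mixed norm $L^2_tL^{q_e}_x$ and the modified height threshold in \eqref{22.86}. Write $\1_+=\1_{A_+\cap(M^{n-1}\times I_T)}$, $U_\la f=e^{-it\la^{-1}\Delta_g}\beta(P/\la)f$, and abbreviate $N=\|U_\la f\|_{L^2_tL^{q_e}_x(A_+\cap(M^{n-1}\times I_T))}$. First I would dualize the $L^2_tL^{q_e}_x$-norm over $A_+\cap(M^{n-1}\times I_T)$: pick $G$ supported in this set with $\|G\|_{L^2_tL^{q_e'}_x}=1$ realizing the norm, so that, since $\|f\|_2=1$, the Schwarz inequality gives
$$N^2=\Bigl|\int_{M^{n-1}} f(x)\,\overline{(U_\la^*G)(x)}\,dx\Bigr|^2\le\big\langle U_\la U_\la^* G,\,G\big\rangle=\mathrm{I}+\mathrm{II},$$
where $\mathrm{I},\mathrm{II}$ come from splitting the kernel $(\beta^2(P/\la)e^{-i(t-s)\la^{-1}\Delta_g})(x,y)$ of $U_\la U_\la^*$ into the piece $L_\la$ where $|t-s|\le1$ and the piece $R_\la$ where $|t-s|\ge1$; since $G$ is supported in $I_T$, an interval of length $\le T=c_0\log(\log\la)$, only $1\le|t-s|\le T$ is relevant in $R_\la$.

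For the local piece $\mathrm{I}$, I would quote the endpoint local Strichartz estimate \eqref{2.85} of Burq, G\'erard and Tzvetkov; by the standard $TT^*$ and almost-orthogonality argument (the kernel of $L_\la$ is banded in $t$, so no loss accrues from the length of $I_T$) this gives $\|L_\la\|_{L^2_tL^{q_e'}_x(M^{n-1}\times\R)\to L^2_tL^{q_e}_x(M^{n-1}\times\R)}=O(\la)$, whence $\mathrm{I}\le\|L_\la G\|_{L^2_tL^{q_e}_x}\|G\|_{L^2_tL^{q_e'}_x}\lesssim\la$. For the remote piece $\mathrm{II}$, I would use the pointwise kernel bounds \eqref{k} (established in \S 4 as Proposition~\ref{kerprop}), which on the present much shorter scale $1\le|t-s|\le T=c_0\log(\log\la)$, where $|t-s|^{-\frac{n-1}2}\le1$ and $\exp(C_M|t-s|)\le(\log\la)^{C_Mc_0}$, yield $|R_\la(x,t;y,s)|\le C\la^{\frac{n-1}2}(\log\la)^{C_Mc_0}$, and hence $\|R_\la h\|_{L^\infty_{t,x}}\le C\la^{\frac{n-1}2}(\log\la)^{C_Mc_0}\|h\|_{L^1_{t,x}}$. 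Then $\mathrm{II}\le\|R_\la G\|_{L^\infty_{t,x}}\|G\|_{L^1_{t,x}}\le C\la^{\frac{n-1}2}(\log\la)^{C_Mc_0}\|G\|_{L^1_{t,x}}^2$, and here is where the modified threshold enters: applying H\"older's inequality slice-by-slice in $x$ ($L^{q_e}_x$ against $L^{q_e'}_x$), then Cauchy--Schwarz in $t$, and using that $|U_\la f|\ge\la^{\frac{n-1}4}(\log\la)^{\e_0}$ on $A_+$,
$$\|G\|_{L^1_{t,x}}\le\|\1_+\|_{L^2_tL^{q_e}_x}\le\bigl(\la^{\frac{n-1}4}(\log\la)^{\e_0}\bigr)^{-1}N.$$
Feeding this back, the factors $\la^{\frac{n-1}2}$ and $\bigl(\la^{\frac{n-1}4}\bigr)^{-2}$ cancel and $\mathrm{II}\le C(\log\la)^{C_Mc_0-2\e_0}N^2$. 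Choosing $c_0>0$ so small that $C_Mc_0<2\e_0$, which is exactly the dependence of $c_0$ on $\e_0$ and $M^{n-1}$ permitted in the statement, makes $\mathrm{II}\le\tfrac12N^2$ for $\la\gg1$; absorbing it into the left side and combining with $\mathrm{I}\lesssim\la$ gives $N^2\lesssim\la$, which is \eqref{ubound}.

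The main obstacle I anticipate is carrying the Bourgain absorption trick through the mixed norm: the height threshold in \eqref{22.86} must be exploited one time-slice at a time so that the resulting gain appears precisely as a factor $(\log\la)^{-2\e_0}$ in the $L^2_tL^{q_e}_x$-norm itself, and, correspondingly, the threshold is set at $\la^{\frac{n-1}4}(\log\la)^{\e_0}$ so that its square exactly absorbs the $\la^{\frac{n-1}2}$ coming from the dispersive kernel $R_\la$. Two supporting points also need care: that the $TT^*$/orthogonality reduction for the banded operator $L_\la$ over an interval longer than $[0,1]$ costs nothing, so that $\mathrm{I}$ really is $O(\la)$ even though $|I_T|$ may be as large as $c_0\log(\log\la)$; and that the kernel bound \eqref{k} is available on the $\log(\log\la)$ time scale, which is immediate since it holds on the longer $\log\la$ scale.
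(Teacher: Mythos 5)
Your proposal is correct and matches the paper's proof in essentially every step: the same dualization and $TT^*$ reduction, the same splitting of $U_\la U_\la^*$ into a near-diagonal piece controlled by the Burq--G\'erard--Tzvetkov endpoint estimate \eqref{2.85} and a remote piece controlled pointwise via \eqref{k}, and the same absorption argument exploiting the height threshold $\la^{\frac{n-1}4}(\log\la)^{\e_0}$ so that the $\la^{\frac{n-1}2}$ factors cancel and a net $(\log\la)^{C_Mc_0-2\e_0}$ loss remains, killed by taking $c_0$ small relative to $\e_0$. The only cosmetic difference is that the paper realizes the near-diagonal piece as $L_\la=\sum_{|j-k|\le 10}\alpha_j(t)\,e^{-i(t-s)\la^{-1}\Delta_g}\beta^2(P/\la)\,\alpha_k(s)$ rather than by a sharp cutoff at $|t-s|=1$, but this is just a smooth version of the same banded decomposition and your reasoning about it is sound.
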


\begin{proof}  If $I_T$ is as above choose $g$ so that
\begin{multline}\label{22.87}
\|g\|_{L^2_tL^{q_e'}_x(A_+\cap (M^{n-1}\times I_T))}
=1, \, \, \text{and }
\\
\|U_\la f\|_{L^2_tL^{q_e}_x(A_+\cap (M^{n-1}\times
I_T))}
=
\iint U_\la f \, \cdot \,
\overline{  \1_{A_+\cap(M^{n-1}\times I_T)} \cdot g }
\, dx dt.
\end{multline}

Note that $U_\la U^*_\la 
=e^{-i(t-s)\la^{-1}\Delta_g}\beta^2(P/\la)$.  Let us
split
$$U_\la U^*_\la =L_\la +G_\la,$$
where if $\alpha_m$ is as in \eqref{22.17}
$$L_\la =\sum_{\{(j,k): \, |j-k|\le 10\}}
\alpha_j(t) e^{-i(t-s)\la^{-1}\Delta_g}\beta^2(P/\la)
\alpha_k(s).$$
Then, it is straightforward to see that \eqref{2.85}
yields
\begin{equation}\label{22.88}
\|L_\la \|_{L^2_tL^{q_e'}_x \to L^2_t L^{q_e}_x}
=O(\la),
\end{equation}
and, using \eqref{k} again, we have that the kernel
of $G_\la$ satisfies
\begin{equation}\label{22.89}
|G_\la(x,t;y,s)|\le C\la^{\frac{n-1}2} \exp(C_M|t-s|) 
\quad \text{if } \, |t-s|\lesssim \log\la,
\end{equation}
for some constant $C_M$ depending on $M^{n-1}$.

Thus, if we repeat the first part of the proof 
of Proposition~\ref{largeprop}, we find that
$$\|U_\la f\|^2_{L^2_tL^{q_e}_x(A_+\cap
(M^{n-1}\times I_T))}\le |I|+|II|,
$$
where
\begin{align*}
I&=\iint L_\la \bigl( \,
\1_{A_+\cap (M^{n-1}\times I_T)} \cdot g\, \bigr)
\, 
\overline{\1_{A_+\cap (M^{n-1}\times I_T)} \cdot g \, }
\, dx dt
\\
II &= \iint G_\la \bigl( \,
\1_{A_+\cap (M^{n-1}\times I_T)} \cdot g\, \bigr)
\, 
\overline{\1_{A_+\cap (M^{n-1}\times I_T)} \cdot g \, }
\, dx dt.
\end{align*}

By \eqref{22.87} and \eqref{22.88}
\begin{equation}\label{22.90}
|I|\le \bigl\| L_\la ( \,
\1_{A_+\cap (M^{n-1}\times I_T)} \cdot g\, )
\|_{L^2_t L^{q_e}_x(A_+\cap (M^{n-1}\times I_T))}
\le C\la.
\end{equation}
Also, by \eqref{22.89}, if  
$T=c_0(\log(\log\la))$ with $c_0>0$ sufficiently
small we have
$$|G_\la(x,t;y,s)|\le C\la^{\frac{n-1}2} \,
(\log\la)^{\e_0},  \, \,
\text{if } \, \, t,s\in I_T.$$
So, for this choice of $T$ we have by \eqref{22.87}
and H\"older's inequality
\begin{align*}
|II|&\le C\la^{\frac{n-1}2}(\log\la)^{\e_0}
\bigl\| \, \1_{A_+\cap (M^{n-1}\times I_T)} \cdot g\,
\bigr\|_{L^1_{t,x}(A_+\cap (M^{n-1}\times I_T)}^2
\\
&\le  C\la^{\frac{n-1}2}(\log\la)^{\e_0}
\, \| \1_{A_+\cap (M^{n-1}\times I_T)}\|^2_{L^2_t
L^{q_e}_x}.
\end{align*}
Since $1\le |U_\la f(x,t)| \cdot (\la^{\frac{n-1}4}
\, (\log\la)^{\e_0})^{-1}$ on $A_+$, we have
$$\|\1_{A_+\cap (M^{n-1}\times I_T)}\|^2_{L^2_t
L^{q_e}_x}\le \la^{-\frac{n-1}2} (\log\la)^{-2\e_0}
\|U_\la f\|^2_{L^2_tL^{q_e}_x(A_+\cap (M^{n-1}
\times I_T))},
$$ 
and thus for $\la \gg1$
\begin{equation}\label{22.91}
|II|\le \tfrac12 \|U_\la f\|^2_{L^2_tL^{q_e}_x(A_+\cap (M^{n-1}
\times I_T))}.
\end{equation}

Since \eqref{22.90} and
\eqref{22.91} imply \eqref{ubound}, the proof is complete.\end{proof}

Next, let us note that by
Proposition~\ref{endprop}
$$\|U_\la f\|_{L^2_tL^{q_e}_x(A_+)}
\le C\la^{\frac12}
(\log\la/\log(\log\la))^{\frac12}.$$
Thus, we would have \eqref{endest} and hence
\eqref{00.15} if we could show that if 
$\e_0>0$ in \eqref{22.86} is small enough, then
for $\la \gg1$,
\begin{equation}\label{22.94}
\|U_\la f\|_{L^2_tL^{q_e}_x(A_-)}
\le C\la^{\frac12}(\log\la)^{\frac12-\delta_1}, \quad
\text{some } \, \delta_1>0.
\end{equation}

We can use our log power gains for $L^{\qc}$ to prove this since, by \eqref{2.85},
\begin{equation}\label{22.95}
\tfrac12=\tfrac{n-1}2(\tfrac12 -\tfrac1{\qe}) \quad \text{and } \, \, \,
\tfrac1{\qc}=\tfrac{n-1}2(\tfrac12-\tfrac1{\qc}).
\end{equation}
We also note that by H\"older's inequality
since $A_-\subset M^{n-1}\times [0,\log\la]$,
the $L^{q_c}_{t,x}$ estimates \eqref{00.10'}
yield
\begin{equation}\label{22.96}
\|U_\la f\|_{L_t^rL^{q_c}_x(A_-)}
\le C\la^{\frac1{q_c}}(\log\la)^{\frac1r-\delta_0}, \, \, 
\text{if } \, 1\le r<q_c, \, \text{and } \, \delta_0=\tfrac1{q_c}(1-\tfrac2{q_c})>0.
\end{equation}

Note that $q_e>q_c$ and let 
\begin{equation}\label{22.97}
\tilde \e_0=\tfrac{q_e-q_c}{q_e}\e_0<\e_0 \quad \text{and } \, \,
\tilde \delta_0=\tfrac{q_c}{q_e}\delta_0<\delta_0.
\end{equation}
Then by \eqref{22.86} and \eqref{22.97}
\begin{align}\label{e.16}
\|U_\la f\|_{\kt(A_-)}&\le \|S_\la f\|_{L^\infty(A_-)}^{\frac{\qe-\qc}{\qe}} \cdot \|U_\la f\|^{\frac{\qc}{\qe}}_{L^{\frac{2\qc}{\qe}}_tL^{\qc}_x(A_-)}
\\
&\lesssim  (\log\la)^{\tilde \e_0} \la^{\frac{n-1}4 (\frac{\qe-\qc}{\qe})} \|U_\la f\|^{\frac{\qc}{\qe}}_{L^{\frac{2\qc}{\qe}}_tL^{\qc}_x(A_-)} 
\notag
\\
&= (\log\la)^{\tilde \e_0} \la^{\frac{n-1}4} \, \la^{-\frac{n-1}4\frac{\qc}{\qe}} \|U_\la f\|^{\frac{\qc}{\qe}}_{L^{\frac{2\qc}{\qe}}_tL^{\qc}_x(A_-)}  .
\notag
\end{align}
If we let $r=\tfrac{2\qc}{\qe}$, then, since $n\ge4$, we have $r\in [1,\qc)$. Therefore, if we apply \eqref{22.96}
and recall \eqref{22.97}, since $\|f\|_2=1$, we can bound the last factor as follows
\begin{align}\label{e.17}
\|U_\la f\|^{\frac{\qc}{\qe}}_{L^{\frac{2\qc}{\qe}}_tL^{\qc}_x(A_-)} 
&\le C\la^{\frac{n-1}2(\frac12-\frac1{\qc})\cdot \frac{\qc}{\qe}} \, \bigl[ (\log\la)^{\frac{\qe}{2\qc}-\delta_0}\bigr]^{\frac{\qc}{\qe}}
\\
&=C\la^{\frac{n-1}4 \cdot \frac{\qc}{\qe}-\frac{n-1}2\cdot \frac1{\qe}} \, (\log\la)^{\frac12-\tilde \delta_0}.
\notag
\end{align}
If we combine \eqref{e.16} and \eqref{e.17} and use \eqref{22.95} one more time we conclude that
$$\|S_\la f\|_{\kt(A_-)}\le C\la^{\frac{n-1}4-\frac{n-1}2 \frac1{\qe}}
(\log\la)^{\frac12-(\tilde \delta_0-\tilde \e_0)}=
C\la^{\frac12}(\log\la)^{\frac12-(\tilde \delta_0-\tilde \e_0)}.$$
This gives us \eqref{22.94} with $\delta_1=\tilde \delta_0-\tilde \e_0$, if $\e_0>0$ is small enough so that $\tilde \e_0<\tilde \delta_0$, which
finishes the proof of Theorem~\ref{endthm}.  \qed

\bigskip

\noindent{\bf Remarks.}  We note that if $M^{n-1}$ is a torus ${\mathbb T^{n-1}}$ 
of dimension $d=n-1\ge 3$ then we can use the toral estimates of Bourgain and Demeter~\cite{BoDe} to obtain
much stronger results than the ones we have obtained for general manifolds of nonpositive curvature.  Indeed, we
recall that in \cite{BoDe} it was shown that 
$\|\beta(P/\la)e^{-it\Delta_{{\mathbb T}^{n-1}}}\|_{L^2({\mathbb T}^{n-1})\to L^{q_c}({\mathbb T}^{n-1}\times [0,1])}
=O(\la^\e)$, $\forall \e>0$.  Therefore, by Sobolev estimates and H\"older's inequality we have
\begin{align*}
\|\beta(P/\la)e^{-it\Delta_{{\mathbb T}^{n-1}}}\|_{L^2_tL^{q_e}_x({\mathbb T}^{n-1}\times [0,1])}
&\lesssim \la^{(n-1)(\frac1{q_c} -\frac1{q_e})}
\|\beta(P/\la)e^{-it\Delta_{{\mathbb T}^{n-1}}}\|_{L^2_tL^{q_c}_x({\mathbb T}^{n-1}\times [0,1])}
\\
&\le  \la^{(n-1)(\frac1{q_c} -\frac1{q_e})}
\|\beta(P/\la)e^{-it\Delta_{{\mathbb T}^{n-1}}}\|_{L^{q_c}_tL^{q_c}_x({\mathbb T}^{n-1}\times [0,1])}
\\
&\lesssim \la^{(n-1)(\frac1{q_c} -\frac1{q_e})+\e} \|f\|_2 = \la^{\frac2{n+1}+\e} \|f\|_2.
\end{align*}
If $d=n-1\ge 3$, this is a $\la^{\frac2{n+1}-\frac12+\e}\le \la^{-\frac1{10}+\e}$ over the universal bounds
of Burq, G\'erard and Tzvetkov \cite{bgtmanifold}, which is much better than our $(\log\log\la)^{-1/2}$
improvement in Theorem~\ref{endthm}.

On the other hand, it seems likely that we shall be able to obtain no loss for dyadic estimates on tori
${\mathbb T}^n$
 on intervals
of length $\la^{-1+\delta_n}$ for some $\delta_n>0$, which would be the natural analog of \eqref{00.14}
in this setting.  We hope to study this problem as well as possible improved Strichartz estimates for
spheres\footnote{We should point out
that in a recent work S\'anchez and Esquivel~\cite{sanchez2021sharp} stronger results
than those in \cite{bgtmanifold} were stated.  However,
there is a gap in the
arguments in \cite{sanchez2021sharp} 
based on incorrect use of Sobolev estimates, and simple
examples (such as the function $f_\la=\beta(P/\la)(x,x_0)$ discussed in the introduction) show that
some of the results in \cite{sanchez2021sharp}
are invalid.}
 in a later work.

\newsection{Local variable coefficient harmonic analysis: Proof of Proposition~\ref{locprop}}

We are dealing with $A^{\theta_0}_\nu\in S^0_{7/8,1/8}$ which are pseudo-differential cutoffs at the scale $\theta_0=\la^{-1/8}$.  In order to obtain the gains involved in the last term in the right side of \eqref{b1} we shall have to also use cutoffs at the scale $\theta_\ell = 2^\ell \theta_0$ with $\ell<0$.

To prove this we shall use the strategy in Blair
and Sogge \cite{SBLog} and earlier works, especially
Tao, Vargas and Vega~\cite{TaoVargasVega} and Lee~\cite{LeeBilinear}.

We first note that if $\delta$ as in \eqref{22.6} is small enough we have
\begin{equation}\label{b2}
\alpha_m(t)\tilde \sigma_\la -
\sum_\nu \alpha_m(t) \tilde \sigma_\la \Atn=R_\la,
\, \,  \text{where } \, \, 
\|R_\la H\|_{L^\infty_{t,x}}\lesssim \la^{-N}
\|H\|_{L^2_{t,x}} \, \, \forall N.
\end{equation}
Thus, we have
\begin{equation}\label{b3}
\bigl(\, \alpha_m(t)\tilde \sigma_\la H\, \bigr)^2
=\sum_{\nu,\tilde \nu}
\bigl(\alpha_m(t)\tilde \sigma_\la \Atn H\bigr)\cdot
\bigl(\alpha_m(t)\tilde \sigma_\la \Atnt H\bigr)
+O(\la^{-N} \|H\|_{L^2_{t,x}}^2) \, \, \forall \, N.
\end{equation}

As in earlier works, let
\begin{equation}\label{b4}
\diag(H) = \sum_{(\nu,\tilde \nu)\in \xid}
\bigl(\alpha_m(t)\tilde \sigma_\la \Atn H\bigr)\cdot
\bigl(\alpha_m(t)\tilde \sigma_\la \Atnt H\bigr),
\end{equation}
and
\begin{equation}\label{b5}
\far(H)=\sum_{(\nu,\tilde \nu)\notin \xid}
\bigl(\alpha_m(t)\tilde \sigma_\la \Atn H\bigr)\cdot
\bigl(\alpha_m(t)\tilde \sigma_\la \Atnt H\bigr)
+O(\la^{-N} \|H\|_{L^2_{t,x}}^2),\end{equation}
with the last term denoting the error term in \eqref{b3}.
Thus,
\begin{equation}\label{b6}
\bigl(\alpha_m(t)\tilde \sigma_\la H\bigr)^2
=\diag(H)+\far(H).
\end{equation}

Thus, the summation in $\diag(H)$ is over near diagonal pairs $(\nu,\tilde \nu)$.  In particular we
have $|\nu-\tilde \nu|\le C\theta_0$ for some uniform constant
as $\nu,\tilde \nu$ range over $\theta_0{\mathbb Z}^{(2n-3)}$.
The other term $\far(H)$ is the remaining pairs, which include many which are far from the diagonal.  This sum will provide the contribution to the last term in
\eqref{b1}. 

The two types of terms here are treated differently,
as in analyzing parabolic restriction problems or 
spectral projection estimates. 

We can treat the first term in the right of \eqref{b6} as in \cite{BHSsp} and \cite{SBLog} by using a variable coefficient variant of
Lemma 6.1 in 
\cite{TaoVargasVega} (see also Lemma 4.2 in \cite{SBLog}):

\begin{lemma}\label{blemma}  If $\diag(H)$ is as in
\eqref{b6} and $n\ge3$, then we have the uniform bounds
\begin{equation}\label{b7}
\|\diag(H)\|_{L^{q_c/2}_{t,x}}
\lesssim \Bigl(\, \sum_\nu
\|\alpha_m(t) \tilde \sigma_\la
\Atn H\|_{L^{q_c}_{t,x}}^{q_c}\,
\Bigr)^{2/q_c}
+O(\la^{\frac2{q_c}-}\|H\|_{L^2_{t,x}}^2).
\end{equation}
\end{lemma}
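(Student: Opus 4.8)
The plan is to follow the variable-coefficient adaptation of Lemma~6.1 of \cite{TaoVargasVega} (and Lemma~4.2 of \cite{SBLog}). Write $g_\nu=\alpha_m(t)\,\tilde\sigma_\la A^{\theta_0}_\nu H$, so that by \eqref{b4} one has $\diag(H)=\sum_{(\nu,\tilde\nu)\in\xid}g_\nu\,g_{\tilde\nu}$, the sum being over the near-diagonal pairs with $|\nu-\tilde\nu|\lesssim\theta_0=\la^{-1/8}$, of which each fixed $\nu$ has only $O(1)$. H\"older's inequality gives $\|g_\nu g_{\tilde\nu}\|_{L^{q_c/2}_{t,x}}\le\|g_\nu\|_{L^{q_c}_{t,x}}\|g_{\tilde\nu}\|_{L^{q_c}_{t,x}}$, so the triangle inequality alone would yield only $\|\diag(H)\|_{L^{q_c/2}_{t,x}}\lesssim\sum_\nu\|g_\nu\|_{L^{q_c}_{t,x}}^2$; since $q_c>2$ this $\ell^2$ bound is weaker than the $\ell^{q_c}$ bound claimed in \eqref{b7}, and to recover the extra summability I would exploit, as in \cite{TaoVargasVega} and \cite{LeeBilinear}, that the products $g_\nu g_{\tilde\nu}$ have essentially disjoint space-time frequency localizations.

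To see this localization, recall that $A^{\theta_0}_\nu=A^{\theta_0}_j(x,D_x)\circ A^{\theta_0}_\ell(P)$ of \eqref{m10} concentrates, modulo $O(\la^{-N})$, to a $C_0\theta_0$-tube about a geodesic $\gamma_\nu\subset S^*\Omega$ as in \eqref{m5} and to the band $|\xi|_g\in[\la\kappa^\theta_\ell-C\theta_0\la,\la\kappa^\theta_\ell+C\theta_0\la]$ of \eqref{m7}, while $\tilde\sigma_\la=B\sigma_\la$ concentrates the full frequency $(\xi,\tau)$ near the Schr\"odinger characteristic $\{\la|\tau|=|\xi|_g^2\}$, the unit-width cutoff $\alpha_m(t)$ spreading $\tau$ only by $O(1)\ll\theta_0\la$. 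I would pass to the Schr\"odinger (Fermi-type) coordinates constructed in \S 3 about the relevant Schr\"odinger curve, which flatten this characteristic, so that in the rescaled variables each $g_\nu$ is, up to a rapidly decaying error, frequency-localized to a slab of dimensions $\approx\theta_0\times\cdots\times\theta_0\times\theta_0^2$ about a fixed affine subspace; then the Euclidean Whitney-slab combinatorics of \cite[p.~971]{TaoVargasVega} and \cite{LeeBilinear}, applied to the paraboloid geometry of the flattened characteristic together with the separation \eqref{sep0}, show that the frequency localizations of the products $g_\nu g_{\tilde\nu}$ over $(\nu,\tilde\nu)\in\xid$ have $O(1)$ overlap.

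Granting this, I would extract the main term by a standard square-function argument, which is where the hypothesis $n\ge3$ enters: it gives $q_c=\tfrac{2(n+1)}{n-1}\le4$, so $q_c/2\in(1,2]$. By the Littlewood--Paley (reverse square function) estimate for families with boundedly overlapping frequency supports, valid for $1<p<\infty$, $\|\diag(H)\|_{L^{q_c/2}_{t,x}}\lesssim\bigl\|(\sum_{\nu,\tilde\nu}|g_\nu g_{\tilde\nu}|^2)^{1/2}\bigr\|_{L^{q_c/2}_{t,x}}$; since $q_c/2\le2$ the pointwise bound $\|(a_{\nu,\tilde\nu})\|_{\ell^2}\le\|(a_{\nu,\tilde\nu})\|_{\ell^{q_c/2}}$ turns this into $\|\diag(H)\|_{L^{q_c/2}_{t,x}}\lesssim\bigl(\sum_{\nu,\tilde\nu}\|g_\nu g_{\tilde\nu}\|_{L^{q_c/2}_{t,x}}^{q_c/2}\bigr)^{2/q_c}$; and then H\"older's inequality, the arithmetic--geometric mean inequality, and the $O(1)$ multiplicity of close pairs $(\nu,\tilde\nu)$ give the first term on the right of \eqref{b7}.

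Finally I would account for the rapidly decaying errors from the flattening step, which are the source of the $O(\la^{2/q_c-}\|H\|_{L^2_{t,x}}^2)$ term: replace $g_\nu$ by $\tilde g_\nu$ whose frequency support genuinely lies in the appropriate $C_0\theta_0$-slab; since the $A^{\theta_0}_\nu$ lie in a bounded subset of $S^0_{7/8,1/8}$ and $\widehat\sigma$ is supported at the $\delta$-scale \eqref{22.6} with $\delta_0<1/2$, the difference operator maps $L^2_{t,x}\to L^{q_c}_{t,x}$ with norm $O(\la^{1/q_c-\e_0})$ for some $\e_0>0$, so by the almost-orthogonality \eqref{m12}, the $L^p$-bounds \eqref{22.10} and \eqref{mp}, and Lemma~\ref{qlemma}, one gets $\sum_\nu\|g_\nu-\tilde g_\nu\|_{L^{q_c}_{t,x}}^2=O(\la^{2/q_c-}\|H\|_{L^2_{t,x}}^2)$; the cross terms involving one factor $g_\nu-\tilde g_\nu$ are handled the same way, and the square-function argument is then run on the $\tilde g_\nu$. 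I expect the bounded-overlap claim of the second paragraph to be the main obstacle: verifying that the Schr\"odinger coordinates of \S 3 flatten the characteristic accurately enough for the Euclidean combinatorics of \cite{TaoVargasVega} and \cite{LeeBilinear} to survive, and tracking the errors closely enough to obtain a genuine power gain $\la^{-\e_0}$ rather than merely $O(\la^{2/q_c})$.
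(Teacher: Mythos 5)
Your proposal identifies the right mechanism — the near-diagonal products $g_\nu g_{\tilde\nu}$ have boundedly overlapping Fourier supports, and an $\ell^{(q_c/2)'}$-decoupling in the spirit of Lemma~6.1 of \cite{TaoVargasVega} then converts this into the desired $\ell^{q_c}$ sum; you also correctly see that $n\ge3\Leftrightarrow q_c\le4\Leftrightarrow q_c/2\le2$ is exactly what makes the exponents close up. However, the route you propose to verify the bounded-overlap claim is not what the paper does and is more elaborate than necessary. You reach for the Schr\"odinger coordinates of \S3 and a flattening of the characteristic $\la|\tau|=|\xi|_g^2$; the paper instead works entirely in the \emph{spatial} Fourier variable. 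For each fixed $x$ the symbol $\Atn(x,\cdot)$ is supported in a cube $Q_\nu^{\theta_0}(x)\subset\R^{n-1}_\xi$ of sidelength $\approx\la^{7/8}$ whose center varies smoothly in $x$, so the paper simply constructs symbols $b_{\nu,\tilde\nu}(x,\xi)\in S^0_{7/8,1/8}$ equal to $1$ on a neighborhood of $\text{supp}_\xi\Atn(x,\cdot)+\text{supp}_\xi\Atnt(x,\cdot)$ (as in \eqref{p1}--\eqref{p2}), notes their supports are disjoint for pairs separated by $\gtrsim\theta_0$, and deduces the key decoupling by \emph{duality}: the family $b^*_{\nu,\tilde\nu}(x,D)$ is $\ell^2\to L^2$ almost-orthogonal \eqref{p3} and uniformly $L^\infty$-bounded \eqref{p4}, so interpolation gives the needed $\ell^r\to L^r$ bound for $r=(q_c/2)'\in[2,\infty)$. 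This also avoids your statement that a reverse square-function estimate for boundedly overlapping frequency supports is ``valid for $1<p<\infty$''; in that generality it is only valid for $1<p\le2$ (the dual of Rubio de Francia/Journ\'e type square-function bounds). You apply it only for $p=q_c/2\le2$, so the argument is not broken, but the claim as written is false.

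Your handling of the error term is also less systematic than the paper's. Rather than replacing $g_\nu$ by a ``genuinely frequency-supported'' $\tilde g_\nu$, the paper introduces the widened cutoffs $\tAtn=\sum_{|\mu-\nu|\le C_0\theta_0}A^{\theta_0}_\mu$ with properties \eqref{b10}--\eqref{b11}, and uses the commutator estimate \eqref{b13} (a variant of Lemma~\ref{comprop}, giving an $O(\la^{1/q_c-1/4})$ gain) to push the outer $\Atn$ past $\tilde\sigma_\la$; only then does one run the frequency-overlap argument on $\Atn(\alpha_m(t)\tilde\sigma_\la\tAtn H)$, where the outer $\Atn$ carries the clean $\xi$-localization. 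You were right to flag the bounded-overlap step as the crux, but the resolution is considerably more direct — purely spatial-Fourier, no flattening — than you anticipated.
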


We also need the following estimate for
$\far(H)$ which will be proved using bilinear
oscillatory integral estimates of Lee~\cite{LeeBilinear}
and arguments of two of us in \cite{BlairSoggeRefined}, \cite{blair2015refined} and \cite{SBLog}.

\begin{lemma}\label{leelemma}
If $\far(H)$ is as in \eqref{b5}, and, as above
$\theta_0=\la^{-1/8}$, then for all $\e>0$ we have
for $H=S_\la f$
\begin{equation}\label{b8}
\iint |\far(H)|^{q/2}\, dxdt \lesssim_{\e}
\la^{1+\e} \, \bigl(\la^{7/8}\bigr)^{\frac{n-1}2
(q-q_c)} \, \|H\|_{L^2_{t,x}}^{q}, \quad
\text{if } \, q=\tfrac{2(n+2)}n.
\end{equation}
\end{lemma}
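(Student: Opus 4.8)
The plan is to adapt the scheme of Tao, Vargas and Vega~\cite{TaoVargasVega}, in the variable-coefficient form developed in \cite{BlairSoggeRefined}, \cite{blair2015refined} and \cite{SBLog}, using the Whitney decomposition already set up around \eqref{m14}. Writing $\theta=2^k\theta_0$ with $\theta_0=\la^{-1/8}$, bilinearity together with \eqref{b5} gives
\[
\far(H)=\sum_{k\ge 10}\far_k(H)\ +\ O(\la^{-N}\|H\|_{L^2_{t,x}}^2),\qquad
\far_k(H)=\sum_{\{(\mu,\tilde\mu):\,\tau^\theta_\mu\sim\tau^\theta_{\tilde\mu}\}}\bigl(\alpha_m(t)\tilde\sigma_\la A^\theta_\mu H\bigr)\bigl(\alpha_m(t)\tilde\sigma_\la A^\theta_{\tilde\mu}H\bigr),
\]
where $A^\theta_\mu=\sum_{\nu\in\tau^\theta_\mu}A^{\theta_0}_\nu$ is, by construction, a microlocal cutoff adapted to the $\theta$-cap indexed by $\tau^\theta_\mu$. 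Since $q<q_c$ forces $\tfrac{n-1}{2}(q-q_c)<0$ and there are only $O(\log\la)$ relevant $k$, it suffices to prove the per-scale estimate
\[
\iint\bigl|\far_k(H)\bigr|^{q/2}\,dx\,dt\ \lesssim_\e\ \la^{\e}\,\la\,(\la\theta)^{\frac{n-1}{2}(q-q_c)}\,\|H\|_{L^2_{t,x}}^{q},\qquad q=\tfrac{2(n+2)}{n};
\]
then summing in $k$ by Minkowski's inequality in $L^{q/2}$ proves \eqref{b8}, the series $\sum_{k\ge10}(\la 2^k\theta_0)^{\frac{n-1}{2}(q-q_c)}$ being geometric and dominated by its first term $(\la\theta_0)^{\frac{n-1}{2}(q-q_c)}=(\la^{7/8})^{\frac{n-1}{2}(q-q_c)}$. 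The $\la^\e$ in \eqref{b8} only absorbs the $\e$-loss carried by the bilinear estimate below; and the use of the full norm $\|H\|_{L^2_{t,x}(M^{n-1}\times\R)}$ is a wasteful but harmless overestimate, since by the rapid decay of the kernel of $\tilde\sigma_\la$ off $|t-s|\le5$ from Lemma~\ref{qlemma} only a unit time window of $H$ is actually seen by $\alpha_m(t)\tilde\sigma_\la$.

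The per-scale estimate will follow from the single close-pair bound
\[
\bigl\|\bigl(\alpha_m(t)\tilde\sigma_\la A^\theta_\mu H\bigr)\bigl(\alpha_m(t)\tilde\sigma_\la A^\theta_{\tilde\mu}H\bigr)\bigr\|_{L^{q/2}_{t,x}}\ \lesssim_\e\ \la^\e\,\la^{2/q}\,(\la\theta)^{(n-1)-\frac{2(n+1)}{q}}\,\|A^\theta_\mu H\|_{L^2_{t,x}}\,\|A^\theta_{\tilde\mu}H\|_{L^2_{t,x}}
\]
for $\tau^\theta_\mu\sim\tau^\theta_{\tilde\mu}$. To obtain this I would first invoke the kernel representation of $\tilde\sigma_\la$ established below (following \cite{SBLog} and \cite{HSSchro}): for $H=S_\la f$, the operator $f\mapsto\alpha_m(t)\tilde\sigma_\la A^\theta_\mu S_\la f$ is, modulo an $O(\la^{-N})$ remainder, a variable-coefficient oscillatory integral operator at frequency $\approx\la$ whose phase, once one passes to the Schr\"odinger coordinates adapted to the Schr\"odinger curve through the centre of $\tau^\theta_\mu$, coincides with the flat parabolic phase up to terms negligible on the relevant ranges of $(x,t)$ and of frequency. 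A parabolic rescaling then turns the two $\theta$-separated pieces into a transversal pair of unit caps at the reduced large parameter $\la\theta^2\ge\la\theta_0^2=\la^{3/4}\gg1$, so that Lee's bilinear oscillatory integral estimates~\cite{LeeBilinear} apply directly: their curvature hypotheses hold because the flat parabolic phase has non-vanishing Gaussian curvature, and their transversality hypothesis holds because close cubes satisfy $\dist(\tau^\theta_\mu,\tau^\theta_{\tilde\mu})\approx\theta$, which rescales to separation $\approx1$. Keeping track of the Jacobians of the two rescalings produces the displayed powers of $\la$ and $\theta$, exactly as in the Euclidean model of \cite{TaoVargasVega}.

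It then remains to sum the single-pair bounds over the close pairs at a fixed scale $\theta$. By \eqref{sep0} and the Whitney geometry, the products $(\alpha_m(t)\tilde\sigma_\la A^\theta_\mu H)(\alpha_m(t)\tilde\sigma_\la A^\theta_{\tilde\mu}H)$ for distinct close pairs have finitely overlapping space-time frequency supports, so the orthogonality/duality argument of \cite{TaoVargasVega} (the same one used to prove Lemma~\ref{blemma}) bounds $\|\far_k(H)\|_{L^{q/2}_{t,x}}$ by the square root of $\sum_{\mu\sim\tilde\mu}\|(\alpha_m(t)\tilde\sigma_\la A^\theta_\mu H)(\alpha_m(t)\tilde\sigma_\la A^\theta_{\tilde\mu}H)\|_{L^{q/2}_{t,x}}^2$; inserting the single-pair bound, using that each $\mu$ has only $O(1)$ close partners, and applying the almost-orthogonality \eqref{m12} in the form $\sum_\mu\|A^\theta_\mu H\|_{L^2_{t,x}}^2\lesssim\|H\|_{L^2_{t,x}}^2$ yields $\|\far_k(H)\|_{L^{q/2}_{t,x}}\lesssim\la^\e\,\la^{2/q}\,(\la\theta)^{(n-1)-\frac{2(n+1)}{q}}\,\|H\|_{L^2_{t,x}}^2$, which on raising to the power $q/2$ is the per-scale estimate. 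I expect the single-pair step to be the main obstacle: one must check, with constants uniform in $\la$ and in the choice of cap, that $\alpha_m(t)\tilde\sigma_\la A^\theta_\mu S_\la$ really is (a bounded sum of) oscillatory integral operators of precisely the class to which \cite{LeeBilinear} applies, and it is exactly here that the Hadamard-parametrix computation of the kernel of $\tilde\sigma_\la$, the passage to Schr\"odinger coordinates, the control of all error terms left over from the parametrix and from the change of coordinates, and the lower bound $\theta\ge\theta_0=\la^{-1/8}$ (which keeps $\la\theta^2$ large enough for the curvature of the characteristic variety to be resolved) all come into play.
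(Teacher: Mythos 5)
Your proposal follows the same overall strategy as the paper: decompose $\far(H)$ via the Whitney decomposition into scales $\theta=2^k\theta_0$, prove a per-scale bound by reducing to a single close-pair estimate, obtain the latter from Lee's bilinear oscillatory integral theorem after passing to Schr\"odinger coordinates and parabolic rescaling, and sum the geometric series in $k$ using $q<q_c$. The algebraic bookkeeping (e.g.\ the identity $(n-1)-\tfrac{2(n+1)}{q}=\tfrac{n-1}{q}(q-q_c)$, the reduced large parameter $\la\theta^2\ge\la^{3/4}$) all matches. The main technical subtlety you under-emphasize, which the paper handles with some care, is the refinement to sub-cubes of sidelength $c_0\theta$ with $c_0=2^{-m_0}$ fixed small: this is what guarantees the quantitative separation \eqref{sep} needed to verify Lee's transversality hypotheses \eqref{v20}--\eqref{v21} uniformly in $\la$ and in the cap.

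There is, however, a genuine error in your close-pair summation at a fixed scale. You claim that finite overlap of the space-time frequency supports of the products $(\alpha_m\tilde\sigma_\la A^\theta_\mu H)(\alpha_m\tilde\sigma_\la A^\theta_{\tilde\mu}H)$ gives an $\ell^2$-bound $\|\far_k(H)\|_{L^{q/2}}\lesssim\big(\sum_{\mu\sim\tilde\mu}\|\cdots\|_{L^{q/2}}^2\big)^{1/2}$, and you attribute this to the argument of Lemma~\ref{blemma}. Both assertions are off: for the exponent $p=q/2<2$ in play when $n\ge3$, finite spectral overlap yields only $\ell^p$-orthogonality (C\'ordoba's $p$-orthogonality), not $\ell^2$; and the duality argument behind Lemma~\ref{blemma} produces an $\ell^{q_c/2}$-norm of the summands, again not $\ell^2$. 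What the paper actually does in \eqref{l4} is much simpler: since there are only $O(1)$ close partners $\tilde\mu$ for each $\mu$, Minkowski's inequality (plain $\ell^1$-summation) followed by the single-pair bound, Cauchy--Schwarz, and the almost-orthogonality $\sum_\mu\|A^\theta_\mu H\|_{L^2}^2\lesssim\|H\|_{L^2}^2$ from \eqref{m12} already yields the desired per-scale estimate. Your false $\ell^2$ claim is not fatal — the weaker $\ell^1$ estimate closes the argument just as well, and the final numerology is the same — but as written that step is not justified and should be replaced by the straightforward triangle-inequality summation.
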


Let us postpone the proofs of these two lemmas for a
bit and show how they can be used to obtain
Proposition~\ref{locprop}.

If we let $q=\tfrac{2(n+2)}n$ as in Lemma~\ref{leelemma}, we note that
$q<q_c$ and also
\begin{multline*}|\alpha_m(t) \tilde \sigma_\la H \cdot
\alpha_m(t) \tilde \sigma_\la H|
\\
\le 2^{q/2}
\, |\alpha_m(t) \tilde \sigma_\la H \cdot
\alpha_m(t) \tilde \sigma_\la H|^{\frac{q_c-q}2}
\cdot \bigl( \,
|\diag(H)|^{q/2}+|\far(H)|^{q/2}\, \bigr).
\end{multline*}
Thus,
\begin{align}\label{b9}
\|\alpha_m(t) &\tilde \sigma_\la H
\|_{L^{q_c}(A_-)}^{q_c} 
=\int_{A_-}\bigl| \alpha_m(t) \tilde \sigma_\la H \cdot
\alpha_m(t) \tilde \sigma_\la H\bigr|^{q_c/2} \, dxdt
\\
&\lesssim \notag
\int_{A_-} |\alpha_m(t) \tilde \sigma_\la H \cdot
\alpha_m(t) \tilde \sigma_\la H|^{\frac{q_c-q}2} \, 
|\diag(H)|^{q/2} \, dxdt
\\
&\quad +\int_{A_-} |\alpha_m(t) \tilde \sigma_\la H \cdot
\alpha_m(t) \tilde \sigma_\la H|^{\frac{q_c-q}2} \, 
|\far(H)|^{q/2} \, dxdt \quad =I+II.
\notag
\end{align}

To estimate $II$ we use \eqref{b8}, the ceiling
for $A_-$, and the fact that $\tilde \sigma_\la
H=\tilde S_\la f$ if $H=S_\la f$ to see that
\begin{multline*}
II \lesssim \|\alpha_m(t)\tilde S_\la f\|_{L^\infty(A_-)}^{q_c-q} \cdot 
\la^{1+\e} \, \bigl(\la^{7/8}\bigr)^{\frac{n-1}2
(q-q_c)} \, \|H\|_{L^2_{t,x}}^{q}
\\
\le \la^{(\frac{n-1}4+\frac18)(q_c-q)}
\cdot \la^{-(q_c-q)(\frac78\cdot \frac{n-1}2)}
\cdot \la^{1+\e} \|H\|^q_{L^2_{t,x}}
=O(\la^{1-\delta_n+\e}\|H\|^{q_c}_{L^2_{t,x}}),
\, \, \text{some } \, \delta_n>0.
\end{multline*}
We have $\delta_n>0$ since $(q_c-q)(\tfrac{3(n-1)}{16}-\tfrac18)>0$, and also 
$\|H\|_{L^2_{t,x}}^{q_c}$ dominates 
$\|H\|_{L^2_{t,x}}^{q}$ since $q_c>q$ and
$\|H\|_{L^2_{t,x}}\approx T$ since $H=S_\la f$, $\|f\|_2=1$
and $e^{-it\la^{-1}\Delta_g}$ is a unitary operator on $L^2_x$.  

Since we may take $\e<\delta_n$, $II^{1/q_c}$ is dominated by the last term in \eqref{b1},  Consequently, we just need to see that $I^{1/q_c}$ is
dominated by the other term in the right side of this inequality. To estimate this term we
use H\"older's inequality followed by Young's 
inequality and Lemma~\ref{blemma} to see that
\begin{align*}
I&\le \|\alpha_m(t) \tilde \sigma_\la H
\cdot \alpha_m(t)\tilde \sigma_\la H\|_{L^{q_c/2}(A_-)}^{\frac{q_c-q}2} \cdot 
\|\diag(H)\|^{q/2}_{L^{q_c/2}_{t,x}}
\\
&\le \tfrac{q_c-q}{q_c}
\|\alpha_m(t) \tilde \sigma_\la H
\cdot \alpha_m(t)\tilde \sigma_\la H\|_{L^{q_c/2}(A_-)}^{q_c/2}
+\tfrac{q}{q_c} \|\diag(H)\|^{q_c/2}_{L^{q_c/2}_{t,x}}
\\
&\le \tfrac{q_c-q}{q_c} \|\alpha_m(t)\tilde \sigma_\la H
\|_{L^{q_c}(A_-)}^{q_c} 
+C\sum_\nu \|\alpha_m(t)\tilde \sigma \Atn H\|_{L^{q_c}_{t,x}}^{q_c}
+O(\la^{1-}\|H\|_{L^2_{t,x}}^{q_c}).
\end{align*}
Since $\tfrac{q_c-q}{q_c}<1$, the first term
in the right can be absorbed in the left
side of \eqref{b9}, and this, along with
the estimate for $II$ above yields \eqref{b1}.

Thus, if we can prove Lemma~\ref{blemma} and
Lemma~\ref{leelemma}, the proof of Proposition~\ref{locprop} will be complete.

\medskip

\noindent {\bf Proof of Lemma~\ref{blemma}.}

Let us first define slightly wider microlocal
cutoffs by setting
$$\tAtn =\sum_{|\mu-\nu|\le C_0 \theta_0}A^{\theta_0}_\mu.$$
We can fix $C_0$ large enough so that
\begin{equation}\label{b10}
\|\Atn-\Atn\tAtn\|_{L^p_x\to L^p_x}=O(\la^{-N})
\, \, \forall \, N\, \, \text{if } \,
\, 1\le p\le \infty.
\end{equation}
Also, like the original $\Atn$ operators
the $\tAtn$ operators are almost orthogonal
\begin{equation}\label{b11}
\sum_\nu \|\tAtn h\|^2_{L^2_x}\lesssim
\|h\|_{L^2_x}^2.
\end{equation}

Since
$$\|\alpha_m(t)\tilde \sigma_\la F\|_{L^{q_c}_{t,x}}\le C\la^{\frac1{q_c}}\|F\|_{L^2_{t,x}},
$$
we conclude that, in order to prove \eqref{b7}, we may replace $\diag(H)$ by $\tdiag(H)$ where
the latter is defined by the analog of 
\eqref{b4} with $\Atn$ and $\Atnt$ 
replaced by $\Atn \tAtn$ and $\Atnt
\tAtnt$, respectively.

So, it suffices to prove
\begin{multline}\label{b12}
\bigl\|\sum_{(\nu,\tilde \nu)\in \xid}
(\alpha_m(t)\tilde \sigma_\la
\Atn\tAtn H)\cdot 
(\alpha_m(t)\tilde \sigma_\la
\Atnt\tAtnt H)\bigr\|_{L^{q_c/2}_{t,x}}
\\
\le C\Bigl(\sum_\nu
\|\alpha_m(t)\tilde \sigma_\la \Atn H\|^{q_c}_{L^{q_c}_{t,x}}
\Bigr)^{2/q_c}
 +O(\la^{\frac2{q_c}-}\|H\|_{L^2_{t,x}}^2).
\end{multline}
We shall need the following variant of  \eqref{cc3},
\begin{equation}\label{b13}
\|\alpha_m(t)[\, \tilde \sigma_\la
\Atn-\Atn\tilde \sigma_\la\, ]F\|_{L^{q_c}_{t,x}}
\lesssim \la^{\frac1{q_c}-\frac14}\|F\|_{L^2_{t,x}}.
\end{equation}
This follows from the proof of Lemma~\ref{comprop}, or, alternately from
Lemma~\ref{qlemma}, \eqref{cc3} and the fact that the commutator
$[B,A^{\theta_0}_\nu]$ is bounded on $L^{q_c}_x(M^{n-1})$ with norm
$O(\la^{-7/8})$. 
Since the $\Atn$ commute with the
$\alpha_m(t)$ time-localizations, by \eqref{b11} and \eqref{b13} we 
would have \eqref{b12} if we could show that
\begin{multline}\label{b14}
\bigl\|\sum_{(\nu,\tilde \nu)\in \xid}
(\Atn(\alpha_m(t)\tilde \sigma_\la
\tAtn H)\cdot 
\Atnt(\alpha_m(t)\tilde \sigma_\la
\tAtnt H)\bigr\|_{L^{q_c/2}_{t,x}}
\\
\le C\Bigl(\sum_\nu
\|\alpha_m(t)\tilde \sigma_\la \Atn H\|^{q_c}_{L^{q_c}_{t,x}}
\Bigr)^{2/q_c}
 +O(\la^{\frac2{q_c}-}\|H\|_{L^2_{t,x}}^2).
\end{multline}

Note that the functions in the norm in the 
left side of \eqref{b14} vanish if $
t\notin [m-1,m+1]$.  Therefore, if we take
$r=(q_c/2)'$ so that $r$ is the conjugate
exponent for $q_c/2$, it suffices to show that
\begin{multline}\label{b15}
\Bigl| \sum_{(\nu,\tilde \nu)\in \xid}
\iint \Atn(\alpha_m(t)\tilde \sigma_\la
\tAtn H)\cdot 
\Atnt(\alpha_m(t)\tilde \sigma_\la
\tAtnt H) \, \cdot G \, dtdx\, \Bigr|
\\
\le C\Bigl(\sum_\nu
\|\alpha_m(t)\tilde \sigma_\la \Atn H\|^{q_c}_{L^{q_c}_{t,x}}
\Bigr)^{2/q_c}
 +O(\la^{\frac2{q_c}-}\|H\|_{L^2_{t,x}}^2),
\\ \text{if } \, \|G\|_{L^r_{t,x}}=1
\, \, \text{and } \,
G(t,x)=0 \, \, \,
\text{if } \, \, t\notin [m-1,m+1].
\end{multline}

Note that if $x$ and $\nu$ are fixed and 
$\xi \to \Atn(x,\xi)$ does not vanish identically, then this function of $\xi$ is supported in a cube $Q^{\theta_0}_\nu(x)\subset {\mathbb R}^{n-1}_\xi$ of sidelength $\approx \la^{7/8}$.
The cubes can be chosen so that, if $\eta_\nu(x)$ is its center, then $\partial^\gamma_x\eta_\nu(x)=O(\la)$ for all 
multi-indices $\gamma$.  Keeping this in mind it
is straightforward to construct for every
pair $(\nu,\tilde \nu)\in \xid$ symbols
$b_{\nu,\tilde \nu}(x,\xi)$ belonging to 
a bounded subset of $S^0_{7/8,1/8}$ satisfying
\begin{equation}\label{p1}
b_{\nu,\tilde \nu}(x,\eta)=1 \, \, \text{if } \, \,
\text{dist}\bigl(\eta, \, \text{supp}_\xi \Atn(x,\xi) \, +\, \text{supp}_\xi \Atnt(x,\xi)\bigr)
\le \la^{7/8},
\end{equation}
with ``$+$'' denoting the algebraic sum.  Using this and a simple integration by parts argument shows that for every pair $(\nu,\tilde \nu)\in \xid$
\begin{equation}\label{p2}
\bigl\| (I-b_{\nu,\tilde \nu}(x,D))\bigl[
\Atn h \cdot \Atnt h]\bigr] \bigr\|_{L^\infty_x}
\le C_N \la^{-N}\|h\|^2_{L^1_x}, \quad
\forall \, N.
\end{equation}
The symbols can also be chosen so that
$b_{\nu_1,\tilde \nu_1}(x,\xi)$ and
$b_{\nu_2,\tilde \nu_2}(x,\xi)$ have disjoint
supports if $(\nu_j,\tilde \nu_j)\in \xid$,
$j=1,2$ and $\min(|(\nu_1-\nu_2,\tilde \nu_1
-\tilde \nu_2)|, \, |(\nu_1-\tilde \nu_2,
\tilde \nu_1-\nu_2)|)\ge C_2 \theta_0$ with 
$C_2$ being a fixed constant independent of 
$\la$ since all pairs in $\xid$ are nearly
diagonal.  Due to this, the adjoints, 
$b^*_{\nu,\tilde \nu}(x,D)$ are almost orthogonal 
in the sense that we have the uniform bounds
\begin{equation}\label{p3}
\sum_{(\nu,\tilde \nu)\in \xid}
\|b^*_{\nu,\tilde \nu}(x,D)h\|^2_{L^2_x}\lesssim
\|h\|^2_{L^2_x}.
\end{equation}
Since $\text{supp}_\xi \Atn(x,\xi) \, +\, \text{supp}_\xi \Atnt(x,\xi)$ is contained
in a cube of sidelength $\approx \la^{7/8}$
and can be chosen to have center $\eta_{\nu,\tilde \nu}(x)$ satisfying 
$\partial^\gamma_x \eta_{\nu,\tilde \nu}(x)
=O(\la)$, we can furthermore assume that
we have the uniform bounds
\begin{equation}\label{p4}
\sup_{(\nu,\tilde \nu)\in \xid}
\|b^*_{\nu,\tilde \nu}(x,D)h\|_{L^\infty_x}
\lesssim \|h\|_{L^\infty_x}.
\end{equation}

We have now set up our variable coefficient 
version of the simple argument in 
\cite{TaoVargasVega} that will allow us
to obtain \eqref{b15}.  First, by \eqref{p2},
modulo $O(\la^{-N}\|H\|_{L^2_{t,x}}^2)$ errors,
the left side of \eqref{b15} is dominated by
\begin{multline}\label{b16}
\Bigl| \sum_{(\nu,\tilde \nu)\in \xid}
\iint (\Atn(\alpha_m(t)\tilde \sigma_\la
\tAtn H)\cdot 
\Atnt(\alpha_m(t)\tilde \sigma_\la
\tAtnt H \, \cdot \bigl( b^*_{\nu,\tilde \nu}(x,D)G \bigr)\, dtdx\, \Bigr|
\\
\le
\Bigl(
\sum_{(\nu,\tilde \nu)\in \xid}
\| \Atn(\alpha_m(t)\tilde \sigma_\la
\tAtn H)\cdot 
\Atnt(\alpha_m(t)\tilde \sigma_\la
\tAtnt H)\|_{L^{q_c/2}_{t,x}}^{q_c/2}\,
\Bigr)^{2/q_c}
\\
\cdot \Bigl(
\sum_{(\nu,\tilde \nu)\in \xid}
\| b^*_{\nu,\tilde \nu}(x,D)G\|_{L^{r}_{t,x}}^r
\Bigr)^{1/r},
\end{multline}
since $r=(q_c/2)'$.

Note that $r\in [2,\infty)$ since
$q_c\in (2,4]$.  So, if we use \eqref{p3},
\eqref{p4} and an interpolation argument
we conclude that
$$\Bigl(
\sum_{(\nu,\tilde \nu)\in \xid}
\| b^*_{\nu,\tilde \nu}(x,D)G\|_{L^{r}_{t,x}}^r
\Bigr)^{1/r}=O(1),$$
for $G$ as in \eqref{b15}.  As a result,
we conclude that modulo $O(\la^{\frac2{q_c}-}
\|H\|_{L^2_{t,x}})$ errors, the left
side of \eqref{b14} is dominated by
\begin{multline*}
\Bigl(
\sum_{(\nu,\tilde \nu)\in \xid}
\| \Atn(\alpha_m(t)\tilde \sigma_\la
\tAtn H)\cdot 
\Atnt(\alpha_m(t)\tilde \sigma_\la
\tAtnt H)\|_{L^{q_c/2}_{t,x}}^{q_c/2}\,
\Bigr)^{2/q_c}
\\
\lesssim 
\Bigl(\sum_\nu
\|\alpha_m(t)\Atn \tilde \sigma_\la
\tAtn H\|_{L^{q_c}_{t,x}}^{q_c}\Bigr)^{2/q_c}.
\end{multline*}

If we repeat earlier arguments and use
\eqref{b10} again, we conclude that the
right side of the preceding inequality is dominated by the right side of \eqref{b7}, and
this finishes the proof of Lemma~\ref{blemma}.

\medskip

\noindent{\bf Bilinear oscillatory integral estimates: Proof of Lemma~\ref{leelemma}}

To prove \eqref{b8} we note that for a given $\theta=2^k\theta_0$, $k\ge 10$ we have
for each fixed $c_0>0$
\begin{equation}\label{l1}
\alpha_m(t) \tilde \sigma_\la A^{\theta_0}_\nu H=
\sum_{\mu'\in {c_0\theta\,\mathbb Z}^{2n-3}}
\tilde \sigma_\la A^{c_0\theta}_{\mu'} A^{\theta_0}_\nu H
+O(\la^{-N}\|H\|_2).
\end{equation}
As in \cite{BlairSoggeRefined} it will be convenient to choose $c_0=2^{-m_0}<1$ so that we are working
at scales $c_0\theta$ rather than $\theta$ to ensure that we easily have the separation to apply bilinear 
oscillatory integral bounds.

With this in mind we note that if we fix $k\ge10$ in the first sum in \eqref{m14}, we then have for a 
given fixed $c_0=2^{-m_0}$, $m_0\in {\mathbb N}$, and pair of dyadic cubes $\tau^\theta_\mu$, $\tau^\theta_{\tilde \mu}$
with $\tau^\theta_\mu \sim \tau^\theta_{\tilde \mu}$
and $\theta=2^k\theta_0$ 
\begin{multline}\label{l2}
\sum_{(\nu,\tilde \nu)\in \tau^\theta_\mu\times \tau^\theta_{\tilde \mu}}
(\alpha_m(t)\tilde \sigma_\la A^{\theta_0}_\nu H)
(\alpha_m(t)\tilde \sigma_\la A^{\theta_0}_{\tilde \nu} H)
\\
=\sum_{(\nu,\tilde \nu)\in \tau^\theta_\mu\times \tau^\theta_{\tilde \mu}} \, 
\sum_{\substack{\tau^{c_0\theta}_{\mu'} \cap \overline{\tau}^\theta_\mu \ne \emptyset
\\ \tau^{c_0\theta}_{\tilde \mu'} \cap \overline{\tau}^\theta_{\tilde \mu} \ne \emptyset}}
(\alpha_m(t)\tilde \sigma_\la  A^{c_0\theta}_{\mu'} A^{\theta_0}_\nu H)
(\alpha_m(t)\tilde \sigma_\la  A^{c_0\theta}_{\tilde \mu'} A^{\theta_0}_{\tilde \nu} H)
+O(\la^{-N}\|H\|_2^2),
\end{multline}
if $\overline{\tau}^\theta_\mu$ and $\overline{\tau}^\theta_{\tilde \mu}$ the cubes with the same centers but $11/10$ times the
sidelength
of $\tau^\theta_\mu$ and $\tau^\theta_{\tilde \mu}$, respectively, so that we have
$\text{dist}(\overline{\tau}^\theta_\mu, \overline{\tau}^\theta_{\tilde \mu})\ge \theta/2$ when
$\tau^\theta_\mu  \sim \tau^\theta_{\tilde \mu}$.  
This follows from the fact that for $c_0$ small enough the product of the symbol of $A_{\mu'}^{c_0\theta}$ and $A_\nu^{\theta_0}$ 
vanishes identically if $\tau_{\mu'}^{c_0\theta}\cap \overline{\tau}^\theta_{\mu}=\emptyset$ and $\nu \in \tau^\theta_\mu$, since
$\theta=2^k\theta_0$ with $k\ge 10$.  
Also notice that we then have 
for fixed $c_0=2^{-m_0}$ small enough
\begin{equation}\label{sep}
\text{dist}(\tau^{c_0\theta}_{\mu'}, \tau^{c_0\theta}_{\tilde \mu'})\in [4^{-1}\theta, 4^n  \theta],
\quad \text{if } \, \, \tau^{c_0\theta}_{\mu'} \cap \overline{\tau}^\theta_\mu \ne \emptyset, \, \, \,
\text{and } \, \, \tau^{c_0\theta}_{\tilde \mu'} \cap \overline{\tau}^\theta_{\tilde \mu} \ne \emptyset.
\end{equation}
Also, of course, for each $\mu$ there are $O(1)$ terms $\mu'$  with $\tau^{c_0\theta}_{\mu'} \cap \overline{\tau}^\theta_\mu \ne \emptyset$,
if $c_0$ is fixed.

Note also, that if we fix $c_0$ then for our fixed pair $\tau^\theta_\mu\sim \tau^\theta_{\tilde \mu}$ of
$\theta$-cubes there are only $O(1)$ summands involving $\mu'$ and $\tilde \mu'$ in the
right side of \eqref{l2}.

Keeping this in mind, we claim that we would have favorable bounds for the $L^{q/2}_{t,x}$-norm, $q=\tfrac{2(n+2)}n$, 
of the first term in \eqref{m14} and hence $\far(H)$
if we could prove the following:

\begin{proposition}\label{key}
Let $\theta=2^k\theta_0=2^k\la^{-1/8}\ll 1$ with $k\in {\mathbb N}$.  Then we can fix $c_0=2^{-m_0}$ small enough so that whenever
\begin{equation}\label{sep'}\text{dist}(\tau^{c_0\theta}_{\nu}, \tau^{c_0\theta}_{\tilde \nu})\in [4^{-1}\theta, 4^n  \theta],
\end{equation}
 one has the uniform bounds for $0\le m\le C\log\la$
 \begin{multline}\label{l3}
 \iint \bigl| (\alpha_m(t)\tilde \sigma_\la A^{c_0\theta}_{\nu}H_1) \, (\alpha_m(t)\tilde \sigma_\la A^{c_0\theta}_{\tilde \nu}H_2) \bigr|^{q/2} \, dt dx
 \\
 \lesssim_\e \la^{1+\e}
\, \bigl(2^k\la^{7/8}\bigr)^{\frac{n-1}2 (q-q_c)} \, \|H_1\|^{q/2}_{L^2_{t,x}} \, \|H_2\|^{q/2}_{L^2_{t,x}},
\end{multline}
with, as in \eqref{b7}, $q=\tfrac{2(n+2)}n$, assuming that $H_k(y,s)=0$, $k=1,2$, for $|s|\ge C\log\la$.
 \end{proposition}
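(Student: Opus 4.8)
The plan is to reduce \eqref{l3} to a variable-coefficient bilinear oscillatory integral estimate of Lee~\cite{LeeBilinear}, by first passing to an explicit integral representation of $\alpha_m(t)\tilde\sigma_\la A^{c_0\theta}_\nu$ in the Schr\"odinger coordinates discussed in the introduction. First I would use the kernel computation for $\sigma_\la$ from Lemma~\ref{qlemma} together with the microlocal cutoffs $A^{c_0\theta}_\nu$ to write, modulo $O(\la^{-N}\|H_k\|_2)$ errors, each factor $\alpha_m(t)\tilde\sigma_\la A^{c_0\theta}_\nu H_k$ as an oscillatory integral operator applied to $H_k$ whose phase function, after rescaling the frequency variable by $\la$ and passing to Schr\"odinger coordinates adapted to the Schr\"odinger curve through the center of $\tau^{c_0\theta}_\nu$, agrees to the required order with the model ``parabolic'' phase $\langle x',\xi'\rangle + x_n|\xi'|^2$ that appears in the study of Fourier restriction to paraboloids. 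Here the relevant microlocalization has the frequency support of $A^{c_0\theta}_\nu$ contained in a box of dimensions $\approx \la \times (2^k\la^{7/8})^{n-1}$ (one long direction of size $\la$, the remaining $n-1$ directions of size $2^k\la^{7/8}=2^k\theta\la$ since these cutoffs are of type $(1-\delta_0,\delta_0)$ with $\delta_0=1/8$), which after the $\la$-rescaling becomes a cap of radius $\approx 2^k\la^{-1/8}=\theta$ on the paraboloid. The separation hypothesis \eqref{sep'}, $\dist(\tau^{c_0\theta}_\nu,\tau^{c_0\theta}_{\tilde\nu})\in[4^{-1}\theta,4^n\theta]$, is exactly the transversality/separation condition needed to invoke the bilinear estimate.

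Second, with this normalization in place, I would invoke Lee's bilinear oscillatory integral theorem in the form used in \cite{BlairSoggeRefined}, \cite{blair2015refined}, \cite{SBLog}: for two oscillatory integral operators with phases satisfying the appropriate curvature (Carleson--Sj\"olin) and transversality hypotheses and amplitudes supported in separated $\theta$-caps, one has, for $q=\tfrac{2(n+2)}{n}$, an $L^2\times L^2 \to L^{q/2}$ bound with the correct power of the cap size. Tracking the scaling: the operator has ``frequency scale'' $\la$ and the caps have size $\theta\la=2^k\la^{7/8}$, and the Schr\"odinger time-localization $\alpha_m$ confines $t$ to an interval of length $O(1)$, so the spatial box in which we integrate is bounded; this is where the factor $\bigl(2^k\la^{7/8}\bigr)^{\frac{n-1}2(q-q_c)}$ comes from, and the extra $\la^{1+\e}$ absorbs the overall frequency normalization together with an $\e$-loss from summing the $O(\log\la)$-many overlapping pieces arising from the time cutoff and from the finitely-overlapping Schr\"odinger-coordinate charts needed to cover the $x$-support of $B$. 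One must also use here the almost-orthogonality and $L^p$-boundedness of the $A^{c_0\theta}_\nu$ (from \eqref{mp} and \eqref{m12}) and the fact, proved via Lemma~\ref{comprop}/Lemma~\ref{qlemma}, that $\tilde\sigma_\la$ commutes with these cutoffs up to $O(\la^{1/q_c - 1/4})$ errors, in order to move the cutoffs past $\tilde\sigma_\la$ and reduce to a clean bilinear estimate for the ``frozen'' oscillatory operators.

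The main obstacle, I expect, is verifying that the phase function of $\tilde\sigma_\la A^{c_0\theta}_\nu$, written in Schr\"odinger coordinates about the relevant Schr\"odinger curve, genuinely satisfies Lee's hypotheses \emph{uniformly} in $\nu$ and in $m\le C\log\la$ — in particular that the error terms by which it differs from the model parabolic phase are of the size permitted by the theorem despite the long time interval $t\in[-C\log\la,C\log\la]$ (in the $\la D_t+\Delta_g$ scaling, $m$ of size $\log\la$ corresponds to physical time $\log\la\cdot\la^{-1}$, the Ehrenfest scale). This requires the detailed kernel analysis and the Hadamard-parametrix estimates referenced in the introduction (following \cite{Berard}, \cite{BSTop}, \cite{BHSsp}), and it is precisely the place where the curvature hypothesis and the smallness of $c_0$ enter: the Schr\"odinger coordinates flatten the geometry up to higher-order terms, but controlling those terms over a logarithmically long curve is what forces $c_0$ small. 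Once the phase is shown to be admissible, the remaining steps — Whitney-summing over $(\mu,\tilde\mu)$ and $k$ as in \eqref{m14}, which costs only an $\e$-power as in \cite{TaoVargasVega} — are routine, and combining with Lemma~\ref{blemma} as already carried out above completes the proof of Lemma~\ref{leelemma} and hence of Proposition~\ref{locprop}.
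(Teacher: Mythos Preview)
Your overall strategy is correct and matches the paper's: reduce to Lee's bilinear oscillatory integral theorem~\cite{LeeBilinear} after writing $\alpha_m(t)\tilde\sigma_\la A^{c_0\theta}_\nu$ explicitly as an oscillatory integral in Schr\"odinger coordinates adapted to the curve~\eqref{q1}, then use parabolic scaling by $\theta$. The paper does exactly this, via a kernel representation (Lemma~\ref{ker}) giving phase $-\la(d_g(x,y))^2/4(t-s)$ with amplitude concentrated on a $\theta$-Schr\"odinger tube, a change of variables~\eqref{q9} to Schr\"odinger coordinates, and then a direct verification (Lemmas~\ref{bila} and~\ref{ABlemma}) that Lee's hypotheses~\cite[(1.4)]{LeeBilinear} hold. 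The freezing in $\tilde s,\tilde s'$ via Minkowski/Schwarz that you mention is also what the paper does.

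However, your identification of the ``main obstacle'' is off in an important way. You locate the difficulty in controlling phase errors over a \emph{logarithmically long} time interval, invoking the Ehrenfest scale, the Hadamard parametrix, and the curvature hypothesis. None of that enters here. The operator $\sigma_\la$ is genuinely \emph{local}: by~\eqref{22.6} its kernel is essentially supported where $|d_g(x,y)-\delta|\le 2\delta_0\delta$ and hence $|t-s|\approx\delta$ (this is~\eqref{q7}), so after the $\alpha_m$ cutoff the entire analysis takes place on a unit-scale patch of $M^{n-1}\times\R$, uniformly in $m$. The hypothesis $H_k(\cdot,s)=0$ for $|s|\ge C\log\la$ is only there so that the $O(\la^{-N})$ kernel errors contribute $O(\la^{-N'}\|H_k\|_2)$. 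Proposition~\ref{key} uses no curvature assumption whatsoever; the nonpositive/negative curvature and the B\'erard--Hadamard machinery are used only in the \emph{global} kernel bounds of \S4 (Propositions~\ref{kerprop} and~\ref{mick}), which feed into the $A_+$ argument and the $\ell^{q_c}_m$/$\ell^2_m$ estimates~\eqref{22.62}--\eqref{22.63}, not into this bilinear lemma.

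The actual work specific to this proposition is threefold, and your sketch underweights each: (i) proving the kernel formula~\eqref{q3}--\eqref{qtube}, in particular the derivative bounds~\eqref{q4} in the $(2\kappa^{c_0\theta}_\ell\partial_{x_{n-1}}-\partial_t)$ direction that make the parabolic rescaling~\eqref{q16}--\eqref{q18} produce uniformly bounded amplitudes; (ii) showing (Lemma~\ref{bila}) that the separation~\eqref{sep'} forces $|\tilde y-\tilde y'|\gtrsim\theta$ in the new coordinates, which requires separate arguments for angular separation ($|j-j'|\gtrsim 1$) versus height separation ($|\ell-\ell'|\gtrsim 1$)---this is precisely why $c_0$ must be small, not any long-time issue; and (iii) checking (Lemma~\ref{ABlemma}) that the Hessian quantities in Lee's condition are nondegenerate, specifically that $-\partial_{\tilde t}A(\tilde t,\tilde s)$ is positive definite, which comes down to an explicit computation with $(d_g(x,y))^2/4(t-s)$ in Fermi normal coordinates and needs $\delta,\delta_0$ in~\eqref{22.6} small. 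The $\la^\e$ loss is intrinsic to Lee's theorem, not from any summation you perform.
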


 Before using Lee's \cite{LeeBilinear} oscillatory integral estimates to prove this Proposition, let us verify the above claim.
 
 We first note that if
 $$H_1=\sum_{\nu\in \tau^\theta_\mu} A^{\theta_0}_\nu H
 \quad \text{and } \, \, \, H_2=\sum_{\tilde \nu\in \tau^\theta_{\tilde \mu}} A^{\theta_0}_{\tilde \nu}H,$$
 then by the almost orthogonality of the $A^\theta_\nu$ operators, there is a fixed constant $C$ so that
 $$\|H_1\|_{L^2_{t,x}}\le C
 \bigl(\sum_{\nu \in \tau^\theta_\mu}\|A^{\theta_0}_\nu H\|^2_{L^2_{t,x}}\bigr)^{1/2}
 \quad \text{and } \, \, 
 \|H_2\|_{L^2_{t,x}}\le C
\bigl( \sum_{\tilde \nu \in \tau^\theta_{\tilde \mu}}\|A^{\theta_0}_\nu H\|^2_{L^2_{t,x}}\bigr)^{1/2}.$$
 Thus, \eqref{l1}, \eqref{sep}, \eqref{l3} and Minkowski's inequality yield
 the following estimates for the first term in \eqref{m14} with $k\ge10$, $\theta=2^k\theta_0$
 and $q=\tfrac{2(n+2)}n$:
 \begin{align}\label{l4}
 &\bigl\| \sum_{(\mu,\tilde \mu): \, \tau^\theta_\mu\sim \tau^\theta_{\tilde \mu}} \sum_{(\nu,\tilde \nu)\in \tau^\theta_\mu
 \times \tau^\theta_{\tilde \mu}}
 (\alpha_m(t)\tilde \sigma_\la A^{\theta_0}_{\nu}H)
 \, 
 (\alpha_m(t)\tilde \sigma_\la A^{\theta_0}_{\tilde \nu}H) \bigr\|_{L^{q/2}_{t,x}}
 \\
 &\le
 \sum_{(\mu,\tilde \mu): \, \tau^\theta_\mu\sim \tau^\theta_{\tilde \mu}}
 \bigl\| 
 \sum_{\substack{\tau^{c_0\theta}_{\mu'} \cap \overline{\tau}^\theta_\mu \ne \emptyset
\\ \tau^{c_0\theta}_{\tilde \mu'} \cap \overline{\tau}^\theta_{\tilde \mu} \ne \emptyset}}
\bigl(\alpha_m(t)\tilde \sigma_\la A^{c_0\theta}_{\mu'}(\sum_{\nu\in \tau^\theta_\mu}A^{\theta_0}_\nu H)\bigr) \cdot
\bigl(\alpha_m(t)\tilde \sigma_\la A^{c_0\theta}_{\tilde \mu'}(\sum_{\tilde \nu\in \tau^\theta_{\tilde \mu}}A^{\theta_0}_\nu H)\bigr)
\|_{L^{q/2}_{t,x}} \notag
\\ &\qquad \qquad \qquad \qquad\qquad \qquad
+O(\la^{-N}\|H\|^2_{L^2_{t,x}}) \notag
\\
&\lesssim_\e \la^{(1+\e)\frac2q} \, \bigl(2^k\la^{7/8}\bigr)^{\frac{n-1}q \, (q-q_c)} 
\sum_{(\mu,\tilde \mu): \, \tau^\theta_\mu\sim \tau^\theta_{\tilde \mu} }
\bigl(\sum_{\nu\in \tau^\theta_\mu} \|A^{\theta_0}_\nu H\|_{L^2_{t,x}}^2 \bigr)^{1/2}
\bigl(\sum_{\tilde \nu\in \tau^\theta_{\tilde \mu}} \|A^{\theta_0}_\nu H\|_{L^2_{t,x}}^2\bigr)^{1/2}
\notag
\\ &\qquad \qquad \qquad \qquad\qquad \qquad
+O(\la^{-N}\|H\|^2_{L^2_{t,x}}) \notag
\\
&\lesssim_\e \la^{(1+\e)\frac2q} \, \bigl(2^k\la^{7/8}\bigr)^{\frac{n-1}q \, (q-q_c)} 
\sum_\nu \sum_{\nu\in \tau^\theta_\nu} \|A^{\theta_0}_\nu H\|_{L^2_{t,x}}^2 
+O(\la^{-N}\|H\|^2_{L^2_{t,x}}) \notag
\\
&\lesssim_\e \la^{(1+\e)\frac2q} \, \bigl(2^k\la^{7/8}\bigr)^{\frac{n-1}q \, (q-q_c)} 
\|H\|^2_{L^2_{t,x}} +O(\la^{-N}\|H\|^2_{L^2_{t,x}}) \notag.
\end{align}
In the above we used the fact that for each $\tau^\theta_\mu$ there are $O(1)$ $\tau^{c_0\theta}_{\mu'}$
with $\tau^{c_0\theta}_{\mu'}\cap \overline{\tau}^\theta_\mu \ne \emptyset$, and
$O(1)$ $\tau^\theta_{\tilde \mu}$ with $\tau^\theta_\mu\sim \tau^\theta_{\tilde \mu}$, as well as \eqref{m12}.

Since $q-q_c<0$, we can clearly show that if we replace $\far(H)$ by the first term in \eqref{m14}, then the
resulting expression satisfies the bounds in \eqref{b8}.  Since by \eqref{b5} the additional part of $\far(H)$ is pointwise
bounded by $O(\la^{-N}\|H\|^2_{L^2_{t,x}})$, we conclude that we have reduced matters to proving Proposition~\ref{key}.

\medskip
\noindent{\bf Proof of Proposition~\ref{key}: Schr\"odinger curves and coordinates, and using  bilinear oscillatory integral estimates}

We first need to collect some facts about the kernels of the operators $\tilde \sigma_\la A^{c_0\theta}_\nu$ in \eqref{l3}.
As we shall momentarily see, they are highly concentrated near certain ``Schr\"odinger curves''.

To describe these, let us recall \eqref{m10}, which says that $A_\nu^{c_0\theta}=A^{c_0\theta}_j(x,D_x)\circ A^{c_0\theta}_\ell(P)$,
if $\nu=(c_0\theta j,c_0\theta\ell)\in c_0\theta{\mathbb Z}^{2(n-2)}\times c_0\theta{\mathbb Z}$.  We also recall that, by \eqref{m5}, the symbols of the
``directional'' operators $A^{c_0\theta}_j$ are each highly concentrated near a unit speed geodesic
\begin{equation}\label{direct}
\gamma_j(s)=(x_j(s),\xi_j(s))\in S^*\Omega, \quad \text{with } \, (x_j(s),\xi_j(s))\in \text{supp }A^{c_0\theta}_j(x,\xi).
\end{equation}
Since $\gamma_j$ is of unit speed, we have $d_g(x_j(s_1),x_j(s_2))=|s_1-s_2|$ for points on the geodesic in $\Omega$.
On the other hand, as described in \cite{HSSchro}, due to the role of the ``height operators'' $A_\ell^{c_0\theta}(P)$, the space-time
Schr\"odinger curves associated to the operators in \eqref{l3} will necessarily have to involve speeds that are
associated with the heights $\kappa_\ell^{c_0\theta}$ in \eqref{m7} that define the operators $A_\ell^{c_0\theta}(P)$ (see also
\cite{AnanNon} and \cite{GGH}).

To be more specific, we claim that, if we define the ``{\em Schr\"odinger curves}'' corresponding to $\nu$,
\begin{equation}\label{q1}
\iota_{s_0,\nu}(s)=(x_j(2\kappa s), -(s-s_0))\in \Omega\times \R,
\quad \nu=(c_0\theta j,c_0\theta\ell),\, \, \kappa=\kappa^{c_0\theta}_\ell,
\end{equation}
then the kernels $K_\nu^{c_0\theta}(x,t;y,s)$ of the operators $\tilde \sigma_\la A^{c_0\theta}_\nu$
must be highly concentrated in ``Schr\"odinger tubes'' of radius $\approx \theta$ about the curves
$\iota_\nu$ in \eqref{q1}.  Note that $s\to x_j(2\kappa^{c_0\theta}_\ell s)$ is a geodesic of speed $2\kappa_\ell^{c_0\theta}$,
meaning that $d_g(x_j(2\kappa^{c_0\theta}_\ell s_1), x_j(2\kappa^{c_0\theta}_\ell s_2))=2\kappa_\ell^{c_0\theta}|s_1-s_2|$.
Also, the minus sign in the time variable in \eqref{q1} is just based on the minus sign in \eqref{22.5}, which, as we shall see, we have chosen to be able to 
use the local analysis in \cite{SBLog}, \cite{sogge88}, etc., without unnecessary  sign-confusion.  This minus sign also occurs because
of our sign convention in \eqref{00.1} of course.

\begin{remark}In Figure~\ref{fig:tubes} three Schr\"odinger tubes passing through a common point $(x_0,t_0)$ are depicted.  The two on the right
have a common spatial orientation, meaning that each comes from a common geodesic $\gamma=\gamma_j$ as in \eqref{direct}; however, their
speeds come from different heights and thus do not coincide, which accounts for the separation of the two Schr\"odinger tubes away
from $(x_0,t_0)$ on the right.  The left and right tubes in the figure have a common speed but different spatial components, which accounts for their separation.
  We also point out that in parabolic restriction problems, curves of the form
\eqref{q1} necessarily arise in the analysis due to Knapp phenomena.  In the translation invariant setting, these Schr\"odinger curves
are simply lines in directions pointing in normal directions to relevant portions of paraboloids
as depicted in Figure~\ref{fig:euclidean}.  For variable coefficient Schr\"odinger problems, the analogous 
Knapp phenomenon was discussed in \cite[\S4]{HSSchro}, and additional variable coefficient local analysis that we have exploited was developed there.
\end{remark}


\begin{figure} 
\begin{minipage}{0.49\textwidth} \centering \includegraphics[width=\linewidth]{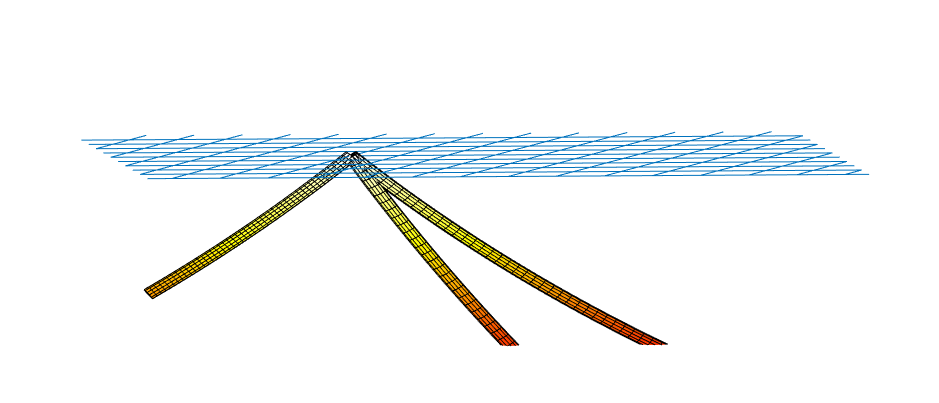} 
\caption{Schr\"odinger tubes}
\label{fig:tubes}
\end{minipage}
\begin{minipage}{0.43\textwidth} \centering \includegraphics[width=\linewidth]{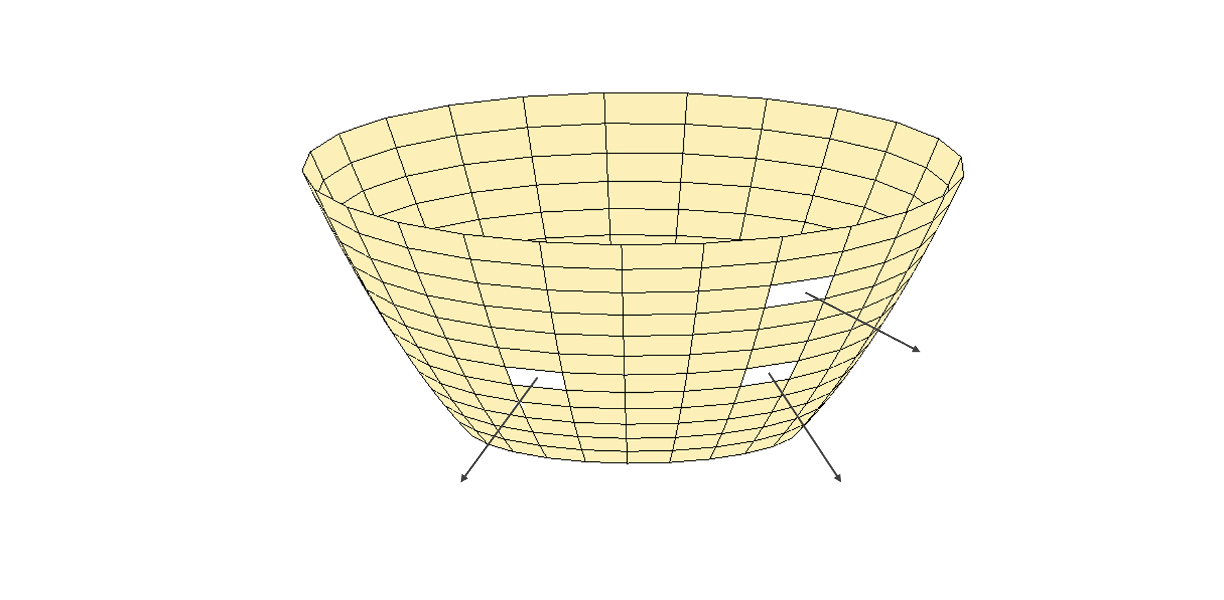} 
\caption{Euclidean case} 
\label{fig:euclidean} 
\end{minipage}
\end{figure}

%
 Let us now state the properties of the kernels $K_\nu^{c_0\theta}(x,t;y,s)$  that we shall require.  To simplify the statements and to 
 also most easily apply Lee's \cite{LeeBilinear} results, let us work in Fermi normal coordinates about the geodesic $x_j(s)$ in 
 \eqref{direct} (see \cite{Tubes}).  
In these coordinates the geodesic becomes part of the last coordinate axis, i.e., $(0,\dots,0,s)$ in ${\mathbb R}^{n-1}$,
 with, as in the constructions of the symbols of the $A^{c_0\theta}_\nu$, $s$ close to the origin.  For the remainder of the section, abusing
 notation a bit, $x=(x_1,\dots,x_{n-1})$ denotes these Fermi normal coordinates.  We then have  \begin{equation}\label{q2}
 d_g((0,\dots,0,x_{n-1}), (0,\dots,0,y_{n-1}))=|x_{n-1}-y_{n-1}|,
 \end{equation}
 and, moreover, on our spatial geodesic in \eqref{direct} we also have that the metric is simply
 $g_{jk}(x)=\delta^k_j$ if $x=(0,\dots,x_{n-1})$ and all the Christoffel symbols vanish there.  
 Thus, $g_{jk}$ agrees with the standard flat rectangular metric to second order along this geodesic.  See \cite{Fermi1}, \cite{Fermi2}.
 Note that in these coordinates we have $(0,(0,\dots,0,1))\in \text{supp }A^{c_0\theta}_j(x,\xi)$ and so for small enough $\theta$ we have
 \begin{multline}\label{ch}
 A^{c_0\theta}_j(0,\xi/|\xi|)=0 \, \, \, \text{when } \,
 |\xi/|\xi|-(0,\dots,0,1)|\ge Cc_0\theta, 
 \\ \text{and } \, \,
 \chi_t(0,(0,\dots,0,1))=(t,(0,\dots,0,1)),
 \end{multline}
 with, as before, $\chi_t$ being geodesic flow, and $C$ here a uniform constant.
 
 We can now formulate the required properties of the kernels.
 
 \begin{lemma}\label{ker}  Fix $0<\delta\ll \tfrac12 \text{Inj }M$.  Assume further that $\mu=\nu,\nu'$ are as in \eqref{sep'},
 and let  $K_{\la,\mu}^{c_0\theta}$ be the kernel of $\tilde \sigma_\la A^{c_0\theta}_\mu$.  In the above coordinates if $c_0\ll 1$ we have
 \begin{equation}\label{q3}
 K_{\la,\mu}^{c_0\theta}(x,t;y,s)=\la^{\frac{n-1}2} e^{-i\la (d_g(x,y))^2/4(t-s)} a_{\la,\mu}(x,t;y,s)+O(\la^{-N}), \quad
 \mu =\nu,\nu',
 \end{equation}
 where, if $\nu=(c_0\theta j,c_0\theta\ell)$, and $\kappa_\ell^{c_0\theta}$ is as in \eqref{q1},
 \begin{multline}\label{q4}
 \bigl| \, \bigl(2\kappa_\ell^{c_0\theta}\tfrac\partial{\partial x_{n-1}}-\tfrac\partial{\partial t})^{m_1}
 \bigl(2\kappa_\ell^{c_0\theta}\tfrac\partial{\partial y_{n-1}}-\tfrac\partial{\partial s})^{m_2} D^\beta_{x,t,y,s}a_{\la,\mu} \bigr|
 \\
 \le C_{m_1,m_2,\beta} \theta^{-|\beta|}, \quad \mu=\nu,\nu'.
 \end{multline}
 Furthermore, for small $\theta$ and $c_0$ there is a constant $C_0$ so that the above $O(\la^{-N})$ errors can be chosen so that the amplitudes
   have the following support properties:
 If $\overline\gamma_j$ denotes the projection onto $M^{n-1}$ of the geodesic in \eqref{m5} and $\overline\gamma_{j'}$ 
 when $j$ is replaced by $j'$, 
\begin{multline}\label{q6}
a_{\la,\mu}(x,t;y,s)=0,
 \quad \text{if } \, \, d_g(x,\overline\gamma_k)+d_g(y,\overline\gamma_k)\ge C_1c_0\theta, 
 \\ \text{if } k=j \, \, \text{when } \nu=(c_0\theta j,c_0\theta\ell) \, \, \text{and if} \, \, 
 k=j' \, \, \text{when} \, \, \, \nu'=(c_0\theta j',c_0\theta\ell'),
\end{multline}
 \begin{multline}\label{q5}
 a_{\la,\mu}(x,t;y,s)=0
 \quad \text{if } \, \, | d_g(x,y)
 +2\kappa\,(t-s)|\ge C_0 c_0\theta,
 \\
  \text{when } \, \mu=\nu \, \, \text{with} \quad \kappa=\kappa_\ell^{c_0\theta}, \, \, \, 
 \text{or } \, \, \nu=\nu' \, \, \text{with} \quad \kappa=\kappa_{\ell'}^{c_0\theta},
 \end{multline}
 as well as
 \begin{multline}\label{qtube}
 a_{\la,\mu}(x,t;y,s)=0, \, \, \mu=\nu, \nu' ,
 \\  \text{if } \, \,
 |(x_1,\dots,x_{n-2})|+
 |(y_1,\dots,y_{n-2})|+
 |(x_{n-1}-y_{n-1})+2\kappa^{c_0\theta}_\ell(t-s) |\ge C_0 \theta.
 \end{multline}
Finally, 
for small $\delta_0>0$ in \eqref{22.6}, the $O(\la^{-N})$ errors can be chosen so that  we also have
\begin{equation}\label{q7}
a_{\mu,\la}(x,t;y,s)=0 \quad \text{if } \, \, \, \bigl|\, d_g(x,y)-\delta\, \bigr|\ge 2\delta_0\delta, \, \, 
\text{or if }\, \, x_{n-1}-y_{n-1}<0, \, \, \mu=\nu, \nu'
\end{equation}
with $\delta$ and $\delta_0$ as in \eqref{22.6}.
 \end{lemma}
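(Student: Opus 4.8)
The plan is to compute the kernel $K_{\la,\mu}^{c_0\theta}$ via the Fourier integral representations of $\tilde\sigma_\la = B\circ\sigma(\la^{1/2}|D_t|^{1/2}-P)\,\tilde\beta(D_t/\la)$ composed with $A^{c_0\theta}_\mu = A^{c_0\theta}_j(x,D_x)\circ A^{c_0\theta}_\ell(P)$, and then recognize the resulting oscillatory integral as (a microlocalized, frequency-truncated version of) the half-wave propagator evaluated at the appropriate times, whose phase is the squared geodesic distance divided by $4(t-s)$. Concretely, I would write
\begin{multline*}
\tilde\sigma_\la A^{c_0\theta}_\mu H(x,t) = (2\pi)^{-1}\iint e^{i(t-s)\tau}\,\Hat\sigma(r)\,e^{ir\la^{1/2}\tau^{1/2}}\,\tilde\beta(\tau/\la)\\
\times\bigl(B\,e^{-irP}A^{c_0\theta}_j(x,D_x)A^{c_0\theta}_\ell(P)H(\cdot,s)\bigr)(x)\,dr\,d\tau\,ds,
\end{multline*}
so that the spatial factor $e^{-irP}A^{c_0\theta}_j A^{c_0\theta}_\ell$ is, modulo $O(\la^{-N})$, a Fourier integral operator associated to geodesic flow for time $r\in[\delta-\delta_0\delta,\delta+\delta_0\delta]$ — this is where the B\'erard/Hadamard parametrix (as in the cited \cite{Berard}, \cite{BSTop}, \cite{BHSsp}) enters, using the nonpositive curvature hypothesis to get clean remainder bounds on the universal cover. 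Performing the $\tau$ (and then $r$) integrals by stationary phase in $\tau$ (the critical point being $\la^{1/2}\tau^{1/2}\cdot\text{(something)} = $ the geodesic distance times $\la$, after the substitution $s=\la^{1/2}\tau^{1/2}$ used repeatedly in Lemma~\ref{lemmadiff}) produces the Schr\"odinger phase $-\la\,d_g(x,y)^2/4(t-s)$ and the $\la^{(n-1)/2}$ prefactor, essentially the computation already carried out in \cite{SBLog} and \cite{HSSchro}.

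The structure of the proof I would give is: (i) reduce to the parametrix for $e^{-irP}$ composed with the dyadic cutoffs, citing the Hadamard parametrix construction and the curvature-dependent bounds on its remainder (here is where $c_0\ll1$, or rather $\delta$ being below the injectivity radius, matters, though $T$ itself does not appear yet since $\Hat\sigma$ is supported near $r=\delta$); (ii) carry out the $d\tau$-stationary phase to extract the phase $-\la d_g(x,y)^2/4(t-s)$ and amplitude $\la^{(n-1)/2}a_{\la,\mu}$, as in \cite{SBLog}; (iii) read off the symbol-type estimates \eqref{q4} by noting that each application of $2\kappa_\ell^{c_0\theta}\partial_{x_{n-1}}-\partial_t$ (resp.\ in $y,s$) is, on the support of the amplitude, a vector field \emph{tangent} to the Schr\"odinger curve and hence costs nothing, whereas transverse derivatives cost $\theta^{-1}$ because of the scale-$\theta$ spatial and height localizations; (iv) deduce the support properties \eqref{q6}–\eqref{q7} from the corresponding supports of the symbols $a^{c_0\theta}_j$, $b(\theta^{-1}\la^{-1}(P-\la\kappa^{c_0\theta}_\ell))$, and $\Hat\sigma$: \eqref{q6} from \eqref{m5}, \eqref{q5} and \eqref{qtube} from pairing the directional cutoff (localizing $y$ near the geodesic $\overline\gamma_j$ and $\xi$ near its codirection) with the height cutoff (which fixes $|\xi|\approx\la\kappa^{c_0\theta}_\ell$, hence fixes the speed $2\kappa^{c_0\theta}_\ell$ of the space-time ray through stationary phase in the $r,\tau$ variables), and \eqref{q7} from $\supp\Hat\sigma\subset[\delta-\delta_0\delta,\delta+\delta_0\delta]$ together with the fact that the propagation time $r$ equals $d_g(x,y)$ up to the stationary-phase analysis, plus the forward-in-time orientation forced by the sign in \eqref{22.5}.

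For the Fermi-normal-coordinate statements \eqref{q2} and the remark that $g_{jk}$ agrees with the flat metric to second order, I would simply invoke the standard theory of Fermi (tubular) coordinates about the geodesic $x_j(s)$ — e.g.\ \cite{Tubes}, \cite{Fermi1}, \cite{Fermi2} — under which $x_j(s)$ becomes $(0,\dots,0,s)$, $g_{jk}(0,\dots,0,x_{n-1})=\delta^k_j$, and all Christoffel symbols vanish along the axis; the fact \eqref{ch} that $\chi_t(0,(0,\dots,0,1))=(t,(0,\dots,0,1))$ is just the statement that the axis is a geodesic in these coordinates, and the angular support bound on $A^{c_0\theta}_j$ is \eqref{m5} transported to the new chart. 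I expect the main obstacle to be the careful bookkeeping in step (iii)–(iv): making the error terms $O(\la^{-N})$ genuinely \emph{compatible} with all four support constraints \eqref{q6}, \eqref{q5}, \eqref{qtube}, \eqref{q7} simultaneously — i.e.\ choosing a single cutoff in the parametrix that is $\equiv 1$ on the relevant piece of the space-time Schr\"odinger tube while being supported in the slightly larger tube, and verifying that each non-tangential derivative lands a $\theta^{-1}$ rather than the worst-case $\la^{1/8}=\theta_0^{-1}$ (this uses $\theta=2^k\theta_0\geq\theta_0$ and the type $(7/8,1/8)$ calculus, exactly as in the treatment of $A^{\theta_0}_\nu\in S^0_{7/8,1/8}$ above). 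The geometric phase computation itself and the symbol estimates are by now routine given \cite{SBLog} and \cite{HSSchro}, so the novelty is essentially in organizing the supports around the Schr\"odinger curves \eqref{q1} in these Schr\"odinger coordinates.
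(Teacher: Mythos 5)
Your overall oscillatory--integral/stationary--phase strategy is the right one, and several of your steps align with the paper's proof; but one piece of your outline is a genuine misattribution that would lead you astray if you tried to carry it out. You invoke ``the B\'erard/Hadamard parametrix \dots using the nonpositive curvature hypothesis to get clean remainder bounds on the universal cover'' as the ingredient that lets you resolve $B\circ e^{-irP}$. For Lemma~\ref{ker} this is not what happens, and indeed cannot be what happens: since $\Hat\sigma$ is supported in $[\delta-\delta_0\delta,\,\delta+\delta_0\delta]$ with $\delta\ll\tfrac12\mathrm{Inj}\,M$, the operator $e^{-irP}$ is only ever evaluated for $r$ well inside the injectivity radius, so the local Lax/H\"ormander wave parametrix applies on \emph{any} compact manifold, with no lifting to the universal cover and no curvature condition whatsoever. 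The paper's Lemma~\ref{ker} is in fact a curvature-free statement. The B\'erard construction (sum over the deck group, G\"unther comparison, exponential-volume-growth bookkeeping) is used exclusively for the \emph{global} kernel estimates \eqref{k}, \eqref{22.65np}, \eqref{22.65neg} (Propositions~\ref{kerprop} and \ref{mick}), where $|t-s|$ can be as large as $c_0\log\la$. Invoking it here introduces irrelevant machinery and obscures why the argument in fact works uniformly with no geometric input beyond the injectivity radius.

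Concretely, the paper proceeds by first obtaining \eqref{q7} directly from the support of $\Hat\sigma$ and the smoothness of $(B\circ e^{-irP}\circ A_\nu^{c_0\theta})(x,y)$ off the light cone (plain integration by parts, no parametrix at all), then quoting the already-established representation $(\tilde\sigma(\omega-P)\circ A_\iota^{c_0\theta})(x,y)=\omega^{(n-2)/2}e^{i\omega d_g(x,y)}a_{\iota,\theta}(\omega;x,y)+O(\la^{-N})$ from Lemma~4.3 of \cite{SBLog} (equivalently Lemma~3.2 of \cite{BlairSoggeRefined}), which is the local quasimode kernel and plays the role you assigned to the Hadamard parametrix. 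It then represents the height operator $A_\ell^{c_0\theta}(P)$ by a standard Lax-type FIO (your \eqref{w7}), composes, and does stationary phase in the spatial variables and finally in $\tau$: after the substitution $\tau\mapsto\la\tau^2$, the phase $\tau^2(t-s)+\tau d_g(x,y)$ has critical point $\tau=-d_g(x,y)/2(t-s)$, where its value is $-d_g(x,y)^2/4(t-s)$, producing the Schr\"odinger phase of \eqref{q3}. Your ``stationary phase in $\tau$ with critical point $\la^{1/2}\tau^{1/2}\cdot(\dots)$'' gestures at this but doesn't pin it down. Finally, your step (iii) observation that the vector field $2\kappa_\ell^{c_0\theta}\partial_{x_{n-1}}-\partial_t$ is tangent to the Schr\"odinger curve and so ``costs nothing'' is the right intuition, but it is not a proof of \eqref{q4}: the paper has to rewrite the stationary-phase amplitude in the centered form \eqref{ww1}--\eqref{ww2}, isolate the variable $\tau+(s-t)^{1/2}\kappa$ (equivalently the first argument $u$ of $h$), and then integrate by parts in $\tau$ to show that the would-be $\theta^{-1}$ loss from the $\tilde b$ cutoff is compensated by a factor $(\la\theta^2)^{-1}\le1$; see \eqref{ww5}--\eqref{ww7}. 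Without that manipulation you only get the weaker bound $\theta^{-m_1-m_2-|\beta|}$, not the required $\theta^{-|\beta|}$.
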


 This lemma is just a small variation on Lemma 4.3 in \cite{SBLog} (see also Lemma 3.2 in \cite{BlairSoggeRefined}), and we shall use the 
 aforementioned result
 from \cite{SBLog}
 and the nature of the $\sigma_\la$ operators to obtain the above estimates.  We shall 
 postpone the proof  until  the final section in which we prove  all  the kernel estimates we have used.
 
 Let us show now how Lemma~\ref{ker} along with results from Lee~\cite{LeeBilinear}
 can be used to obtain
 Proposition~\ref{key}.   

 \begin{proof}[Proof of Proposition~\ref{key}]
 To be able to prove \eqref{l3} using Lee's bilinear estimates we need to make one more change of variables to isolate what amounts
 to a ``linear direction" for the phase functions in Lemma~\ref{ker}.  In our earlier works on improved spectral projection estimates this was
 done simply by choosing Fermi normal coordinates about the spatial geodesic in \eqref{direct}.  Since the kernels in Lemma~\ref{ker}
 also involve a time variable, we have to deal with our time management problem by working in what amounts to ``Fermi-Schr\"odinger''
 coordinates adapted to the Schr\"odinger tubes that we have described before.  As we shall see, when we use these coordinates we use a simple
 parabolic scaling argument allowing us to apply the main estimate in \cite{LeeBilinear}.  We should also point out that the coordinate
 system we are about to describe is associated to the tube $\iota_\nu$ in \eqref{q1} that is associated with the amplitude
 $a_{\la,\nu}$ of the
 kernel $K^{c_0\theta}_{\lambda,\nu}$ but not the amplitude
 other kernel $K^{c_0\theta}_{\la,\nu'}$ in the lemma.
 
 To describe these coordinates we first recall that, by \eqref{q2}, the last spatial coordinate $x_{n-1}$ measures distance along  the spatial geodesic
 partially defining $\iota_\nu$.  The ``Fermi-Schr\"odinger'' coordinates will preserve the first $(n-2)$ spatial coordinates but involve a linear change of
 variables in the last two coordinates $(x_{n-1},t)$ that takes into account the speeds of the spatial geodesics in \eqref{q1}, i.e., $2\kappa_\ell^{c_0\theta}$, with $\nu=(c_0\theta j,c_0\theta\ell)$ as before and $\kappa_\ell^{c_0\theta}$ as in \eqref{m7}.  
 The ``Schr\"odinger coordinates''  that we employ are the quantum analog of the ``free-fall coordinates" in relativity theory
  described in Manasse and Misner~\cite{Fermi2}.

 We note that if 
 \begin{equation}\label{q08}
 \varphi(x,t;y,s)=\frac{-(d_g(x,y))^2}{4(t-s)},
 \end{equation}
 is the phase function of the kernels in \eqref{q3}, then since we are working in Fermi normal coordinates, we have along our spatial 
 geodesic
 \begin{equation}\label{q8}\frac\partial{\partial x_j}\varphi, \, \, \frac\partial{\partial y_j}\varphi =0
 \, \, \, \text{if } \, \, x=(0,\dots,0,x_{n-1}), \, \,  y=(0,\dots,0,y_{n-1}) 
 \, \, \, \text{and } \, \, j=1,\dots,n-2.
 \end{equation}
 This is not valid, though, for either of the two remaining coordinates $x_{n-1}$ or $t$ that we are currently using.  We need to
 change coordinates so that, in the new variable, we will have the analog of \eqref{q8} for the $(n-1)$-th variable, and, simultaneously,
 have that the phase function is linear in the other remaining variable when restricted to $\iota_\nu$.
 
 Fortunately, this is easy to do.  We simply define new variables $(\tilde x_{n-1},\tilde t)$ via
 \begin{equation}\label{q9}
 (x_{n-1},t)=\tilde t (2\speed, -1)+\tilde x_{n-1}(\kappa_\ell^{c_0\theta},-1)=(2\speed \tilde t+\speed \tilde x_{n-1},
 -\tilde t-\tilde x_{n-1}).
 \end{equation}
 Note then, for later use that
 \begin{equation}\label{q99}
 (\tilde x_{n-1},\tilde t)=-(\speed)^{-1}\cdot \bigl(x_{n-1}+2\speed t, - x_{n-1}-\speed t),
 \end{equation}
 which means that the $\tilde x_{n-1}$ is related to the concentration in \eqref{q5} with $\kappa=\speed$.
 As mentioned before, we shall not change the first $(n-2)$ variables and so to be consistent with our notation, we let
 \begin{equation}\label{q999}
 \tilde x_j=x_j, \quad 1\le j\le n-2.
 \end{equation}

Note that $(\tilde x,\tilde t)$ is on the Schr\"odinger curve $\iota_\nu$ in \eqref{q2} if and only if $\tilde x=0$.  Moreover,
 we claim that our new coordinates fulfill the two additional goals for the behavior of the phase function $\varphi$ in \eqref{q08} on
 $\iota_\nu$. 
 
 So, we need to check that we have the analog of \eqref{q8} for all $j=1,\dots,n-1$, i.e.,
 \begin{equation}\label{q10}
 \nabla_{\tilde x}\varphi, \, \, \nabla_{\tilde y}\varphi =0 \quad \text{if } \, \, \tilde x=\tilde y=0,
 \end{equation}
 as well as
 \begin{equation}\label{q11}
 \varphi(0,\tilde t,0,\tilde s)=(\speed)^2 \cdot (\tilde t-\tilde s).
 \end{equation}
 
 To verify \eqref{q10}, we note that since $\tilde x_j=x_j$, $1\le j \le n-2$, \eqref{q8} yields 
 $\partial \varphi/\partial x_j=0$ and $\partial \varphi/\partial y_j=0$ when $\tilde x=\tilde y=0$ and $1\le j\le n-2$.
 To see that this remains true for $j=n-1$, which gives us the remaining part of \eqref{q10}, we note that, by
 \eqref{q2} and \eqref{q9},
 \begin{equation}\label{q12}
 \varphi(0,\dots,\tilde x_{n-1},\tilde t,0,\dots,0, \tilde y_{n-1},\tilde s)=
 \frac{(\speed)^2}4 \cdot \frac{(2(\tilde t-\tilde s)+(\tilde x_{n-1}-\tilde y_{n-1}))^2}{\tilde t-\tilde s+(\tilde x_{n-1}-\tilde y_{n-1})},
 \end{equation}
 and, consequently, by calculus, we also obtain $\partial\varphi/\partial \tilde x_{n-1}, \, 
 \partial \varphi/\partial \tilde y_{n-1}=0$ when $\tilde x=\tilde y=0$.  Finally, of course \eqref{q12}  yields \eqref{q11} as well, meaning
 that our goals are fulfilled.

 Next, we need to make a couple of more minor modifications to prove \eqref{l3}, which, in the notation of 
 Lemma~\ref{ker}, after a little bit of arithmetic, can be rewritten as follows:
 \begin{equation}\label{q13}
 \bigl\| (T_1H_1)(T_2H_2)\bigr\|_{L^{q/2}_{t,x}}\lesssim_\e
 \la^{-\frac{2n}q +\e} \,  \theta^{-\frac2{n+2}}\, \|H_1\|_{L^2_{t,x}}\|H_2\|_{L^2_{t,x}}, \, \, q=\tfrac{2(n+2)}n,
 \end{equation}
 assuming $H_k(y,s)=0$, $k=1,2$, if $|s|\ge C\log \la$, where
 \begin{equation}\label{q14}
 (T_1H_1)(\tilde x,\tilde t)=\alpha_m(t)\iint 
e^{i\la \varphi(\tilde x,\tilde t,\tilde y, \tilde s)} a_{\la,\nu}(\tilde x,\tilde t; \tilde y,\tilde s)
\, H_1(\tilde y,\tilde s)\, d\tilde y d\tilde s
\end{equation}
 and
 \begin{equation}\label{q15}
 (T_2H_2)(\tilde x,\tilde t)=\alpha_m(t)\iint 
e^{i\la \varphi(\tilde x,\tilde t,\tilde y, \tilde s)} a_{\la,\nu'}(\tilde x,\tilde t; \tilde y,\tilde s)
\, H_2(\tilde y,\tilde s)\, d\tilde y d\tilde s.
\end{equation}
We may neglect the $O(\la^{-N})$ errors in Lemma~\ref{ker} since in \eqref{l3} we are supposing that $H_j(s,\cdot)=0$ if $|s|\ge C\log\la$.

 We also of course have
 \begin{multline}\label{q155}
 (T_1H_1 \cdot T_2H_2)(\tilde x,\tilde t)
 = \bigl(\alpha_m(t)\bigr)^2 \times
 \\
 \int e^{i\la (\varphi(\tilde x,\tilde t,\tilde y,\tilde s)+\varphi(\tilde x,\tilde t,\tilde y',\tilde s'))}
 a_{\la,\nu}(\tilde x,\tilde t, \tilde y,\tilde s)
  a_{\la,\nu'}(\tilde x,\tilde t, \tilde y',\tilde s')
  \, H_1(\tilde y,\tilde s) \, H_2(\tilde y',\tilde s') \, d\tilde y d\tilde s d\tilde y' d\tilde s'.
  \end{multline}

  Note that by \eqref{q6}, \eqref{qtube} and \eqref{q99} we have that $a_{\la,\mu}(\tilde x,\tilde t, \tilde y,\tilde s)=0$, $\mu=\nu,\nu'$ if
  $|(\tilde x_1,\dots,\tilde x_{n-2})|\ge C_1\theta$, $|(\tilde y_1,\dots,\tilde y_{n-2})|\ge C_1\theta$ 
  or $|\tilde x_{n-1}-\tilde y_{n-1}|\ge C_1\theta$.
As a result, in order to prove \eqref{q13}, it suffices to control the left side when the norm
  is taken over sets where $|\tilde x-(0,\dots,0,r)|\le C_2\theta$, with $C_2$ fixed, and so, since we may take $r$ to be $0$, we have reduced matters to showing that
  for sufficiently small $\theta$ we have with $C_3\approx C_2$,
 \begin{multline}\label{q13'}
 \bigl\| (T_1H_1)(T_2H_2)\bigr\|_{L^{q/2}_{t,x}(\{|\tilde x|\le C_3\theta\}\times [-1,1])}
 \\
 \lesssim_\e
 \la^{-\frac{2n}q +\e} \,  \theta^{-\frac2{n+2}}\, \|H_1\|_{L^2_{\tilde t,\tilde x}}\|H_2\|_{L^2_{\tilde t,\tilde x}}, \, \, q=\tfrac{2(n+2)}n,
 \end{multline}
 assuming, as above, that  $H_k(y,s)=0$, $k=1,2$, if $|s|\ge C\log \la$.
 
 Next, we note that by \eqref{q4}, \eqref{q9} and \eqref{q99} we have that if we use the parabolic scaling
 $(\tilde x,\tilde t)\to (\theta\tilde x,\tilde t)$ then
 \begin{equation}\label{q16}
 D^\beta_{\tilde x,\tilde t,\tilde y,\tilde s} a_{\la,\mu}(\theta \tilde x, \tilde t,\theta \tilde y, \tilde s)=O_\beta(1).
 \end{equation}
 This is clear for $\mu=\nu$ since then $2\kappa^{c_0\theta}_\ell \tfrac\partial{\partial x_{n-1}}-\tfrac\partial{\partial t}$
 corresponds to $\tfrac\partial{\partial \tilde t}$, and the bounds also hold for $\mu=\nu'$ since $\kappa_\ell^{c_0\theta}-\kappa_{\ell'}^{c_0\theta}
  =O(\theta)$.  
  Also note that the dilated amplitude in \eqref{q16}
  is $O(\la^{-N})$ when $|\tilde x|$ or $|\tilde y|$ is larger than a fixed constant.
  
  The phase function $\varphi(\tilde x,\tilde t,\tilde y, \tilde s)$ does not quite satisfy the bounds in \eqref{q16}; however, it is straightforward to 
  remedy this if we recall that we constructed our Fermi-Schr\"odinger coordinates so that \eqref{q10} and \eqref{q11} would be valid.  As a result
  \begin{equation}\label{q17}
  \tilde \varphi(\tilde x,\tilde t, \tilde y,\tilde s)=\varphi(\tilde x,\tilde t,\tilde y,\tilde s)-\bigl(\kappa^{c_0\theta}_\ell\bigr)^2 \, (\tilde t-\tilde s)
  \end{equation}
  vanishes to second order when $\tilde x=0$ and $\tilde y=0$.  This means that, after the above parabolic scaling, we actually have
  \begin{equation}\label{q18}
   D^\beta_{\tilde x,\tilde t,\tilde y,\tilde s}\bigl( \theta^{-2}\tilde \varphi(\theta\tilde x,\tilde t,\theta\tilde y,\tilde s)\bigr)=O_\beta(1)
   \quad \text{if } \, \, |\tilde x|, |\tilde y|=O(1).
   \end{equation}
   
   Clearly, in order to prove \eqref{q13'} we may replace $\varphi$ by $\tilde \varphi$.  Also, by Minkowski's inequality and the Schwarz inequality,
   if we define the ``frozen'' bilinear oscillatory integral operators
   \begin{multline}\label{q19}
   B_{\la,\nu,\nu'}^{\tilde s, \tilde s'}(h_1,h_2)(x,t)
   =(\alpha_m(t))^2 \times
   \\
    \iint e^{i\la (\tilde \varphi(\tilde x,\tilde t,\tilde y,\tilde s)+\tilde\varphi(\tilde x,\tilde t,\tilde y',\tilde s'))}
 a_{\la,\nu}(\tilde x,\tilde t, \tilde y,\tilde s)
  a_{\la,\nu'}(\tilde x,\tilde t, \tilde y',\tilde s')
  \, h_1(\tilde y)\, h_2(\tilde y') \, d\tilde y  d\tilde y',
  \end{multline}
  then it suffices to prove that
   \begin{multline}\label{q20}
 \bigl\|    B_{\la,\nu,\nu'}^{\tilde s, \tilde s'}(h_1,h_2)\bigr\|_{L^{q/2}_{t,x}(\{|\tilde x|\le C_3\theta\}\times [-1,1])}
 \\
 \lesssim_\e
 \la^{-\frac{2n}q +\e} \,  \theta^{-\frac2{n+2}}\, \|h_1\|_{L^2_{\tilde x}}\|h_2\|_{L^2_{\tilde x}}, \, \, q=\tfrac{2(n+2)}n.
 \end{multline}
 
 Note that $B_{\la,\nu,\nu'}^{\tilde s, \tilde s'}(h_1,h_2)$ factors as the product of two oscillatory integral operators involving the $(\tilde x,\tilde t, \tilde y)$
 variables.  The two phase functions are
 \begin{equation}\label{q21}
 \phi_{\tilde s}(\tilde x,\tilde t;\tilde y)=\tilde\varphi(\tilde x,\tilde t,\tilde y,\tilde s) \, \, \text{and } \, \,
  \phi_{\tilde s'}(\tilde x,\tilde t;\tilde y)=\tilde\varphi(\tilde x,\tilde t,\tilde y,\tilde s').
  \end{equation}
  
  In order to apply the bilinear results in \cite{LeeBilinear} we need to collect a few facts about the support
  properties of the amplitudes of the bilinear oscillatory integrals in \eqref{q19} which are straightforward
  consequences of Lemma~\ref{ker}.
  
  \begin{lemma}\label{bila}
  Let $\delta<1/8$ as in \eqref{22.6} be given.  Then we can fix $c_0>0$ in \eqref{l2} so that there are 
  constants $c_\delta, C_\delta\in (0, \infty)$ so that for sufficiently small $\theta$ and $|\tilde x|\le C_0\theta$,
  with $C_0$ fixed, we have
  \begin{equation}\label{q22}
  \text{if } \,  \, a_{\la,\nu}(\tilde x,\tilde t; \tilde y,\tilde s)\cdot a_{\la,\nu'}(\tilde x,\tilde t; \tilde y', \tilde s')\ne 0,
  \\
  \text{then } \, \, |\tilde y|, \, |\tilde y'|\le C_\delta\theta, \, \, \text{and } \, \, 
  |\tilde y-\tilde y'|\ge c_\delta\theta.
  \end{equation}
  Additionally, if $\delta_0<1/8$ as in \eqref{22.6} is small enough, then for sufficiently small  $\theta$ we have
  \begin{multline}\label{q22'}
  \text{if } \, a_{\la,\nu}(\tilde x,\tilde t; \tilde y, \tilde s)\ne 0 \, \, 
  \text{then } \, |\delta-2\kappa_\ell^{c_0\theta}(\tilde t-\tilde s)|\le 4\delta_0\delta, 
  \\
   \text{and } \, \, 
     \text{if } \, a_{\la,\nu'}(\tilde x,\tilde t; \tilde y, \tilde s')\ne 0 \, \, 
  \text{then } \, |\delta-2\kappa_\ell^{c_0\theta}(\tilde t-\tilde s')|\le 4\delta_0\delta,
  \end{multline}
  \end{lemma}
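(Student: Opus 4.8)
The plan is to read off every assertion of the lemma from the support properties of the amplitudes collected in Lemma~\ref{ker}, transporting them through the \emph{linear} change of variables \eqref{q9}--\eqref{q999}. The one subtlety to keep in mind is that this change of variables is built from the speed $\kappa_\ell^{c_0\theta}$ attached to $\nu$, so statements about $a_{\la,\nu'}$ require the auxiliary observation that $\kappa_\ell^{c_0\theta}-\kappa_{\ell'}^{c_0\theta}=O(\theta)$; this holds because the separation hypothesis \eqref{sep'} forces $|\nu-\nu'|\lesssim\theta$ and hence $|\ell-\ell'|\lesssim c_0^{-1}$, while in fact $\kappa_\ell^{c_0\theta}-\kappa_{\ell'}^{c_0\theta}=c_0\theta(\ell-\ell')$ equals precisely the ``height component'' of $\nu-\nu'$. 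Throughout, $c_0$ and then $\theta$ are taken small enough (depending on $\delta,\delta_0$ and on $(M^{n-1},g)$) that every $O(c_0\theta)$ or $O(\theta^2)$ quantity is negligible compared with $\delta_0\delta$.

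First I would dispose of \eqref{q22'} and of the size bounds $|\tilde y|,|\tilde y'|\le C_\delta\theta$. Working in Fermi normal coordinates about $\overline\gamma_j$, \eqref{q99} gives $\tilde t-\tilde s=(\kappa_\ell^{c_0\theta})^{-1}(x_{n-1}-y_{n-1})+(t-s)$ and $\tilde x_{n-1}-\tilde y_{n-1}=-(\kappa_\ell^{c_0\theta})^{-1}\bigl((x_{n-1}-y_{n-1})+2\kappa_\ell^{c_0\theta}(t-s)\bigr)$, with $\tilde y_j=y_j$ for $j\le n-2$. If $a_{\la,\nu}\ne0$, then \eqref{q7} gives $x_{n-1}-y_{n-1}=d_g(x,y)+O(\theta^2)=\delta+O(\delta_0\delta)$ (using that the metric agrees with the flat one to second order along the axis and that $x,y$ lie $O(c_0\theta)$ from it), while \eqref{q5} gives $2\kappa_\ell^{c_0\theta}(t-s)=-d_g(x,y)+O(c_0\theta)$; substituting yields $2\kappa_\ell^{c_0\theta}(\tilde t-\tilde s)=\delta+O(\delta_0\delta)+O(c_0\theta+\theta^2)$, which is \eqref{q22'} once $\theta$ is small. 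For the $\nu'$-statement one repeats this with the speed $\kappa_{\ell'}^{c_0\theta}$ now appearing in \eqref{q5}, and absorbs the discrepancy using $|\kappa_\ell^{c_0\theta}-\kappa_{\ell'}^{c_0\theta}|\cdot|t-s'|=O(\theta)$. For the size bounds, restricting to $|\tilde x|\le C_0\theta$ and invoking \eqref{qtube} (valid for both $\mu=\nu,\nu'$) gives $|(\tilde y_1,\dots,\tilde y_{n-2})|\lesssim\theta$ and $|\tilde x_{n-1}-\tilde y_{n-1}|\lesssim\theta$, hence $|\tilde y_{n-1}|\lesssim\theta$, and likewise for $\tilde y'$.

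The heart of the argument is the transversality $|\tilde y-\tilde y'|\ge c_\delta\theta$. Fix $(\tilde x,\tilde t)$ with $|\tilde x|\le C_0\theta$ and suppose both amplitudes do not vanish; since \eqref{q9} recovers $x$ from $(\tilde x,\tilde t)$, the two configurations share the common spatial point $x$ and time $t$, and by \eqref{q6} the point $x$ lies within $C_1c_0\theta$ of each of $\overline\gamma_j,\overline\gamma_{j'}$, while $y$ (resp. $y'$) lies within $C_1c_0\theta$ of $\overline\gamma_j$ (resp. $\overline\gamma_{j'}$) at geodesic distance $\delta+O(\delta_0\delta)$ behind $x$. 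By \eqref{sep'} either the directional or the height part of $\nu-\nu'$ has size $\gtrsim\theta$. In the directional case one argues exactly as in \cite[Lemma~4.3]{SBLog} and \cite[Lemma~3.2]{BlairSoggeRefined}: since $\delta<\tfrac12\,\text{Inj}\,M$, two geodesics passing within $C_1c_0\theta$ of a common point but with $\gtrsim\theta$-separated initial data are $\gtrsim\theta$ apart at geodesic distance $\approx\delta$ from that point, so $|y-y'|\gtrsim\theta$, and since $(y,s)\mapsto(\tilde y,\tilde s)$ is linear with bounded inverse, $|\tilde y-\tilde y'|\gtrsim\theta$. In the height case $j=j'$, $\overline\gamma_j=\overline\gamma_{j'}$, and $|\kappa_\ell^{c_0\theta}-\kappa_{\ell'}^{c_0\theta}|\gtrsim\theta$; writing $d_g(x,y)=x_{n-1}-y_{n-1}+O((c_0\theta)^2)$ and likewise for $y'$ from \eqref{q7}, and subtracting the two instances of \eqref{q5}, gives
\begin{equation*}
(y_{n-1}-y'_{n-1})+2\kappa_\ell^{c_0\theta}(s-s')=2\bigl(\kappa_\ell^{c_0\theta}-\kappa_{\ell'}^{c_0\theta}\bigr)(t-s')+O(c_0\theta);
\end{equation*}
since $|t-s'|=d_g(x,y')/(2\kappa_{\ell'}^{c_0\theta})+O(c_0\theta)\approx\delta/2$, the right side has size $\gtrsim_\delta\theta$ for $c_0$ small, and by \eqref{q99} the left side equals $\kappa_\ell^{c_0\theta}(\tilde y_{n-1}-\tilde y'_{n-1})$ up to the first $n-2$ coordinates, so $|\tilde y-\tilde y'|\gtrsim_\delta\theta$. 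One then takes $c_\delta$ to be the smaller of the two resulting constants.

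I expect the main obstacle to be the error bookkeeping in the height case. The localization of $d_g(x,y)$ that \eqref{q7} provides is only to the order-one scale $\delta_0\delta$, so no individual lower bound on, say, $|s-s'|$ is available; what saves the argument is that the quantity forced to be of size $\gtrsim\theta$, namely $2(\kappa_\ell^{c_0\theta}-\kappa_{\ell'}^{c_0\theta})(t-s')$, is built from \emph{differences} in which the order-one contributions cancel, leaving behind only the genuine $\theta$-scale term together with $O(c_0\theta)$ remainders that are absorbed by shrinking $c_0$. A minor additional check is that the directional/height splitting of $\nu-\nu'$ may always be arranged so that whichever part carries the $\gtrsim\theta$ separation is the one invoked, which is immediate from $|\nu-\nu'|\gtrsim\theta$ and the triangle inequality in $\mathbb{R}^{2n-3}=\mathbb{R}^{2(n-2)}\times\mathbb{R}$.
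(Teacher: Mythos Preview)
Your approach is essentially the same as the paper's: both arguments read the conclusions off the support properties \eqref{q5}--\eqref{qtube}, \eqref{q7} through the linear change of variables \eqref{q9}--\eqref{q999}, split the transversality claim into a ``directional'' case ($|j-j'|$ large) and a ``height'' case ($|\ell-\ell'|$ large), and in the height case subtract the two instances of \eqref{q5} to extract $|\tilde y_{n-1}-\tilde y'_{n-1}|\gtrsim\theta$.

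One point needs tightening. In the directional case you write ``$|y-y'|\gtrsim\theta$, and since $(y,s)\mapsto(\tilde y,\tilde s)$ is linear with bounded inverse, $|\tilde y-\tilde y'|\gtrsim\theta$.'' That implication is not valid as stated: the bounded inverse only gives $|(\tilde y,\tilde s)-(\tilde y',\tilde s')|\gtrsim\theta$, and since $y_{n-1}-y'_{n-1}=2\kappa_\ell^{c_0\theta}(\tilde s-\tilde s')+\kappa_\ell^{c_0\theta}(\tilde y_{n-1}-\tilde y'_{n-1})$ by \eqref{q9}, separation in $y_{n-1}$ alone could be absorbed entirely by $\tilde s-\tilde s'$. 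What actually makes the directional case work (and what the paper spells out) is that the geometric divergence of $\overline\gamma_j$ and $\overline\gamma_{j'}$ at distance $\approx\delta$ from their near-intersection at $x$ is \emph{transverse} to the axis, so the separation lies in the first $n-2$ coordinates: $|(y_1,\dots,y_{n-2})-(y'_1,\dots,y'_{n-2})|\gtrsim\theta$. These coordinates are fixed by \eqref{q999}, and the conclusion follows. Your citation to \cite{SBLog}, \cite{BlairSoggeRefined} in fact gives exactly this transverse separation, so the fix is a one-line replacement of the justification. (A related minor point: your height-case hypothesis ``$j=j'$'' is unnecessary and unused---your computation only needs that both $y,y'$ lie within $O(\theta)$ of the axis, which \eqref{qtube} supplies regardless.)
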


\begin{proof}
 The first assertion in \eqref{q22} about the size of $\tilde y$ and $\tilde y'$ follows trivially from \eqref{qtube}, \eqref{q99} and \eqref{q999}.
     To see the assertion regarding the important separation of the $\tilde y$-variables, recall that $\nu, \nu' \in c_0\theta\cdot\mathbb{Z}^{2n-3}$, and by \eqref{sep'}, $|\nu-\nu'|\in [\frac14\theta, 4^n\theta]$. Thus, if we write
   $\nu=(c_0\theta j,c_0\theta\ell)$ and $\nu'=(c_0\theta j',c_0\theta\ell')$, we can divide into the following two cases:

   (i) $|j-j'|\ge \frac18$. In this case,  the spatial parts, $\overline\gamma_j$ and $\overline\gamma_{j'}$ of the 
   Schr\"odinger curves $\iota_\nu$ and $\iota_{\nu'}$ have angle $\approx \theta$.  By \eqref{q6} if the product of the amplitudes in
   \eqref{q22} is nonzero then we must have in our original Fermi normal coordinates that, for a fixed constant $C_1'$,
   $x\in {\mathcal T}_{C_1'c_0\theta}(\overline\gamma_j) \cap{\mathcal T}_{C_1'c_0\theta}(\overline\gamma_{j'})$,
   $y\in {\mathcal T}_{C_1'c_0\theta}(\overline\gamma_j)$ and $y'={\mathcal T}_{C_1'c_0\theta}(\overline\gamma_{j'})$.
   Here, of course, ${\mathcal T}_{r}(\overline\gamma)$ denotes an $r$-tube about $\overline\gamma$ in $M^{n-1}$.
   By \eqref{q7} we must also have $d_g(x,y),d_g(x,y')\in [\delta-\delta_0\delta,\delta+\delta_0\delta]$ for our small $\delta_0>0$ if the 
   product is nonzero.  Since we are assuming \eqref{sep'} the two tubes of width $\approx c_0\theta$ intersect at
   angle $\approx \theta$ at $(x,t)$, which implies that in our original Fermi normal
   coordinates $|(y_1,\dots,y_{n-2})-(y_1',\dots,y_{n-2}')|\approx \theta$ if the above product is nonzero and 
   $c_0$ and $\theta$ are small.  By \eqref{q999}, this yields  the assertion in \eqref{q22} about the separation of $\tilde y$ and $\tilde y'$
   under our assumption that $j\ne j'$.  Note that the smaller $\delta$ becomes we have to choose $c_0$ to be correspondingly small, but we are
   assuming here that $\delta$ is fixed (as we shall do later).

 (ii)  $|\ell-\ell'|\ge \frac18$. In this case we have $|\ell-\ell'|\approx 1$.
Recall that in our Fermi normal coordinates, we have
 \begin{multline}\label{taylor1}
     d_g(x,y)=|x_{n-1}-y_{n-1}|,
 \, \, \,  \frac\partial{\partial x_j}d_g(x,y), \, \, \frac\partial{\partial y_j}d_g(x,y)=0,  \\
\text{if} \, \, x=(0,\dots,0,x_{n-1}), \, \,  y=(0,\dots,0,y_{n-1}), \text{and } \, \, \, j=1,\dots,n-2.
 \end{multline}
Also we know that by \eqref{qtube}
 \begin{equation}\label{taylor2}
|(x_1,\dots,x_{n-2})|+
 |(y_1,\dots,y_{n-2})|
\le C_0 \theta,\quad \text{if } \, a_{\la,\nu} a_{\la,\nu'}\ne 0.
 \end{equation}
Since the function $d_g(x,y)$ is smooth when $d_g(x,y)\approx \delta$, by \eqref{taylor1}, \eqref{taylor2} and Taylor's expansion, we have
$$|d_g(x,y)-(x_{n-1}-y_{n-1})|, \, |d_g(x,y')-(x_{n-1}-y_{n-1}')| \le C_\delta\theta^2, \quad \text{if } \, a_{\la,\nu} a_{\la,\nu'}\ne 0,$$
since by \eqref{q7} both of  the amplitudes vanish if $x_{n-1}-y_{n-1}<0$. If we let $\theta$ to be small enough, $C_\delta\theta^2$ is much smaller than $c_0\theta$, 
consequently, if $|(x_{n-1}-y_{n-1})+2\kappa_\ell^{c_0\theta}(t-s)|\ge C'_0 c_0\theta$ with
$C_0'$ large enough we must have that
$|d_g(x,y)+2\kappa^{c_0\theta}_\ell(t-s)|\ge C_0c_0\theta$ with $C_0$ as in \eqref{q5}, which means
that $a_\nu=0$ if $|(x_{n-1}-y_{n-1})+2\kappa_\ell^{c_0\theta}(t-s)|\ge C'_0 c_0\theta$ for this choice of $C'_0$ (which is independent of $c_0$).
We similarly have $a_{\nu'}=0$ if
 $|(x_{n-1}-y'_{n-1})+2\kappa_{\ell'}^{c_0\theta}(t-s)|\ge C'_0 c_0\theta$.  By \eqref{q99} this means that
 for a uniform constant $C_1$ if $a_\nu\ne0$ we must have $|\tilde x_{n-1}-\tilde y_{n-1}|\le C_1c_0\theta$, and if
 $a_{\nu'}\ne 0$ we must have $|( \tilde x_{n-1}-\tilde y_{n-1}')-2(\kappa_{\ell}^{c_0\theta})^{-1}(\kappa^{c_0\theta}_{\ell'}-\kappa^{c_0}_{\ell})(t-s)|\le
 C_1c_0\theta$.  Since \eqref{q5} and \eqref{q7} imply that $-(t-s)\sim \delta$ on the support of the amplitudes and thus
 $|2(\kappa_{\ell}^{c_0\theta})^{-1}(\kappa^{c_0\theta}_\ell-\kappa^{c_0\theta}_{\ell'})(t-s)|$ must be larger than a fixed multiple of $\theta$ if
$|\ell-\ell'|\approx 1$ and $a_{\la,\nu} \cdot a_{\la,\nu'}\ne 0$.   So, in this case, if $c_0$ is small enough, we must have $|\tilde y_{n-1}-\tilde y_{n-1}'|\approx \theta$ if 
 $a_{\la,\nu}\cdot a_{\la,\nu'}\ne0$,
 which finishes the proof of the first assertion regarding the separation of $\tilde y$ and $\tilde y'$ in \eqref{q22}
 .

It remains to prove \eqref{q22'}.  Since we are assuming $|\tilde x|\le C_0\theta$, it follows from the first part of \eqref{q22} that 
both of the amplitudes in \eqref{q22'} will vanish if we do not have $|\tilde y|, \, |\tilde y'|=O(\theta)$ and hence
\begin{equation}\label{qh}
a_{\la,\nu}(\tilde x,\tilde t; \tilde y,\tilde s)=a_{\la,\nu'}(\tilde x,\tilde t; \tilde y, \tilde s)=0
\quad \text{if } \, \, \, |\tilde x_{n-1}-\tilde y_{n-1}|\ge C'\theta,
\end{equation}
for some constant $C'$.  By \eqref{q99}
$$(\tilde x_{n-1}-\tilde y_{n-1},\tilde t-\tilde s)=
-(\kappa_\ell^{c_0\theta})^{-1}\bigl(x_{n-1}-y_{n-1}+2\kappa^{c_0\theta}_\ell(t-s), \, -(x_{n-1}-y_{n-1})-\kappa^{c_0\theta}_\ell(t-s))\bigr),
$$
and so $(\tilde x_{n-1}-\tilde y_{n-1})+(\tilde t-\tilde s)=-(t-s)$.  Since $|\ell-\ell'|\approx 1$ implies $\kappa^{c_0\theta}_\ell-
\kappa^{c_0\theta}_{\ell'}=O(\theta)$, by \eqref{q5} and \eqref{qh}, we conclude that both amplitudes vanish if we do not have
$$|d_g(x,y)-2\kappa_\ell^{c_0\theta}(\tilde t-\tilde s)| \le C'\theta,$$
for some uniform constant $C'$.  By the first part of \eqref{q7} we obtain \eqref{q22'} if $\theta$ is sufficiently small.
\end{proof}

Let us now prove the bilinear oscillatory integral estimates \eqref{q20} which will finish the proof of 
Proposition~\ref{key}.

To prove \eqref{q20}, in addition to following the proof of \cite[Theorem 1.3]{LeeBilinear}, we shall also follow related arguments
of two of us \cite{BlairSoggeRefined} which proved analogous bilinear estimates in the $1+2$ dimensional setting (one lower dimension than here)
using the simpler classical bilinear oscillatory integral estimates implicit in H\"ormander~\cite{HormanderFLP}.  Similar arguments were in
the paper \cite{blair2015refined} by these two authors.

Just as in \cite{LeeBilinear} we first perform a parabolic scaling as in \eqref{q16} and \eqref{q18} to be able to apply the main
estimate, Theorem 1.1, in Lee~\cite{LeeBilinear}.  So for small $\la^{-1/8}\le \theta \ll 1$, we let 
\begin{equation}\label{q244}
\phi^\theta_{\tilde s}(\tilde x,\tilde t; \tilde y)=\theta^{-2}\tilde \varphi(\theta \tilde x,\tilde t; \theta \tilde y, \tilde s),
\quad
\text{and } \, \, 
\phi^\theta_{\tilde s'}(\tilde x,\tilde t; \tilde y')=\theta^{-2}\tilde \varphi(\theta \tilde x,\tilde t; \theta \tilde y', \tilde s').
\end{equation}
and corresponding amplitudes
\begin{equation}\label{q255}
a^\theta_{\la,\nu}(\tilde x,\tilde t; \tilde y,\tilde s)=a_{\la,\nu}(\theta x,\tilde t; \theta \tilde y,\tilde s)
\quad \text{and } \, \, 
a^\theta_{\la,\nu'}(\tilde x,\tilde t; \tilde y,\tilde s)=a_{\la,\nu}(\theta x,\tilde t; \theta \tilde y',\tilde s').
\end{equation}
Then, as we noted before
$$D^\beta_{\tilde x,\tilde t,\tilde y}a^\theta_{\la,\mu}=O_\beta(1), \, \mu=\nu,\nu' \quad \text{and } \, \,
D^\beta_{\tilde x,\tilde t,\tilde y}\phi_j=O_\beta(1), \, \, \phi_1=\phi^\theta{\tilde s}, \, \, \phi_2=\phi^\theta_{\tilde s'}.
$$

By \eqref{q22} and \eqref{q22'} we also have the key separation properties for small enough $\theta$
\begin{multline}\label{q27} 
\text{if } \, \, a^\theta_{\la,\nu}(\tilde x,\tilde t; \tilde y,\tilde s)a^\theta_{\la,\nu'}(\tilde x,\tilde t; \tilde y',\tilde s')\ne 0
\\
\text{then } \, |\tilde y|, |\tilde y'|=O(1), \, \, |\tilde y-\tilde y'|\ge c_\delta \, \,
\text{and } \, \, |\tilde s-\tilde s'|\le 8\delta_0\delta,
\end{multline}
with $\delta$ and $\delta_0$ as in \eqref{22.6}.

Additionally, by a simple scaling argument, our remaining task of the section, \eqref{q20}, is equivalent to the following
for small enough $\theta$:
\begin{equation}\label{q28}
\bigl\| B^{\theta,\tilde s,\tilde s'}_{\la,\nu,\nu'}(h_1,h_2)\bigr\|_{L^{q/2}_{t,x}(\{|\tilde x_0|\le C_1\}\times [-1,1])}
\lesssim_\e \bigl( \la\theta^2 \bigr)^{-\frac{2n}q+\e}\|h_1\|_{L^2_x}\|h_2\|_{L^2_x}, \, q=\tfrac{2(n+2)}n,
\end{equation}
where we have the scaled version of \eqref{q19}, i.e.,
\begin{multline}\label{q29}
B^{\mu,\tilde s,\tilde s'}_{\la,\nu,\nu'}(h_1,h_2)(x,t)=(\alpha_m(t))^2\times
\\
\iint e^{i(\la\theta^2)[\phi^\theta_{\tilde s}(\tilde x,\tilde t;\tilde y)+\phi^\theta_{\tilde s'}(\tilde x,\tilde t; \tilde y')]}
a^\theta_{\la,\nu}(\tilde x,\tilde t; \tilde y,\tilde s)a^\theta_{\la,\nu'}(\tilde x,\tilde t; \tilde y',\tilde s') \, h_1(\tilde y) 
h_2(\tilde y')\, d\tilde y d\tilde y'.
\end{multline}

To prove this, let us see how we can use our earlier observation that \eqref{q10} and \eqref{q17}
implies that $\tilde \varphi$ vanishes to second order when $(\tilde x,\tilde y)=(0,0)$ to see that the
scaled phase functions in \eqref{q244} closely resemble Euclidean ones if $\theta$ is small which will allow us to 
verify the hypotheses of Lee's bilinear oscillatory integral theorem \cite[Theorem 1.3]{LeeBilinear} if $\delta,\delta_0>0$
in \eqref{22.6} are fixed small enough.

To be more specific, let  
\begin{multline}\label{v1}
A(\tilde t,\tilde s)=\frac{\partial^2\tilde \varphi}{\partial \tilde y_j \partial \tilde y_k}(0,\tilde t;0,\tilde s), \, \, \,
B(\tilde t,\tilde s)=\frac{\partial^2\tilde \varphi}{\partial \tilde x_j \partial\tilde y_k}(0,\tilde t;0,\tilde s),
\\ 
\text{and } \, \, C(\tilde t,\tilde s)=\frac{\partial^2\tilde \varphi}{\partial \tilde x_j \partial \tilde x_k}(0,\tilde t;0,\tilde s).
\end{multline}
Then the Taylor expansion about $(\tilde x,\tilde y)=(0,0)$  of $\tilde \varphi$ is
\begin{equation}\label{v2}
\tilde \varphi(\tilde x,\tilde t; \tilde y, \tilde s)=
\tfrac12 \tilde y^TA(\tilde t,\tilde s)\tilde y+ \tilde x^TB(\tilde t,\tilde s)\tilde y+\tfrac12\tilde x^TC(\tilde t,\tilde s)\tilde x
+r(\tilde x,\tilde t; \tilde y,\tilde s),
\end{equation}
where $r(\tilde x,\tilde t; \tilde y, \tilde s)$ vanishes to third order at $(\tilde x,\tilde y)=(0,0)$.  So,
\begin{equation}\label{v3}
D^\beta_{\tilde x,\tilde y,\tilde t,\tilde s}r^\theta(\tilde x,\tilde t;\tilde y,\tilde s)=O(\theta), \quad 
\text{if } \, \, r^\theta(\tilde x, \tilde t; \tilde y,\tilde s) = \theta^{-2} r(\theta\tilde x, \tilde t; \theta\tilde y,\tilde s),
\end{equation}
which means that $r^\theta\to 0$ in the $C^\infty$ topology as $\theta\to 0$.

To use \eqref{v2} we shall use parabolic scaling and  the following lemma, whose proof we
postpone until the end of this subsection, which says that if $\delta, \delta_0>0$ in \eqref{22.6}
are small enough then the phase functions $\phi_{\tilde s}$ and $\phi_{\tilde s'}$ in \eqref{q21}
satisfy the Carleson-Sj\"olin condition (see \cite[\S2.2.2]{SFIO2} and \cite{steinbeijing}).

\begin{lemma}\label{ABlemma}  Let $A(\tilde t,\tilde s)$ and $B(\tilde t,\tilde s)$ be as in \eqref{v1}.
Then if $\delta,\delta_0>0$ in \eqref{22.6} are small enough
\begin{equation}\label{v4}
\det  B(\tit,\tis)=\det \frac{\partial^2 \tilde \varphi(0,\tilde t; 0,\tilde s)}{\partial \tix_j\partial \tiy_k}\ne 0, \quad
\text{if } \, \, a^\theta_{\la,\nu}\cdot a^\theta_{\la,\nu'}\ne 0.
\end{equation}
Furthermore, on the support of $ a^\theta_{\la,\nu}\cdot a^\theta_{\la,\nu'}$, $-(\tfrac\partial{\partial \tilde t}A(\tilde t,\tilde s))^{-1}
=-(\tfrac\partial{\partial \tilde t} \tfrac{\partial^2 \tilde \varphi  }{\partial \tilde y_j\partial \tilde y_k}(0,\tilde t; 0,\tilde s))^{-1}$ is positive definite, i.e.,
\begin{equation}\label{v13}
\xi^t\Bigl(-\frac{\partial}{\partial \tit}A(\tit,\tis)\Bigr)^{-1}\xi, \, \, \xi^t\Bigl(-\frac{\partial}{\partial \tit}A(\tit,\tis') \Bigr)^{-1}\xi 
\ge c_\delta|\xi|^2, \quad \text{if } \, a^\theta_{\la,\nu}\cdot a^\theta_{\la,\nu'}\ne 0,
\end{equation}
and also
\begin{equation}\label{v13'}
\Bigl|\frac\partial{\partial \tilde t}A(\tilde t,\tilde s)\xi \Bigr|\ge c_\delta |\xi| , \, \Bigl|\frac\partial{\partial \tilde t}A(\tilde t,\tilde s')\xi \Bigr|\ge c_\delta |\xi|
\, \,  \, \text{if } \, \, a^\theta_{\la,\nu}\cdot a^\theta_{\la,\nu'}\ne 0,
\end{equation}
\end{lemma}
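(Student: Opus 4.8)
The plan is to reduce everything to an explicit computation with the model phase function and then absorb the $O(\theta)$ corrections. First I would recall that, by \eqref{q17} and \eqref{v2}, the relevant second-order data of $\tilde\varphi$ at $(\tilde x,\tilde y)=(0,0)$ agrees with that of $\varphi$ minus the linear-in-$(\tilde t-\tilde s)$ term, and that in Fermi--Schr\"odinger coordinates \eqref{q9}--\eqref{q999} the phase $\varphi$ restricted to $\tilde x_1=\dots=\tilde x_{n-2}=0$ is given by the explicit formula \eqref{q12}. So the first step is: compute $A(\tilde t,\tilde s)$, $B(\tilde t,\tilde s)$, $C(\tilde t,\tilde s)$ directly from \eqref{q08} written in the new coordinates, using that in Fermi normal coordinates $d_g(x,y)^2$ agrees with the Euclidean distance squared to second order along the central geodesic (all Christoffel symbols vanish there, by \eqref{q2} and the sentence after it). To leading order this makes $\tilde\varphi$ agree with the flat Schr\"odinger phase $-(\kappa_\ell^{c_0\theta})^2|\tilde x-\tilde y|^2/\bigl(4(\tilde t-\tilde s)\bigr)$ minus $(\kappa_\ell^{c_0\theta})^2(\tilde t-\tilde s)$, whose Hessian data I can write out by hand.

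The second step is the model computation. For the flat phase $\psi_{\tilde s}(\tilde x,\tilde t;\tilde y)=-c^2|\tilde x-\tilde y|^2/(4(\tilde t-\tilde s))$ with $c=\kappa_\ell^{c_0\theta}\approx 1$, one has $\partial^2_{\tilde x_j\tilde y_k}\psi = c^2\delta_{jk}/\bigl(2(\tilde t-\tilde s)\bigr)$, so $B(\tilde t,\tilde s)$ is a nonzero multiple of the identity as long as $\tilde t-\tilde s\neq 0$; and by \eqref{q22'} (equivalently \eqref{q27}) on the support of $a^\theta_{\la,\nu}\cdot a^\theta_{\la,\nu'}$ we have $|\delta - 2\kappa_\ell^{c_0\theta}(\tilde t-\tilde s)|\le 4\delta_0\delta$, hence $\tilde t-\tilde s\approx \delta/(2\kappa_\ell^{c_0\theta})$ is bounded away from $0$ (and similarly $\tilde t-\tilde s'$). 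This gives \eqref{v4} for the model, with $|\det B|\approx \delta^{-(n-1)}$. Likewise $A(\tilde t,\tilde s)=\partial^2_{\tilde y_j\tilde y_k}\psi = -c^2\delta_{jk}/\bigl(2(\tilde t-\tilde s)\bigr)$, so $\partial_{\tilde t}A = c^2\delta_{jk}/\bigl(2(\tilde t-\tilde s)^2\bigr)$ which is a positive multiple of the identity, and both $-(\partial_{\tilde t}A)^{-1}$ being positive definite and $|\partial_{\tilde t}A\,\xi|\gtrsim|\xi|$ follow at once, with constants $c_\delta\approx \delta^2$. (Here I should double-check the sign bookkeeping coming from the minus sign in \eqref{q08}/\eqref{22.5}; the authors have arranged coordinates precisely so that this works out to the positive-definite case, as in \cite{SBLog}.) The third step is to pass from the model to the actual phase: by \eqref{v2}--\eqref{v3}, after parabolic scaling $\tilde\varphi(\theta\cdot,\tilde t;\theta\cdot,\tilde s) = \theta^2\bigl(\text{quadratic part}\bigr)+ r(\theta\tilde x,\tilde t;\theta\tilde y,\tilde s)$ with $r$ vanishing to third order, so $D^\beta r^\theta = O(\theta)$; but the quantities in \eqref{v4}, \eqref{v13}, \eqref{v13'} only involve the second-order Taylor coefficients at $(\tilde x,\tilde y)=(0,0)$, which are exactly the $A,B,C$ of the unscaled $\tilde\varphi$ and are the ones computed in step two up to $O(\theta)$-errors coming from the deviation of the Fermi metric from the flat one away from the central axis (these are $O(\theta^2)$ by Taylor, as used in the proof of Lemma~\ref{bila}). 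Since the model quantities are bounded below uniformly in terms of the fixed $\delta,\delta_0$, the $O(\theta)$ perturbation is harmless for $\theta\ll 1$, giving all three conclusions with slightly worse but still uniform $c_\delta$.

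The main obstacle I anticipate is not any single estimate but the careful bookkeeping of \emph{which} variables and signs enter: one must be sure that the coordinate change \eqref{q9} genuinely produces, in the $(n-1)$-th slot, a variable for which the mixed Hessian $B$ is nondegenerate (this is the content of \eqref{q10}--\eqref{q12}, already established), and that the curvature corrections entering $A,B,C$ are of size $O(\theta^2)$ rather than $O(1)$ or even $O(\theta)$ — which is exactly why one works in Fermi normal (now Fermi--Schr\"odinger) coordinates, so that $g_{jk}$ agrees with the flat metric to second order along the geodesic. A secondary point requiring care is that the conclusions are asserted only on the support of $a^\theta_{\la,\nu}\cdot a^\theta_{\la,\nu'}$, so one must invoke \eqref{q27} (hence \eqref{q22'}) to guarantee $|\tilde t-\tilde s|,|\tilde t-\tilde s'|\approx\delta$ is bounded below; without that the mixed Hessian degenerates as $\tilde t\to\tilde s$. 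Once these points are in place, \eqref{v4}, \eqref{v13}, \eqref{v13'} reduce to the explicit model computation above plus a perturbation argument, and the lemma follows.
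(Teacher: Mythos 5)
Your overall strategy — compute $A$, $B$, $\partial_{\tilde t}A$ explicitly from a flat model in the new coordinates and absorb curvature corrections by smallness — is the right strategy and is essentially what the paper does. But your written-out model computation contains a sign error that, taken at face value, would produce the \emph{opposite} of the stated conclusion, and it also misses a structural feature of the coordinate change that the paper needs.

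Concretely: you take the model phase to be $\psi_{\tilde s}=-c^2|\tilde x-\tilde y|^2/(4(\tilde t-\tilde s))$, get $A\approx -c^2 I/(2(\tilde t-\tilde s))$, hence $\partial_{\tilde t}A\approx +c^2 I/(2(\tilde t-\tilde s)^2)$, a positive multiple of the identity. But then $-(\partial_{\tilde t}A)^{-1}$ is a \emph{negative} multiple of the identity, which contradicts \eqref{v13}; your own arithmetic refutes the claim you assert "follows at once." The source of the error is the translation of $\varphi=-d_g^2/(4(t-s))$ into the Schr\"odinger coordinates \eqref{q9}: at $\tilde x=\tilde y=0$ one has $t-s=-(\tilde t-\tilde s)$, so the model phase in the new coordinates has an overall $+$ sign, and in fact \eqref{q12} and \eqref{v7}--\eqref{v10} give $A(\tilde t,\tilde s)\approx \frac1{2(\tilde t-\tilde s)}J_{\kappa}$ where $J_\kappa=\text{diag}(1,\dots,1,\kappa^2)$ with $\kappa=\kappa_\ell^{c_0\theta}$. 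Thus $\partial_{\tilde t}A\approx -\frac1{2(\tilde t-\tilde s)^2}J_\kappa$ is \emph{negative} definite, and $-(\partial_{\tilde t}A)^{-1}$ is positive definite, as required. The $J_\kappa$ structure (rather than a scalar $c^2 I$) arises because \eqref{q9} rescales only the last spatial coordinate by $\kappa$, not the transverse ones \eqref{q999}; this does not change the truth of the lemma but shows the change of variables must be tracked coordinate-by-coordinate, as in the paper's \eqref{v6}--\eqref{v12}. Finally, your claim that the curvature correction is $O(\theta^2)$ is not what drives the smallness here: the paper's correction in \eqref{v12} is $O(|\tilde t-\tilde s|)=O(\delta)$ (using \eqref{q22'} to localize $\tilde t-\tilde s\approx\delta$), and the positive definiteness is secured by taking $\delta$ small, not $\theta$ small. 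You should redo the explicit second-order computation in the coordinates \eqref{q9}--\eqref{q999} (using \eqref{q12} for the $(n{-}1,n{-}1)$-entry and the Fermi-normal Hessian identity for the transverse block) and then absorb the remaining $O(\delta)$ error by choosing $\delta$ small.
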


Let us use \eqref{v2} and \eqref{v3} and this lemma to see that we can obtain our remaining estimate \eqref{q28} via Lee's
\cite[Theorem 1.1]{LeeBilinear}.  As we shall see, it is crucial for us that $-\frac{\partial}{\partial \tilde t}A(\tilde t,\tilde s)$
is positive definite.

Note that, in addition to the $\theta$ parameter, \eqref{q28} also involves the
$(\tis,\tis')$ parameters.  For simplicity, let us first see how Lee's result yields \eqref{q28} in the case where these
two parameters agree, i.e. $\tis=\tis'$.  We then will  argue that if $\delta_0$ in \eqref{22.6} and hence \eqref{q22} is fixed small enough we can also handle the case
where $\tis\ne \tis'$. 

To do this we first note that the parabolic scaling in \eqref{v3}, which agrees with that in \eqref{q244}, preserves the first three terms in
the right of \eqref{v2} since they are quadratic.  Also, in proving \eqref{q29}, we may subtract $\tfrac12 \tix^tC(\tit,\tis)\tix$ from
$\phi^\theta_{\tis}$ and $\tfrac12 \tix^tC(\tit,\tis')\tix$ from $\phi^\theta_{\tis'}$ as these quadratic terms do not involve $\tiy$.  We point out that
this trivial reduction also works if $\tis\ne \tis'$.

Next, we note that, by \eqref{v4} and our temporary assumption that $\tilde s=\tilde s'$, after making a linear change of variables in
$\tix$ (depending on $\tit,\tis$), we may reduce to the case where $B(\tit,\tis)=I_{n-1}$, the $(n-1)\times (n-1)$ identity matrix.
This means for the case where $\tis =\tis'$ we have reduced matters to showing that \eqref{q28} is valid where
\begin{multline}\label{v5}
\phi^\theta_{\tis}(\tix,\tit;\tiy)=\langle \tix,\tiy\rangle + \frac12 \sum_{j,k=1}^{n-1} \frac{\partial^2\tilde \varphi}{\partial \tiy_j \partial \tiy_k}(0,\tit; 0, \tis)
\tiy_j \tiy_k + \tilde r^\theta(\tix,\tit; \tiy, \tis)
\\
=\langle \tilde x,\tilde y\rangle +\tilde y^t A(\tilde t,\tilde s) \tilde y+\tilde r^\theta(\tilde x,\tilde t; \tilde y,\tilde s)
,
\end{multline}
with $\tilde r^\theta$ denoting $r^\theta$ rewritten in the new $\tix$ variables coming from $B(\tit,\tis)$.  In view of 
\eqref{v4}, \eqref{v3} remains valid for $\tilde r^\theta$.
For later use, we note that if we change variables according to $\tilde s$ as above, then for $\tis'$ near $\tis$ we have
for
\begin{equation}\label{v5.1}
B(\tit,\tis,\tis')=(B(\tit,\tis'))^t \, ((B(\tit,\tis)^{-1})^t=I_{n-1}+O(|\tis-\tis'|),
\end{equation}
\begin{multline}\label{v5.2}
\phi^\theta_{\tis'}(\tix,\tit,\tiy)=\langle x, B(\tit,\tis,\tis')\tiy\rangle+ \frac12 \sum_{j,k=1}^{n-1} \frac{\partial^2\tilde \varphi}{\partial \tiy_j \partial \tiy_k}(0,\tit; 0, \tis')
\tiy_j \tiy_k + \tilde r^\theta(\tix,\tit; \tiy, \tis)
\\
=\phi^\theta_{\tis}(\tix,\tit;\tiy)+O(|\tis-\tis'|).
\end{multline}

We fix $\delta$ and $\delta_0$ in \eqref{22.6} so that the conclusions of Lemmas \ref{ker} and \ref{ABlemma} are valid.
We can  also now fix also finally fix $c_0$ so that the results of Lemma~\ref{ker} and 
Lemma~\ref{bila} are valid.   If we only had to treat the case where $\tilde s=\tilde s'$ in \eqref{q28} then the above
choice of $\delta_0$ would suffice; however, as we shall momentally see, to handle the cases where
$\tilde s\ne \tilde s'$ we shall need to choose $\delta_0$ small enough so that we can exploit the last part
of \eqref{q27}.

Let us now verify that we can apply \cite[Theorem 1.3]{LeeBilinear} to obtain \eqref{q28} for sufficiently small $\theta$.
This would complete the proof of Proposition~\ref{key}.
We recall that we are assuming for now that $\tis=\tis'$ and that we have reduced to the case
where $B(\tit,\tis)=I_{n-1}$ and $C(\tit,\tis)=0$ in \eqref{v2} and so
\begin{equation}\label{v14}
\phi^\theta_{\tis}(\tix,\tit;\tiy)=\langle \tix,\tiy\rangle +\tfrac12 \tiy^t A(\tit,\tis)\tiy +\tilde r^\theta(\tix,\tit; \tiy,\tis), 
\end{equation}
with $\tilde r^\theta$ satisfying the bounds in \eqref{v3}.   

By \eqref{v3} and \eqref{v14} we have
\begin{equation}\label{v15}
\frac{\partial \phi^\theta_{\tis}}{\partial \tix}(\tilde x,\tilde t; \tilde y)=\tilde y
+\frac{\partial \tilde r^\theta}{\partial \tix}(\tix,\tit; \tiy, \tis)
=\tiy +\e(\theta,\tix,\tit,\tis;\tiy), 
\end{equation}
where $\tiy\to \e(\cdot; \, \tiy)$ and its derivatives are $O(\theta)$.  Thus, for small enough $\theta$,
the inverse function also satisfies
\begin{equation}\label{v16}
\tiy\to \bigl(\frac{\partial \phi^\theta_{\tis}}{\partial \tix}(\tix, \tit; \, \cdot\, )\bigr)^{-1}(\tiy)
=\tiy +\tilde \e(\theta,\tix,\tis;\tiy), 
\end{equation}
where
\begin{equation}\label{v17}
D^\beta_{\tiy}\tilde \e(\theta,\tix,\tit,\tis;\tiy)=O_\beta(\theta).
\end{equation}

Define, in the notation of \cite{LeeBilinear},
\begin{multline}\label{v18}
q^\theta_s(\tix,\tit;\tiy)=\tfrac{\partial}{\partial \tit}\phi^\theta_s\bigl(\tix,\tit; \bigl( \tfrac{\partial \phi^\theta_s}{\partial \tix}
(\tix,\tit; \, \cdot\, )\bigr)^{-1}(\tiy)\bigr)
\\
=\bigl(\tfrac\partial{\partial \tit}\phi^\theta_s\bigr)\bigl(\tix,\tit; \tiy+\tilde \e(\theta,\tix,\tit,s;\tiy)\bigr), \quad
s=\tis, \tis',
\end{multline}
as well as
\begin{equation}\label{v19}
\delta^\theta_{\tis,\tis'}(\tix,\tit; \tiy, \tiy')
= \partial_{\tiy}q^\theta_{\tis}(\tix,\tit; \partial_{\tix}\phi^\theta_{\tis}(\tix,\tit;\tiy))
- \partial_{\tiy}q^\theta_{\tis'}(\tix,\tit; \partial_{\tix}\phi^\theta_{\tis'}(\tix,\tit;\tiy')).
\end{equation}
Even though we are assuming for the moment that $\tis=\tis'$ these two quantities will be needed for 
$\tis\ne\tis'$ as well to be able to use \cite[Theorem 1.1]{LeeBilinear} to obtain \eqref{q28}.

Then \cite[(1.4)]{LeeBilinear}, the conditions to ensure the bounds \eqref{q28}, are
\begin{multline}\label{v20}
\bigl| \bigl\langle 
\partial^2_{\tix\tiy}\phi^\theta_{\tis}(\tix,\tit;\tiy) \delta_{\tis,\tis'}^\theta, \, 
\bigl[ \, \partial^2_{\tix,\tiy}\phi^\theta_{\tis}(\tix,\tit; \tiy)\, \bigr]^{-1}
\bigl[ \, \partial^2_{\tiy\tiy}q^\theta_{\tis}(\tix,\tit; \partial_{\tix}\phi^\theta_{\tis}(\tix,\tit;\tiy))\, \bigr]^{-1}
\delta_{\tis,\tis'}^\theta
\bigr\rangle\bigr| 
>0,
\\
 \quad \delta_{\tis,\tis'}^\theta=\delta^\theta_{\tis,\tis'}(\tix,\tit; \tiy, \tiy'), \quad
 \text{on } \, \, \, \text{supp }(a_{\la,\nu}\cdot a_{\la,\nu'}),
\end{multline}
as well as
\begin{multline}\label{v21}
\bigl| \bigl\langle 
\partial^2_{\tix\tiy}\phi^\theta_{\tis'}(\tix,\tit;\tiy') \delta_{\tis,\tis'}^\theta, 
\bigl[  \partial^2_{\tix,\tiy}\phi^\theta_{\tis'}(\tix,\tit; \tiy')\, \bigr]^{-1}
\bigl[ \, \partial^2_{\tiy\tiy}q^\theta_{\tis'}(\tix,\tit; \partial_{\tix}\phi^\theta_{\tis'}(\tix,\tit;\tiy'))\bigr]^{-1}
\delta_{\tis,\tis'}^\theta
\bigr\rangle\bigr| 
>0,
\\
 \quad \delta_{\tis,\tis'}^\theta=\delta^\theta_{\tis,\tis'}(\tix,\tit; \tiy, \tiy'), \quad
 \text{on } \, \, \, \text{supp }(a_{\la,\nu}\cdot a_{\la,\nu'}).
\end{multline}

Note that by \eqref{v3}, \eqref{v5}, \eqref{v12}, \eqref{v17} and \eqref{v18} for small $\theta$ we have
\begin{equation}\label{v22}
\bigl(q^\theta_{\tis}(\tix,\tit;\tiy)\bigr)^{-1}=
\bigl(\tfrac{\partial A}{\partial \tit}(\tit,\tis)\bigr)^{-1}+O(\theta),
\end{equation}
and also, by \eqref{v15} and \eqref{v16},
\begin{equation}\label{v23}
\partial^2_{\tix,\tiy}\phi^\theta_{\tis}(\tix,\tit;\tiy)=I_{n-1}+O(\theta), 
\end{equation}
as well as
\begin{equation}\label{v24}
\bigl(\partial^2_{\tix,\tiy}\phi^\theta_{\tis}(\tix,\tit;\tiy)\bigr)^{-1}=I_{n-1}+O(\theta).
\end{equation}

 By \eqref{v5}, \eqref{v13'}, \eqref{v18} and the separation condition in \eqref{q22}, if
 $\tis=\tis'$ we have
 \begin{equation}\label{v25}
 |\delta^\theta_{\tis,\tis}(\tix,\tit; \tiy,\tiy')|>0 \quad \text{on } \, \, \text{supp }(a_{\la,\nu}\cdot a_{\la,\nu'}),
 \end{equation}
 if $\theta$ is small enough.
 Thus, in this case the quantities inside the absolute values in \eqref{v20} and \eqref{v21} equal
 \begin{equation}\label{v26}
 \bigl\langle \delta^\theta_{\tis,\tis}(\tix,\tit; \tiy,\tiy'), 
 \, \bigl(\tfrac{\partial A}{\partial t} (\tit,s)\bigr)^{-1}\delta^\theta_{\tis,\tis}(\tix,\tit;\tiy,\tiy')\bigr\rangle
 +O(\theta) \, \, \, \text{on } \, \, \text{supp }(a_{\la,\nu}\cdot a_{\la,\nu'}), 
 \end{equation}
 and, therefore, by     \eqref{v13} and \eqref{v25} the conditions \eqref{v20} and \eqref{v21} are
 valid for small enough $\theta$ when $\tis=\tis'$.
 So, by \cite[Theorem 1.1]{LeeBilinear}, we obtain \eqref{q28}, we obtain \eqref{q28} in this case.
 
 If $\tis\ne \tis'$ in \eqref{q28}, we must replace $\delta^\theta_{\tis,\tis}$ by $\delta^\theta_{\tis,\tis'}$.
 In order to accommodate this, 
 we first need to use the fact that, by the last part of \eqref{q22},
 $$\delta^\theta_{\tis,\tis'}(\tix,\tit; \tiy,\tiy')
 =\delta^\theta_{\tis,\tis}(\tix,\tit;\tiy,\tiy')+O(|\tis-\tis'|)
 \quad \text{on } \, \text{supp }(a_{\la,\nu}\cdot a_{\la,\nu'}).$$
 Thus, by the last part of \eqref{q27},
 $$\delta^\theta_{\tis,\tis'}(\tix,\tit; \tiy,\tiy')
 =\delta^\theta_{\tis,\tis}(\tix,\tit;\tiy,\tiy')+O(\delta_0)
 \quad \text{on } \, \text{supp }(a_{\la,\nu}\cdot a_{\la,\nu'}).$$
 This means that, if we replace $O(\theta)$ by $O(\theta+\delta_0)$ in \eqref{v26}, then the quanitity
 in \eqref{v20} is of this form.
 
 The other condition, \eqref{v21} involves the phase function $\phi^\theta_{\tis'}$ and the associated
 $q^\theta_{\tis'}$.  However if $B=B(\tit,\tis,\tis')$ is as in \eqref{v5.1}, then we have the analog
 of \eqref{v15} where we replace the first term in the right side of \eqref{v15} by $B\tiy$ and the 
 first term in the right side of \eqref{v16} by $B^{-1}\tiy$.  Also, of course $\tfrac{\partial A}{\partial \tit}(\tit,\tis')=
\tfrac{ \partial A}{\partial \tit}(\tit,\tis)+O(|\tis-\tis'|)$.
 Consequently, $q^\theta_{\tis'}$ will agree with $q^\theta_{\tis}$ when $a_{\la,\nu}\cdot a_{\la,\nu'}\ne0$ up to
 a $O(|s-s'|)=O(\delta_0)$ error, and by \eqref{v5.1} the analogs of \eqref{v23} and \eqref{v24} remain valid if
 $\tis$ is replaced by $\tis'$ if $O(\theta)$ there is replaced by $O(\theta+\delta_0)$.
  So, like \eqref{v20}, if we replace $O(\theta)$ by $O(\theta+\delta_0)$ in \eqref{v26},
 then the quantity in \eqref{v21} is of this form.

Thus, if $\delta_0$ in \eqref{22.6} is (finally) fixed
 small enough, and, as above, $\theta$ is small enough we conclude that the condition (1.4) in 
 \cite{LeeBilinear} is valid, which yields \eqref{q28} and completes
 the proof of Proposition~\ref{key}.    \end{proof}
 
 \begin{proof}[Proof of Lemma~\ref{ABlemma}]
 
Let us first prove \eqref{v13} and \eqref{v13'} since they are slightly more difficult than the other estimate, \eqref{v4}, in
the lemma.

If we recall \eqref{q17} we see that 
\begin{equation}\label{v6}
A(\tilde t,\tilde s)=
\frac{\partial^2 \tilde\varphi}{\partial \tiy_j \partial \tiy_k}(0,\tit; 0, \tis) = \frac{\partial^2 \varphi}{\partial \tiy_j \partial \tiy_k}(0,\tit; 0, \tis),
\end{equation}
where $\varphi$ is as in \eqref{q08}.

By \eqref{q12} we have
\begin{equation}\label{v7}
\frac{\partial^2 \varphi}{\partial \tiy_{n-1}^2}(0,\tit;0,\tis)=\frac{(\kappa_\ell^{c_0\theta})^2}{2(\tit-\tis)}.
\end{equation}
Additionally, by \eqref{q9} and \eqref{q99} we have
\begin{equation}\label{v8}
\varphi(0,\tit;\tiy,\tis)=
\frac{\big[ d_g\bigl((0,\dots,0,2\kappa_\ell^{c_0\theta}\tit), \, (\tiy_1,\dots,\tiy_{n-2}, 2\kappa^{c_0\theta}_\ell \tis
+\kappa_\ell^{c_0\theta} \tiy_{n-1}) \bigr)        \bigr]^2}{4(\tit-\tis-\tiy_{n-1})}.
\end{equation}
By \eqref{v8} we have
\begin{equation}\label{v9}
\frac{\partial^2\varphi}{\partial \tiy_j\partial \tiy_{n-1}}(0,\tit; 0,\tis)\equiv 0, \quad \text{if } \, \, j=1,\dots,n-2.
\end{equation}

The remaining part of the Hessian in \eqref{v6} is
\begin{multline}\label{v10}
\frac{\partial^2\tilde \varphi}{\partial \tiy_j \partial\tiy_k}(0,\tit;0,\tis)=
\frac{\partial^2}{\partial \tiy_j\partial \tiy_k}
\frac{\bigl[ d_g\bigl((0,\dots,0,,2\kappa_\ell^{c_0\theta}\tit), (\tiy_1,\dots, 2\kappa_\ell^{c_0\theta}\tis\bigr)\bigr]^2}
{4(\tit-\tis)}, 
\\
\text{when } \, \, \tiy_1=\dots=\tiy_{n-2}=0 \, \, \,
\text{and } \, \, 1\le j,k\le n-2.
\end{multline}
To compute this, we recall that the Schr\"odinger coordinates $(\tilde x,\tilde t)$  in
\eqref{q99} and \eqref{q999} come from the Fermi normal coordinates $(x_1,\dots, x_{n-1})$
about the spatial geodesic
 $\overline{\gamma}_j$, and that in these coordinates $\overline{\gamma_j}=(0,\dots,0,t)$
 and on this  geodesic
the metric is $\delta_{j,k}$ (rectangular) and the Christoffel symbols vanish there as well

As a result, in the Fermi normal coordinates,
we must have that the full Hessian of the 
square of the distance function satisfies 
$$\frac{\partial^2}{\partial y_j\partial y_k}\bigl[d_g\bigl( (0,\dots,0,2\kappa_\ell^{c_0\theta}\tilde t) ,y\bigr)\bigr]^2=2I_{n-1},
\quad \text{if } \, 
y=(0,\dots,0,2\kappa_\ell^{c_0\theta}\tilde t).
$$
This along with \eqref{q999} means that
\eqref{v10}, the remaining piece of the Hessian in \eqref{v6}, must be of the form
$$\frac{\partial^2\varphi}{\partial \tiy_j\partial\tiy_k}
=\frac1{2(\tit-\tis)}\delta_{j,k} +O(1), \quad 
\text{if } \, \, 1\le j,k\le n-2.$$
Note that since
$$(\tit,\tiy_1,\dots,\tiy_{n-2})\to d_g((0,\dots,0,2\kappa^{c_0\theta}_\ell \tit), (\tiy_1,\dots,\tiy_{n-2},2\kappa^{c_0\theta}_\ell \tis'))$$
is smooth we also obtain
\begin{equation}\label{v11}
\frac{\partial}{\partial \tit}\frac{\partial^2 \varphi}{\partial \tiy_j \partial \tiy_k}(0,\tit; 0,\tis)=\frac{-1}{2(\tit-\tis)^2}\delta_{j,k}+O((\tit-\tis)^{-1}), 
\, \, 1\le j,k\le n-2.
\end{equation}

Therefore, by \eqref{v1}, \eqref{v6}, \eqref{v7}, \eqref{v9} and \eqref{v11}, we have
\begin{multline}\label{v12}
-2(\tit-\tis)^2 \frac{\partial}{\partial \tit}A(\tit,\tis)=
-2(\tit-\tis)^2
\frac{\partial}{\partial \tit} \frac{\partial^2\tilde \varphi}{\partial \tiy_j\partial \tiy_k}(0,\tit;0,\tis)= J_{\kappa_\ell^{c_0\theta}} +O(|\tit-\tis|),
\\
\text{if } \quad J_{\kappa_\ell^{c_0\theta}}=\text{diag }(1,\dots,1,(\kappa_\ell^{c_0\theta})^2).
\end{multline}
Note that $\kappa^{c_0\theta}_\ell \in [1/10,10]$.
Therefore by \eqref{q22'}
if $\delta$ is fixed small enough in \eqref{22.6} and if $\delta_0$ there is smaller than $1/8$, by \eqref{q22'}, we have that 
$-(\partial A(\tit,\tis)/\partial \tit)^{-1}$ is positive definite on the support of the amplitudes in \eqref{q29}.
Thus, we obtain \eqref{v13} and \eqref{v13'}
for some $c_\delta>0$.  Indeed, one may take $c_\delta\sim \delta^{-2}$,

The proof of the other \eqref{v4} is very similar.  If we use \eqref{q08} we see that since,
by \eqref{q7} and \eqref{q22'}, 
$d_g(x,y)\approx |t-s|\approx \delta$ on $\text{supp } a^\theta_{\la,j}\cdot a^\theta_{\la,\nu'}$,
we may assume that $|x|, \, |y|\le C\delta$.  We then have
$$(d_g(x,y))^2 =|x-y|^2 +r(x,y), \quad
\text{where } \, r(x,y)= O\big((|x|+|y|) |x-y|^2 \big) \, \, \text{and } \, r\in C^\infty.$$
Therefore, by \eqref{q9}, \eqref{q17} and \eqref{q99}, if $\bar x=(\tilde x_1, \tilde x_2, \dots, \tilde x_{n-2})$ and $\bar y=(\tilde y_1, \tilde y_2, \dots, \tilde y_{n-2})$
\begin{equation}\label{v4''}
\tilde \varphi(\tilde x,\tilde t; \tilde y, \tilde s)=
  \frac{(\speed)^2(\tilde x_{n-1}-\tilde y_{n-1})^2+|\bar x-\bar y|^2 +r(x,y)}{4\big(\tilde t-\tilde s+(\tilde x_{n-1}-\tilde y_{n-1})\big)}  -(\kappa_\ell^{c_0\theta})^2 (\tilde t-\tilde s).
\end{equation}
Consequently, by the proof of \eqref{v13} and \eqref{v13'} we have that for
$J_{\kappa_\ell^{c_0\theta}}$ as above
$$
B(\tilde t,\tilde s)=\frac{\partial^2\tilde \varphi}{\partial \tilde x_j\partial\tilde y_k}(0,\tilde t;0,\tilde s)=-\frac1{2(\tit-\tis)} J_{\kappa_\ell^{c_0\theta}} +O(1),
$$
which yields \eqref{v4} if $\delta$ is small enough.
 \end{proof}

\newsection{Kernel estimates}

In this section we finish up matters by proving the various kernel estimates that we have utilized.

\medskip

\noindent{\bf 4.1. Basic kernel estimates on manifolds of nonpositive curvature}

Let us prove the kernel estimates that we used on $A_+$.

\begin{proposition}\label{kerprop}  Let $S_\la(x,t;y,s)$ denote the kernel
$$\eta(t/T)\eta(s/T) \beta^2(P/\la)\bigl(e^{-i(t-s)\la^{-1}\Delta_g}\bigr)(x,y).$$
Then if $M=M^{n-1}$ has nonpositive sectional curvatures and $T=c_0\log\la$ with 
$c_0=c_0(M)>0$ sufficiently small, we have for $\la\gg 1$
\begin{equation}\label{j1}
|S_\la(x,t;y,s)|\le C\la^{\frac{n-1}2}|t-s|^{-\frac{n-1}2} \exp(C_M|t-s|).
\end{equation}
\end{proposition}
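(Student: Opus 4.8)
\textbf{Proof strategy for Proposition~\ref{kerprop}.}

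The plan is to reduce the estimate to a classical Hadamard parametrix analysis for the half-wave operator, exactly as in B\'erard~\cite{Berard}. First I would use the Fourier inversion formula in the spectral variable to rewrite the dyadic Schr\"odinger propagator in terms of the half-wave operator $e^{i\tau P}$. Concretely, fix a function $\chi\in\mathcal S(\R)$ with $\hat\chi$ supported in a small interval about the origin and $\chi(0)=1$; then write the kernel as
\begin{equation*}
S_\la(x,t;y,s)=\eta(t/T)\eta(s/T)\sum_k \beta^2(\la_k/\la)e^{-i(t-s)\la^{-1}\la_k^2}e_k(x)\overline{e_k(y)},
\end{equation*}
and express $e^{-i(t-s)\la^{-1}\la_k^2}\beta^2(\la_k/\la)$ via a superposition $\int e^{i\tau\la_k}m_\la(\tau,t-s)\,d\tau$ of the half-wave group, where $m_\la$ is obtained by stationary phase in the scaling variable. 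The point is that $\beta^2(P/\la)e^{-i(t-s)\la^{-1}\Delta_g}$, acting at frequency $\approx\la$, behaves like the half-wave operator of speed $\approx\la$, so that the relevant times $\tau$ in the half-wave representation are of size $\approx|t-s|$, which is $\le 2T=2c_0\log\la$.

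Next I would lift to the universal cover $(\widetilde M,\tilde g)$ of $M^{n-1}$. Since $M$ has nonpositive sectional curvature, by Cartan--Hadamard $\widetilde M$ is diffeomorphic to $\R^{n-1}$ with no conjugate points, and the Hadamard parametrix for the half-wave operator $e^{i\tau\sqrt{-\Delta_{\tilde g}}}$ is globally valid there, giving a kernel that, microlocalized to frequencies $\approx\la$, is $O(\la^{\frac{n-1}2})$ and, by the dispersive estimate for the half-wave equation (or equivalently stationary phase in the lifted parametrix), is $O(\la^{\frac{n-1}2}|\tau|^{-\frac{n-1}2})$ for $|\tau|\gtrsim\la^{-1}$. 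Summing over the deck transformation group $\Gamma$, the kernel on $M$ at a pair of points with $\tau\approx|t-s|$ is a sum over those $\gamma\in\Gamma$ with $d_{\tilde g}(\tilde x,\gamma\tilde y)\le |\tau|+O(1)$; since $\widetilde M$ has a two-sided exponential volume bound $\mathrm{vol}(B(\tilde x,r))\le Ce^{C_M r}$, the number of such $\gamma$ is $O(e^{C_M|t-s|})$, each contributing $O(\la^{\frac{n-1}2}|t-s|^{-\frac{n-1}2})$ once $|t-s|\gtrsim\la^{-1}$; for $|t-s|\lesssim\la^{-1}$ one instead uses the trivial bound $O(\la^{n-1})=O(\la^{\frac{n-1}2}|t-s|^{-\frac{n-1}2})$ on the single near-diagonal term. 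This yields \eqref{j1}. (The cutoffs $\eta(t/T)\eta(s/T)$ only restrict $|t|,|s|\le T$, hence $|t-s|\le 2T$, which is exactly the range where the Hadamard parametrix sum over $\Gamma$ is controlled; this is where $c_0=c_0(M)$ must be taken small, so that the error terms in the finite-order Hadamard parametrix — which grow like $e^{C|t-s|}$ after truncating at order $\sim n$ — remain acceptable, and so that $e^{C_M|t-s|}\le\la^{C_Mc_0}$ stays a small power of $\la$ when later used in \S2.2.)

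The main technical obstacle is a bookkeeping one: reconciling the \emph{Schr\"odinger} time-scaling with the \emph{wave} parametrix. One must show carefully that after writing the scaled Schr\"odinger propagator as a superposition of half-wave operators, the amplitude $m_\la(\tau,t-s)$ is concentrated in $|\tau|\approx|t-s|$ with the correct $\la$-weights, and that plugging the Hadamard parametrix into this superposition and performing the $\tau$-integration reproduces the free Schr\"odinger dispersive decay $\la^{\frac{n-1}2}|t-s|^{-\frac{n-1}2}$ on each covering-space term — i.e. a stationary phase computation whose critical point tracks the geodesic from $\tilde x$ to $\gamma\tilde y$. This is precisely the content of the microlocal Hadamard-parametrix arguments in \cite{BSTop} and \cite{BHSsp}, which I would cite for the detailed stationary phase bounds; the nonpositive curvature hypothesis enters only through the absence of conjugate points (global validity of the parametrix on $\widetilde M$) and the exponential volume growth bound (to count lattice points), both standard consequences of Cartan--Hadamard comparison geometry.
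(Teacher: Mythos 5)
Your proposal follows essentially the same route as the paper: write the dyadic Schr\"odinger propagator as a Fourier superposition of half-wave operators (the paper uses $\cos rP$, an immaterial difference), truncate in $r$ using the rapid decay of the Fourier transform of the symbol, lift to the universal cover via Cartan--Hadamard, apply the Hadamard parametrix with finitely many terms plus an $e^{Cr}$ remainder, do stationary phase in $\xi$ to convert wave-type kernel bounds into the Schr\"odinger dispersive decay $\la^{\frac{n-1}2}|t-s|^{-\frac{n-1}2}$, and count the nonzero deck-transformation summands by the $O(e^{C_M|t-s|})$ volume bound, choosing $c_0$ small so the remainder term and lattice count stay below the allotted power of $\la$. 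This matches the paper's argument in structure and in every substantive step.
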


To prove this we note that for fixed $t$ and $s$,
$\beta^2(P/\la)e^{-i(t-s)\la^{-1}\Delta_g}=\beta^2(P/\la)e^{i(t-s)\la^{-1}P^2}$ is the  Fourier multiplier operator
on $M^{n-1}$ with
\begin{equation}\label{j2}
     m(\la,t-s;\tau)=\beta^2(|\tau|/\la)e^{i(t-s)\la^{-1}\tau^2}.
\end{equation}
We have extended $m$ to be an even function of $\tau$ so that we can write
\begin{equation}\label{j3}
\beta^2(P/\la)e^{-i(t-s)\la^{-1}\Delta_g}=(2\pi)^{-1}\int_{-\infty}^\infty 
\Hat m(\la,t-s;r) \, \cos r\sqrt{-\Delta_g} \, dr,
\end{equation}
where
\begin{equation}\label{j4}
\Hat m(\la,t-s;r)=\int_{-\infty}^\infty e^{-i\tau r}
\beta^2(|\tau|/\la) \, e^{i(t-s)\la^{-1}\tau^2} \, d\tau.
\end{equation}

We note that, by a simple integration by parts argument,
\begin{equation}\label{j5}
\partial^k_r\Hat m(\la,t-s;r)=O(\la^{-N}(1+|r|)^{-N}) \, \forall \, N, 
\, \, \text{if } \, |t-s|\le 1 \, \, \text{and } \, \, \, |r|\ge C_0,
\end{equation}
with $C_0$ sufficiently large.
Similarly
\begin{multline}\label{j6}
\partial^k_r \Hat m(\la,t-s;r)=O(\la^{-N}(1+|r|)^{-N}) \, \forall \, N, 
\\
 \text{if } \, |t-s|\in [2^{j-1},2^j ], \, \, \text{and } \, \, |r|\ge C_02^j, \, \, 
j=1,2,\dots,
\end{multline}
with $C_0$ fixed large enough.  Since $\beta(|\tau|/\la)=0$ if $|\tau|\notin [\la/4,2\la]$ one may take 
$C_0=100$, as we shall do.

To use this fix an even function $a\in C^\infty_0(\R)$ satisfying
$$a(r)=1, \, \, |r|\le 100 \quad \text{and } \, \, a(r)=0\, \, \text{if } \, \,  |r|\ge200.$$
Then by crude eigenfunction estimates and the Weyl formula, if we let
\begin{equation}\label{j7}
\tilde S_{\la,0}(x,t;y,s)=(2\pi)^{-1}\int a(r)\Hat m(\la,t-s,r) \cos rP\, dr
\end{equation}
we have
\begin{equation}\label{j8}
\tilde S_{\la,0}(x,t;y,s)-\bigl(\beta^2(P/\la) e^{-i(t-s)\la^{-1}\Delta_g}\bigr)(x,y)=O(\la^{-N}) \, \forall \, N
\quad \text{if } \, |t-s|\le 1,
\end{equation}
and if 
\begin{equation}\label{j9}
\tilde S_{\la,j}(x,t;y,s)=(2\pi)^{-1}\int a(2^{-j}r)\Hat m(\la,t-s,r) \cos rP\, dr
\end{equation}
we have
\begin{multline}\label{j10}
\tilde S_{\la,j}(x,t;y,s)-\bigl(\beta^2(P/\la) e^{-i(t-s)\la^{-1}\Delta_g}\bigr)(x,y)=O(\la^{-N}) \, \forall \, N
\\
\text{if } \, |t-s| \in [2^{j-1},2^j], \, \, j=1,2,\dots.
\end{multline}

Consequently, we would have \eqref{j1} if we could show that
\begin{equation}\label{j11}
|\tilde S_{\la,0}(x,t;y,s)|\le \la^{\frac{n-1}2}|t-s|^{-\frac{n-1}2} \quad
\text{when } \, \, |t-s|\le 1,
\end{equation}
as well as
\begin{multline}\label{j12}
|\tilde S_{\la,j}(x,t;y,s)|\le \la^{\frac{n-1}2} \exp(C2^j), \quad \text{if } \, |t-s|\in [2^{j-1},2^j]
\\
\text{with } \, \, j=1,2,\dots \, \, \text{and } \, \,  \, 2^j \le c_0\log\la
\end{multline}
with $c_0=c_0(M)$ fixed small enough.

To prove \eqref{j11} and \eqref{j12} we shall argue as in B\'erard~\cite{Berard} (see also \cite[\S3.6]{SoggeHangzhou}).   Just as in \cite{Berard}, \cite{BSTop}, \cite{SoggeZelditchL4} and other works we
shall want to use the Hadamard parametrix and the 
Cartan-Hadamard theorem to lift the 
calculations that will be needed up to the
universal cover $({\mathbb R}^{n-1},\tilde g)$
of $(M^{n-1},g)$.

We therefore let $\{\alpha\}=\Gamma$ denote the group
of deck transfermations preserving the associated
covering map $\kappa: {\mathbb R}^{n-1}\to M^{n-1}$
coming from the exponential map at the point
in $M^{n-1}$ with coordinates $0$ in $\Omega$ in
\S4 above.  The metric $\tilde g$ on ${\mathbb R}^{n-1}$
is the pullback of the metric $g$ on $M^{n-1}$
via $\kappa$.  Choose a Dirichlet domain $D\simeq
M^{n-1}$ for $M^{n-1}$ centered at the lift of the 
point with coordinates $0$.

As in earlier works (see \cite{SoggeHangzhou}) we recall
that if $\tilde x$ denotes the lift of $x\in M^{n-1}$
to $D$, then we have the following formula
\begin{equation}\label{j13}(\cos tP)(x,y)
=(\cos t\sqrt{-\Delta_g})(x,y)
=\sum_{\alpha \in \Gamma}
\bigl(\cos t\sqrt{-\Delta_{\tilde g}}\bigr)(\tilde x,
\alpha(\tilde y)).
\end{equation}
As a result, if we set
\begin{equation}\label{j14}
K_{\la,0}(\tilde x,t;\tilde y,s)=
(2\pi)^{-1}\int a(r)\Hat m(\la,t-s;r)
\, \bigl( \cos r\sqrt{-\Delta_{\tilde g}}\bigr)(\tilde x,
\tilde y) \, dr,
\end{equation}
we have the formula
\begin{equation}\label{j15}
\tilde S_{\la,0}(x,t;y,s)=
\sum_{\alpha \in \Gamma}
K_{\la,0} (\tilde x,t; \alpha(\tilde y),s)).
\end{equation}
Similarly, if we set
\begin{equation}\label{j16}
K_{\la,j}(\tilde x,t;\tilde y,s)=
(2\pi)^{-1}\int a(2^{-j}r)\Hat m(\la,t-s;r)
\, \bigl( \cos r\sqrt{-\Delta_{\tilde g}}\bigr)(\tilde x,
\tilde y) \, dr,
\end{equation}
we have
\begin{equation}\label{j17}
\tilde S_{\la,j}(x,t;y,s)
=
\sum_{\alpha \in \Gamma}
K_{\la,j} (\tilde x,t;\alpha(\tilde y),s).
\end{equation}

Also, by Huygen's principle and the support properties
of $a$, we have that
\begin{equation}\label{j18}
K_{\la,0}(\tilde x,\tilde y)=0 \quad \text{if } \, \,
d_{\tilde g}(\tilde x,\tilde y)\ge C_1, \, \, \,
\text{and } \, \, K_{\la,j}(\tilde x,\tilde y)=0 \, \, \text{if } \, \, 
d_{\tilde g}(\tilde x,\tilde y)\ge C_12^j
\end{equation}
for a uniform constant $C_1$.  Based on this, we conclude that the number
of non-zero summands in the right side of \eqref{j15} is $O(1)$ since $\alpha(D)\cap \alpha'(D)=\emptyset$ if $\alpha\ne \alpha'$.
Also, by simple volume estimates, the number of $\alpha\in \Gamma$ for which $d_{\tilde g}(D,\alpha(D))\le \mu$ is
$O(\exp(C\mu))$ for a uniform constant $C$ if $\mu=2^j$, $j=1,2,\dots$, and so the number of nonzero summands
in the right side of \eqref{j16} is $O(\exp(C2^j))$.  As a result, we would obtain \eqref{j11} if we could show that
\begin{equation}\label{j19}
|K_{\la,0}(\tilde x,t;\tilde y,s)|\le C\la^{\frac{n-1}2}|t-s|^{-\tfrac{n-1}2}, \quad \text{if } \, \, |t-s|\le 1,
\end{equation}
while \eqref{j12} would follow from the estimate
\begin{multline}\label{j20}
|K_{\la,j}(\tilde x,t;\tilde y,s)|\le C\la^{\frac{n-1}2}\exp(C2^j),
\\ \text{if } \, \, 
|t-s| \in [2^{j-1},2^j], \, \, j=1,2,\dots, \, 2^j\le  c_0\log\la,
\end{multline}
with $c_0=c_0(M)$ sufficiently small.

To prove these two estimates, we can use the Hadamard
parametrix for $\partial_r^2-\Delta_{\tilde g}$
since $({\mathbb R}^{n-1},\tilde g)$ is a Riemannian
manifold without conjugate points, i.e., its
 injectivity radius is infinite.  Thus, we can
use the Hadamard parametrix to write for
$\tilde x\in D$, $\tilde y\in {\mathbb R}^{n-1}$
and $|r|\ge c_0>0$
\begin{equation}\label{k13}
\bigl(\cos r\sqrt{-\Delta}_{\tilde g}\bigr)(\tilde x,
\tilde y)=
\sum_{\nu=0}^N
w_\nu(\tilde x,\tilde y)W_\nu(r,\tilde x,\tilde y)
+R_N(r,\tilde x,\tilde y)
\end{equation}
where $w_\nu \in C^\infty({\mathbb R}^{n-1}
\times {\mathbb R}^{n-1})$,
\begin{equation}\label{k14}
W_0(r,\tilde x,\tilde y)=(2\pi)^{-(n-1)}
\int_{{\mathbb R}^{n-1}} e^{id_{\tilde g}(\tilde x,
\tilde y)\xi_1} \cos r|\xi| \, d\xi,
\end{equation}
while for $\nu=1,2,\dots$, $W_\nu(t,\tilde x,\tilde y)$
is a finite linear combination of Fourier integrals 
of the form
\begin{equation}\label{k15}
\int_{{\mathbb R}^{n-1}}
e^{id_{\tilde g}(\tilde x,\tilde y)\xi_1}
e^{\pm ir|\xi|}
\alpha_\nu(|\xi|)\, d\xi,
\, \, \text{with } \, \,
\alpha_\nu(\tau)=0, \, \text{for } \, \tau\le 1
\, \, \text{and } \, \, \partial^j_\tau \alpha_\nu(\tau)
\lesssim \tau^{-\nu-j},
\end{equation}
and, if $N_0$ is given, then if $N$ is large enough,
\begin{equation}\label{k16}
|\partial_r^j R_N(r,\tilde x,\tilde y)|\le C\exp(Cr),
\quad
0\le j\le N_0
\end{equation}
for a fixed constant $C$.  Furthermore, the leading
coefficient $w_0(\tilde x,\tilde y)$ reflects the
geometry of $({\mathbb R}^{n-1},\tilde g)$.
Specifically, in geodesic normal coordinates
about $\tilde x$
$$w_0(\tilde x,\tilde y)=
\bigl(\text{det }\tilde g_{ij}(\tilde y)\bigr)^{-1/4}.$$
Thus, if in geodesic polar coordinates the
volume element is given by 
$$dV_{\tilde g}(\tilde y)
=\bigl({\mathcal A}(r,\omega)\bigr)^{n-2}dr d\omega,
\, \, r=d_{\tilde g}(\tilde x,\tilde y),$$
then
$$w_0(\tilde x,\tilde y)=\bigl(r/{\mathcal A}(r,\omega)
\bigr)^{\frac{n-2}2}.$$
By the classical G\"unther comparison theorem from
Riemannian geometry 
(see \cite[\S III.4]{ChavelRiemannianGeometry})
\begin{equation}\label{k17}
w_0(\tilde x,\tilde y)\le 1,
\end{equation}
and, moreover, for later use,
${\mathcal A}(r,\omega)\ge \tfrac1K\sinh(Kr)$
if all the sectional curvatures are $\le -K^2<0$,
and so
\begin{multline}\label{k18}
w_0(\tilde x,\tilde y)\le C_{K,N}\mu^{-N}
\\ \text{if } \, \,
d_{\tilde g}(\tilde x,\tilde y) \approx \mu
\, \, \text{and } \, \, 
\text{all the sectional curvatures of }\, M^{n-1}
\, \, \text{are} \, \, \le -K^2<0.
\end{multline}
The other coefficients in \eqref{k13} are not as
well behaved; however, B\'erard~\cite{Berard}
showed that if $N_0$ is fixed
\begin{equation}\label{k19}
|\partial^\beta_x w_\nu(\tilde x,\tilde y)|
\le C\exp(Cr), \, \,
|\beta|, \nu\le N_0, \quad r=d_{\tilde g}(\tilde x,
\tilde y),
\end{equation}
for some uniform constant $C$ (depending on 
$\tilde g$ and $N_0$).

The facts that we have just recited are well known.
One can see, for instance,  \cite{Berard} or \cite[\S 1.1, \S 3.6]{SoggeHangzhou} for 
background regarding the Hadamard parametrix,
and \cite{SoggeZelditchL4} for a discussion
of properties of $w_0$.

Let us next use the Hadamard parametrix to prove \eqref{j19}.   By \eqref{k13}, it suffices
to see that if we replace $(\cos r\sqrt{-\Delta_{\tilde g}})(\tilde x,\tilde y)$ in \eqref{j14} by each
of the terms in the right side of \eqref{k13} then each such expression will satisfy the bounds
in \eqref{j19}.

Let us start with the contribution of the main term in the Hadamard parametrix which is the
$\nu=0$ term in \eqref{k13}.  In view of \eqref{k14} and \eqref{k17} it would give rise to these
bounds if
\begin{multline}\label{j28}
(2\pi)^{-n}\int_{-\infty}^\infty \int_{{\mathbb R}^{n-1}} e^{id_{\tilde g}(\tilde x,\tilde y)\xi_1}
\cos (r|\xi|) \, a(r) \, \Hat m(\la,t-s;r) \, dr d\xi
\\
=O(\la^{\frac{n-1}2}|t-s|^{-\frac{n-1}2}) \quad
\text{when } \, \, \, |t-s|\le 1.
\end{multline}
However, by \eqref{j2} and \eqref{j5} and the support properties of $a$, if $|t-s|\le 1$
\begin{align}\label{j29}
(2\pi)^{-1}&\int_{-\infty}^\infty \int_{{\mathbb R}^{n-1}} e^{id_{\tilde g}(\tilde x,\tilde y)\xi_1}
\cos (r|\xi|) \, a(r) \, \Hat m(\la,t-s;r) \, dr d\xi
\\
&=
(2\pi)^{-1}\int_{-\infty}^\infty \int_{{\mathbb R}^{n-1}} e^{id_{\tilde g}(\tilde x,\tilde y)\xi_1}
\cos (r|\xi|) \Hat m(\la,t-s;r) \, dr d\xi+O(\la^{-N}) \notag
\\
&=\int_{{\mathbb R}^{n-1}} e^{id_{\tilde g}(\tilde x,\tilde y)\xi_1} \beta^2(|\xi|/\la) e^{i(t-s)\la^{-1}|\xi|^2} \, d\xi
+O(\la^{-N}). \notag
\end{align}
A simple stationary phase argument shows that the last integral is $O(\la^{\frac{n-1}2}|t-s|^{-\frac{n-1}2})$, and so  
we conclude that the main term in the Hadamard parametrix leads to the desired bounds.

To estimate the contributions of the higher order terms $\nu=1,2,\dots$, we note that by the first part of \eqref{j18}
we may assume that $d_{\tilde g}(\tilde x,\tilde y)$ is bounded.  So, by \eqref{k15} the higher order terms would
lead to the desired bounds since
\begin{multline}\label{j29'}
(2\pi)^{-1} \iint e^{id_{\tilde g}(\tilde x,\tilde y)\xi_1} e^{\pm ir|\xi|} \alpha_\nu(|\xi|) \, a(r)\Hat m(\la,t-s;r) \, dr d\xi
\\
=\int_{{\mathbb R}^{n-1}} e^{id_{\tilde g}(\tilde x,\tilde y)\xi_1} \beta^2(|\xi|/\la) e^{i(t-s)\la^{-1}|\xi|^2} \, \alpha_\nu(|\xi|) \, d\xi
+O(\la^{-N}), \, \, \text{if } \, \, |t-s|\le 1,
\end{multline}
and, by \eqref{k15}, together with a stationary phase argument, the last integral is
$O(\la^{\frac{n-1}2-\nu}|t-s|^{-\frac{n-1}2})$.

We also need to see that the remainder term in \eqref{k13} leads to the bounds
\begin{multline}\label{j30}
\int^\infty_{-\infty} a(r)\Hat m(\la,t-s;r) R(r,\tilde x,\tilde y) \, dr
\\
=\int^\infty_{-\infty} \beta^2(|\tau|/\la) e^{i(t-s)\la^{-1}\tau^2} \, 
\bigl[ a(\, \cdot \, )R(\, \cdot\, ,\tilde x,\tilde y)\bigr]\, \widehat{}\, \,(\tau) \, d\tau =O(\la^{\frac{n-1}2}), \, \, 
\text{if } \, \, d_{\tilde g}(\tilde x,\tilde y)=O(1).
\end{multline}
By \eqref{k16}, the last factor in the integral in the right, which is the Fourier transform of
$r\to a(r)R(r,\tilde x,\tilde y)$, is $O(1)$ if $d_{\tilde g}(\tilde x,\tilde y)=O(1)$.  So, by the support properties of $\beta$, the last integral
in \eqref{j30} is $O(\la)=O(\la^{\frac{n-1}2})$, as desired, since $n\ge3$.

Since each term in the Hadamard parametrix has the desired contribution, the proof of \eqref{j19} is complete.

Similar arguments will yield \eqref{j20}.  We need to see that if we replace $(\cos r\sqrt{-\Delta_{\tilde g}})(\tilde x,\tilde y)$ in
\eqref{j16}
by each of the terms in the right side of \eqref{k13}, then each will satisfy the bounds in \eqref{j20} if
$d_{\tilde g}(\tilde x,\tilde y)\le C2^j$ and $|t-s|\in [2^{j-1},2^j]$ with $j=1,2,\dots$ and
$2^j\le c_0\log \la$ as above.

By \eqref{j6} and \eqref{k17} and the above argument, the $\nu=0$ term in the Hadamard parametrix will lead to 
a contribution of
$$(2\pi)^{-(n-1)}\int_{{\mathbb R}^{n-1}} e^{id_{\tilde g}(\tilde x,\tilde y)\xi_1} e^{i(t-s)\la^{-1}|\xi|^2}
\beta^2(|\xi|/\la)\, d\xi +O(\la^{-N}) =O(\la^{\frac{n-1}2}),$$
by stationary phase and the fact that we are assuming $|t-s|\ge1$.  By \eqref{k19} and the above arguments each of the
$\nu=1,2,\dots$ terms will have contributions which are $O(\la^{\frac{n-1}2-\nu}\cdot \exp(C2^j))=O(\la^{\frac{n-1}2})$
if $2^j\le c_0\log\la$ with $c_0>0$ small enough.  If we repeat the argument above for the contribution of the 
remainder term, we see that the contributions here will be of the form
$$\int^\infty_{-\infty} \beta^2(|\tau|/\la) e^{i(t-s)\la^{-1}\tau^2}
\bigl[ a(2^{-j}\, \cdot \, )R(\, \cdot\, ,\tilde x,\tilde y)\bigr]\, \widehat{}\, \,(\tau) \, d\tau,$$
which, by \eqref{k16} and the support properties of $a$, is $O(\la \exp(C2^j))=O(\la^{\frac{n-1}2}\exp(C2^j))$,
as desired.

Since each term in the Hadamard parametrix has the desired contribution, the proof of \eqref{j20} is complete,
which finishes the proof of Proposition~\ref{kerprop}.

We should point out that the small $|t-s|$ estimates are universally true as in \cite{bgtmanifold}.

\medskip

 \noindent{\bf 4.2. Estimates for kernels of microlocalized operators}
 
 Let us prove the kernel estimates, \eqref{22.65np} and \eqref{22.65neg}, that we used in the proof of $L^{q_c}(A_-)$-estimates.

\begin{proposition}\label{mick} For each $m\in {\mathbb Z}$ pick $\nu(m)\in {\mathbb Z}^{2n-3}$ as in
\eqref{m10} and
 let
\begin{multline}\label{z1}
K_\la(x,t,m;y,s,m') =
\\
\alpha_m(t)\alpha_{m'}(s)
\Bigl( A_{\nu(m)}^{\theta_0} \circ \bigl( \, \beta^2(P/\la) e^{-i\la^{-1}(t-s)\Delta_g} \, \bigr)
\circ (A^{\theta_0}_{\nu(m')})^* \Bigr)(x,y).
\end{multline}
Then if $M=M^{n-1}$ has nonpositive curvature
\begin{equation}\label{z2}
|K_\la(x,t,m;y,s,m')|\le C\la^{\frac{n-1}2} \, |t-s|^{-\frac{n-1}2},
\end{equation}
provided that $|t-s|\le c_0\log\la$ with $c_0=c_0(M)>0$
sufficiently small.
Moreover,  for such $|t-s|$ we have
\begin{multline}\label{z3}
|K_\la(x,t,m;y,s,m')|\le C\la^{\frac{n-1}2} \, |t-s|^{-N} \, \, \forall \, N, 
\\ \text{if }\, |t-s|\ge1 \, \,
\text{and all the sectional curvatures of } \, M^{n-1} \, \text{are negative}.
\end{multline}
The uniform constants $C=C(M^{n-1})$ do not depend on the particular choice of the 
$\nu(m)$.
\end{proposition}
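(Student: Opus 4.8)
\textbf{Proof proposal for Proposition~\ref{mick}.}
The plan is to reduce the microlocalized kernel bound to the already-established kernel estimate of Proposition~\ref{kerprop} (that is, \eqref{j1}), together with the fact that the cutoffs $A^{\theta_0}_{\nu(m)}$ and $(A^{\theta_0}_{\nu(m')})^*$ have bounded $L^1\to L^1$ and $L^\infty\to L^\infty$ norms uniformly in $\la$ and in the choice of indices. First I would observe that, up to $O(\la^{-N})$ errors which are harmless on the scale $|t-s|\lesssim\log\la$, we may freely insert the factor $\eta(t/T)\eta(s/T)$ into \eqref{z1}, since $\alpha_m(t)\alpha_{m'}(s)$ is supported where $|t|,|s|\lesssim\log\la=T/c_0$, so that the operator in \eqref{z1} agrees (modulo such errors) with $A^{\theta_0}_{\nu(m)}\circ S_\la S_\la^*\circ (A^{\theta_0}_{\nu(m')})^*$ when $c_0$ is small. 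Then, writing $K_\la$ as the composition of integral operators, for fixed $t,s$ we have
\[
K_\la(x,t,m;y,s,m')=\alpha_m(t)\alpha_{m'}(s)\iint A^{\theta_0}_{\nu(m)}(x,z)\,\bigl(S_\la S_\la^*\bigr)(z,t;w,s)\,\overline{A^{\theta_0}_{\nu(m')}(y,w)}\,dz\,dw,
\]
so by \eqref{j1} and the kernel bounds $|A^{\theta_0}_\nu(x,z)|\le C_N(\theta_0\la)^{n-1}(1+\theta_0\la\,d_g(x,z))^{-N}$ recorded before \eqref{mp}, the double integral is dominated by $C\la^{\frac{n-1}2}|t-s|^{-\frac{n-1}2}\exp(C_M|t-s|)$ uniformly in $\nu(m),\nu(m')$; choosing $c_0$ so small that $\exp(C_M c_0\log\la)\le\la^{\e}$ with $\e$ as small as we like — in fact, since here we only want \eqref{z2} without a power gain, $\exp(C_M|t-s|)=O(1)$ suffices because $|t-s|\le c_0\log\la$ and we absorb the resulting $\la^{C_M c_0}$, but it is cleaner to note $\exp(C_M|t-s|)\le C$ on any fixed time interval and for $|t-s|\ge 1$ the later negative-curvature gain handles the rest — gives \eqref{z2}.

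For \eqref{z3}, under the assumption that all sectional curvatures are negative, I would revisit the proof of Proposition~\ref{kerprop} rather than its final output, since \eqref{j1} alone is not enough: I need the exponential \emph{decay} in $|t-s|$ that comes from the exponential volume growth on the universal cover being overwhelmed by the rapid decay of the leading Hadamard coefficient $w_0$ via \eqref{k18}. Concretely, in the covering-space decomposition \eqref{j17}, the summand indexed by $\alpha$ is supported where $d_{\tilde g}(\tilde x,\alpha(\tilde y))\lesssim 2^j\approx|t-s|$, there are $O(\exp(C2^j))$ such terms, and each is, by \eqref{k13}--\eqref{k18}, bounded by $C\la^{\frac{n-1}2}\cdot w_0(\tilde x,\alpha(\tilde y))\le C_{K,N}\la^{\frac{n-1}2}|t-s|^{-N}$ \emph{plus} contributions of lower-order Hadamard terms and the remainder; the lower-order terms gain extra powers $\la^{-\nu}$ and the remainder gains powers of $\la$, so after summing the $O(\exp(C2^j))$ terms one still obtains $O(\la^{\frac{n-1}2}|t-s|^{-N}\exp(C2^j))$, and on the range $2^j\le c_0\log\la$ with $c_0$ small this is $O(\la^{\frac{n-1}2}|t-s|^{-N'})$ after relabeling $N$. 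Composing with the $L^1$- and $L^\infty$-bounded cutoffs preserves this, and the $\alpha_m(t)\alpha_{m'}(s)$ localization is irrelevant except to confine $t,s$ to the stated range; this yields \eqref{z3}.

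The main obstacle I anticipate is the bookkeeping in the negative-curvature case: one must verify that the decay $w_0(\tilde x,\tilde y)\le C_{K,N}\mu^{-N}$ for $d_{\tilde g}(\tilde x,\tilde y)\approx\mu$ genuinely beats the $O(\exp(C\mu))$ count of lattice points \emph{and} the $O(\exp(C\mu))$-type growth of the higher Hadamard coefficients $w_\nu$ and of the remainder $R_N$ in \eqref{k16}, \eqref{k19}. This is exactly the balance the authors allude to in the introduction ("we have to balance this with the exponential volume growth"), and it is handled by the smallness of $c_0$: the remainder contributes $O(\la\,\exp(C2^j))=O(\la^{1+C c_0})$, which is $\le\la^{\frac{n-1}2}$ only because $n\ge4$ here (so $\tfrac{n-1}2\ge\tfrac32>1$) and $c_0$ is small — and for the exponent $N$ in $|t-s|^{-N}$ one must take $N$ in the Hadamard expansion \eqref{k13} correspondingly large so that $\la^{\frac{n-1}2-\nu}$ is negligible past $\nu>\tfrac{n-1}2$. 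I would also double-check that the cutoffs' uniformity in the index $\nu(m)$ is genuinely index-independent, which follows since the bounds on $A^{\theta_0}_\nu$ in \eqref{mp} and the kernel estimates preceding it are uniform over $\nu\in\theta_0\mathbb{Z}^{2n-3}$; this gives the asserted uniformity of $C=C(M^{n-1})$.
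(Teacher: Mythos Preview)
Your reduction for $|t-s|\le 1$ via \eqref{z4} and Proposition~\ref{kerprop} is correct and matches the paper. The gap is for $1\le|t-s|\le c_0\log\la$. Composing the $L^1$-bounded cutoffs with \eqref{j1} yields only
\[
|K_\la|\le C\la^{\frac{n-1}2}|t-s|^{-\frac{n-1}2}\exp(C_M|t-s|),
\]
and for $|t-s|\approx c_0\log\la$ the exponential factor is $\la^{C_Mc_0}$, which is not $O(1)$ regardless of how small $c_0$ is; since the theorems' gain over \cite{bgtmanifold} is only logarithmic, any $\la^\e$ loss here is fatal. Your fallback ``the negative-curvature gain handles the rest'' does not apply, since \eqref{z2} is asserted for merely \emph{nonpositive} curvature. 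The same issue undermines your argument for \eqref{z3}: if the sectional curvatures lie in $[-K_2^2,-K_1^2]$ with $0<K_1\le K_2$, then the number of deck-transformation terms at scale $r\approx|t-s|$ is $O(e^{(n-2)K_2 r})$ while $w_0\lesssim e^{-(n-2)K_1 r/2}$, and since $K_2-K_1/2\ge K_2/2>0$ the product still grows exponentially. So the ``relabel $N$'' step fails without a pinching hypothesis the paper does not assume.

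The idea you are missing is that the cutoffs $A^{\theta_0}_\nu$ are not passive bounded operators to be composed at the end---they are the mechanism that cuts the covering sum from $O(\exp(C|t-s|))$ terms to $O(1)$. The paper factors $A^{\theta_0}_\nu = A^{\theta_0}_j(x,D)\circ A^{\theta_0}_\ell(P)$ and uses each piece. The directional cutoff $A^{\theta_0}_j$, combined with Toponogov's theorem as in \cite{BSTop}, shows that summands with $\alpha(D)$ outside a fixed tube ${\mathcal T}_R(\tilde\gamma)$ about the lifted geodesic are each $O(\la^{-1})$, leaving only $O(\log|t-s|)$ terms in the tube. The height cutoff $A^{\theta_0}_\ell(P)$ replaces the multiplier $m$ by $m_\ell$ in \eqref{z8}, whose frequency support in a band of width $\la^{7/8}$ about $\la\kappa^{\theta_0}_\ell$ forces, via nonstationary phase (see \eqref{z18}), $d_{\tilde g}(\tilde x,\alpha(\tilde y))$ to lie in an interval $I_{t,s,\kappa}$ of length $O(|t-s|\la^{-1/8})\ll 1$, so that only $O(1)$ of the tube terms survive. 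With $O(1)$ terms, \eqref{z2} follows from \eqref{k17} and stationary phase, and \eqref{z3} follows from \eqref{k18} applied to those finitely many survivors, each of which necessarily has $d_{\tilde g}\approx|t-s|$.
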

 
\begin{proof}
Since, as we mentioned before, the kernels
of the $A_\nu^{\theta_0}$ operators satisfy the uniform
bounds
\begin{equation}\label{z4}
\int|A_\nu^{\theta_0}(x,y)|\, dx, \, \, \,
\int|A_\nu^{\theta_0}(x,y)| \, dy \, \le C,
\end{equation}
by Proposition~\ref{kerprop}, we obtain
the above bounds when $|t-s|\le 1$.

Also, by \eqref{z4}, if we let
\begin{multline}\label{z5}
\tilde K_\la(x,t,m;y,s,m')=
\\
\alpha_m(t)\alpha_{m'}(s)
\Bigl(A_{\nu}^{\theta_0}\circ \bigl( \, \beta^2(P/\la) e^{-i\la^{-1}(t-s)\Delta_g} \, \bigr)
\Bigr)(x,y), \quad \nu=\nu(m),
\end{multline}
it suffices to see that this kernel, which does not
include the microlocal cutoffs in the right satisfies
the bounds in \eqref{z2} and \eqref{z3} when
$|t-s|\ge1$.

Let us start by proving that \eqref{z5} satisfies the 
bounds in \eqref{z3} for $|t-s|\ge 1$
if $M^{n-1}$ has nonpositive curvatures.

To do this recall that, by  \eqref{m7} and \eqref{m10} with $\theta=\theta_0=\la^{-1/8}$ , if $\nu(m)=(\theta_0k, \theta_0\ell)$ then
\begin{multline}\label{z6}
A_\nu^{\theta_0}(x,D)=\tilde A_k^{\theta_0}(x,D)
\circ b(\la^{-7/8}(P-\la \kappa_\ell^{\theta_0})), \quad
\kappa^{\theta_0}_\ell=1+\theta_0\ell, \, \, |\ell|\lesssim \theta_0^{-1},
\\
\text{if } \, \, \tilde A_k^{\theta_0}(x,D)
=A_k^{\theta_0}(x,D)\circ \tilde \Upsilon(P/\la),
\end{multline}
with $b\in C^\infty_0((-1,1))$ and $\Upsilon$ as in \eqref{ups}.
Here the $A_k^{\theta_0}$ operators localize at
scale $\theta_0=\la^{-1/8}$ about a geodesic
$\overline{\gamma}_k$ in $\Omega$ due to 
\eqref{m2}--\eqref{m6}.

By \eqref{z6} we now have the following variant of 
\eqref{j3}
\begin{align}\label{z7}
A_\nu^{\theta_0} \circ \bigl(\beta^2(P/\la) e^{-i(t-s)\la^{-1}\Delta_g}\bigr)
&= \tilde A_k^{\theta_0}
\circ\bigl(\beta^2(P/\la) \, b(\la^{-7/8}(P-\la
\kappa^{\theta_0}_\ell))\, 
e^{-i(t-s)\la^{-1}\Delta_g}\bigr)
\\
&=
\Bigl( \, (2\pi)^{-1}\int
\Hat m_\ell(\la,t-s;r) \, \bigl(
 \tilde A_k^{\theta_0} \circ \cos r\sqrt{-\Delta_g}\bigr) \, dr
\Bigr),
\notag
\end{align}
if now \eqref{j4} is replaced by
\begin{equation}\label{z8}
\Hat m_\ell(\la,t-s;r)
=\int_{-\infty}^\infty e^{-i\tau r}
\beta^2(|\tau|/\la) \, 
b(\la^{-7/8}(|\tau|-\la \kappa_\ell^{\theta_0}))
e^{i(t-s)\la^{-1}\tau^2}
\, d\tau.
\end{equation}

By a simple integration by parts argument we have the
following analog of \eqref{j6}
\begin{multline}\label{z9}
\partial_r^k \Hat m_\ell(\la,t-s;r)=O(\la^{-N}(1+|r|)^{-N}) \, \, \forall \, N 
\\
 \text{if } \, \,
|t-s|\le c_0\log\la \, \, \,
\text{and } \, \, |r|\ge 100 c_0\log\la.
\end{multline}
Thus, if $a$ is as in the proof of Proposition~\ref{kerprop}, since the dyadic operators
$\tilde A_k^{\theta_0}$ have kernels as in \eqref{z4},
if we insert a factor of $(1-a(r/c_0\log\la))$ into
the integral in the last term in \eqref{z7} the resulting kernels will be $O(\la^{-N})$ for all $N$.  So, we have
reduced the proof of \eqref{z2} to showing that we have
the kernel estimates
\begin{equation}\label{z10}
\bigl(W_{\la,\ell,t-s}\bigr)(x,y)
=O(\la^{\frac{n-1}2}|t-s|^{-\frac{n-1}2}),
\quad \text{if } \, \, 1\le |t-s|\le c_0\log\la,
\end{equation}
for small enough $c_0>0$ if
\begin{equation}\label{z11}
W_{\la,\ell,t-s}=
(2\pi)^{-1}\int a(r/c_0\log\la) \, 
\Hat m_\ell(\la,t-s;r)\, \bigl(\tilde A_k^{\theta_0} \cos r\sqrt{-\Delta_g}\bigr) \, dr.
\end{equation}

To estimate \eqref{z10}, we shall argue as in the last
subsection.  We first lift the calculation up to
the universal cover exactly as before by rewriting
\begin{equation}\label{z12}W_{\la,\ell, t-s}(x,y)
=\sum_{\alpha\in \Gamma}W^\alpha_{\la,\ell,t-s}(\tilde
x, \tilde y),
\end{equation}
where
\begin{multline}\label{z13}
W^\alpha_{\la,\ell,t-s}(\tilde
x, \tilde y)=
\\
(2\pi)^{-1}\int_{-\infty}^\infty
a(r/c_0\log\la) \, \Hat m_\ell(\la,t-s;r)
\, \bigl(\tilde A_k^{\theta_0} \cos r\sqrt{-\Delta_{\tilde g}}\bigr)(\tilde x, 
\alpha(\tilde y))\, dr,
\end{multline}
and $\tilde x$, $\tilde y$ denote the lift of $x,y$, respectively, to the universal cover $({\mathbb R}^{n-1},
\tilde g)$.  By the support properties of $a$ and
Huygens principle
\begin{equation}\label{z14}
W^\alpha_{\la,\ell,t-s}(\tilde
x, \alpha(\tilde y))=0 \quad \text{if } \, \, 
d_{\tilde g}(\tilde x,\tilde \alpha(\tilde y))\ge C
c_0\log\la,
\end{equation}
with $C$ being a fixed constant.

In the last subsection we had to deal with the fact that the sums that arose after lifting the calculations
up to the universal cover involved $O(\exp(Cc_0\log\la))$ terms.  Here, because of the $\nu=(k,\ell)$ localizations,
it will turn out that, given $|t-s|\ge1$,
 there are only $O(1)$ summands above which are nontrivial.
 
 Let us start by exploiting the localization coming from the $\tilde A_k^{\theta_0}$ operators which localize
 about the geodesic $\overline{\gamma}_k$ in $\Omega$.  If we argue exactly in \cite{BSTop}, just by
 using this operator and elementary arguments involving the calculus of Fourier integral operators
 and Toponogov's triangle comparison theorem,
 we shall be able to see that the overwhelming majority of the terms in \eqref{z12} are $O(\la^{-1/2})$,
 which is much better than the bounds posited above.
 
 To do this, just as in earlier works we start by modifying the coordinates in $\Omega$ so that the
 $0\in \overline\gamma_k$.    Then, as in \cite{BSTop}, we let $\tilde \gamma(t)$, $t\in \R$ denote the lift
 of the geodesic $\overline \gamma_k$ to the universal cover and
 $${\mathcal T}_R(\tilde \gamma)=\{x: \, d_{\tilde g}(\tilde \gamma, \tilde z)\le R\}.$$
Then, just as in \cite{BSTop}, if $R$ is fixed large enough and 
$\alpha(D)\cap {\mathcal T}_R(\tilde \gamma)=\emptyset$, with, as before, $D\simeq M^{n-1}$ being our
fundamental domain, then the summand in \eqref{z12} involving $\alpha$ must by $O(\la^{-1})$
by Toponogov's theorem and microlocal arguments.  This is exactly how (3.9) in \cite{BSTop} was
proved, and one can simply repeat the arguments there to obtain this bound.

Since there are $O(\la^{1/2})$ non-zero terms
in \eqref{z12} if $c_0$ in \eqref{z13} is fixed small enough, we obtain in this case
$$\sum_{\{\alpha: \, \alpha(D)\cap{\mathcal T}_R(\tilde \gamma)=\emptyset\} }
W_{\la,\ell,t-s}^\alpha(\tilde x,\tilde y)=O(\la^{-1/2}), \quad \text{if } \, 1\le |t-s|\le c_0\log\la,
$$
which is much better than the bounds in \eqref{z10}.

In order to obtain \eqref{z10} we still have to deal with the terms for which
$ \alpha(D)\cap{\mathcal T}_R(\tilde \gamma) \ne \emptyset$; however, fortunately for us, 
by \eqref{z14}  there
are only $O(\log |t-s|)$ such non-zero terms in \eqref{z12}.  Having reduced out task
to only considering such summands we no longer need to use the microlocal cutoff
$\tilde A_k^{\theta_0}$.  Since it satisfies the bounds in \eqref{z4} we would have
\eqref{z10} if
\begin{multline}\label{z15}
\sum_{\{\alpha: \, \alpha(D)\cap{\mathcal T}_R(\tilde \gamma) \ne \emptyset\} }
(2\pi)^{-1}\int_{-\infty}^\infty
a(r/c_0\log\la) \, \Hat m_\ell(\la,t-s;r)
\, \bigl( \cos r\sqrt{-\Delta_{\tilde g}}\bigr)(\tilde x, 
\alpha(\tilde y))\, dr
\\
 =O(\la^{\frac{n-1}2}|t-s|^{-\frac{n-1}2}), 
\quad
 \text{if } \, \, 1\le |t-s|\le c_0\log\la.
 \end{multline}

To do this, just like before, we shall use the Hadamard parametrix
\eqref{k13}.   We need to see that the contribution
of each term gives a contribution satisfying the
these bounds.

If we argue as before, and use 
\eqref{k15} the contribution
of the  higher order terms to \eqref{z13} will be
a linear combination of terms of the form
\begin{multline}\label{z151}
(2\pi)^{-1}w_\nu(\tilde x,\alpha(\tilde y))
\int_{-\infty}^\infty \int e^{id_{\tilde g}(\tilde x,
\alpha(\tilde y))\xi_1} e^{\pm ir|\xi|}\alpha_\nu(|\xi|)
a(r/c_0\log\la) 
\\
\times \Hat m_\ell(\la,t-s;r)
\, d\xi dr.
\end{multline}
Assuming as we are that $|t-s|\le c_0\log\la$, 
modulo a $O(\la^{-N})$ term, just as in \eqref{j29'},
this equals
\begin{equation}\label{z16}
w_\nu(\tilde x,\alpha(\tilde y))
\int_{{\mathbb R}^{n-1}}
\beta^2(|\xi|/\la) 
b(\la^{-7/8}(|\xi|-\la
\kappa^{\theta_0}_\ell)) \,
e^{id_{\tilde g}(\tilde x,\alpha(\tilde y))\xi_1}
\, \alpha_\nu(|\xi|) \, 
e^{i(t-s)\la^{-1}|\xi|^2} \, d\xi.
\end{equation}
By an easy stationary phase calculation if $|t-s|\ge1$
the last integral is 
$O(\la^{\frac{n-1}2-\nu})
$ in view
of the last part of \eqref{k15}.  
Since, as we noted before there are only $O(\log\la)$ summands in \eqref{z15}, we conclude
that the contribution of the higher order terms, $\nu=1,2,\dots$, in Hadamard parametrix
will be $O(\la^{\frac{n-1}2-\frac12})$, which is much better than we need for \eqref{z15}.

We next notice that, similar to \eqref{j30}, the contribution of the remainder term in \eqref{k13}
will be
$$\int_{-\infty}^\infty
\beta^2(|\tau|/\la)e^{i(t-s)\la^{-1}\tau^2} b(\la^{-7/8}(|\tau|-\la\kappa^{\theta_0}_\ell))
\bigl[ a((c_0\log\la)^{-1}\, \cdot \, )R(\, \cdot\, ,\tilde x,\tilde y)\bigr]\, \widehat{}\, \,(\tau) \, d\tau.$$
By \eqref{k16}, the last factor in the integral
is $O(\exp(Cc_0\log\la))\le \la^{1/16}$ if 
$c_0$ is small enough.  Since the rest of the integrand
is bounded and supported on a set of size
$\la^{7/8}$, we conclude that the contribution
of the remainder term in the Hadamard parametrix to
\eqref{z10} also not only satisfies the bounds in \eqref{z15}, but, moreover, like the above 
terms for $\nu=1,2,\dots$ in \eqref{k13}, satisfies the improved ones
in \eqref{z3}.  Indeed, its contribution will be $O(\la^{15/16}\log\la)$ for such $c_0$.

We still have to deal with the main term in the
Hadamard parametrix, i.e., the contribution of the
$\nu=0$ term in \eqref{k13} to \eqref{z151}.  
Arguing as before, the proof of \eqref{z15} would be complete if we could show that
\begin{multline}\label{z17}
\sum_{\{\alpha: \, \alpha(D)\cap{\mathcal T}_R(\tilde \gamma) \ne \emptyset\} }
w_0(\tilde x,\alpha(\tilde y)) \int_{{\mathbb R}^{n-1}}
\beta^2(|\xi|/\la) \, b(\la^{-7/8}(|\xi|-\la\kappa^{\theta_0}_\ell))
\\
\times \cos(r|\xi|) e^{id_{\tilde g}(\tilde x,\alpha(\tilde y)\xi_1}
e^{i(t-s)\la^{-1}|\xi|^2}
\, dr 
\\
 =O(\la^{\frac{n-1}2}|t-s|^{-\frac{n-1}2}), 
\quad
 \text{if } \, \, 1\le |t-s|\le c_0\log\la.
 \end{multline}
 
 To obtain \eqref{z17} we shall use the fact that
 for $|t-s|\ge1$ we have: 
\begin{multline}\label{z18}
\int_{{\mathbb R}^{n-1}} \beta^2(|\xi|/\la)
b(\la^{-7/8}(|\xi|-\la \kappa)) 
e^{id_{\tilde g}(\tilde x, \alpha(\tilde y))\xi_1} 
e^{i(t-s)\la^{-1}|\xi|^2} \,d\xi
=O(\la^{-N}) \, \forall N 
\\
\text{if } \, \, 
d_{\tilde g}(\tilde x, \alpha(\tilde y))\notin
I_{t,s,\kappa}=
\bigl[ 2|t-s|(\kappa-C\la^{-1/8}), \,
2|t-s|(\kappa+C\la^{-1/8})\big], \, \,
\text{if }\, \kappa=\kappa^{\theta_0}_\ell
\end{multline}
with $C$ large enough,
and
\begin{multline}\label{z19}
\int_{{\mathbb R}^{n-1}} \beta^2(|\xi|/\la)
b(\la^{-7/8}(|\xi|-\la \kappa)) 
e^{id_{\tilde g}(\tilde x,\alpha(\tilde y))\xi_1} 
e^{i(t-s)\la^{-1}|\xi|^2} \, d\xi
=O(\la^{\frac{n-1}2}|t-s|^{-\frac{n-1}2}),
\\
\text{if } \, \, d_{\tilde g}(\tilde x,\alpha(\tilde y))
\in I_{t,s,\kappa}, \, \, \, \kappa=\kappa^{\theta_0}_\ell.
\end{multline}
The first estimate  just follows from a simple integration by parts argument.  For if $\phi=d_{\tilde g}(\tilde x,\tilde y)\xi_1+(t-s)\la^{-1}|\xi|^2$, then, if
$C$ in the definition of $I_{t,s,\kappa}$ is fixed large enough, $|\nabla_\xi \phi|\ge \la^{-1/8}$ if $d_{\tilde g}(\tilde x,\tilde y) \notin
I_{\kappa,t,s}$, and, also, derivatives of the amplitude of the integral are $O(\la^{-7/8})$.  Thus, in this case, every integration by parts
gains a power of $\la^{-3/4}$, resulting in \eqref{z18}.  The other estimate, \eqref{z19} just follows from stationary phase.

If we note that the interval $I_{t,s,\kappa}$ has length $O(|t-s| \, \la^{-1/8})$ which is much smaller than 1, if as above we
assume that $|t-s|\le c_0\log\la$, we conclude that there can only be $O(1)$ terms in \eqref{z17} which are not
$O(\la^{-N})$, which leads to \eqref{z17} since, by \eqref{k17} $w_0$, is bounded.

This completes the proof of \eqref{z2}.

To prove the much stronger bounds \eqref{z3} which require that $M^{n-1}$ have {\em negative} sectional curvatures, we
note that the contribution of all of the terms in the Hadamard parametrix other than the main one, corresponding to $\nu=0$,
involved a $\lambda$-power improvement of what
was needed for \eqref{z2} and thus lead to bounds of the form \eqref{z3} since we are assuming that
$|t-s|=O(\log\la)$.  Thus, to prove \eqref{z3}, it is enough to show that under these curvature assumptions we have the analog
of \eqref{z17}  with $O(\la^{\frac{n-1}2}|t-s|^{-\frac{n-1}2})$ replaced by
$O(\la^{\frac{n-1}2}|t-s|^{-N})$ for every $N$.  To do this, we also use the simple fact, which follows from an integration
by parts argument, that we have the $O(\la^{-N})$ bounds in \eqref{z18} if $d_{\tilde g}(\tilde x,\alpha(\tilde y))
\notin [C_1^{-1}|t-s|, C_1|t-s|]$ with $C_1$ fixed sufficiently large.  In view of \eqref{k18} each of the $O(1)$ nontrivial
terms in the sum in the left side of \eqref{z17} must be $O(\la^{\frac{n-1}2}|t-s|^{-N})$ for every $N$, as desired.
This completes the proof of Proposition~\ref{mick}.
\end{proof}
  
  \noindent{\bf 4.3. Estimates for kernels involving local auxiliary operators.}
  
  Let us prove the kernel estimates we used in \S 3.

\medskip

\noindent{\bf Proof of Lemma~\ref{ker}.}
Let us now prove Lemma~\ref{ker}, which allowed us
to use parabolic scaling and results from \cite{LeeBilinear} to obtain the bilinear estimates
\eqref{q28}.  This lemma follows from a  straightforward variation
on the stationary phase arguments used to prove
\cite[Lemma 5.1.2]{SFIO2}.  Moreover, Lemma~\ref{ker}
is essentially Lemma 3.2 in \cite{blair2015refined}
or Lemma 4.3 in \cite{SBLog}, and in fact the latter
result almost immediately gives our results given
how we have constructed the local operators in \eqref{22.5}.

We first note that the kernel of our local operators
are given by
\begin{multline}\label{w1}
\tilde \sigma_\la(x,t;y,s)=
\bigl(B\circ\sigma_\la\bigr)(x,t;y,s)
\\
=(2\pi)^{-2}
\iint e^{i(t-s)\tau}e^{ir\la^{1/2}\tau^{1/2}}
\tilde \beta(\tau/\la) \, \Hat \sigma(r)
\, \bigl(B\circ e^{-irP}\bigr)(x,y) \, dr d\tau.
\end{multline}  
Since $(r,x,y)\to (B\circ e^{-irP})(x,y)$ is smooth when
$d_g(x,y)\ne |r|$ and $\Hat \sigma$ is as in
\eqref{22.6}, by a simple integration by parts
argument,
\begin{equation}\label{w2}
\tilde \sigma_\la(x,t;y,s)=O(\la^{-N}) \, \forall \, N
\quad \text{if } \, \, |d_g(x,y)-\delta|\ge \tfrac32
\delta_0\delta.
\end{equation}
This leads to the first part of \eqref{q7} if $\theta$
is small since the kernels of our microlocal
cutoffs, $A_\nu^{c_0\theta}$, satisfy
\begin{equation}
\label{w2'}A^{c_0\theta}_\nu(x,y)=O(\la^{-N}) \, \forall \, N
\quad \text{if } \, \, d_g(x,y)\ge C_1\theta,
\end{equation}
for a uniform constant $C_1$ since we are assuming
that $\la^{-1/2}\ll \la^{-1/8}\le \theta$.  Also, since the symbols $A_\nu^{c_0\theta}(x,\xi)=0$ if $\xi$ is not
in a small conic neighborhood of $(0,\dots,0,1)\in 
{\mathbb R}^{n-1}$, it follows that
$(r,x,y) \to \bigl(B\circ e^{-irP}  \circ A_\nu^{c_0\theta}  \bigr)(x,y)$ is smooth when $x_{n-1}-y_{n-1}<0$ and $\Hat \sigma \ne 0$, which yields the other half of \eqref{q7} via another
simple integration by parts argument.

Next, we recall that, by \eqref{m10} $A_\nu^{c_0\theta}
=A^{c_0\theta}_j(x,D) \circ A_\ell^{c_0\theta}(P)$, where 
$A_j$ localizes to a $c_0\theta$ neighborhood of a
geodesic $\overline \gamma_j\in \Omega$ about which we have chosen Fermi normal coordinates and
$A_\ell^{c_0\theta}(P)$ is the ``height operator'' given
by \eqref{m7}.  The other operator $A_{\nu'}^{c_0\theta}$ localizes at scale $c_0\theta$ to a geodesic $\overline \gamma_{j'}$ and 
height operator $A_{\ell'}^{c_0\theta}$ which are $\theta$-close to the above.

Next, let us use the fact that, by Lemma 3.2 in \cite{BlairSoggeRefined} or
Lemma 4.3 in 
\cite{SBLog},\footnote{In \cite{SBLog} different notation was used to denote the 
pseudodifferential cutoff $B$ due to the semiclassical notation there.}
 if we just consider the localizations
coming from the ones arising from geodesics, we have,
for $\omega\approx \la$, that the following kernels on
$M^{n-1}$ satisfy
\begin{equation}\label{w3}
\bigl(
\tilde \sigma(\omega-P) \circ   A^{c_0\theta}_\iota \bigr)(x,y)
=\omega^{\frac{n-2}2} e^{i\omega d_g(x,y)}
a_{\iota,\theta}(\omega;x,y) +O(\la^{-N}),
\, \, \iota=j,j',
\end{equation}
where the amplitude satisfies
$a_{\iota,\theta}=0$ if \eqref{q6} is valid, and, 
additionally, since we are working in Fermi normal
coordiates about $\overline \gamma_j$ 
\begin{equation}\label{w4}
\bigl| \partial^i_\omega 
\partial^k_{x_{n-1}}\partial^{k'}_{y_{n-1}}
D^\beta_{x,y}a_{\iota,\theta}(\omega;x,y)|
\le C_{i,k,k',\beta} \, \omega^{-i}\theta^{-|\beta|}, \quad \iota=j,j'.
\end{equation}
If $i=0$, this  just follows \cite[Lemma 4.3]{SBLog}
and our choice of coordinates.  In order to get the $c_0\theta$-scale concentration
as in \eqref{q6} that we used in the last section, we apply \cite[Lemma 4.3]{SBLog} with
$\theta$  there replaced by $c_0\theta$.
The fact that we also
have a $\omega^{-1}$ improvement for each $\omega$-derivative just comes from the fact that if
we use parametrices for $e^{-irP}$ to represent $e^{-i\omega d_g(x,y)}$ times the right side of 
\eqref{w3} as an oscillatory integral in the standard way, such as in \cite{SBLog}, each $\omega$-derivative
brings down a factor of the phase function (normalized to vanish at the stationary points) and so
results in a $\omega^{-1}$ improvement, just as in
standard stationary phase with parameters results
(see e.g., \cite[Corollary 1.1.8]{SFIO2}).

To obtain \eqref{q6} for our kernels, we first note that, by \eqref{w1}
and \eqref{m10},
\begin{multline}\label{w5}
\bigl(
\tilde \sigma_\la \circ A_\nu^{c_0\theta}\bigr)(x,t;y,s)
\\
=
\Bigl(\bigl[\, (2\pi)^{-1}
\int e^{i\tau(t-s)}
 \,
\tilde \sigma(\la^{1/2}\tau^{1/2}-P) \circ A_j^{c_0\theta}\circ
\tilde \beta(\tau/\la)\, d\tau
\, \bigr] \circ A_\ell^{c_0\theta}\Bigr).
\end{multline}
If we consider the kernel of the operator inside
the square brackets, by \eqref{w3}, we can write it
as
\begin{align}\label{w6}
(2\pi)^{-1}
&\int^\infty_{-\infty} e^{i\tau(t-s)} (\la\tau)^{\frac{n-2}4}
e^{i\la^{1/2}\tau^{1/2}d_g(x,y)}
a_{j,\theta}(\la^{1/2}\tau^{1/2};x,y)\, \tilde \beta(\tau/\la) \, d\tau +O(\la^{-N})
\\
&=(2\pi)^{-1}\la^{n/2} 
\int^\infty_{-\infty} e^{i\la[\tau(t-s)+ \tau^{1/2}d_g(x,y)]}
a_{j,\theta}(\la\tau^{1/2};x,y)\, 
\tau^{\frac{n-2}4}\tilde \beta(\tau) d\tau +O(\la^{-N}),
\notag
\\
&=\pi^{-1} \la^{n/2}\int_0^\infty e^{i\la[\tau^2(t-s)+\tau d_g(x,y)]}
a_{j,\theta}(\la\tau;x,y) \, \tau^{n/2} \, \tilde\beta(\tau^2) \, d\tau +O(\la^{-N}), \notag
\end{align}
where $a_{j,\theta}$  is
as in \eqref{w3} and so vanishes when $x$ or $y$ is outside a $O(c_0\theta)$-tube
about $\overline\gamma_j$.  Since the kernel of
$A_\ell^{c_0\theta}$ also satisfies \eqref{w2'}, by combining \eqref{w5} and \eqref{w6},
we obtain \eqref{q6} for $\nu=(c_0\theta j,c_0\theta \ell)$.  The same argument
gives us this for $\nu'=(c_0\theta j',c_0\theta\ell')$.

Next, let us use \eqref{w4}--\eqref{w6} to prove 
the remaining parts of Lemma~\ref{ker} saying that
the kernels are also $O(\la^{-N})$ in the
regions described by \eqref{q5} and \eqref{qtube} and
that outside of these and the ones in \eqref{q6} and
\eqref{q7} (where we already know this), they are as in \eqref{q3} and \eqref{q4}.

In order to do this we argue as in H\"ormander~ \cite{HSpec} or more specifically as in 
\cite[\S4.3]{SFIO2} to see that the we can write
the kernel of the height operators, $m=\ell, \ell'$, as
\begin{align}\label{w7}
&A_m^{c_0\theta}(x,y)
\\
&=
\int_{{\mathbb R}^{n-1}} 
e^{i\varphi(x,y;\xi)}
b\bigl((c_0\theta\la)^{-1}(p(x,\xi)-\la\kappa_m^{c_0\theta})
\bigr)
\Upsilon(p(x,\xi)/\la) \, 
q(x,y,\xi) \, d\xi +O(\la^{-N})
\notag
\\
&= \la^{n-1}
\int_{{\mathbb R}^{n-1}}
e^{i\la\varphi(x,y;\xi)}
b\bigl((c_0\theta)^{-1}(p(x,\xi)-\kappa_m^{c_0\theta})
\bigr)
\Upsilon(p(x,\xi)) \, 
q(x,y,\la\xi) \, d\xi +O(\la^{-N}),
\notag
\end{align}
where $b\in C_0^\infty((-1,1))$ is as in
\eqref{m7}, $\Upsilon$ as in \eqref{ups}, and $p(x,\xi)$ is the principal symbol
of $P$, $q\in S^{0}_{1,0}$, $(2\pi)^{-(n-1)}-q\in S^{-1}_{1,0}$
and $\varphi$ is homogeneous of degree one in $\xi$
and satisfies
\begin{equation}\label{w8}
\varphi(x,y;\xi)=\langle x-y,\xi\rangle
+O(|x-y|^2|\xi|), \quad
\text{on supp } \, q.
\end{equation}
So, in particular,
\begin{equation}\label{w9}
\nabla_\xi \varphi=0
\, \iff \, x=y, \, \, \, \text{and } \, 
\nabla_x\varphi=\xi \, \, \text{as well as } \,
\frac{\partial^2\varphi}{\partial x\partial \xi}=I_{n-1} 
\, \, \text{if } \, x=y.
\end{equation}
Indeed, to see this, one recalls that the Lax parametrix allows to write
for small $|t|$
$$\bigl(e^{itP}\bigr)(x,y)
=
\int e^{i\varphi(x,y;\xi)+itp(x,\xi)} q(x,y,t;\xi) \, 
d\xi,$$
for $q\in S^0_{1,0}$ solving a transport equation and
so $(2\pi)^{-(n-1)}-q(0,x,y;\xi)\in S^{-1}_{1,0}$.  Using this, and
the fact that the Fourier transform of
$\tau\to b\bigl((c_0\theta\la)^{-1}(\tau-\la\kappa_m^{c_0\theta})
\bigr)\Upsilon(\tau/\la)$ is $O(\la^{-N})$ 
and rapidly decreasing outside
of a fixed interval about the origin, allows one to
argue as in \cite[\S 4.3]{SFIO2} or the previous
two subsections here to obtain \eqref{w7}.

In the regions where we do not already know that
the kernel $K^{c_0\theta}_{\la,\mu}$ in \eqref{q3}
is $O(\la^{-N})$, by \eqref{w5}, \eqref{w6}
and \eqref{w7}, we can write
\begin{multline}\label{w10}
K^{c_0\theta}_{\la,\mu}(x,y)=
c\la^{\frac{n}2}\int_{0}^\infty\Bigl[\,
\la^{n-1}
\int_{{\mathbb R}^{2(n-1)}} e^{i\la[\tau d_g(x,z)+
\varphi(z,y;\xi)]} a_{\iota,\theta}(\la\tau;x,z) \, 
\tau^{\frac{n}2}\tilde \beta(\tau^2)
\\
\qquad \qquad\qquad \times
b((c_0\theta)^{-1}(p(z,\xi)-\kappa_m^{c_0\theta}))\Upsilon(p(z,\xi)/\la) 
q(z,y;\la\xi) dz d\xi\, \Bigr] \, 
e^{i\la\tau^2(t-s)} d\tau,
\\
\mu=\nu,\nu', \, \iota=j,j', \, m=\ell,\ell'.
\end{multline}
If we consider the oscillatory integral over ${\mathbb R}^{2(n-1)}$ in the
square brackets here,
the phase function is
$$\phi(z,\xi)=\phi(x,y,\tau;z,\xi)=
\tau d_g(x,z)+\varphi(z,y;\xi).$$  It has
a unique stationary point when
$$y=z \quad \text{and } \, \tau\nabla_z d_g(x,z)=-
\nabla_z \varphi(z,y;\xi)=-\xi,$$
with the last inequality coming from the second part
of \eqref{w9}.
This stationary point is non-degenerate by the
last part of \eqref{w9}, and
$\phi=\tau d_g(x,y)$ there.
Also, since 
$p(z,\nabla_z d_g(x,z))=1$ and $p(z,\xi)=p(z,-\xi)$, 
we conclude
that
\begin{equation}\label{w11}\tau = p(z,\xi) \, \,
\text{and } \, \, \varphi=0 \quad 
\text{when } \, \, \nabla_{z,\xi}\phi=0.
\end{equation}

Since $\theta\ge \la^{-1/8}\gg \la^{-1/2}$ we may use 
\eqref{w9} and \eqref{w11} 
along with stationary phase to evaluate $\la^{n-1}$ times the oscillatory
integral inside the square brackets in \eqref{w10}.
It must be of the form
\begin{equation}\label{w12}
    \begin{aligned}
        &e^{i\tau d_g(x,y)}\tilde a_{\iota,\theta}(\la\tau;
x,y)\, \tau^{\frac{n}2} \tilde \beta(\tau^2)
\, 
\tilde b\bigl((c_0\theta)^{-1}(\tau-\kappa_m^{c_0\theta})\bigr) 
\tilde q(x,y;\la\tau )
\\
&=
e^{i\tau d_g(x,y)}a_{\iota,\theta}(\la\tau;
x,y) \, \tau^{\frac{n}2} \tilde \beta(\tau^2)
\, 
b\bigl((c_0\theta)^{-1}(\tau-\kappa_m^{c_0\theta})\bigr) \Upsilon(\tau/\la) 
q(y,y;-\la\tau \nabla_y d_g(x,y)) \\
&\quad+O(\la^{-3/4}).
    \end{aligned}
\end{equation}
Here $\tilde a_{\iota,\theta}$ satisfies the bounds
in \eqref{w4}, like $b$, the smooth function $\tilde b$ vanishes outside of $[-1,1]$, and, finally, 
$\tilde q\in S^0_{1,0}$.  

If we combine \eqref{w10} and \eqref{w11}, we conclude
that
\begin{multline}\label{w13}
K^{c_0\theta}_{\la,\mu}(x,y)=
c\la^{\frac{n}2}\int_{0}^\infty
e^{i\la[\tau d_g(x,y)+\tau^2(t-s)]}
\tilde a_{\iota,\theta}(\la\tau;
x,y) \tau^{\frac{n}2} \tilde \beta(\tau^2)
\\
\times \tilde b\bigl((c_0\theta)^{-1}(\tau-\kappa_m^{c_0\theta})\bigr)
\tilde q(x,y;\la\tau)\,  d\tau,
\, \, 
\mu=\nu,\nu', \, \iota=j,j', \,
m=\ell, \ell'.
\end{multline}

Now we shall prove \eqref{q6}-\eqref{qtube}, by a simple integration by parts argument, we obtain
\eqref{q5} from \eqref{w13}, and, by using the properties of the amplitude function $\tilde a_{\iota,\theta}(\la\tau;
x,y)$, we have 
the assertion in \eqref{qtube} that the amplitudes are $O(\la^{-N})$  when $|(x_1,\dots,x_{n-2}|+|(y_1,\dots,y_{n-2})|$ is larger than 
a fixed multiple of $\theta$ for both $\mu=\nu, \nu'$ 
since $|\nu-\nu'|=O(\theta)$.  For the last part of \eqref{qtube}, saying that the amplitudes
are also trivial when $|(x_{n-1}-y_{n-1})+2\kappa^{c_0\theta}_\ell(t-s)|$ is larger than a
fixed multiple of $\theta$,
we use
the fact that $d_g(x,y)=x_{n-1}-y_{n-1}+O(\theta)$ in our Fermi normal coordinates if
\eqref{q7} is valid and $x,y$ are in a $O(\theta)$-tube about $\overline{\gamma_j}$ as in \eqref{q6}.
By \eqref{qtube},
along with the earlier steps, we conclude that these kernels
are $O(\la^{-N})$ in the regions described by
\eqref{q6}--\eqref{q7}.

Also, since the phase function in \eqref{w13} has a unique stationary point when $\tau=-d_g(x,y)/2(t-s)$
which is non-degenerate, and since the phase equals 
$-(d_g(x,y))^2/4(t-s)$ there, we conclude that the kernels in \eqref{w12} must be of the form
\eqref{q3}.  It is also straightforward that the amplitudes must satisfy the estimates in \eqref{q4}
in the special cases where both $m_1$ and $m_2$ are
zero due to \eqref{w4}.

To prove the estimates \eqref{q4} involving $(m_1,m_2)\ne(0,0)$, we first note that
$$(d_g(x,y))^2/4(t-s)+\tau d_g(x,y)+\tau^2(t-s)
=(t-s) \cdot \bigl(\tau+d_g(x,y)/2(t-s)\bigr)^2,$$
and, also, by \eqref{qtube} and \eqref{q7} $t-s\approx -\delta$ when the kernel is non-trivial.
Therefore, by \eqref{w13},
the amplitude in \eqref{q3} is of the form
\begin{multline}\label{ww1}
a_{\la,\mu}(x,t;y,s)=
\\
\la^{\frac12}
\int_{-\infty}^\infty e^{-i\la \tau^2}
h_{\la,\iota,\theta}\bigl(x,t;y,s; (\tau- \tfrac{(s-t)^{1/2}d_g(x,y)}{2(t-s)});
 \, \tfrac1{(s-t)^{1/2}c_0\theta}
(\tau-(s-t)^{1/2}[\kappa_m^{c_0\theta}+
\tfrac{d_g(x,y)}{2(t-s)}
])\bigr) \, d\tau,
\\
\mu=\nu=(j,k), \nu'=(j',\ell'), \, \, \iota=j, j', \, \, m=\ell, \ell',
\end{multline}
with
\begin{multline}\label{ww2}
h_{\la,\iota, \theta}(x,t;y,s; u;r)
\\
=\tilde a_{\iota,\theta}(\la u/(s-t)^{1/2};x,y) \, (u_+/(s-t)^{1/2})^{n/2} \, \tilde \beta\bigl(u^2/(s-t)\bigr) \,
\, \tilde q\bigl(x,y;\la u/(s-t)^{1/2}\bigr) \, \tilde b(r) .
\end{multline}
Here $u_+=u$ if $u\ge0$ and $0$ otherwise.
What is important for us and follows from the fact
that $\tilde a_{\iota,\theta}$ satisfies\eqref{w4}, the support properties of $\tilde b$ and
$\tilde \beta$, as well as the fact that $s-t$ is bounded away from zero and $\tilde q\in S^0_{1,0}$, 
is that we have
\begin{equation}\label{ww3}
h_{\la,\iota, \theta}(x,t;y,s; u;r) =0 \quad \text{if } \, \, 
|u| +|r| \ge C \, \, \, \text{or } \, \, |u|\le C^{-1},
\end{equation}
for some fixed $C=C_\delta$, and, moreover
\begin{equation}\label{ww4}
D^{\beta_1}_{t,s,u,r,x_{n-1},y_{n-1}} (\theta D_{x',y'})^{\beta_2} h_{\la,\iota, \theta}(x,t;y,s; u;r)
=O_{\beta_1,\beta_2}(1).
\end{equation}

Let us now use this to prove \eqref{q4} for $(m_1,m_2)\ne 0$.  
 We shall 
first consider the special case where $\mu=\nu$
and  on $x,y\in \overline \gamma_j$, which is a 
portion of the $(n-1)$-axis in the Fermi normal
coordinate system in which we are working.  We
then have $d_g(x,y)=x_{n-1}-y_{n-1}$ if the kernel
is nontrivial by \eqref{q7}.  Note that
\eqref{q5} tells us that the amplitude
$a_{\la,\nu}$ in \eqref{q3} is also very highly 
concentrated on the Schr\"odinger curve
where we also have $x_{n-1}-y_{n-1}=-2\kappa^{c_0\theta}_\ell(t-s)$.  With this in mind, let us prove 
\eqref{q4} when $\beta=0$, $m_2=0$ and $m_1=1$ and we
are on this Schr\"odinger curve.  We then have
$-d_g(x,y)/2(t-s)\equiv \kappa_\ell^{c_0\theta}$.  In this case we take $\kappa_m^{c_0\theta}=\kappa^{c_0\theta}_\ell$, $\iota=j$ and
$\mu=\nu$ in \eqref{ww1} and see that we would have \eqref{q4} for this special case if
\begin{equation}\label{ww5}
\int e^{-i\la \tau^2}
\bigl(\partial_u h_{\la,\iota,\theta}\bigr)(x,t;y,s; \tau+
(s-t)^{1/2}\kappa^{c_0\theta}_\ell; ((s-t)^{1/2}c_0\theta)^{-1}\tau) \, d\tau
=O(\la^{-1/2}),
\end{equation}
as well as
\begin{multline}\label{ww6}
\theta^{-1}\int \tau e^{-i\la \tau^2}
 \bigl(\partial_r h_{\la,\iota,\theta}\bigr)(x,t;y,s; \tau+
 (s-t)^{1/2}\kappa^{c_0\theta}_\ell; ((s-t)^{1/2}c_0\theta)^{-1}\tau) \, d\tau
 \\
=O(\la^{-1/2}).
\end{multline}
The first estimate, \eqref{ww5} just follows from stationary phase and \eqref{ww4}.  We obtain the second estimate
by realizing that, after integrating by parts, we can rewrite the left side as
\begin{multline}\label{ww7}
(2i\la\theta)^{-1}\int  e^{-i\la \tau^2} \, 
\frac\partial{\partial \tau}\Bigl[
 \bigl(\partial_r h_{\la,\iota,\theta}\bigr)(x,t;y,s; \tau+
 (s-t)^{1/2}\kappa^{c_0\theta}_\ell; ((s-t)^{1/2}c_0\theta)^{-1}\tau)\Bigr] \, d\tau
 \\
 =O((\la \theta^2)^{-1}\la^{-1/2})=O(\la^{-1/2}),
 \end{multline}
with the bounds in the right holding by \eqref{ww4} along with stationary phase and the fact that $\theta \gg \la^{-1/2}$.  In view of
\eqref{ww4}, it is clear that 
by induction this argument will give the rest of \eqref{q4} in this special
case where both $(x,t)$ and $(y,s)$ lie on this special Schr\"odinger curve.

If $x,y$ are in a $O(\theta)$-tube about
$\overline \gamma_j$ with $d_g(x,y)\approx \delta$
and we let $\gamma(\, \cdot \, )$ be the unit speed geodesic
in $M^{n-1}$ with $\gamma(0)=0$ and $\gamma(r)=x$,
$r=d_g(x,y)$, then the argument also yields
$$\bigl(2\kappa^{c_0\theta}_\ell
\partial_r-\partial_t\bigr)
a_{\la,\nu}(\gamma(r),t;y,s)=O(1) \quad
\text{if } \, \, r=d_g(x,y)
\, \, \text{and } \, r=-2\kappa^{c_0\theta}_\ell (t-s),
$$
due to the fact that the Schr\"odinger curve connecting
$(x,t)$ and $(y,s)$ is as in \eqref{q1}.
Since we are working in Fermi normal coordinates
about $\overline \gamma_j$ this equals
$$(2\kappa^{c_0\theta}\partial_{x_{n-1}}-\partial_t)
a_{\la,\nu}(x,t;y,s)+O(\theta|\nabla_x a_{\la,\nu}|),$$
and the error term here is $O(1)$ by our known estimate in
\eqref{q4} where $|\beta|=1$ and $m_1=m_2=0$.  Thus,
if there is a $\kappa^{c_0\theta}_\ell$-speed 
Schr\"odinger curve connecting $(x,t)$ and $(y,s)$
and the kernel is not $O(\la^{-N})$
we have \eqref{q4} with $m_1=1$, $m_2=0$ and $\mu=\nu$.  By an
induction argument, it must be valid for all
$(m_1,m_2,\beta)$ in this case.  

If the kernel is non-trivial at 
at $(x,t;y,s)$,
then by \eqref{qtube} and \eqref{q7}
there is a Schr\"odinger curve as in \eqref{q1}
with associated speed $\kappa
=\kappa^{c_0\theta}_\ell +O(\theta)$, passing through $(x,t)$ and $(y,s)$.
So, by the above argument, we would 
we have
\eqref{q4} for $\mu=\nu$, $m_1=1$ and $m_2=0$
 if we had the following variants of \eqref{ww5} and \eqref{ww6}:
\begin{multline}\label{ww5'}
\int e^{-i\la \tau^2}
\bigl(\partial_u h_{\la,\iota,\theta}\bigr)(x,t;y,s; \tau+
(s-t)^{1/2}\kappa; ((s-t)^{1/2}c_0\theta)^{-1}(\tau-(s-t)^{1/2}(\kappa^{c_0\theta}_\ell
-\kappa)) \, d\tau
\\
=O(\la^{-1/2}),
\end{multline}
as well as
\begin{multline}\label{ww6'}
\theta^{-1}\int 
(\tau-(\kappa^{c_0\theta}_\ell
-\kappa)) \cdot
e^{-i\la \tau^2}
\\
\times  \bigl(\partial_r h_{\la,\iota,\theta}\bigr)(x,t;y,s; \tau+
 (s-t)^{1/2}\kappa; ((s-t)^{1/2}c_0\theta)^{-1}(\tau- (s-t)^{1/2}(\kappa^{c_0\theta}_\ell
-\kappa)) \, d\tau
\\
=O(\la^{-1/2}).
\end{multline}
Just as with \eqref{ww5}, \eqref{ww5'} follows immediately from stationary phase arguments and \eqref{ww3}--\eqref{ww4}.
We also get \eqref{ww6'} since, as we mentioned before, we must have $\kappa^{c_0\theta}_\ell -\kappa=O(\theta)$,
and so the left side of \eqref{ww6'} splits into two terms, one of which is of the form \eqref{ww5}, while the other is
of the form \eqref{ww6}.  
Thus,  
\eqref{ww5'} and \eqref{ww6'} imply \eqref{q4} for $\mu=\nu$
when $m_1=1$ and $m_2=0$.  Also, just as before, one obtains the remaining cases of \eqref{q4} by an
induction argument.

Finally, since $|\kappa^{c_0\theta}_\ell-\kappa^{c_0\theta}_{\ell'}|, \, \theta|j-j'|=O(\theta)$, it is also
clear that \eqref{q4}
also must hold when $\nu=(c_0\theta j, c_0\theta \ell)$ is replaced by
$\nu'=(c_0\theta j', c_0\theta\ell')$, which completes the proof of Lemma~\ref{ker}.

\bibliography{refs}
\bibliographystyle{abbrv}

%

\end{document}